\Crefname{condition}{Condition}{Conditions}
\DeclarePairedDelimiter\floor{\lfloor}{\rfloor}
\newcommand{\Bag}{X}
\newcolumntype{L}[1]{>{\raggedright\let\newline\\\arraybackslash\hspace{0pt}}m{#1}}
\newcolumntype{C}[1]{>{\centering\let\newline\\\arraybackslash\hspace{0pt}}m{#1}}
\newcolumntype{R}[1]{>{\raggedleft\let\newline\\\arraybackslash\hspace{0pt}}m{#1}}
\newcommand{\problemdef}[4][XXXEMPTYLABELXXX]{
	\begin{center}
		\begin{boxedminipage}{\textwidth}
			\textsc{{#2}}\ifthenelse{\equal{#1}{XXXEMPTYLABELXXX}}{}{\label{#1}}\\[2pt]
		    \renewcommand{\arrayrulewidth}{0pt}
			\begin{tabular}{@{\hspace{0.007\textwidth}}r@{\hspace{0.007\textwidth}}p{0.87\textwidth}@{\hspace{0.007\textwidth}}}
				\textit{Input:}  & {#3}\\
				\textit{Output:}  & {#4}
			\end{tabular}
		\end{boxedminipage}
	\end{center}
}
\newcommand*\linenomathpatch[1]{%
  \cspreto{#1}{\linenomath}%
  \cspreto{#1*}{\linenomath}%
  \csappto{end#1}{\endlinenomath}%
  \csappto{end#1*}{\endlinenomath}%
}
\newcommand*\linenomathpatchAMS[1]{%
  \cspreto{#1}{\linenomathAMS}%
  \cspreto{#1*}{\linenomathAMS}%
  \csappto{end#1}{\endlinenomath}%
  \csappto{end#1*}{\endlinenomath}%
}
  \let\linenomathAMS\linenomathWithnumbers
  \patchcmd\linenomathAMS{\advance\postdisplaypenalty\linenopenalty}{}{}{}
  \let\linenomathAMS\linenomathNonumbers
\pretocmd{\NAT@citexnum}{\@ifnum{\NAT@ctype>\z@}{\let\NAT@hyper@\relax}{}}{}{}
\newenvironment{subproof}[1][\proofname]{%

  \begin{proof}[#1]%
}{%
  \end{proof}%
}
\newcommand{\tw}{\mathrm{tw}}
\newcommand{\mms}{\mathrm{mms}}
\newcommand{\tin}{\mathsf{tree} \textnormal{-} \alpha}
\newcommand{\FF}{\mathcal{F}}
\newcommand{\HH}{\mathcal{H}}
\newcommand{\G}{\mathcal{G}}
\renewcommand{\P}{\textsf{P}}
\newcommand{\NP}{\textsf{NP}}
\renewcommand{\O}{\mathcal{O}}
\newtheorem{theorem}{Theorem}[section]
\newtheorem{claim}[theorem]{Claim}
\newtheorem{corollary}[theorem]{Corollary}
\newtheorem{proposition}[theorem]{Proposition}
\newtheorem{lemma}[theorem]{Lemma}
\newtheorem{observation}[theorem]{Observation}
\theoremstyle{definition}
\newtheorem{remark}[theorem]{Remark}
\newtheorem{example}[theorem]{Example}
\newtheorem{definition}[theorem]{Definition}
\newtheorem{question}[theorem]{Question}
\newtheorem{conjecture}[theorem]{Conjecture}
\newtheorem{numbered-claim}{Claim}
\crefname{numbered-claim}{Claim}{Claims}
\Crefname{numbered-claim}{Claim}{Claims}
\crefname{question}{Question}{Questions}
\Crefname{question}{Question}{Questions}
\crefname{property}{property}{properties}
\Crefname{property}{Property}{Properties}
\title{Treewidth versus clique number. III. Tree-independence number of graphs with a forbidden structure}
\author{
\begin{center}
Cl\'{e}ment Dallard\textsuperscript{1}, Martin Milani{\v c}\textsuperscript{2,3}, Kenny \v{S}torgel\textsuperscript{2,4}\\[10pt]
{\small \textsuperscript{1} LIP, École Normale Supérieure de Lyon, France}\\
{\small \textsuperscript{2} FAMNIT, University of Primorska, Koper, Slovenia}\\
{\small \textsuperscript{3} IAM, University of Primorska, Koper, Slovenia}\\
{\small \textsuperscript{4} Faculty of Information Studies in Novo mesto, Slovenia}\\[5pt]
{\footnotesize%
\url{clement.dallard@ens-lyon.fr}\quad
\url{martin.milanic@upr.si}\quad
\url{kennystorgel.research@gmail.com}}
\end{center}
}
\date{}
\begin{document}
\maketitle
\begin{abstract}
We continue the study of $(\tw,\omega)$-bounded graph classes, that is, hereditary graph classes in which the treewidth can only be large due to the presence of a large clique, with the goal of understanding the extent to which this property has useful algorithmic implications for the Maximum Independent Set and related problems.

In the previous paper of the series [Dallard, Milani\v{c}, and \v{S}torgel, Treewidth versus clique number. {II}. Tree-independence number], we introduced the \emph{tree-independence number}, a min-max graph invariant related to tree decompositions.
Bounded tree-independence number implies both \hbox{$(\tw,\omega)$-boundedness} and the existence of a polynomial-time algorithm for the Maximum Weight Independent Packing problem, provided that the input graph is given together with a tree decomposition with bounded independence number.
In particular, this implies polynomial-time solvability of the Maximum Weight Independent Set problem.

In this paper, we consider six graph containment relations---the subgraph, topological minor, and minor relations, as well as their induced variants---and for each of them characterize the graphs $H$ for which any graph excluding $H$ with respect to the relation admits a tree decomposition with bounded independence number.
The induced minor relation is of particular interest: we show that excluding either a $K_5$ minus an edge or the $4$-wheel implies the existence of a tree decomposition in which every bag is a clique plus at most $3$ vertices, while excluding a complete bipartite graph $K_{2,q}$ implies the existence of a tree decomposition with independence number at most $2(q-1)$.

These results are obtained using a variety of tools, including $\ell$-refined tree decompositions, SPQR trees, and potential maximal cliques, and actually show that in the bounded cases identified in this work, one can also compute tree decompositions with bounded independence number efficiently.
Applying the algorithmic framework provided by the previous paper in the series leads to polynomial-time algorithms for the Maximum Weight Independent Set problem in an infinite family of graph classes, each of which properly contains the class of chordal graphs.
In particular, these results apply to the class of $1$-perfectly orientable graphs, answering a question of Beisegel, Chudnovsky, Gurvich, Milani\v c, and Servatius from 2019.
\end{abstract}

\clearpage
\tableofcontents
\clearpage

\section{Introduction}

\subsection{Background}

Treewidth is an important and well-studied graph invariant, with both structural and algorithmic applications.
The presence of a large clique implies large treewidth and it is an interesting question for which graph classes this sufficient condition is also necessary.
A graph class $\G$ is said to be \emph{$(\tw,\omega)$-bounded} if it admits a  \emph{$(\tw,\omega)$-binding function}, that is, a function $f$ such that the treewidth of any graph $G\in \G$ is at most $f(\omega(G))$, where $\omega(G)$ is the clique number of $G$, and the same holds for all induced subgraphs of $G$.
The chromatic number of any graph is bounded from above by its treewidth plus one, and hence every $(\tw,\omega)$-bounded graph class is $\chi$-bounded, that is, the chromatic number of the graphs in the class and all their induced subgraphs is bounded from above by some function of the clique number (see, e.g.,~\cite{MR951359}).
Similarly as $\chi$-boundedness generalizes perfection, $(\tw,\omega)$-boundedness generalizes chordality.
Besides the class of chordal graphs, further examples of $(\tw,\omega)$-bounded graph classes include the class of circular-arc graphs, as well as several more general families of graph classes studied in the literature (see~\cite{MR1090614,MR1642971,MR1172354,DBLP:journals/endm/ChaplickZ17,MR4249058,MR3746153,MR4141534,MR4332111,MR1852483}).
This is the third paper of a series of papers on $(\tw,\omega)$-bounded graph classes, initiated in~\cite{dallard2021treewidth} (see also~\cite{DMS-WG2020}) and continued in~\cite{dallard2022firstpaper}.

It is known that $(\tw,\omega)$-bounded graph classes enjoy good algorithmic properties related to clique and coloring problems, in some cases under the mere assumption that the class admits a computable $(\tw,\omega)$-binding function (see~\cite{DBLP:journals/endm/ChaplickZ17,MR4332111,DMS-WG2020,dallard2021treewidth,dallard2022firstpaper}).
This motivates the question of whether $(\tw,\omega)$-boundedness has any other algorithmic implications, in particular, for problems related to independent sets.
A partial answer to this question was given in the second paper of the series~\cite{dallard2022firstpaper} where we identified a sufficient condition for $(\tw,\omega)$-bounded graph classes to admit a polynomial-time algorithm for the \textsc{Max Weight Independent Packing} problem and, as a consequence, for the weighted variants of the \textsc{Independent Set} and \textsc{Induced Matching} problems. (We refer to \cref{sec:preliminaries} for precise definitions.)
These results were obtained using the notion of tree-independence number, a newly introduced graph invariant related to tree decompositions.

The \emph{independence number} of a tree decomposition of a graph $G$ is defined as the maximum independence number of a subgraph of $G$ induced by a bag of the decomposition.
The \emph{tree-independence number} of a graph $G$
is the minimum independence number over all tree decompositions of $G$.
It follows from Ramsey's theorem that every graph class with bounded tree-independence number is $(\tw,\omega)$-bounded, even with a polynomial binding function; in particular, every graph class with bounded tree-independence number is polynomially $\chi$-bounded.
It was shown in~\cite{dallard2022firstpaper} that the \textsc{Max Weight Independent Packing} problem can be solved in polynomial time provided that the input graph is given along with a tree decomposition with bounded independence number.
Thus, boundedness of the tree-independence number is an algorithmically useful refinement of \hbox{$(\tw,\omega)$-boundedness} that still captures a wide variety of families of graph classes, including graph classes of bounded treewidth, graph classes of bounded independence number, intersection graphs of connected subgraphs of graphs with bounded treewidth, and graphs in which all minimal separators are of bounded size (see~\cite{dallard2022firstpaper} for details).

\subsection{Our results}

In this paper, we identify new graph classes with bounded tree-independence number and show that in each of these cases the general algorithmic result from~\cite{dallard2022firstpaper} can be applied.

\subsubsection{Bounding the tree-independence number}

We consider six graph containment relations---the subgraph, topological minor, and minor relations, as well as their induced variants---and for each of them characterize the graphs $H$ for which any graph excluding $H$ with respect to the relation admits a tree decomposition with bounded independence number.
These results build on and refine the analogous characterizations for $(\tw,\omega)$-boundedness from~\cite{dallard2021treewidth}, and show that in all these cases, $(\tw,\omega)$-boundedness is actually equivalent to bounded tree-independence number.

Instead of stating the exact characterizations here (they can be found in \cref{dichotomy tin}), let us summarize their main features.
Because of the Grid-Minor Theorem, for any graph class closed under subgraphs (and hence also for any graph class closed under topological minors or minors), bounded tree-independence number is equivalent to bounded treewidth.
When a single graph is excluded as an induced subgraph or as  an induced topological minor, bounded tree-independence number is equivalent to bounded independence number, except for a small number of well-structured graph classes, such as disjoint unions of complete graphs, chordal graphs, or block-cactus graphs.
The most interesting case is when a single graph is excluded as an induced minor.
Given two graphs $G$ and $H$, we say that $G$ is \emph{$H$-induced-minor-free} if it is not possible to obtain from $G$ a graph isomorphic to $H$ by a sequence of vertex deletions and edge contractions.
We show that the class of $H$-induced-minor-free graphs has bounded tree-independence number if and only if $H$ is an induced minor of $W_4$ (a wheel with four spokes, that is, the graph obtained from the $4$-vertex cycle by adding to it a universal vertex), of $K_5^-$ (the complete $5$-vertex graph minus an edge), or of some complete bipartite graph $K_{2,q}$.
In fact, we not only prove that these graph classes have bounded tree-independence number, but also show that, given a graph in such a class, we can compute a tree decomposition with bounded independence number in polynomial time.

As a key ingredient of our approach, we develop a generic framework for graph classes closed under taking induced topological minors that allows us, roughly speaking, to reduce the problem of computing a tree decomposition with bounded independence number satisfying some additional requirements to the case of $3$-connected graphs, in linear time (see \cref{thm:sufficient}).
We then apply our framework to the classes of $K_5^-$- and $W_4$-induced-minor-free graphs, for which we obtain a particularly good structural understanding: any such graph admits a tree decomposition in which each bag is a clique plus at most three vertices, resembling a well-known characterization of chordal graphs (where each bag is a clique).
This property gives rise to conceptually simpler proofs of $(\tw,\omega)$-boundedness of these two graph classes compared to the proofs from~\cite{DMS-WG2020,dallard2021treewidth}, which rely on graph minors theory; it also implies that the tree-independence number is bounded by~$4$ and is likely to have further algorithmic applications besides the polynomial-time solvability of the \textsc{Max Weight Independent Packing} problem.
Furthermore, we claimed in~\cite{dallard2021treewidth} that all the identified $(\tw,\omega)$-bounded graph classes have a polynomial binding function, but left out the proofs for the case of $W_4$- and $K_5^-$-induced-minor-free graphs, explaining that we would prove these results in a future publication.
This ``future publication'' refers to the present work, where the aforementioned property implies the existence of a linear $(\tw,\omega)$-binding function for these two classes.
In particular, the classes of $W_4$- and $K_5^-$-induced-minor-free graphs are linearly $\chi$-bounded.
Prior to this work, it was not even known whether these classes are polynomially $\chi$-bounded.\footnote{Esperet asked whether every $\chi$-bounded graph class admits a polynomial $\chi$-binding function (see~\cite{esperet2017graph}).
Esperet's question was recently answered in the negative by Bria\'nski, Davies, and Walczak~\cite{brianski2022separating}.}

Let us note that the class of $W_4$-induced-minor-free graphs generalizes not only the class of chordal graphs but, more generally, also the class of universally signable graphs studied by Conforti, Cornu\'{e}jols, Kapoor, Ajai, and Vu\v{s}kovi\'{c}~\cite{MR1466576} (as can be seen using~\cite[Corollary 4.2]{MR1466576}).
Furthermore, the class of graphs excluding both $W_4$ and $K_5^-$ as an induced minor was studied in 2015 by Lewchalermvongs in his Ph.D.\ thesis~\cite{Lewchalermvongs}; this class of graphs generalizes the class of graphs of separability at most $2$ studied by Cicalese and Milani\v{c}~\cite{MR2901082}.

The classes of $K_{2,q}$-induced-minor-free graphs, for all $q \ge 3$, form an infinite family of graph classes that strictly generalize the class of chordal graphs and that have bounded tree-independence number.
It is an easy observation that a graph is $K_{2,q}$-induced-minor-free if and only if no independent set of size $q$ is contained in a minimal separator.
Hence, by showing that the tree-independence number is bounded in such classes, we generalize the fact that the classes of graphs with minimal separators of bounded size are $(\tw,\omega)$-bounded~\cite{MR1852483}.

\subsubsection{Algorithmic implications}

Our results generalize a number of results in the literature and answer several questions from the literature.
We now discuss these connections.

Most notably, we identify new graph classes where the \textsc{Max Weight Independent Set} problem is solvable in polynomial time.
This problem is one of the most studied graph optimization problems, and is known to be (strongly) \NP-hard in general~\cite{MR0378476} and \NP-hard to approximate on $n$-vertex graphs to within a factor of $n^{1-\epsilon}$ for every $\epsilon>0$~\cite{MR2403018}.
Several recent results consider the problem in restricted graph classes~\cite{MR4117301,MR4141321,MR3763297,MR3909546,MR4262549,MR4262487,MR4232071,DBLP:conf/sosa/PilipczukPR21,10.1145/3406325.3451034}.

Before explaining the implications of our results and the results from~\cite{dallard2022firstpaper} for the \textsc{Max Weight Independent Set} problem, let us put them in context with related work.
While the complexity of the \textsc{Max Weight Independent Set} problem was traditionally studied in \emph{hereditary} graph classes, that is, graph classes closed under induced subgraphs, many of the graph classes for which the complexity of the problem has been determined are also closed under the induced topological minor, induced minor, or even the minor relation.
For example:
\begin{itemize}
\item The \textsc{Max Weight Independent Set} problem is \NP-hard in the class of planar graphs~\cite{MR411240}, which is exactly the class of graphs with no $K_5$ and $K_{3,3}$ as a minor.
\item The \textsc{Max Weight Independent Set} problem is solvable in polynomial time in the class of graphs containing no induced $6$-vertex path~\cite{MR3909546}.
For every positive integer $p$, any class of graphs containing no induced $p$-vertex path is closed under induced minors  (in fact, it is exactly the class of graphs excluding the $p$-vertex path as an induced minor).
\item The \textsc{Max Weight Independent Set} problem is solvable in polynomial time in the class of graphs containing no induced cycles of length at least $5$~\cite{MR4262549}.
For every positive integer $p$, any class of graphs containing no induced cycles of length at least $p$ is also closed under induced minors (in fact, it is exactly the class of graphs excluding the $p$-vertex cycle as an induced minor).
\item The \textsc{Max Weight Independent Set} problem is solvable in polynomial time in the class of wheel-free graphs excluding all induced subdivisions of the $K_4$~\cite{MR3757557}.
This class of graphs is closed under induced topological minors.
\end{itemize}

These results motivate the study of the complexity of the \textsc{Max Weight Independent Set} problem in classes of graphs that are closed under minors, induced minors, or induced topological minors.
The simplest way to define such a class is to exclude a single graph $H$ with respect to the considered relation.
If a non-planar graph $H$ is excluded, then the \textsc{Max Weight Independent Set} problem is \NP-hard in the class of $H$-minor-free graphs, since this class contains the class of all planar graphs and the \textsc{Max Weight Independent Set} problem is known to be \NP-hard in the class of planar graphs~\cite{MR411240}.
If a planar graph $H$ is excluded as a minor, then the treewidth is bounded and the \textsc{Max Weight Independent Set} problem is solvable in linear time.
Thus, the interesting cases are when a planar graph $H$ is excluded with respect to the induced minor or the induced topological minor relation.
The resulting graph classes are equivalent for the two relations if $H$ is a path or a cycle.
The case when $H$ is the $k$-vertex path $P_k$ is polynomial-time solvable for $k\le 6$~\cite{MR3909546} and open for all $k\ge 7$; however, quasi-polynomial-time algorithms are known for every fixed $k$, as shown by Gartland and Lokshtanov~\cite{MR4232071} and Pilipczuk, Pilipczuk, and Rz\k{a}\.{z}ewski~\cite{DBLP:conf/sosa/PilipczukPR21}.
These results were recently strengthened by Gartland, Lokshtanov, Pilipczuk, Pilipczuk, and Rza\.{z}ewski~\cite{10.1145/3406325.3451034} to the case when $H$ is the $k$-vertex cycle, which is polynomial-time solvable for $k\le 5$~\cite{MR4262549} and open for all $k\ge 6$.

\medskip

Recall that for all the graph classes with bounded tree-independence number considered in this work we are able to \emph{efficiently} compute a tree decomposition with bounded independence number.
Combined with the results from~\cite{dallard2022firstpaper}, our results lead to a new infinite family of graph classes in which the \textsc{Max Weight Independent Set} problem is solvable in polynomial time.
This result is summarized in the following theorem, which combines \cref{MWIS for W4-im-free graphs,MWIS for K5minus-im-free graphs,cor:MWIS-K2q}.

\begin{theorem}\label{theorem-main}
The \textsc{Max Weight Independent Set} problem is solvable in polynomial time in the class of $H$-induced-minor-free graphs whenever $H$ is either $W_4$, $K_5^-$, or a complete bipartite graph $K_{2,q}$ for some positive integer $q$.
\end{theorem}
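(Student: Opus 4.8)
The plan is to obtain the statement as a direct composition of two kinds of results: the algorithmic framework inherited from the previous paper of the series~\cite{dallard2022firstpaper}, and the structural-algorithmic results established in the present work for each of the three forbidden induced minors. The framework guarantees that, once a graph is supplied \emph{together with} a tree decomposition whose independence number is bounded by a constant, the \textsc{Max Weight Independent Packing} problem---and hence, as a special case, the \textsc{Max Weight Independent Set} problem---can be solved in polynomial time. The essential point is therefore that we must be able to \emph{compute} such a decomposition in polynomial time: mere boundedness of the tree-independence number would not suffice, since the framework of~\cite{dallard2022firstpaper} takes the decomposition as part of its input.

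I would then split into the three cases. For $H=W_4$ and $H=K_5^-$, I would invoke the structural result of this paper asserting that every graph in the class admits a tree decomposition in which each bag is a clique together with at most three further vertices, and---crucially---that such a decomposition can be produced in polynomial time. Since a clique contributes at most one vertex to any independent set, each bag then induces a subgraph of independence number at most~$4$, so the computed decomposition has independence number at most~$4$, a constant independent of the input. For $H=K_{2,q}$ with $q$ fixed, I would instead appeal to the corresponding result of this paper, which computes in polynomial time a tree decomposition of independence number at most $2(q-1)$, again a constant for the (fixed) class under consideration.

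In each case the two ingredients compose cleanly: run the appropriate polynomial-time procedure to obtain the bounded-independence tree decomposition, and feed the graph together with this decomposition into the algorithm of~\cite{dallard2022firstpaper}. The overall running time is the sum of the time to compute the decomposition and the time of the packing algorithm, both polynomial (for $K_{2,q}$ the exponents may depend on the fixed value of $q$, which is harmless), yielding the claimed polynomial-time solvability. The statement then follows immediately.

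I expect the genuine difficulty to lie not in this assembly but in the underlying structural results that are being invoked here, and in particular in the demand that the tree decompositions be \emph{computed} rather than merely shown to exist. Establishing the ``clique plus at most three vertices'' decomposition for $W_4$- and $K_5^-$-induced-minor-free graphs, the $2(q-1)$ bound for $K_{2,q}$-induced-minor-free graphs, and efficient procedures to extract each of them, is where the $\ell$-refined tree decompositions, SPQR trees, and potential-maximal-clique machinery announced in the introduction are needed; once those are in hand, the combination above is routine.
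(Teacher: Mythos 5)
Your proposal is correct and takes essentially the same route as the paper: for each of the three excluded graphs, compute in polynomial time a tree decomposition of bounded independence number (the clique-plus-at-most-three-vertices decompositions for $W_4$ and $K_5^-$, giving bags of independence number at most $4$, and the $2(q-1)$ bound for $K_{2,q}$), then feed the graph and decomposition into the algorithmic framework of~\cite{dallard2022firstpaper}. The only cosmetic difference is that the paper runs the $3$-refined decompositions through its refined dynamic programming to obtain $\mathcal{O}(n^3)$ rather than $\mathcal{O}(n^6)$ in the $W_4$ and $K_5^-$ cases, a running-time improvement the paper itself notes is optional and which does not affect the claimed polynomiality.
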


In~\cite{dallard2021treewidth}, we showed that the \textsc{Max Weight Independent Set} problem can be solved in polynomial time in all the identified $(\tw,\omega)$-bounded graph classes, except possibly for the case when $H$ is excluded with respect to the induced minor relation and $H$ is isomorphic to either $W_4$, $K_5^-$, or $K_{2,q}$ where $q\ge 3$.
In particular, we established polynomial-time solvability of the \textsc{Max Weight Independent Set} problem in the classes of $H$-induced-minor-free graphs when $H$ is either the diamond (that is, $K_4$ minus an edge) or the complete bipartite graph $K_{1,q}$ for some positive integer~$q$.
The results given by \cref{theorem-main} generalize these results and establish polynomial-time solvability of the \textsc{Max Weight Independent Set} problem for all the cases left open in~\cite{dallard2021treewidth}.

\Cref{theorem-main} provides an infinite family of planar graphs $H$ such that excluding $H$ as an induced minor results in polynomial-time solvability of the \textsc{Max Weight Independent Set} problem.
To further appreciate the result of \cref{theorem-main}, note that the only graph class containing all classes of \hbox{$K_{2,q}$-induced-minor-free} graphs is the class of all graphs.
Furthermore, unless $H\in \{K_{2,1},K_{5}^-\}$, each of the graph classes listed in \cref{theorem-main} generalizes the class of chordal graphs---for which a polynomial-time algorithm for \textsc{Max Weight Independent Set} was given in 1976 by Frank~\cite{MR0392683}---and, with the exception of the case $H = K_{2,2}$, all these generalizations are proper.
Additionally, the fact that \textsc{Max Weight Independent Set} is solvable in polynomial time in the class of $K_{2,3}$-induced-minor-free graphs implies a polynomial-time solution for \textsc{Max Weight Independent Set} in the class of $1$-perfectly orientable graphs, which is a common generalization of the classes of chordal graphs and circular-arc graphs.
This answers a question of Beisegel, Chudnovsky, Gurvich, Milani\v{c}, and Servatius posed in~\cite{MR3992956,MR4357319}.
The analogous question for \textsc{$k$-Coloring} was answered in our previous works~\cite{DMS-WG2020,dallard2021treewidth}.

We in fact obtain results analogous to \cref{theorem-main} but for the
\textsc{Max Weight Independent Packing} problem (\cref{MWIHP for W4-im-free graphs,MWIHP for K5minus-im-free graphs,MWIHP-K2q}).
These results generalize the polynomial-time solvability of the \textsc{Max Induced Matching} problem in the class of chordal graphs~\cite{MR1011265}, of the \textsc{$k$-Separator} problem (for fixed $k$) in the class of interval graphs~\cite{MR3296270}, and of the \textsc{Independent $\mathcal{F}$-Packing} problem in the class of chordal graphs~\cite{MR2190818}.

\subsection{Overview of our methodology}

We develop two significantly different general approaches for establishing boundedness of the tree-independence number for a given graph class, and for computing tree decompositions of bounded independence number for graphs in the class.
Given a graph class $\mathcal G$, we say that (boundedness of the) tree-independence number is \emph{efficiently witnessed} in $\mathcal{G}$ if there exists a polynomial-time algorithm to compute a tree decomposition with bounded independence number for every graph in~$\mathcal G$.

\bigskip
\noindent{\bf First approach: removing small cutsets.}
We make use of two known and linear-time computable decompositions of graphs into biconnected and triconnected components: the decompositions based on the block-cutpoint trees and the SPQR trees, respectively.
Using these decompositions, we can reduce the problem to connected graphs with no cutsets of size one and two, respectively.

The fact that block-cutpoint trees are useful for the tree-independence number follows from a more general observation that the tree-independence number is ``well-behaved'' with respect to clique cutsets, in the sense that it is bounded in any graph class closed under induced subgraphs in which graphs \emph{without clique cutsets} have bounded tree-independence number (see \cite[Proposition 3.10]{dallard2022firstpaper} and its refinement, \cite[Proposition 4.6]{dallard2022firstpaper}, stated in this work as~\cref{reduction-to-atoms}).
Indeed, the same approach shows that the tree-independence number is bounded in any graph class closed under induced subgraphs in which graphs \emph{without cut vertices} have bounded tree-independence number.
Furthermore, both reductions are algorithmically efficient.
When the graph is decomposed along a clique cutset into smaller subgraphs, if a tree decomposition with bounded independence number is provided for each subgraph, then one can combine these tree decompositions into a tree decomposition with bounded tree independence number for the entire graph.
In the case of block-cutpoint trees, the whole procedure can be implemented in linear time (\cref{cor:reduction-to-blocks}).

Getting rid of cutsets of size two using SPQR trees can also be performed in linear time but is significantly more involved.
Besides, it cannot always be applied, since it is not necessarily the case that bounded tree-independence number of the ``triconnected components'' of the graph implies boundedness of the tree-independence number of the whole graph (see \cref{example subdivided Kn}).
Additional technical conditions need to be met.
We postpone the precise statement of the result to \cref{sec:sufficient} (see \cref{thm:sufficient}), but let us remark that the result can be applied within graph classes closed under induced topological minors.

We then use this combined decomposition approach and a classical result due to Tutte~\cite{MR0140094} that any $3$-connected graph with at least $5$ vertices contains an edge whose contraction results in a $3$-connected graph to show that the tree-independence number of any graph that does not contain an induced minor isomorphic to $W_4$ or to $K_5^-$ is bounded by $4$ (see \cref{W_4-tree-independence-number,K_5-tree-independence-number}, respectively).
Furthermore, we give a linear-time algorithm to compute a tree decomposition with independence number at most $4$ of any such graph.
In the case of $W_4$-induced-minor-free graphs, particular tree decompositions of chordal graphs known as \emph{clique trees} (see~\cite{MR1320296}) also play an important role, along with moplex elimination orderings, and known linear-time algorithms to compute them.

\bigskip
\noindent{\bf Second approach: potential maximal cliques and minimal triangulations.}
The second approach relies on the notions of \emph{potential maximal cliques} and \emph{minimal triangulations}.
Roughly speaking, a minimal triangulation of a graph $G$ is a chordal graph obtained from $G$ by adding to it an inclusion-minimal set of edges, and a potential maximal clique is a set of vertices that forms a maximal clique in some minimal triangulation (see, e.g.,~\cite{MR2204109}).
By combining known results on minimal triangulations and chordal graphs, one can compute in polynomial time a tree decomposition of a given graph $G$ in which each bag is a potential maximal clique.
It follows that the tree-independence number is bounded and efficiently witnessed in any class of graphs in which there is a bound on the maximum size of an independent set contained in a potential maximal clique.

To apply this observation to particular graph classes, we connect it with another classical concept in algorithmic graph theory, that of a \emph{minimal separator}.
A minimal separator in a graph is an inclusion-minimal set of vertices separating some fixed non-adjacent vertex pair.
Building on the work of Bouchitt\'{e} and Todinca~\cite{MR1896345}, we show that in every graph, every potential maximal clique is either a clique or is contained in the union of two minimal separators (\cref{thm:pmcs-covering}).
Thus, the tree-independence number is bounded and efficiently witnessed in any class of graphs for which there is a bound on the maximum size of an independent set contained in a minimal separator.
This last condition is easily seen to be equivalent to the requirement that the graph is $K_{2,q}$-induced-minor-free for some fixed integer $q$ and leads to polynomial-time algorithms for computing tree decompositions with bounded independence number in any such graph class.

\subsection{Organization of the paper}

In \cref{sec:preliminaries}, we provide the necessary preliminaries.
Then, in \cref{sec:K2q-induced-minor-free}, we prove that for every positive integer $q$, the class of $K_{2,q}$-induced-minor-free graphs has bounded and efficiently witnessed tree-independence number.
A polynomial-time algorithm for the \textsc{Max Weight Independent Set} problem in the class of $1$-perfectly orientable graphs is also derived in that section.
In \cref{sec:sufficient}, we develop a sufficient condition for bounded and efficiently witnessed tree-independence number for graph classes closed under induced topological minors.
This general result is then applied to the specific cases of $W_4$-induced-minor-free graphs and
$K_5^-$-induced-minor-free graphs in \cref{W4-induced-minor-free,K5-induced-minor-free}, respectively.
In \cref{sec:dichotomies}, we consider the six aforementioned graph containment relations and for each of them characterize the graphs $H$ for which the class of graphs excluding $H$ has bounded tree-independence number.
We conclude the paper in \cref{sec:open-questions}.

\section{Preliminaries}\label{sec:preliminaries}

\subsection{General notations and definitions}\label{sec:preliminaries-general}

Throughout the paper, we denote by $\mathbb{Z}_+$ the set of nonnegative integers and by $\mathbb{Q}_+$ the set of nonnegative rational numbers.
We assume familiarity with the basic concepts in graph theory as used, e.g., by West~\cite{MR1367739}.
We denote the vertex set and the edge set of a graph $G$ by $V(G)$ and $E(G)$, respectively.
A graph is \emph{null} if it has no vertices.
The \emph{neighborhood} of a vertex $v$ in $G$, which corresponds to the set of vertices adjacent to $v$ in $G$, is denoted by $N_G(v)$.
The \emph{closed neighborhood} of $v$ is the set $N_G[v] = N_G(v) \cup \{v\}$.
These concepts are extended to sets $X\subseteq V(G)$ so that $N_G[X]$ is defined as the union of all closed neighborhoods of vertices in $X$, and $N_G(X)$ is defined as the set $N_G[X]\setminus X$.
The \emph{degree} of $v$, denoted by $d_G(v)$, is the cardinality of the set $N_G(v)$.
When there is no ambiguity, we may omit the subscript $G$ in the notations of the degree, and open and closed neighborhoods, and thus simply write $d(v)$, $N(v)$, and $N[v]$, respectively.
Given a set $X \subseteq V(G)$, we denote by $G-X$ the graph obtained from $G$ after deleting all the vertices in $X$.
Similarly, given a vertex $v \in V(G)$, we denote by $G-v$ the graph obtained from $G$ after deleting $v$.
Given two disjoint vertex sets $A,B\subseteq V(G)$, we say that $A$ is \emph{anticomplete} to $B$ (and vice versa) if no vertex in $A$ is adjacent to a vertex in $B$.
The fact that two graphs $G$ and $H$ are isomorphic to each other is denoted by $G\cong H$.
The \emph{complement} of a graph $G$ is the graph, denoted by $\overline{G}$, with vertex set $V(G)$, in which two distinct vertices are adjacent if and only if they are non-adjacent in $G$.
For a positive integer $n$, we denote the $n$-vertex complete graph, path, and cycle by $K_n$, $P_n$, and $C_n$, respectively.
Similarly, for $n\ge 2$ we denote by $K_n^{-}$ the graph obtained from the complete graph $K_n$ by deleting an edge, and for positive integers $m$ and $n$ we denote by $K_{m,n}$ the complete bipartite graph with parts of sizes $m$ and $n$.
The graph $K_4^-$ is also called the \emph{diamond}.
For $n\ge 4$, we denote by $W_n$ the \emph{$n$-wheel}, that is, the graph obtained from the graph $C_n$ by adding to it a universal vertex.
A \emph{wheel} is a graph isomorphic to $W_n$ for some $n \geq 4$.

An \emph{independent set} in a graph $G$ is a set of pairwise non-adjacent vertices, and a \emph{clique} is a set of pairwise adjacent vertices.
Given a graph $G$ and a weight function $w:V(G)\to \mathbb{Q}_+$, the \textsc{Max Weight Independent Set} problem asks to find an independent set $I$ in $G$ of maximum possible weight $w(I)$, where $w(I) = \sum_{x\in I}w(x)$.
The \emph{independence number} of $G$, denoted by $\alpha(G)$, is the maximum size of an independent set in~$G$.
The \emph{clique number} of $G$, denoted by $\omega(G)$, is the maximum size of a clique in~$G$.
Given a positive integer $k$, a graph $G$ is \emph{$k$-connected} if it has at least $k+1$ vertices and for every set $X\subseteq V(G)$ with $|X|<k$, the graph $G-X$ is connected.
Given a graph $G$, a set $X\subseteq V(G)$ is a \emph{cutset} in $G$ if $G-X$ is disconnected; a \emph{$k$-cutset} is a cutset of cardinality $k$.
(Note that if $G$ is disconnected, the above definition allows the number of components of $G-X$ to be the same as the number of components of $G$.)
A \emph{cut-vertex} in a connected graph $G$ is a vertex $v$ such that $G-v$ is disconnected.
A \emph{cut-partition} in a graph $G$ is a triple $(A,B,C)$ such that $A\cup B\cup C = V(G)$, $A\neq \emptyset$, $B\neq \emptyset$, and $G$ has no edges with one endpoint in $A$ and one in $B$.
A \emph{clique cutset} in $G$ is a clique $C$ such that $G$ has a cut-partition $(A,B,C)$ for some $A$ and $B$.
Given two non-adjacent vertices $u$ and $v$ in $G$, a \emph{$u{,}v$-separator} is a set $S\subseteq V(G)\setminus \{u,v\}$ such that $u$ and $v$ belong to different components of $G-S$.
A $u{,}v$-separator $S$ is \emph{minimal} if no proper subset of $S$ is a $u{,}v$-separator.
A \emph{minimal separator} in $G$ is a set $S\subseteq V(G)$ that is a minimal $u{,}v$-separator for some non-adjacent vertex pair $u{,}v$.
A well-known characterization of minimal separators (see, e.g.,~\cite{MR2063679}) is that a set $S\subseteq V(G)$ is a minimal separator if and only if the graph $G-S$ contains at least two \emph{$S$-full components}, that is, two components such that every vertex in $S$ has a neighbor in each of them.

We now define the six graph containment relations studied in this paper.
A graph $H$ is said to be an \emph{induced subgraph} of $G$ if $H$ can be obtained from $G$ by deleting vertices. If $H$ is obtained from $G$ by deleting vertices and edges, then $H$ is a \emph{subgraph} of $G$.
The \emph{subdivision of an edge} $uv$ of a graph is the operation that deletes the edge $uv$ and adds a new vertex $w$ and two edges $uw$ and $wv$.
A \emph{subdivision of a graph} $H$ is a graph obtained from $H$ by a sequence of edge subdivisions.
A graph $H$ is said to be a \emph{topological minor} of a graph $G$ if $G$ contains a subdivision of $H$ as a subgraph.
Similarly, $H$ is an \emph{induced topological minor} of $G$ if some subdivision of $H$ is isomorphic to an induced subgraph of $G$.
An \emph{edge contraction} is the operation of deleting a pair of adjacent vertices and replacing them with a new vertex whose neighborhood is the union of the neighborhoods of the two original vertices.
If $H$ can be obtained from $G$ by a sequence of vertex deletions, edge deletions, and edge contractions, then $H$ is said to be a \emph{minor} of $G$. Finally, we say that $G$ contains $H$ as an \emph{induced minor} if $H$ can be obtained from $G$ by a sequence of vertex deletions and edge contractions.
If $G$ does not contain an induced subgraph isomorphic to $H$, then we say that $G$ is \emph{$H$-free}. Analogously, we also say that $G$ is $H$-subgraph-free, $H$-topological-minor-free, $H$-induced-topological-minor-free, $H$-minor-free, or $H$-induced-minor-free, respectively, for the other five relations.
Note that $G$ contains $H$ as an induced minor if and only if there exists an \emph{induced minor model} of $H$ in $G$, that is, a collection $(X_u: u\in V(H))$ of pairwise disjoint subsets of $V(G)$ such that each $X_u$ induces a connected subgraph of $G$ and for every two distinct vertices $u,v\in V(H)$, there is an edge in $G$ between a vertex of $X_u$ and a vertex of $X_v$ if and only if $uv \in E(H)$.

A \emph{tree decomposition} of a graph $G$ is a pair $\mathcal{T} = (T, \{X_t\}_{t\in V(T)})$ such that $T$ is a tree and every node $t$ of $T$ is assigned a \emph{bag}, that is, a vertex subset $X_t\subseteq V(G)$, such that the following conditions are satisfied:
every vertex of $G$ is in at least one bag, for every edge $uv\in E(G)$ there exists a node $t\in V(T)$ such that $X_t$ contains both $u$ and $v$, and for every vertex $u\in V(G)$ the subgraph $T_u$ of $T$ induced by the set $\{t\in V(T): u\in X_t\}$ is connected (that is, a tree).
The \emph{width} of $\mathcal{T}$ is the maximum value of $|X_t|-1$ over all $t\in V(T)$.
The \emph{treewidth} of a graph $G$, denoted by $\tw(G)$, is defined as the minimum width of a tree decomposition of $G$.

A graph $G$ is \emph{chordal} if it has no induced cycles of length at least four.
It is known that a graph $G$ is chordal if and only if it admits a \emph{clique tree}, that is, a tree decomposition of $G$ whose bags are exactly the maximal cliques of $G$ (see, e.g.,~\cite{MR1320296}).
Given a graph $G = (V,E)$, a \emph{triangulation} of $G$ is a chordal graph of the form
$(V,E\cup F)$ where $F$ is a subset of the set of non-edges of $G$.
A triangulation $(V,E \cup F)$ of $G$ is a \emph{minimal triangulation} of $G$ if $G$ has no triangulation $(V,E\cup F')$ such that $F'$ is a proper subset of $F$.

An (integer) \emph{graph invariant} is a mapping from the class of all graphs to the set of nonnegative integers that does not distinguish between isomorphic graphs.
For a graph invariant $\rho$, we say that a graph class $\mathcal{G}$ has \emph{bounded $\rho$} if there exists an integer $k$ such that $\rho(G)\le k$ for every graph $G$ in the class.
A graph class $\mathcal{G}$ is said to be \emph{$(\tw,\omega)$-bounded} if it admits a \emph{$(\tw,\omega)$-binding function}, that is, a function $f$ such that for every graph $G$ in the class and any induced subgraph $G'$ of $G$, the treewidth of $G'$ is at most $f(\omega(G'))$.
Furthermore, $\mathcal{G}$ is said to be \emph{polynomially $(\tw,\omega)$-bounded} if it admits a polynomial $(\tw,\omega)$-binding function.
Ramsey's theorem~\cite{MR1576401} states that for every two positive integers $p$ and $q$ there exists an integer $N(p,q)$ such that every graph with at least $N(p,q)$ vertices contains either a clique of size $p$ or an independent set of size $q$. The least such positive integer is denoted by $R(p,q)$.
Following Raghavan and Spinrad~\cite{MR2006100}, we say that an algorithm for a particular graph problem is \emph{robust} for a graph class $\mathcal{G}$ if for every input graph $G$, the algorithm either solves the problem or determines that the graph is not in $\mathcal{G}$.

\subsection{On the tree-independence number}

Given a tree decomposition $\mathcal{T} = (T, \{X_t\}_{t\in V(T)})$ of a graph $G$, the \emph{independence number} of $\mathcal T$, denoted by $\alpha(\mathcal{T})$, is defined as the maximum independence number of $G[X_t]$ among all nodes $t\in V(T)$.
The \emph{tree-independence number} of $G$, denoted by $\tin(G)$, is the minimum independence number among all possible tree decompositions of $G$.

Known properties of chordal graphs imply that the class of chordal graphs coincides with the class of graphs with tree-independence number at most one (see Theorem 3.3 in~\cite{dallard2022firstpaper}).

\begin{theorem}\label{chordal}
Let $G$ be a graph. Then $\tin(G)\le 1$ if and only if $G$ is chordal.
\end{theorem}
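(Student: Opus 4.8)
The plan is to first translate the inequality into a purely structural condition. A bag $X_t$ satisfies $\alpha(G[X_t]) \le 1$ precisely when it contains no two non-adjacent vertices, i.e.\ when $X_t$ is a clique; hence $\tin(G) \le 1$ holds if and only if $G$ admits a tree decomposition $\mathcal{T} = (T, \{X_t\}_{t \in V(T)})$ in which every bag is a clique. The theorem thus reduces to proving that $G$ is chordal if and only if it has such a ``clique-bag'' tree decomposition.

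The forward implication is immediate from the clique tree characterization recalled just before the statement: if $G$ is chordal it admits a tree decomposition whose bags are exactly its maximal cliques. Each such bag is a clique, so $\alpha(G[X_t]) \le 1$ for every node $t$, whence $\alpha(\mathcal{T}) \le 1$ and $\tin(G) \le 1$.

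For the converse, I would fix a clique-bag tree decomposition $\mathcal T = (T, \{X_t\})$ and, for each vertex $v$, write $T_v = \{t \in V(T) : v \in X_t\}$ for the (connected) subtree of $T$ associated with $v$. Two remarks drive the argument: if $uv \in E(G)$ then some bag contains both endpoints, so $T_u \cap T_v \ne \emptyset$; conversely, if $T_u \cap T_v \ne \emptyset$ then $u$ and $v$ lie in a common clique bag, so $uv \in E(G)$ (so in fact $G$ is the intersection graph of the subtrees $\{T_v\}$). To show $G$ is chordal, suppose toward a contradiction that $G$ has an induced cycle $v_1 v_2 \cdots v_k$ with $k \ge 4$. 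I would root $T$ arbitrarily and, for each $v$, let $\mathrm{top}(v)$ be the unique highest (closest to the root) node of $T_v$. Choosing an index $i$ for which $\mathrm{top}(v_i)$ is as deep as possible and setting $t = \mathrm{top}(v_i)$, I would argue that $t$ in fact belongs to both $T_{v_{i-1}}$ and $T_{v_{i+1}}$: a witness $a \in T_{v_{i-1}} \cap T_{v_i}$ lies in the subtree below $t$, while $\mathrm{top}(v_{i-1})$ is an ancestor of $a$ that, by the maximality of the depth of $t$, cannot be deeper than $t$ and hence is an ancestor of $t$; connectivity of $T_{v_{i-1}}$ then forces $t \in T_{v_{i-1}}$, and symmetrically $t \in T_{v_{i+1}}$. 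Consequently $v_{i-1}, v_i, v_{i+1} \in X_t$, and since $X_t$ is a clique we get $v_{i-1} v_{i+1} \in E(G)$, contradicting the fact that $v_{i-1}$ and $v_{i+1}$ are non-adjacent in an induced cycle of length at least four.

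The only non-routine step is this last one, and it is where I expect the main difficulty to lie: one has to locate a single bag containing three consecutive cycle vertices, and the argument genuinely uses both the acyclicity of $T$ (via the rooting and the ancestor comparison) and the connectivity of each $T_v$. Conceptually this is the statement that intersection graphs of subtrees of a tree contain no holes, a classical fact; but I would prefer to include the short self-contained ``deepest-top'' argument above rather than cite it, so that the equivalence is established from first principles together with the clique tree characterization.
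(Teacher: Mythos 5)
Your proof is correct, but it is worth pointing out that the paper itself does not prove this statement at all: it is imported as known (Theorem 3.3 of the second paper in the series), with the justification that it follows from ``known properties of chordal graphs.'' Your argument supplies those properties from first principles, so it is genuinely more self-contained than what the paper offers. The initial reduction is right: $\alpha(G[X_t])\le 1$ holds exactly when $X_t$ contains no two non-adjacent vertices, i.e.\ when $X_t$ is a clique, so $\tin(G)\le 1$ is equivalent to the existence of a tree decomposition all of whose bags are cliques. The forward direction via clique trees is exactly what the paper would invoke. Your converse is the classical argument (Gavril/Buneman/Walter) that intersection graphs of subtrees of a tree are chordal, and the ``deepest-top'' step is sound: the one implicit point is that $\mathrm{top}(v_{i-1})$ and $t=\mathrm{top}(v_i)$ are comparable in the ancestor order because both are ancestors of the common witness node $a\in T_{v_{i-1}}\cap T_{v_i}$ (so both lie on the root-to-$a$ path); combined with the depth-maximality of $t$ this makes $\mathrm{top}(v_{i-1})$ an ancestor of $t$, and connectivity of $T_{v_{i-1}}$ then puts $t\in T_{v_{i-1}}$, and symmetrically $t\in T_{v_{i+1}}$. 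Since $X_t$ is a clique, this produces the chord $v_{i-1}v_{i+1}$, contradicting that the cycle is induced of length at least four. What your approach buys is a complete, citation-free proof of the equivalence; what the paper's approach buys is brevity, delegating the equivalence to the companion paper where the same characterization of chordal graphs (clique trees, equivalently subtree intersection representations) is the underlying tool.
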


While the tree-independence number can increase upon edge deletions, it is monotone under vertex deletions and edge contractions.

\begin{proposition}[Proposition 3.9 in~\cite{dallard2022firstpaper}]\label{lem:tree-indepencence number induced-minor}
Let $G$ be a graph and $G'$ an induced minor of $G$.
Then $\tin(G')\le \tin(G)$.
\end{proposition}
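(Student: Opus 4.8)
The plan is to reduce the statement to the two elementary operations whose composition defines the induced minor relation, namely vertex deletion and edge contraction, and to show that neither operation can increase the tree-independence number. Since $G'$ is obtained from $G$ by a finite sequence of such operations, a trivial induction then yields $\tin(G') \le \tin(G)$. Throughout, I fix a tree decomposition $\mathcal{T} = (T, \{X_t\}_{t \in V(T)})$ of $G$ with $\alpha(\mathcal{T}) = \tin(G)$ and show how to transform it into a tree decomposition of the resulting graph with no larger independence number.

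For vertex deletion, say $G'' = G - v$, I would simply delete $v$ from every bag: the pair $(T, \{X_t \setminus \{v\}\}_{t \in V(T)})$ is readily checked to be a tree decomposition of $G''$, and each new bag induces in $G''$ an induced subgraph of $G[X_t]$, so its independence number is at most $\alpha(G[X_t]) \le \tin(G)$. Hence $\tin(G - v) \le \tin(G)$.

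The interesting case is edge contraction. Let $G'' = G/uv$ be obtained by contracting an edge $uv$ into a new vertex $w$, whose neighborhood is $(N_G(u) \cup N_G(v)) \setminus \{u,v\}$. I would define new bags by $X'_t = (X_t \setminus \{u,v\}) \cup \{w\}$ whenever $X_t \cap \{u,v\} \ne \emptyset$, and $X'_t = X_t$ otherwise. Verifying that $(T, \{X'_t\})$ is a tree decomposition of $G''$ is routine: every vertex and every edge of $G''$ lies in some bag, and for the subtree condition the only nontrivial point is the vertex $w$, whose bags form the union $T_u \cup T_v$ of the (connected) subtrees of $\mathcal{T}$ associated with $u$ and $v$; since $uv \in E(G)$ these subtrees share a node, so their union is connected.

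The main obstacle, and the heart of the proof, is to bound the independence number of each new bag, that is, to show $\alpha(G''[X'_t]) \le \alpha(G[X_t])$. I would argue by lifting an arbitrary independent set $I'$ of $G''[X'_t]$ to an independent set of $G[X_t]$ of the same size. If $w \notin I'$, then $I' \subseteq X_t$ and the adjacencies among its vertices coincide in $G''$ and $G$, so $I'$ is already independent in $G[X_t]$. If $w \in I'$, then no vertex of $I' \setminus \{w\}$ is adjacent in $G''$ to $w$, hence none is adjacent in $G$ to $u$ or to $v$; choosing $z \in \{u,v\} \cap X_t$ (nonempty since $w \in X'_t$), the set $(I' \setminus \{w\}) \cup \{z\}$ is then an independent set of $G[X_t]$ of size $|I'|$. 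In either case $|I'| \le \alpha(G[X_t]) \le \tin(G)$, so $\tin(G/uv) \le \tin(G)$, completing the induction.
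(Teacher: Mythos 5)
Your proof is correct: reducing to the two elementary operations, the bag-update $X'_t = (X_t\setminus\{u,v\})\cup\{w\}$ for contraction, and the lifting argument (replacing $w$ by a vertex $z\in\{u,v\}\cap X_t$, which works because every vertex of $I'\setminus\{w\}$ is non-adjacent in $G$ to both $u$ and $v$) are all sound, including the connectivity of $T_u\cup T_v$ via a common bag for the edge $uv$. Note that the present paper does not prove this proposition but imports it from the second paper of the series~\cite{dallard2022firstpaper}; your argument is the standard one along the same lines, so there is nothing to flag.
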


A possible reason for large tree-independence number is the presence of a large induced balanced complete bipartite subgraph.

\begin{lemma}[Corollary 3.6 in~\cite{dallard2022firstpaper}]\label{tin-of-Knn}
For every positive integer $n$, we have $\tin(K_{n,n}) = n$.
\end{lemma}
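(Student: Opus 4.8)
The plan is to prove the two inequalities $\tin(K_{n,n}) \le n$ and $\tin(K_{n,n}) \ge n$ separately. Write $V(K_{n,n}) = A \cup B$ with parts $A = \{a_1,\dots,a_n\}$ and $B = \{b_1,\dots,b_n\}$, and record the basic observation that, since every vertex of $A$ is adjacent to every vertex of $B$, every independent set of $K_{n,n}$ is contained in $A$ or in $B$; in particular $\alpha(K_{n,n}) = n$, and an independent set has size exactly $n$ if and only if it equals $A$ or $B$. The upper bound is then immediate: the tree decomposition consisting of a single node whose bag is all of $V(K_{n,n})$ has independence number $\alpha(K_{n,n}) = n$, so $\tin(K_{n,n}) \le n$.

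For the lower bound I would show that every tree decomposition $\mathcal{T} = (T,\{X_t\})$ of $K_{n,n}$ has a bag containing all of $A$ or all of $B$; by the observation above, such a bag induces a subgraph with independence number at least $n$, giving $\alpha(\mathcal{T}) \ge n$. For each vertex $v$ let $T_v$ denote the subtree of $T$ induced by the nodes whose bag contains $v$, and consider the two families $\mathcal{A} = \{T_{a_i}\}_{i}$ and $\mathcal{B} = \{T_{b_j}\}_{j}$ of subtrees of $T$. Since $a_i b_j \in E(K_{n,n})$ for all $i,j$, some bag contains both $a_i$ and $b_j$, so $T_{a_i} \cap T_{b_j} \neq \emptyset$; that is, every member of $\mathcal{A}$ meets every member of $\mathcal{B}$.

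The key step is then a cross-intersecting argument on subtrees of a tree. If $\mathcal{A}$ has a common node $t$, then $A \subseteq X_t$ and we are done. Otherwise, by the Helly property for subtrees of a tree (pairwise intersecting subtrees of a tree share a common node), two of them, say $T_{a_1}$ and $T_{a_2}$, are vertex-disjoint. In a tree, two vertex-disjoint subtrees are joined by a unique bridging path, and any edge $e = \{p,p'\}$ of this bridge separates $T$ into two components, one containing $T_{a_1}$ and the other containing $T_{a_2}$. Since each $T_{b_j}$ meets both $T_{a_1}$ and $T_{a_2}$ and is connected, it must cross $e$ and hence contain the endpoint $p$ lying on the $T_{a_1}$-side; thus $b_j \in X_p$ for every $j$, i.e.\ $B \subseteq X_p$. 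In either case some bag contains a full part, which yields $\tin(K_{n,n}) \ge n$ and completes the proof.

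I expect the only nontrivial point to be this final separation argument: formally identifying the bridging edge of two disjoint subtrees and verifying that any connected subtree meeting both sides must contain one of its endpoints. This is where the tree structure (as opposed to a general host graph) is essential, and it is precisely what upgrades ``every member of $\mathcal{A}$ meets every member of $\mathcal{B}$'' into ``one of the two families has a common node.''
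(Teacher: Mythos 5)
Your proof is correct. The paper itself contains no proof of this lemma---it is imported as Corollary 3.6 of the second paper in the series---but your argument (the single-bag decomposition for $\tin(K_{n,n})\le n$, and for the lower bound the observation that the subtrees $T_{a_i}$ and $T_{b_j}$ pairwise intersect, so that either the Helly property gives a bag containing all of $A$, or a bridging edge between two disjoint subtrees $T_{a_1},T_{a_2}$ forces every $T_{b_j}$ to contain a common node, giving a bag containing all of $B$) is exactly the standard proof of that cited result, so there is nothing to correct or to contrast.
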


Given a graph $G$ and a family $\HH =\{H_j\}_{j\in J}$ of connected subgraphs of $G$, we denote by $G(\HH)$ the graph with vertex set $J$, in which two distinct elements $i,j\in J$ are adjacent if and only if $H_i$ and $H_j$ either have a vertex in common or there is an edge in $G$ connecting them.
A subfamily $\HH'$ of $\HH$ is said to be an \emph{independent $\HH$-packing} in $G$ if every two graphs in $\HH'$ are vertex-disjoint and there is no edge between them, that is, $\HH'$ is an independent set in the graph $G(\HH)$.
Assume now that the subgraphs in $\HH$ are equipped with a weight function $w:J\to \mathbb{Q}_+$ assigning weight $w_j$ to each subgraph $H_j$.
For any set $I\subseteq J$, we define the \emph{weight} of the family $\HH' = \{H_i\}_{i\in I}$ as the sum $\sum_{i\in I}w_i$.
Given a graph $G$, a finite family $\HH = \{H_j\}_{j\in J}$ of connected nonnull subgraphs of $G$, and a weight function $w:J\to \mathbb{Q}_+$ on the subgraphs in $\HH$, the \textsc{Max Weight Independent Packing} problem asks to find an independent $\HH$-packing in $G$ of maximum weight.
Note that the problem is equivalent to the \textsc{Max Weight Independent Set} problem on the derived graph $G(\HH)$.

The construction $G(\HH)$ was considered by Cameron and Hell in~\cite{MR2190818}, who focused on the following particular case.
Given a graph $G$ and a (finite or infinite) set $\FF$ of connected graphs, we denote by $\HH(G,\FF)$ the family of all subgraphs of $G$ isomorphic to a member of $\FF$.
Note that, for $\HH = \HH(G,\FF)$, the graph $G(\HH)$ is isomorphic to $G$ if
$\FF = \{K_1\}$ and to the square of the line graph of $G$ if $\FF = \{K_2\}$.
In the special case of the \textsc{Max Weight Independent Packing} when $\HH = \HH(G,\FF)$, we obtain the \textsc{Max Weight Independent $\mathcal{F}$-Packing} problem, a common generalization of several problems studied in the literature, including the \textsc{Independent $\mathcal{F}$-Packing} problem (see~\cite{MR2190818}), the \textsc{Max Weight Independent Set} problem, the \textsc{Max Weight Induced Matching} problem (see, e.g.,~\cite{MR3776983,MR4151749}), the \textsc{Dissociation Set} problem (see, e.g.,~\cite{MR3593941,MR615221,MR2812599}), and the \textsc{$k$-Separator} problem (see, e.g.,~\cite{MR3987192,MR3296270}).

In the previous work of the series~\cite{dallard2022firstpaper}, we developed a polynomial-time algorithm for the \textsc{Max Weight Independent Packing} problem under the assumption that the input graph is equipped with a tree decomposition of bounded independence number.

\begin{theorem}[Theorem 7.2 in~\cite{dallard2022firstpaper}]\label{thm:bounded-tree-independence-number-packings}
Let $k$ be a positive integer.
Then, given a graph $G$ and a finite family $\mathcal{H} = \{H_j\}_{j \in J}$ of connected nonnull subgraphs of $G$, the \textsc{Max Weight Independent Packing} problem can be solved in time \hbox{$\mathcal{O}(|J| \cdot ((|J| + |V(T)|) \cdot |V(G)| + |E(G)|+|J|^{k}\cdot|V(T)|)$} if $G$ is given together with a tree decomposition \hbox{$\mathcal{T} = (T, \{\Bag_t\}_{t\in V(T)})$} with independence number at most $k$.
\end{theorem}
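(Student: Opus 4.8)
The plan is to exploit the equivalence, noted just above the statement, between the \textsc{Max Weight Independent Packing} problem and the \textsc{Max Weight Independent Set} problem on the derived graph $G(\mathcal{H})$, and to transport the given tree decomposition $\mathcal{T} = (T,\{X_t\}_{t\in V(T)})$ of $G$ to one of $G(\mathcal{H})$ of no larger independence number. Concretely, I would keep the same tree $T$ and assign to each node $t$ the bag $Y_t = \{\, j \in J : V(H_j) \cap X_t \neq \emptyset \,\}$, i.e.\ the set of (indices of) subgraphs whose vertex set meets $X_t$. The first step is to check that $\mathcal{T}' = (T,\{Y_t\}_{t\in V(T)})$ is a valid tree decomposition of $G(\mathcal{H})$. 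Every $H_j$ is nonnull, so $j$ lies in some $Y_t$; if $ij \in E(G(\mathcal{H}))$, then $H_i$ and $H_j$ either share a vertex or are joined by an edge of $G$, and in either case one finds a bag $X_t$ meeting both $V(H_i)$ and $V(H_j)$, placing both $i$ and $j$ into $Y_t$. The connectivity axiom is the only place where I use that the $H_j$ are connected: the nodes $t$ with $X_t \cap V(H_j) \neq \emptyset$ form a subtree of $T$, since $H_j$ is a connected subgraph of $G$.

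Next I would bound $\alpha(\mathcal{T}')$. Fix a node $t$ and an independent set $S$ of $G(\mathcal{H})[Y_t]$; this means the subgraphs $H_j$ with $j\in S$ are pairwise vertex-disjoint and pairwise anticomplete, and each meets $X_t$. Choosing, for every $j \in S$, a representative vertex $v_j \in V(H_j) \cap X_t$ yields $|S|$ distinct, pairwise non-adjacent vertices of $X_t$, that is, an independent set of size $|S|$ in $G[X_t]$. Hence $\alpha(G(\mathcal{H})[Y_t]) \le \alpha(G[X_t]) \le k$, so $\mathcal{T}'$ has independence number at most $k$.

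With $\mathcal{T}'$ in hand, the problem becomes \textsc{Max Weight Independent Set} on the $|J|$-vertex graph $G(\mathcal{H})$ equipped with a tree decomposition of independence number at most $k$ on the tree $T$. Here I would run the standard bottom-up dynamic program: root $T$, and for each node $t$ and each \emph{independent} subset $S \subseteq Y_t$ of $G(\mathcal{H})$ (of which there are at most $\sum_{i=0}^{k}\binom{|Y_t|}{i} = \mathcal{O}(|J|^k)$, precisely because $\alpha(G(\mathcal{H})[Y_t]) \le k$) store the maximum weight of an independent set of $G(\mathcal{H})$ contained in the union of the bags of the subtree rooted at $t$ whose trace on $Y_t$ is exactly $S$. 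The introduce/forget/join updates are routine, the only subtlety being that a candidate $S$ must induce an independent set in $G(\mathcal{H})$, which one checks against the adjacency structure computed once at the outset.

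The remaining work, which I expect to be the main technical obstacle, is the running-time bookkeeping rather than any conceptual difficulty. Building the bags $Y_t$ by testing each of the $|J|$ subgraphs against each bag costs $\mathcal{O}(|J|\,|V(T)|\,|V(G)|)$, determining the adjacencies of $G(\mathcal{H})$ arising from shared vertices costs $\mathcal{O}(|J|^2\,|V(G)|)$, and those arising from edges of $G$ costs $\mathcal{O}(|J|\,|E(G)|)$, together giving the $\mathcal{O}(|J| \cdot ((|J| + |V(T)|) \cdot |V(G)| + |E(G)|))$ preprocessing contribution. The dynamic program then has $\mathcal{O}(|J|^k\,|V(T)|)$ states, each processed in $\mathcal{O}(|J|)$ time, contributing $\mathcal{O}(|J|^{k+1}\,|V(T)|)$. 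Summing these yields the claimed bound, and reading a maximizer off the table at the root produces an optimal independent $\mathcal{H}$-packing.
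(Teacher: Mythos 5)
Your proposal is correct and follows essentially the same route as the paper's proof of this result (Theorem 7.2 of the second paper in the series): reduce to \textsc{Max Weight Independent Set} on the derived graph $G(\mathcal{H})$, transport the given tree decomposition via the bags $Y_t=\{\,j\in J: V(H_j)\cap X_t\neq\emptyset\,\}$, use connectivity of the $H_j$ for the subtree condition and representative vertices to show the independence number stays at most $k$, and then run the dynamic program for tree decompositions of bounded independence number. Your running-time accounting (preprocessing for the bags and the adjacencies of $G(\mathcal{H})$, plus the $\mathcal{O}(|J|^{k+1}\cdot|V(T)|)$ dynamic program) matches the stated bound.
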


One of the key steps in the proof of \cref{thm:bounded-tree-independence-number-packings} was a dynamic programming algorithm for the \textsc{Max Weight Independent Set} problem obtained by adapting the standard dynamic programming approach over tree decompositions of bounded width to the more general setting of tree decompositions with bounded independence number.
In fact, an improved version of this algorithmic result was given using the notion of an \emph{$\ell$-refined tree decomposition}, that is, a tree decomposition in which each bag comes equipped with a subset of at most $\ell$ vertices.
Ignoring such vertices when computing the independence number of the subgraph induced by the bag may lead to improved running times (\cref{thm:bounded-ell-refined-tree-independence-number}).
Two particular examples of such improvements are explained in \cref{refined-improvement-1,refined-improvement-2}.

\begin{definition}%
\label{definition-ell-refined}
Given a nonnegative integer $\ell$, an \emph{$\ell$-refined tree decomposition} of a graph $G$ is a pair $\widehat{\mathcal T} = (T, \{(\Bag_t,U_t)_{t \in V(T)}\})$ such that $\mathcal T = (T,  \{\Bag_t\}_{t\in V(T)})$ is a tree decomposition of $G$, and for every $t \in V(T)$ we have $U_t \subseteq X_t$ and $|U_t| \leq \ell$.
We will refer to $\mathcal{T}$ as the \emph{underlying tree decomposition} of~$\widehat{\mathcal T}$.
\end{definition}

Any concept defined for tree decompositions can be naturally extended to $\ell$-refined tree decompositions, simply by considering it on the underlying tree decomposition.

The definition of $\ell$-refined tree decomposition leads to the following refinement of the tree-independence number.

\begin{definition}%
\label{definition-ell-refined-tree-alpha}
Given a nonnegative integer $\ell$, the \emph{residual independence number} of an $\ell$-refined tree decomposition $\widehat{\mathcal T}$ of a graph $G$ is defined as $\max_{t \in V(T)} \alpha(G[X_t \setminus U_t])$ and denoted by $\widehat{\alpha}(\widehat{\mathcal T})$.
The \emph{$\ell$-refined tree-independence number} of a graph $G$ is defined as the minimum residual independence number of an $\ell$-refined tree decomposition of $G$, and denoted by $\ell$\textnormal{-}$\tin(G)$.
\end{definition}

The notion of $\ell$-refined tree decompositions with bounded residual independence number generalizes the notion of $(k,\ell)$-semi clique tree decompositions studied by Jacob, Panolan, Raman, and Sahlot in~\cite{MR4189425}, as well as tree decompositions where each bag is obtained from a clique by deleting a bounded number of edges used by Fomin and Golovach~\cite{MR4276552}.

\begin{observation}[Observation 4.3 in~\cite{dallard2022firstpaper}]\label{observation-ell-tree-alpha}
For every graph $G$ and every integer $\ell\ge 0$, we have
\[\ell\textnormal{-}\tin(G)\le \tin(G)\le \ell\textnormal{-}\tin(G) + \ell\,.\]
In particular, equalities hold when $\ell = 0$, that is, $0\textnormal{-}\tin(G) = \tin(G)$.
\end{observation}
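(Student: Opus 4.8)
The plan is to prove the two inequalities separately, in each case converting a witnessing decomposition for one invariant into a decomposition witnessing the desired bound for the other; the ``in particular'' clause will then follow by specializing to $\ell=0$. For the lower bound $\ell\textnormal{-}\tin(G)\le \tin(G)$, I would start with a tree decomposition $\mathcal{T}=(T,\{X_t\}_{t\in V(T)})$ attaining $\alpha(\mathcal{T})=\tin(G)$, and turn it into an $\ell$-refined tree decomposition $\widehat{\mathcal{T}}$ by equipping every bag with the empty refinement set $U_t=\emptyset$. This is legitimate since $|U_t|=0\le \ell$ (using $\ell\ge 0$). For this refined decomposition the residual independence number is $\widehat{\alpha}(\widehat{\mathcal{T}})=\max_{t}\alpha(G[X_t\setminus\emptyset])=\max_{t}\alpha(G[X_t])=\tin(G)$, and since $\ell\textnormal{-}\tin(G)$ is defined as a minimum of the residual independence number over all $\ell$-refined tree decompositions, the bound $\ell\textnormal{-}\tin(G)\le\tin(G)$ follows immediately.

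For the upper bound $\tin(G)\le \ell\textnormal{-}\tin(G)+\ell$, I would instead take an $\ell$-refined tree decomposition $\widehat{\mathcal{T}}=(T,\{(X_t,U_t)_{t\in V(T)}\})$ attaining $\widehat{\alpha}(\widehat{\mathcal{T}})=\ell\textnormal{-}\tin(G)$ and analyze its underlying tree decomposition $\mathcal{T}=(T,\{X_t\}_{t\in V(T)})$. The key (and essentially only) nontrivial point is the subadditivity of the independence number with respect to the partition $X_t=(X_t\setminus U_t)\cup U_t$: any independent set $I$ of $G[X_t]$ decomposes as $I=(I\cap(X_t\setminus U_t))\cup(I\cap U_t)$, where $I\cap(X_t\setminus U_t)$ is an independent set of $G[X_t\setminus U_t]$ and hence has size at most $\alpha(G[X_t\setminus U_t])$, while $I\cap U_t$ has size at most $|U_t|\le \ell$. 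This yields $\alpha(G[X_t])\le\alpha(G[X_t\setminus U_t])+\ell$ for every node $t$. Taking the maximum over $t\in V(T)$ gives $\alpha(\mathcal{T})\le\widehat{\alpha}(\widehat{\mathcal{T}})+\ell=\ell\textnormal{-}\tin(G)+\ell$, and since $\tin(G)$ is the minimum of $\alpha(\mathcal{T}')$ over all tree decompositions $\mathcal{T}'$ of $G$, we conclude $\tin(G)\le\alpha(\mathcal{T})\le\ell\textnormal{-}\tin(G)+\ell$.

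Finally, the special case $\ell=0$ is immediate from the two chained inequalities, which collapse to $0\textnormal{-}\tin(G)\le\tin(G)\le 0\textnormal{-}\tin(G)+0$, forcing $0\textnormal{-}\tin(G)=\tin(G)$. I do not anticipate any genuine obstacle in this argument; the only step requiring (minor) care is the subadditivity bound $\alpha(G[X_t])\le\alpha(G[X_t\setminus U_t])+|U_t|$, which reflects nothing more than the elementary fact that deleting at most $\ell$ vertices from an induced subgraph can decrease its independence number by at most $\ell$.
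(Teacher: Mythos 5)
Your proof is correct, and it is the natural (essentially unique) argument for this statement: the paper itself does not reprove the observation but cites it from the second paper of the series, where the proof is exactly this witness-conversion argument---empty refinement sets for the first inequality, and the subadditivity bound $\alpha(G[X_t])\le\alpha(G[X_t\setminus U_t])+|U_t|$ for the second.
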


We now state precisely the aforementioned algorithmic results for the \textsc{Max Weight Independent Set} problem.

\begin{theorem}[Theorem 5.2 in~\cite{dallard2022firstpaper}]\label{thm:bounded-ell-refined-tree-independence-number}
For every integer $k\ge 1$, \textsc{Max Weight Independent Set} is solvable in time $\mathcal{O}(2^\ell\cdot |V(G)|^{k+1}\cdot|V(T)|)$ if the input vertex-weighted graph $G$ is given with an $\ell$-refined tree decomposition \hbox{$\widehat{\mathcal{T}} = (T, \{(\Bag_t,U_t)\}_{t\in V(T)})$} with residual independence number at most $k$.
\end{theorem}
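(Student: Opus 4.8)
The plan is to solve the problem by dynamic programming over the given tree decomposition, following the standard paradigm used for tree decompositions of bounded width, but with the step ``enumerate all subsets of a bag'' — which is hopeless here since bags may be large — replaced by an enumeration of only those bag-intersections that an independent set can realize. The entire gain comes from the residual independence number bound. First I would preprocess $\widehat{\mathcal T}$ into a \emph{nice} $\ell$-refined tree decomposition: a rooted decomposition in which the root and all leaves have empty bags, and every other node is an \emph{introduce}, \emph{forget}, or \emph{join} node in the usual sense. This transformation can be carried out in polynomial time, and a decomposition with $|V(T)|$ nodes and width at most $|V(G)|-1$ yields a nice one with $\mathcal{O}(|V(G)|\cdot |V(T)|)$ nodes. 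To preserve the refinement, I would set $U_{t'} = U_t \cap X_{t'}$ for each new node $t'$ derived from an original node $t$; since then $X_{t'}\setminus U_{t'} \subseteq X_t\setminus U_t$, the graph $G[X_{t'}\setminus U_{t'}]$ is an induced subgraph of $G[X_t\setminus U_t]$, so the residual independence number stays at most $k$ while $|U_{t'}|\le \ell$ is maintained.

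The key counting step is next. For a node $t$, call a set $I_t\subseteq X_t$ \emph{feasible} if it is independent in $G[X_t]$. Writing $I_t = (I_t\cap U_t)\cup(I_t\cap(X_t\setminus U_t))$, the first part is one of at most $2^\ell$ subsets of $U_t$, and the second part is an independent set of $G[X_t\setminus U_t]$, hence of size at most $k$ by the residual independence number bound. There are $\sum_{i=0}^{k}\binom{|X_t\setminus U_t|}{i} = \mathcal{O}(|V(G)|^k)$ such sets, and they can all be listed by brute force over subsets of size at most $k$. Thus each node has only $\mathcal{O}(2^\ell\cdot |V(G)|^k)$ feasible states, enumerable within this time bound.

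I would then fill in the table $c[t,I_t]$, defined as the maximum weight of an independent set $I$ in $G[V_t]$ — where $V_t$ is the union of the bags in the subtree rooted at $t$ — satisfying $I\cap X_t = I_t$, ranging over feasible $I_t$. The transitions are the familiar ones: at a leaf, $c[t,\emptyset]=0$; at an introduce node for $v$, a state with $v\notin I_t$ inherits the child value, while a state with $v\in I_t$ adds $w(v)$ to the child value for $I_t\setminus\{v\}$ (feasibility guaranteeing independence); at a forget node for $v$, we set $c[t,I_t]=\max\{c[t',I_t],\,c[t',I_t\cup\{v\}]\}$; and at a join node, feasible states of the two children are matched on equal bag-intersections $I_{t_1}=I_{t_2}=I_t$ and combined as $c[t_1,I_t]+c[t_2,I_t]-w(I_t)$ to avoid double counting the bag weight. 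Correctness follows from the connectivity property of tree decompositions exactly as in the bounded-width case, and the answer is read off as $c[r,\emptyset]$ at the root $r$.

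For the running time, each of the $\mathcal{O}(|V(G)|\cdot|V(T)|)$ nodes is processed in time $\mathcal{O}(2^\ell\cdot |V(G)|^k)$ amortized over its feasible states, where independence tests and lookups of the child states required by each transition are handled with a suitable indexing of states (for instance a dictionary keyed by the sorted state); multiplying yields the claimed bound $\mathcal{O}(2^\ell\cdot|V(G)|^{k+1}\cdot|V(T)|)$. The only genuinely delicate points are the bookkeeping at the join node — matching the children's states and subtracting the shared weight — and the verification that the niceness transformation respects the $\ell$-refinement; both are routine once the state space is pinned down. The real substance of the theorem is the observation that a residual independence number of at most $k$ collapses the number of relevant states per bag to $\mathcal{O}(2^\ell\cdot|V(G)|^k)$, which is what makes the dynamic program polynomial despite arbitrarily large bags.
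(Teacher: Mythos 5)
Your proposal is correct and follows essentially the same route as the proof of this result in the previous paper of the series~\cite{dallard2022firstpaper} (the statement is imported here as Theorem 5.2 of that work): a dynamic program over a nice version of the given $\ell$-refined tree decomposition whose states at a node $t$ are the independent sets of $G[X_t]$, decomposed as a subset of $U_t$ (at most $2^\ell$ choices) together with an independent set of size at most $k$ in $G[X_t\setminus U_t]$ (at most $\mathcal{O}(|V(G)|^k)$ choices by the residual independence number bound). Your handling of the two delicate points---preserving the refinement under the niceness transformation via $U_{t'} = U_t\cap X_{t'}$, and the join-node combination with the shared bag weight subtracted---is sound, and your accounting of $\mathcal{O}(|V(G)|\cdot|V(T)|)$ nodes times $\mathcal{O}(2^\ell\cdot|V(G)|^{k})$ states per node reproduces the claimed running time.
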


The case $\ell = 0$ yields the following.

\begin{corollary}[Corollary 5.3 in~\cite{dallard2022firstpaper}]\label{thm:bounded-tree-independence-number}
For every $k\ge 1$, \textsc{Max Weight Independent Set} is solvable in time $\mathcal{O}(|V(G)|^{k+1}\cdot|V(T)|)$ if the input vertex-weighted graph $G$ is given with a tree decomposition \hbox{$\mathcal{T} = (T, \{\Bag_t\}_{t\in V(T)})$} with independence number at most $k$.
\end{corollary}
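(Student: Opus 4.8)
The plan is to obtain this statement as the special case $\ell = 0$ of \cref{thm:bounded-ell-refined-tree-independence-number}. Given an ordinary tree decomposition $\mathcal{T} = (T, \{X_t\}_{t\in V(T)})$ of $G$ with $\alpha(\mathcal{T})\le k$, I would promote it to a $0$-refined tree decomposition $\widehat{\mathcal{T}} = (T, \{(X_t, U_t)\}_{t\in V(T)})$ by setting $U_t = \emptyset$ for every $t\in V(T)$; this choice is in fact forced, since \cref{definition-ell-refined} requires $|U_t|\le \ell = 0$. For this $\widehat{\mathcal{T}}$ the residual independence number satisfies $\widehat{\alpha}(\widehat{\mathcal{T}}) = \max_{t} \alpha(G[X_t\setminus U_t]) = \max_{t} \alpha(G[X_t]) = \alpha(\mathcal{T}) \le k$, and the factor $2^\ell = 1$ disappears from the running time in \cref{thm:bounded-ell-refined-tree-independence-number}, leaving exactly $\mathcal{O}(|V(G)|^{k+1}\cdot |V(T)|)$. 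Thus the corollary follows immediately.

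Should a self-contained argument be preferred, I would instead prove the statement directly by dynamic programming over $\mathcal{T}$, adapting the textbook bounded-treewidth algorithm for \textsc{Max Weight Independent Set}. The key observation is that, because $\alpha(G[X_t])\le k$ for every bag, any independent set of $G$ meets $X_t$ in an independent subset of $G[X_t]$ of size at most $k$; hence the number of relevant partial solutions at a node $t$ is at most $\sum_{i=0}^{k}\binom{|V(G)|}{i} = \mathcal{O}(|V(G)|^{k})$, rather than the $2^{|X_t|}$ subsets one tracks in the bounded-width setting (and which would be useless here, as the bags may be large). For each node $t$ and each independent set $S\subseteq X_t$, one would record the maximum weight of an independent set $I$ in the subgraph induced by the bags of the subtree at $t$ with $I\cap X_t = S$, and then the standard introduce/forget/join transitions—after converting $\mathcal{T}$ to a nice tree decomposition in linear time—propagate these values up the tree.

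The only genuinely non-routine point is the state count: everything hinges on replacing the $2^{|X_t|}$ bound by the polynomial bound $\mathcal{O}(|V(G)|^{k})$ coming from $\alpha(G[X_t])\le k$. Once this is in place, verifying that each transition can be carried out within the allotted budget, so that the total running time is $\mathcal{O}(|V(G)|^{k+1}\cdot |V(T)|)$, is routine bookkeeping entirely analogous to the bounded-treewidth case. Since \cref{thm:bounded-ell-refined-tree-independence-number} already packages this dynamic program in its full generality, I expect the one-line specialization to $\ell = 0$ to be the proof actually used.
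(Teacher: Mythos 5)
Your proposal is correct and matches the paper's own derivation exactly: the paper states ``The case $\ell = 0$ yields the following'' immediately after \cref{thm:bounded-ell-refined-tree-independence-number}, i.e., it obtains the corollary precisely by taking $U_t = \emptyset$ for all $t$, so that the residual independence number coincides with $\alpha(\mathcal{T})$ and the factor $2^\ell$ becomes $1$. Your additional dynamic-programming sketch is a reasonable description of what underlies the cited theorem, but it is not needed, as you yourself conclude.
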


Computing $\ell$-refined tree decompositions with bounded residual independence number can be reduced to graphs without clique cutsets; in particular, to $2$-connected graphs.
We will use this result in the proof of~\cref{cor:reduction-to-blocks}.

\begin{proposition}[Proposition 4.6 in~\cite{dallard2022firstpaper}]\label{reduction-to-atoms}
Let $C$ be a clique cutset in a graph $G$ and let $(A,B,C)$ be a cut-partition of $G$.
Let $G_A = G[A\cup C]$ and $G_B = G[B\cup C]$, and let $\widehat{\mathcal{T}}_A$ and $\widehat{\mathcal{T}}_B$ be $\ell$-refined tree decompositions of $G_A$ and $G_B$, respectively.
Then we can compute in time $\mathcal{O}(|\widehat{\mathcal{T}}_A|+|\widehat{\mathcal{T}}_B|)$ an  $\ell$-refined tree decomposition $\widehat{\mathcal{T}}$ of $G$ such that $\widehat{\alpha}(\widehat{\mathcal{T}}) = \max\{\widehat{\alpha}(\widehat{\mathcal{T}}_A),\widehat{\alpha}(\widehat{\mathcal{T}}_B)\}$.
\end{proposition}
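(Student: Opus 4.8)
The plan is to build $\widehat{\mathcal T}$ by gluing the two given refined tree decompositions at a single pair of bags, one in each, both of which contain the whole clique $C$. Write $\widehat{\mathcal T}_A = (T_A, \{(X_s,U_s)\}_{s\in V(T_A)})$ and $\widehat{\mathcal T}_B = (T_B, \{(X_t,U_t)\}_{t\in V(T_B)})$ with $V(T_A)$ and $V(T_B)$ disjoint. Since $C$ is a clique of $G_A$, the standard Helly property of subtrees of a tree guarantees a node $a\in V(T_A)$ with $C\subseteq X_a$; symmetrically there is a node $b\in V(T_B)$ with $C\subseteq X_b$. I would then let $T$ be the disjoint union of $T_A$ and $T_B$ together with one new edge $ab$, keep every bag and every refinement set $U_t$ exactly as they were, and output $\widehat{\mathcal T} = (T, \{(X_t,U_t)\}_{t\in V(T)})$.

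Verifying that $\widehat{\mathcal T}$ is an $\ell$-refined tree decomposition of $G$ is the first block of routine checks. Vertex coverage and the bound $|U_t|\le\ell$ are inherited from the two pieces. For edge coverage I would split the edges of $G$ along the cut-partition: edges inside $A\cup C$ lie in $G_A$ and edges inside $B\cup C$ lie in $G_B$, each already covered by its piece, while there is no edge with one endpoint in $A$ and one in $B$ by definition of a cut-partition, so every edge of $G$ appears in some bag. The only non-immediate axiom is the connectedness of $T_u = \{t : u\in X_t\}$. For $u\in A$ (respectively $u\in B$) this set lives entirely in $T_A$ (respectively $T_B$) and is unchanged, hence connected; for $u\in C$ the set $T_u$ is the disjoint union of a subtree of $T_A$ containing $a$ and a subtree of $T_B$ containing $b$ (because $C\subseteq X_a\cap X_b$), and adding the edge $ab$ joins them into a single subtree.

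The residual independence number is then immediate: every bag of $\widehat{\mathcal T}$ is a bag of $\widehat{\mathcal T}_A$ or of $\widehat{\mathcal T}_B$ with the same associated set $U_t$, and since $G_A$ and $G_B$ are induced subgraphs of $G$ we have $G[X_t\setminus U_t] = G_A[X_t\setminus U_t]$ or $G_B[X_t\setminus U_t]$ accordingly. Taking the maximum of $\alpha(G[X_t\setminus U_t])$ over all bags therefore yields exactly $\max\{\widehat\alpha(\widehat{\mathcal T}_A),\widehat\alpha(\widehat{\mathcal T}_B)\}$.

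The part that needs the most care, and the main obstacle to the claimed running time, is locating $a$ and $b$ in linear time, since merely invoking Helly is not algorithmic. My plan here is to root $T_A$ arbitrarily, precompute depths in one traversal, and then in a single pass over all bags record, for each $c\in C$, the minimum-depth node $\mathrm{top}(c)$ whose bag contains $c$ (using a boolean membership table for $C$ so that each vertex occurrence costs $\mathcal O(1)$). I would take $a$ to be a node $\mathrm{top}(c^\star)$ of maximum depth among $\{\mathrm{top}(c):c\in C\}$. A short argument then shows $C\subseteq X_a$: for any $c\in C$, adjacency of $c$ and $c^\star$ places them in a common bag $w$, so $\mathrm{top}(c)$ and $a=\mathrm{top}(c^\star)$ are both ancestors of $w$ and hence comparable, and maximality of the depth of $a$ forces $a$ to lie on the path inside $T_c$ from $\mathrm{top}(c)$ to $w$, whence $c\in X_a$. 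All steps run in $\mathcal O(|V(T_A)|+\sum_s|X_s|) = \mathcal O(|\widehat{\mathcal T}_A|)$, and symmetrically for $b$; the final gluing is $\mathcal O(1)$, giving the stated bound. The degenerate case $C=\emptyset$, where $G$ is disconnected, is handled by joining $T_A$ and $T_B$ at arbitrary nodes.
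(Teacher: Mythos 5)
Your proposal is correct and is essentially the canonical argument for this statement: since the present paper only imports \cref{reduction-to-atoms} from Proposition~4.6 of the second paper in the series rather than reproving it, the proof there is precisely this gluing construction---use the clique--Helly property to find a bag containing $C$ in each of $\widehat{\mathcal{T}}_A$ and $\widehat{\mathcal{T}}_B$, join the two trees by a single edge between those nodes, and note that every bag and every set $U_t$ is left unchanged, so the residual independence number is exactly the maximum of the two. Your rooted-tree computation of the deepest node among the $\mathrm{top}(c)$'s is a sound linear-time implementation of the Helly step (including the degenerate case $C=\emptyset$), which is exactly what the claimed $\mathcal{O}(|\widehat{\mathcal{T}}_A|+|\widehat{\mathcal{T}}_B|)$ bound requires.
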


The next lemma, showing $(\tw,\omega)$-boundedness, combines Lemmas 3.2 and 4.4 in~\cite{dallard2022firstpaper}.

\begin{lemma}\label{bounded tin implies bounded tw-omega-refined}
For every two nonnegative integers $k$ and $\ell$, the class of graphs with $\ell$-refined tree-independence number at most $k$ is $(\tw,\omega)$-bounded, with a binding function $f(p) = R(p+1,k+1)+\ell-2$, which is upper-bounded by a polynomial in $p$ of degree $k$.\footnote{The fact that for fixed $k$, the Ramsey number $R(p+1,k+1)$ is upper-bounded by a polynomial in $p$ of degree $k$ follows from a well-known upper bound on the Ramsey numbers stating that $R(a,b)\le {a+b-2\choose a-1}$ for all positive integers $a$ and $b$.}
In particular, for every positive integer $k$, the class of graphs with tree-independence number at most $k$ is $(\tw,\omega)$-bounded, with a binding function $f(p) = R(p+1,k+1)-2$.
\end{lemma}

\section{$K_{2,q}$-induced-minor-free graphs}\label{sec:K2q-induced-minor-free}

In this section, we prove boundedness of the tree-independence number in the classes of $K_{2,q}$-induced-minor-free graphs.
We build on the work of Bouchitt\'{e} and Todinca~\cite{MR1896345} and make use of a number of concepts that are often used in the literature in relation with tree decompositions: minimal separators, minimal triangulations, and potential maximal cliques.

\begin{definition}
A \emph{potential maximal clique} in a graph $G$ is a set $X\subseteq V(G)$ such that $X$ is a maximal clique in some minimal triangulation of $G$.
\end{definition}

Our approach can be summarized as follows.
First, we characterize the $K_{2,q}$-induced-minor-free graphs in terms of minimal separators.
Second, using the fact that every graph has an efficiently computable tree decomposition in which every bag is a potential maximal clique, we show that the tree-independence number is bounded and efficiently witnessed in any class of graphs in which there is a bound on the maximum size of an independent set contained in a potential maximal clique.
Finally, we connect this observation with minimal separators by proving that in every graph, every potential maximal clique is either a clique or is contained in the union of two minimal separators (\cref{thm:pmcs-covering}).

\begin{lemma}\label{lem:K2q}
Given a positive integer $q$, a graph $G$ is $K_{2,q}$-induced-minor-free if and only if every minimal separator in $G$ induces a subgraph with independence number less than $q$.
\end{lemma}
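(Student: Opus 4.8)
The plan is to prove both implications via their contrapositives, using throughout the description of an induced minor of $K_{2,q}$ in terms of an induced minor model. Writing the vertices of $K_{2,q}$ as two \emph{hubs} $a,b$ (the part of size $2$) and $q$ pairwise non-adjacent \emph{spokes} $c_1,\dots,c_q$ (the part of size $q$), an induced minor model of $K_{2,q}$ in $G$ consists of pairwise disjoint connected sets $A,B,C_1,\dots,C_q$ such that $A$ is anticomplete to $B$, the sets $C_1,\dots,C_q$ are pairwise anticomplete, and each $C_i$ has an edge to both $A$ and $B$ (and no other adjacencies are required or forbidden beyond these). The two directions then amount to converting between such a model and a minimal separator whose induced subgraph has independence number at least~$q$.

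For the first direction, I would start from a minimal separator $S$ with $\alpha(G[S])\ge q$ and build a model directly. By the characterization of minimal separators recalled in \cref{sec:preliminaries}, $G-S$ has two $S$-full components $D$ and $D'$. Fixing an independent set $\{s_1,\dots,s_q\}\subseteq S$, I would set $A=D$, $B=D'$, and $C_i=\{s_i\}$. The sets $D$ and $D'$ and the singletons are connected; $A$ is anticomplete to $B$ because they are distinct components of $G-S$; the $C_i$ are pairwise anticomplete because $\{s_1,\dots,s_q\}$ is independent; and each $s_i$ has a neighbor in both $D$ and $D'$ precisely because these components are $S$-full. This exhibits $K_{2,q}$ as an induced minor of $G$, proving the contrapositive of the ``only if'' direction.

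For the converse, I would assume an induced minor model $A,B,C_1,\dots,C_q$ is given and produce a minimal separator of $G$ containing an independent set of size $q$. Pick $a\in A$ and $b\in B$; they are non-adjacent since $A$ is anticomplete to $B$. The crucial point is to find a \emph{minimal} $a,b$-separator that meets every $C_i$. I would first observe that $V(G)\setminus(A\cup B)$ is itself an $a,b$-separator: no $a$--$b$ path can be confined to $A\cup B$, since $G[A\cup B]$ has $A$ and $B$ as separate connected components. Hence there is a minimal $a,b$-separator $S\subseteq V(G)\setminus(A\cup B)$. Because $S$ avoids $A\cup B$ and both $A$ and $B$ are connected, $A$ lies entirely in the component $C_a$ of $a$ in $G-S$ and $B$ lies entirely in the component $C_b$ of $b$, with $C_a\ne C_b$.

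The heart of the argument, and the step I expect to be the main obstacle, is showing that this minimal separator $S$ meets every branch set $C_i$; I would handle it by a connectivity/crossing argument. Each $C_i$ is connected and has a neighbor in $A\subseteq C_a$ and a neighbor in $B\subseteq C_b$. If $C_i$ were disjoint from $S$, then, being connected, it would lie within a single component of $G-S$ and could therefore have a neighbor in at most one of the two distinct components $C_a,C_b$, contradicting that it is adjacent to both. Thus $C_i\cap S\ne\emptyset$ for every $i$, and choosing $x_i\in C_i\cap S$ yields, by pairwise anticompleteness of the $C_i$, an independent set $\{x_1,\dots,x_q\}\subseteq S$, so $\alpha(G[S])\ge q$. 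The subtlety to keep in mind throughout is that minimal separators are not inherited by induced subgraphs, so the separator must be produced inside $G$ itself; isolating it as a minimal $a,b$-separator disjoint from $A\cup B$ is exactly what guarantees minimality while letting the crossing argument go through.
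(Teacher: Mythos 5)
Your proof is correct, and while your first direction is essentially identical to the paper's (the same construction: two $S$-full components as hubs, singletons from the independent set as spokes), your converse direction takes genuinely different mechanics. The paper first normalizes the induced minor model by choosing one minimizing $\sum_{b\in B}|X_b|$, which forces each spoke branch set to be a singleton; it then argues that the resulting independent set $I$ can be \emph{extended} to a minimal separator of $G$, because $I\cup(V(G)\setminus W)$ separates the two hub branch sets and any minimal separator inside that set must contain all of $I$ (each vertex of $I$ having neighbors in both connected hub sets). You instead work with an arbitrary model, fix a minimal $a{,}b$-separator $S\subseteq V(G)\setminus(A\cup B)$ up front, and run the argument in the opposite direction: a crossing argument showing $S$ must intersect every spoke branch set $C_i$, from which an independent transversal $\{x_1,\dots,x_q\}\subseteq S$ is extracted. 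Both hinge on the same underlying fact---a separator between the hubs must meet every spoke---but your version avoids the model-minimization step entirely and is more self-contained for this lemma, whereas the paper's ``extend a given vertex set to a minimal separator of $G$'' pattern is the same device it isolates as \cref{lem:separators} and reuses in the proof of \cref{thm:pmcs-covering}, so stating it once pays off later. Your closing remark correctly identifies the one subtlety (minimal separators of induced subgraphs need not survive in $G$), and your construction of $S$ directly inside $G$ handles it properly.
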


\begin{proof}
Fix $q\ge 1$ and a graph $G$.
Let $S$ be a minimal separator in $G$, and let $C$ and $D$ be two $S$-full components of $G-S$.
Note that for any independent set $I$ of $G$ contained in $S$, deleting from $G$ all vertices in $V(G)\setminus (V(C)\cup V(D)\cup I)$ and then contracting all edges fully contained within $C$ or $D$ yields a graph isomorphic to $K_{2,|I|}$, showing that $G$ contains $K_{2,|I|}$ as an induced minor. Thus, if $G$ is $K_{2,q}$-induced-minor-free, then $\alpha(G[S])<q$.

Suppose now that $G$ contains $K_{2,q}$ as an induced minor. Fix a bipartition $\{A,B\}$ of $K_{2,q}$ such that
$A = \{a_1,a_2\}$ and $B = \{b_1,\ldots, b_q\}$, and let $M= (X_u : u\in V(K_{2,q}))$
be an induced minor model of $K_{2,q}$ in $G$ that minimizes the sum $\sum_{b\in B}|X_b|$.
Since each vertex $b\in B$ has degree two in $K_{2,q}$, the minimality of $M$ implies that $|X_b| = 1$ for all $b\in B$.
Let $I = \cup_{b\in B}X_b$ and $W = I\cup X_{a_1}\cup X_{a_2}$.
Since $M$ is an induced minor model of $K_{2,q}$ in $G$, the set $I$ is independent in $G$, while the sets $X_{a_1}$ and $X_{a_2}$ induce connected subgraphs of $G$ and are anticomplete to each other.
In particular, since every vertex in $I$ has a neighbor in $G$ in both $X_{a_1}$ and $X_{a_2}$, we infer that $I$ is a minimal separator in the subgraph of $G$ induced by $W$.
Furthermore, since $I\cup (V(G)\setminus W)$ separates $X_{a_1}$ from $X_{a_2}$ in $G$, there exists a minimal separator $S$ in $G$ such that $I\subseteq S$.
Consequently, $\alpha(G[S])\ge |I| = |B| = q$.
\end{proof}

The importance of potential maximal cliques for our purpose is due to the following upper bound on the tree-independence number.
Given a graph $G$, we denote by $\alpha_{pmc}(G)$ the \emph{pmc-independence number} of $G$, defined as the maximum independence number of a subgraph of $G$ induced by some potential maximal clique in $G$.
Note that the value of $\alpha_{pmc}(G)$ is well-defined, since every graph has at least one potential maximal clique.
In the next lemma, we denote by $\mu$ the \emph{matrix multiplication exponent}, that is, the smallest real number such that two $n\times n$ binary matrices can be multiplied in time $\mathcal{O}(n^{\mu+\epsilon})$ for all $\epsilon>0$. It is known that $\mu < 2.37286$~\cite{MR4262465}.

\begin{lemma}\label{tin-pmcin}
Let $G$ be an $n$-vertex graph. Then $\tin(G)\le \alpha_{pmc}(G)$.
Furthermore, a tree decomposition of $G$ with at most $n$ nodes and with independence number at most
$\alpha_{pmc}(G)$ can be computed in time $\mathcal{O}(n^{\mu}\log n)$.
\end{lemma}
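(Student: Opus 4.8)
The plan is to realize the desired tree decomposition as a clique tree of a minimal triangulation of $G$. First I would compute a minimal triangulation $H = (V(G), E(G)\cup F)$ of $G$. By definition $H$ is chordal, it has the same vertex set as $G$, and $E(G)\subseteq E(H)$. Invoking the characterization of chordal graphs recalled in \cref{sec:preliminaries}, $H$ admits a \emph{clique tree} $\mathcal{T}=(T,\{X_t\}_{t\in V(T)})$, that is, a tree decomposition of $H$ whose bags are exactly the maximal cliques of $H$.

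The key observation is that this single object $\mathcal{T}$ is simultaneously a valid tree decomposition of $G$ and has bags of the right type. Since $V(H)=V(G)$, the vertex-coverage and connectivity conditions of a tree decomposition read identically for $G$ and for $H$; and since $E(G)\subseteq E(H)$, the edge-coverage condition for $G$ is implied by the corresponding condition for $H$. Hence $\mathcal{T}$ is a tree decomposition of $G$. Moreover, each bag $X_t$ is by construction a maximal clique of the minimal triangulation $H$, so it is a potential maximal clique of $G$ by definition; consequently $\alpha(G[X_t])\le \alpha_{pmc}(G)$ for every node $t$. Taking the maximum over $t$ yields $\alpha(\mathcal{T})\le \alpha_{pmc}(G)$, and therefore $\tin(G)\le \alpha(\mathcal{T})\le \alpha_{pmc}(G)$, which is the first claim. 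The bound on the number of nodes follows because the bags are precisely the maximal cliques of the chordal graph $H$, and a chordal graph on $n$ vertices has at most $n$ maximal cliques.

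For the running time, there are only two computations to account for: the minimal triangulation and the clique tree. A minimal triangulation of an $n$-vertex graph can be computed in time $\mathcal{O}(n^{\mu}\log n)$ using the algorithm of Heggernes, Telle, and Villanger. Given the chordal graph $H$, a perfect elimination ordering (via, e.g., maximum cardinality search) and from it a clique tree can be extracted in time linear in the size of $H$, hence in $\mathcal{O}(n^2)$, which is dominated by $\mathcal{O}(n^{\mu}\log n)$ since $\mu>2$. Thus the overall running time is $\mathcal{O}(n^{\mu}\log n)$, and note that we never need to compute $\alpha_{pmc}(G)$ itself, only a decomposition whose independence number is bounded by it.

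I expect the only genuine obstacle to lie on the complexity side: one must invoke an off-the-shelf subcubic minimal-triangulation algorithm achieving exactly the $\mathcal{O}(n^{\mu}\log n)$ bound and check that the subsequent clique-tree extraction does not dominate it. The structural content---that a clique tree of a minimal triangulation of $G$ is a tree decomposition of $G$ all of whose bags are potential maximal cliques---is immediate from the definitions recalled in \cref{sec:preliminaries}, so no additional combinatorial work is required there.
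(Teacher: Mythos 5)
Your proposal is correct and follows essentially the same route as the paper: compute a minimal triangulation via the Heggernes--Telle--Villanger algorithm in time $\mathcal{O}(n^{\mu}\log n)$, take a clique tree of the resulting chordal graph (the paper uses the Berry--Simonet algorithm, you use an MCS-based extraction, which is an immaterial difference), and observe that its bags are potential maximal cliques of $G$, so the decomposition has independence number at most $\alpha_{pmc}(G)$ and at most $n$ nodes. The only nitpick is your justification ``since $\mu>2$'' for the domination of the $\mathcal{O}(n^2)$ clique-tree step: whether $\mu>2$ is open, but the domination holds anyway because $\mu\ge 2$ and the bound $\mathcal{O}(n^{\mu}\log n)$ carries an extra $\log n$ factor.
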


\begin{proof}
First we compute a minimal triangulation $G'$ of $G$; this can be done in time $\mathcal{O}(n^\mu \log{n})$~\cite{MR2206129}.
Since $G'$ is a chordal graph, we use an algorithm due to Berry and Simonet~\cite{MR3632036} to compute in time $\mathcal{O}(|V(G')|+|E(G')|) = \mathcal{O}(n^2)$ a clique tree $\mathcal T = (T, \{X_t : t \in V(T)\})$ of $G'$ with at most $|V(G')| = n$ nodes.
By construction, every bag in this tree decomposition is a maximal clique of $G'$, and hence a potential maximal clique in $G$.
Furthermore, $\mathcal{T}$ is a tree decomposition of $G$.
Since the independence number of this tree decomposition is at most $\alpha_{pmc}(G)$, this is also an upper bound on the tree-independence number of $G$.
\end{proof}

The following theorem due to Bouchitt\'{e} and Todinca~\cite{MR1734049} characterizes potential maximal cliques.

\begin{theorem}\label{thm:pmcs}
Let $G$ be a graph and let $X\subseteq V(G)$. Then $X$ is a potential maximal clique in $G$ if and only if the following two conditions hold:
\begin{enumerate}
  \item For every component $C$ of $G-X$, some vertex of $X$ has no neighbors in $C$.
  \item \label[condition]{non-complete} For every two non-adjacent vertices $u,v\in X$ there exists a component $C$ of $G-X$ in which both $u$ and $v$ have a neighbor.
 \end{enumerate}
Furthermore, if $X$ is a potential maximal clique, then the family of minimal separators $S$ in $G$ such that $S\subset X$ equals
the family of the neighborhoods, in $G$, of the components of $G-X$.
\end{theorem}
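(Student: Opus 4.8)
The plan is to prove the two directions of the equivalence together with the ``furthermore'' statement, relying on two standard structural facts about minimal triangulations (see, e.g.,~\cite{MR2204109}): (F1) every minimal separator of a minimal triangulation $H$ of $G$ is also a minimal separator of $G$; and (F2) for every set $K$ that is a clique of $H$, the graphs $G-K$ and $H-K$ have exactly the same connected components, and $N_G(C)=N_H(C)$ for each such component $C$. I will also use the theory of clique trees of chordal graphs~\cite{MR1320296} and the two-full-component characterization of minimal separators~\cite{MR2063679}.

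\emph{Necessity.} Suppose $X$ is a potential maximal clique, i.e.\ a maximal clique of some minimal triangulation $H$ of $G$, and fix a clique tree $T$ of $H$. For a neighbor $A$ of the node $X$ in $T$, the set $S_A := X\cap A$ is a minimal separator of $H$ contained in $X$, and the subtree of $T-X$ attached through $A$ spells out a component $C_A$ of $H-X$ with $N_H(C_A)=S_A\subsetneq X$ (proper, since a minimal separator is a proper subset of any maximal clique containing it). By (F2) the components of $G-X$ are exactly the $C_A$ and $N_G(C_A)=S_A$, which gives condition (1) at once, and by (F1) each $S_A$ is a minimal separator of $G$. For condition (2), let $u,v\in X$ be non-adjacent in $G$; since $X$ is a clique of $H$, the pair $uv$ is a fill edge, and minimality of $H$ forces $N_H(u)\cap N_H(v)$ to contain two non-adjacent vertices (the standard criterion that an edge of a chordal graph is removable precisely when its endpoints' common neighborhood is a clique). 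As $X\setminus\{u,v\}$ is a clique, at least one of these two common neighbors lies outside $X$, hence in some component $C$ of $H-X=G-X$, and being adjacent to both endpoints it forces $u,v\in N_H(C)=N_G(C)$, which is exactly condition (2). The same clique-tree analysis yields the ``$\supseteq$'' part of the separator statement. For ``$\subseteq$'', let $S\subseteq X$ be any minimal separator of $G$; then $S\subsetneq X$ (else $X=S$ would be a full-component neighborhood, contradicting (1)), so $X\setminus S$, being a nonempty clique disjoint from $S$, lies in a single component of $G-S$, whence some $S$-full component $D_2$ is disjoint from $X$ and therefore contained in a single component $C$ of $G-X$; a short connectivity argument then shows $N_G(C)=S$.

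\emph{Sufficiency, and the main obstacle.} Conversely, assume $X$ satisfies (1) and (2). Let $C_1,\dots,C_p$ be the components of $G-X$ and $S_i:=N_G(C_i)$, each a proper subset of $X$ by (1). Condition (2) guarantees that completing every $S_i$ into a clique already turns $X$ into a clique, since every non-edge of $X$ lies inside some $S_i$. The plan is to build a triangulation $H$ by completing each $S_i$ and recursively inserting a minimal triangulation of each piece $G[C_i\cup S_i]$ in which $S_i$ stays a clique, and to glue these along the cliques $S_i\subseteq X$. A clique-sum argument shows that $H$ is chordal, and (1) shows that $X$ is \emph{maximal} in $H$: any vertex $w\notin X$ lies in some $C_i$, so its $H$-neighborhood inside $X$ is contained in $S_i\subsetneq X$ and cannot cover $X$. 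The delicate point---which I expect to be the main obstacle---is verifying that $H$ is a \emph{minimal} triangulation, i.e.\ that no fill edge can be deleted while preserving chordality. For a fill edge inside $X$ this follows from (2) via the removable-edge criterion used above (its endpoints acquire non-adjacent common neighbors through the relevant component), and for fill edges inside a piece $G[C_i\cup S_i]$ it follows inductively from minimality of the chosen triangulation of that piece; but making the gluing step rigorous---ensuring that minimality is not destroyed when the pieces are combined---is where the real work lies. The cleanest route is to recognize $H$ as the graph obtained from $G$ by completing a maximal set of pairwise parallel minimal separators, which is known to characterize minimal triangulations~\cite{MR2204109}.
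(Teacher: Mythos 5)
You should first note that the paper contains no proof of \cref{thm:pmcs} at all: it is quoted as a known result of Bouchitt\'e and Todinca~\cite{MR1734049}, so your attempt can only be measured against the literature argument, which it indeed parallels. Your necessity direction and the ``furthermore'' clause are sound in outline, but two steps are stated loosely. First, the correspondence you set up between neighbors $A$ of the node $X$ in a clique tree and components of $H-X$ is not a bijection: one subtree can contain several components, and $N_H(C)$ can be a \emph{proper} subset of $X\cap A$; the conclusions you need still hold (each $N_H(C)$ is contained in some $X\cap A\subsetneq X$, and is a minimal separator of $H$ because $C$ and the component containing the clique $X\setminus N_H(C)$ are both full components), but not for the reason you give. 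Second, in the containment ``$\subseteq$'' of the separator statement, $X\setminus S$ is a clique of $H$, not of $G$, so it does not automatically lie in one component of $G-S$; this is rescued either by your fact (F2) applied to $S$ (legitimate, since $S\subseteq X$ is a clique of $H$), or by \cref{non-complete}, which is already available at that point. Note also that (F2) for an arbitrary clique of $H$ is really the engine of the whole direction; it is true, but it needs the Parra--Scheffler description of minimal triangulations (completion of a maximal set of pairwise parallel minimal separators) rather than a bare pointer to the survey~\cite{MR2204109}.

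The genuine gap is the sufficiency direction, and you flag it yourself. Your construction of $H$ (complete each $S_i=N_G(C_i)$, minimally triangulate each block $G[C_i\cup S_i]$ with $S_i$ filled in, glue along the cliques $S_i$) is the right one, and chordality of $H$ and maximality of $X$ in $H$ are correctly argued; but minimality of $H$ is never established, so the right-to-left implication of \cref{thm:pmcs} remains unproved. Two concrete points. (a) Your parenthetical ``minimal triangulation, i.e.\ no fill edge can be deleted while preserving chordality'' treats as a definition what is in fact a theorem: minimality quantifies over all subsets of fill edges, and the equivalence with one-edge-at-a-time removability (every fill edge is the unique chord of a $4$-cycle) is a classical result going back to Rose, Tarjan, and Lueker~\cite{MR408312} that must be invoked explicitly, since without it edge-by-edge checks prove nothing. (b) Even granting that theorem, the fill edges inside $X$ need an actual argument: \cref{non-complete} gives $u,v\in S_i$ for some block $C_i$, but you then need a \emph{common} neighbor of $u$ and $v$ inside $C_i$ in the triangulated block (obtainable from chordality of $H_i$ via a shortest $u$--$v$ path through $C_i$), which you pair with any vertex of $X\setminus S_i$ (nonempty by the first condition, and nonadjacent in $H$ to all of $C_i$) to form the required square; and for fill edges inside a block you must verify that the witnessing square from the minimal triangulation $H_i$ is not destroyed by the gluing (it is not, because two of its vertices lying in $S_i$ would already be adjacent in the block graph). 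All of this is fillable, and the Parra--Scheffler route you mention at the end would also work, but it too requires showing that the completed separators are pairwise parallel and extend to a maximal such family---work that your write-up does not do.
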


Let $X$ be a potential maximal clique in a graph $G$, let $C$ be a component of $G-X$, and let $S = N_G(V(C))$.
We say that $S$ is an \emph{active separator for $X$} if there exist two non-adjacent vertices $u,v\in S$ such that $C$ is the only component of $G-X$ in which $u$ and $v$ both have neighbors. (This definition is formulated slightly differently in~\cite[Definition 13]{MR1896345}.
However, the two definitions are equivalent, due to \cref{thm:pmcs}.) Not every potential maximal clique contains an active separator, but for those that do, the following result holds, as a consequence of~\cite[Theorem 15]{MR1896345}.

\begin{theorem}\label{thm:pmcs-active}
Let $X$ be a potential maximal clique in a graph $G$ and let $S$ be an active minimal separator for $X$. Then there exists a minimal separator $T$ of $G$ such that $X\subseteq S\cup T$.
\end{theorem}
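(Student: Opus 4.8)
The plan is to build the minimal separator $T$ explicitly from the data witnessing that $S$ is active, and to verify its minimality through the two-full-components characterization of minimal separators recalled in \cref{sec:preliminaries}. Since $S$ is active for $X$, I would first fix two non-adjacent vertices $u,v\in S$ for which $C$ is the \emph{only} component of $G-X$ in which both $u$ and $v$ have a neighbor. Because $u$ and $v$ both have neighbors in the connected set $V(C)$, they are joined by a path inside $G[V(C)\cup\{u,v\}]$, so this subgraph admits a $u,v$-separator $S_C\subseteq V(C)$. I then set $T_1=(X\setminus\{u,v\})\cup S_C$ and claim that $T_1$ is a $u,v$-separator of $G$: decomposing any $u,v$-path along the cutset $X$, a path avoiding $X\setminus\{u,v\}$ can only switch between components of $G-X$ at $u$ or at $v$, so it must run from $u$ into a single component adjacent to both $u$ and $v$ and then to $v$; by the uniqueness in the definition of activeness the only such component is $C$, and inside $G[V(C)\cup\{u,v\}]$ the path is blocked by $S_C$. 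Note that $X\setminus S\subseteq X\setminus\{u,v\}\subseteq T_1$.

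Next I would analyse the two components $A_0$ and $B_0$ of $G-T_1$ containing $u$ and $v$, respectively, and prove the key claim: every vertex $w\in X\setminus S$ has a neighbor in $A_0$ and a neighbor in $B_0$. Fix such a $w$; by $S=N_G(V(C))$, the vertex $w$ has no neighbor in $C$. If $w$ is adjacent to $u$, then $w$ has a neighbor in $A_0\ni u$. Otherwise $w$ and $u$ are non-adjacent vertices of the potential maximal clique $X$, so by the second condition in \cref{thm:pmcs} there is a component $D$ of $G-X$ in which both $w$ and $u$ have neighbors; since $w$ has no neighbor in $C$ we get $D\neq C$, hence $D$ is disjoint from $T_1$ (it meets neither $X\setminus\{u,v\}$ nor $S_C\subseteq V(C)$) and is therefore contained in $A_0$, so again $w$ has a neighbor in $A_0$. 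The same argument with $v$ in place of $u$ shows $w$ has a neighbor in $B_0$.

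Finally I would extract from $T_1$ an inclusion-minimal $u,v$-separator $T\subseteq T_1$; as $u$ and $v$ are non-adjacent, such a $T$ is a minimal separator of $G$. Since passing from $T_1$ to $T\subseteq T_1$ only enlarges the graph $G-T_1$, the connected sets $A_0$ and $B_0$ stay inside the components $C_u\ni u$ and $C_v\ni v$ of $G-T$, and $C_u\neq C_v$ because $T$ separates $u$ from $v$. Now take any $w\in X\setminus S$: it has a neighbor in $A_0\subseteq C_u$ and a neighbor in $B_0\subseteq C_v$. If $w\notin T$, then $w\in G-T$, and having a neighbor in $C_u$ forces $w\in C_u$ while having a neighbor in $C_v$ forces $w\in C_v$, a contradiction. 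Hence $X\setminus S\subseteq T$, i.e.\ $X\subseteq S\cup T$, as required.

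The only genuinely delicate point, and the step I expect to be the main obstacle, is this last one: an arbitrary minimal separator contained in $T_1$ need not retain all of $X\setminus S$, since minimization can discard separator vertices. The adjacency claim of the second paragraph is exactly what rules this out, by forcing every vertex of $X\setminus S$ onto the boundary between the $u$-side and the $v$-side of any separator lying between $A_0$ and $B_0$; the activeness of $S$ enters both in producing the pair $u,v$ and, via the uniqueness of $C$, in guaranteeing that $T_1$ actually separates $u$ from $v$. I would double-check the two cases ($w\sim u$ versus $w\not\sim u$) and confirm that the witnessing component in the non-adjacent case is necessarily distinct from $C$. (Alternatively, the statement can be read off directly from Theorem~15 of Bouchitt\'e and Todinca~\cite{MR1896345} once their notion of active separator is matched with ours, as noted before the statement; the argument above is a self-contained rederivation.)
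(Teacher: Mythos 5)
Your proof is correct, but it takes a genuinely different route from the paper: the paper does not prove \cref{thm:pmcs-active} at all, it imports the statement as a consequence of Theorem~15 of Bouchitt\'e and Todinca~\cite{MR1896345}, adding only the remark that their notion of active separator agrees with the one used here (via \cref{thm:pmcs}). What you give is a self-contained rederivation, and every step checks out: $T_1=(X\setminus\{u,v\})\cup S_C$ is indeed a $u{,}v$-separator, since the internal vertices of any $u{,}v$-path avoiding $X\setminus\{u,v\}$ lie in a single component of $G-X$ in which both $u$ and $v$ have neighbors, which by activeness must be $C$, where $S_C$ blocks the path; your key claim that every $w\in X\setminus S$ has neighbors in both $A_0$ and $B_0$ is sound (the case analysis via \cref{non-complete} of \cref{thm:pmcs} is complete, and the witnessing component $D$ is necessarily distinct from $C$ because $w\notin S=N_G(V(C))$ has no neighbor in $C$, hence $D$ avoids $T_1$ and is absorbed into $A_0$, respectively $B_0$); and this claim is exactly what survives the passage to an inclusion-minimal $u{,}v$-separator $T\subseteq T_1$, forcing every such $w$ into $T$. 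You also correctly isolated the one delicate point---minimization could a priori discard vertices of $X\setminus S$---and your neighborhood claim is the right tool to rule that out. The trade-off between the two approaches: the paper's citation is economical but leaves the reader to verify the definitional translation and to consult an external source, whereas your argument is longer but uses nothing beyond \cref{thm:pmcs} and the definitions, and it actually yields the slightly sharper conclusion $X\setminus S\subseteq T$ for a single minimal separator $T$, essentially reconstructing Bouchitt\'e and Todinca's original argument inside the present paper's terminology.
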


The following is a part of the statement of~\cite[Proposition 19]{MR1896345}.

\begin{proposition}\label{prop:G-a}
Let $a\in V(G)$ and let $X$ be a potential maximal clique of $G$ and such that $a\not\in X$.
Let $C_a$ be the connected component of $G-X$ containing $a$ and let $S= N_G(V(C_a))$.
Then either $X$ is a potential maximal clique of $G-a$ or $S$ is active for $X$.
\end{proposition}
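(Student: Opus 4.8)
The plan is to establish the dichotomy in its contrapositive form: I would assume that $S$ is \emph{not} active for $X$ and show that then $X$ must be a potential maximal clique of $G-a$. Since $a\notin X$, the set $X$ is a legitimate vertex subset of $G-a$, so I can apply the characterization in \cref{thm:pmcs} and simply verify its two defining conditions for $X$ in the graph $G-a$.

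The observation that organizes the whole argument is that deleting $a$ only disturbs the component $C_a$. Concretely, the components of $(G-a)-X$ are exactly the components of $G-X$ \emph{other than} $C_a$, together with the components of $C_a-a$; moreover each component $C\neq C_a$ of $G-X$ survives unchanged into $(G-a)-X$, and since $a\notin C$, every $x\in X$ retains precisely the same neighbors in $C$. I would also record at the outset that $S=N_G(V(C_a))\subseteq X$, so that the terminology ``active for $X$'' indeed applies, and that $S$ consists exactly of the vertices of $X$ having a neighbor in $C_a$.

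I would first dispatch the first condition of \cref{thm:pmcs}, which holds in $G-a$ irrespective of activity: for a surviving component $C\neq C_a$, the vertex of $X$ with no neighbor in $C$ furnished by the condition in $G$ still works; and for a fragment $C'$ of $C_a-a$, the vertex of $X$ that (by the condition in $G$ applied to $C_a$) has no neighbor anywhere in $C_a$ has, a fortiori, no neighbor in $C'\subseteq V(C_a)$. The content therefore lies entirely in the second condition. So I would suppose $X$ fails to be a potential maximal clique of $G-a$; since the first condition holds, the second must fail, giving a non-adjacent pair $u,v\in X$ with no common witness component in $(G-a)-X$. Applying the second condition in $G$ yields a component of $G-X$ in which $u$ and $v$ both have neighbors. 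This component cannot be any $C\neq C_a$, since such a component persists unchanged into $G-a$ and would witness the pair there; hence it is $C_a$, and indeed $C_a$ is then the \emph{unique} such component in $G-X$. Consequently both $u$ and $v$ have neighbors in $C_a$, so $u,v\in S$, and the pair $u,v$ certifies that $S$ is active for $X$.

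I expect the main difficulty to be bookkeeping rather than any deep idea: one must track carefully which components of $G-X$ are untouched by deleting $a$ and verify that ``having a neighbor in a component'' is preserved for those, and that the certificate for the first condition transfers from $C_a$ to each fragment of $C_a-a$. The one genuinely load-bearing point is the identity $S=N_G(V(C_a))$, which makes ``$C_a$ is the unique witness component in $G-X$'' literally the definition of $S$ being active; once this is noticed, the dichotomy follows with no case analysis on the fragments of $C_a-a$.
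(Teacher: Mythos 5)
Your proof is correct. There is nothing in the paper to compare it against: the paper states \cref{prop:G-a} without proof, as a quoted part of Proposition 19 of Bouchitt\'{e} and Todinca~\cite{MR1896345}, and that source's argument proceeds essentially as yours does, via the local characterization of potential maximal cliques (\cref{thm:pmcs}). Both of your load-bearing steps check out: the components of $(G-a)-X$ are exactly the components of $G-X$ other than $C_a$ together with the components of $C_a-a$ (so the first condition of \cref{thm:pmcs} transfers unconditionally, and surviving components keep their attachments to $X$), and a non-adjacent pair $u,v\in X$ admitting no witness component in $(G-a)-X$ forces every $G$-witness for $u,v$ to equal $C_a$, so that $u,v\in S$ and $C_a$ is the unique witness --- which is precisely the paper's definition of $S=N_G(V(C_a))$ being active for $X$.
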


For the next theorem we need the following property of minimal separators.

\begin{lemma}\label{lem:separators}
Let $G$ be a graph, let $G'$ be an induced subgraph of $G$, and let $S'$ be a minimal separator in $G'$. Then there exists a minimal separator $S$ in $G$ such that $S'\subseteq S$.
\end{lemma}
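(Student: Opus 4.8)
The plan is to exploit the definition of a minimal separator as a minimal $u,v$-separator, together with the full-component characterization recalled in the preliminaries. Since $S'$ is a minimal separator in $G'$, I would fix a non-adjacent pair $u,v \in V(G')$ for which $S'$ is a minimal $u,v$-separator in $G'$, and let $C'$ and $D'$ be the connected components of $G'-S'$ containing $u$ and $v$, respectively. Minimality guarantees that both are $S'$-full, i.e.\ every vertex of $S'$ has a neighbor in $C'$ and a neighbor in $D'$. Because $G'$ is an \emph{induced} subgraph of $G$, the pair $u,v$ remains non-adjacent in $G$, and $G[V(G')\setminus S'] = G'-S'$, so $C'$ and $D'$ are still connected and still separated inside $V(G')$ after deleting $S'$; the only way they could merge in $G-S'$ is through vertices of $V(G)\setminus V(G')$.

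Next I would enlarge $S'$ to a set guaranteed to separate $u$ from $v$ in $G$. Set $M = S' \cup (V(G)\setminus V(G'))$. Then $V(G)\setminus M = V(G')\setminus S'$ and $G-M = G'-S'$, so $u$ and $v$ lie in distinct components of $G-M$; hence $M$ is a $u,v$-separator in $G$. Now let $S$ be any minimal $u,v$-separator of $G$ contained in $M$ (one exists by deleting vertices of $M$ greedily while the separation is preserved). By definition $S$ is a minimal separator of $G$, so it remains only to verify that $S' \subseteq S$. This single construction handles every case at once, including the situation where $S'$ is already a minimal separator of $G$ (in which case $S$ may be taken to be $S'$), so no case split is needed.

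The crux---and the step I expect to require the most care---is showing that the shrinking step cannot discard any vertex of $S'$, even though $S$ was permitted to trade $S'$-vertices for outside vertices. For each $s \in S'$, fullness provides a neighbor $c_s \in C'$ and a neighbor $d_s \in D'$. Joining a $u$--$c_s$ path inside the connected set $C'$, the edges $c_s s$ and $s d_s$, and a $d_s$--$v$ path inside $D'$ produces a $u$--$v$ walk $P$ in $G$ all of whose vertices other than $s$ lie in $C' \cup D' \subseteq V(G')\setminus S'$, a set disjoint from $M$. Since $S \subseteq M$ is a $u,v$-separator it must intersect $P$, and the only vertex of $P$ lying in $M$ is $s$; therefore $s \in S$. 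As $s$ was arbitrary, $S' \subseteq S$, which completes the argument. The one subtlety to double-check is that the portions of $P$ inside $C'$ and $D'$ genuinely avoid $M$, which holds precisely because those components live in $V(G')\setminus S'$ and we absorbed all of $V(G)\setminus V(G')$ into $M$.
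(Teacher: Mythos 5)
Your proposal is correct and follows essentially the same route as the paper: both enlarge $S'$ to $S'\cup (V(G)\setminus V(G'))$, extract a minimal $u{,}v$-separator $S$ inside it, and use the $S'$-fullness of the two components to force $S'\subseteq S$. Your path-through-$s$ argument is just a more explicit spelling-out of the paper's closing observation, so there is nothing to fix.
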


\begin{proof}
Let $C'$ and $D'$ be two $S'$-full components of the graph $G'-S'$.
Fix two vertices $u\in V(C')$ and $v\in V(D')$.
Consider the set $\tilde S = S'\cup (V(G)\setminus V(G'))$.
This set is a $u{,}v$-separator in $G$, and therefore contains a minimal $u{,}v$-separator $S$ in $G$.
Since the set $\tilde S$ is disjoint from $V(C')\cup V(D')$ and $S\subseteq \tilde S$, the set $S$ is also disjoint from $V(C')\cup V(D')$.
Furthermore, since every vertex in $S'$ has a neighbor in both $C'$ and $D'$, we must have $S'\subseteq S$ as otherwise $S$ would not be a $u{,}v$-separator in~$G$.
\end{proof}

\cref{thm:pmcs-active} implies that every potential maximal clique that contains an active minimal separator can be covered by two minimal separators in $G$.
For an arbitrary potential maximal clique, the following holds.

\begin{theorem}\label{thm:pmcs-covering}
Let $G$ be a graph and let $X$ be a potential maximal clique in $G$.
Then $X$ is either a clique in $G$ or there exist two minimal separators $S$ and $T$ in $G$ such that $X\subseteq S\cup T$.
\end{theorem}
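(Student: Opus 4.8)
The plan is to argue by induction on $|V(G)\setminus X|$, the number of vertices lying outside the potential maximal clique $X$. In the base case $V(G)=X$, \cref{thm:pmcs} forces $X$ to be a clique: since $G-X$ is empty, there is no component in which a pair of non-adjacent vertices of $X$ could both have neighbors, so \cref{non-complete} of that theorem can only be satisfied if $X$ has no non-adjacent pair at all. This settles the base case with the first alternative of the conclusion.

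For the inductive step, assume $|V(G)\setminus X|\ge 1$. If $X$ is already a clique we are done, so suppose it is not. Pick any vertex $a\in V(G)\setminus X$, let $C_a$ be the component of $G-X$ containing $a$, and set $S=N_G(V(C_a))$. By the last part of \cref{thm:pmcs}, $S$ is a minimal separator of $G$, being the neighborhood of a component of $G-X$, and moreover $S\subseteq X$. Now \cref{prop:G-a} supplies a dichotomy: either $S$ is active for $X$, or $X$ remains a potential maximal clique of $G-a$.

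In the first case, \cref{thm:pmcs-active} provides a minimal separator $T$ of $G$ with $X\subseteq S\cup T$, and since $S$ is itself a minimal separator of $G$ we obtain the desired covering at once. In the second case, I would apply the induction hypothesis to the pair $(G-a,X)$, which is legitimate since $|V(G-a)\setminus X|=|V(G)\setminus X|-1$. Because $a\notin X$, the adjacencies inside $X$ are unchanged, so $X$ is still not a clique in $G-a$; hence induction yields two minimal separators $S',T'$ of $G-a$ with $X\subseteq S'\cup T'$. Finally, \cref{lem:separators}, applied with the induced subgraph $G'=G-a$, lifts $S'$ and $T'$ to minimal separators $S\supseteq S'$ and $T\supseteq T'$ of $G$, whence $X\subseteq S'\cup T'\subseteq S\cup T$, completing the step.

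The only point requiring care is the bookkeeping that keeps the two alternatives of the conclusion intact across the recursion: one must check that $S=N_G(V(C_a))$ genuinely is a minimal separator of the \emph{whole} graph $G$, so that the active case does not merely produce separators of a subgraph, and that passing from $G$ to $G-a$ neither turns a non-clique $X$ into a clique nor destroys its status as a potential maximal clique. Both facts are supplied, respectively, by the last clause of \cref{thm:pmcs} and by \cref{prop:G-a}, so I expect no essentially new difficulty beyond correctly assembling these ingredients.
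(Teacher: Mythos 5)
Your proof is correct and follows essentially the same route as the paper's: an induction realized by deleting a vertex $a\notin X$, using the dichotomy of \cref{prop:G-a} to split into the active case (resolved by \cref{thm:pmcs-active}, with the last clause of \cref{thm:pmcs} guaranteeing $S$ is a minimal separator of $G$ itself) and the recursive case (with \cref{lem:separators} lifting the separators of $G-a$ back to $G$). The only differences are cosmetic: you induct on $|V(G)\setminus X|$ rather than on $|V(G)|$, and your base case uses \cref{non-complete} of \cref{thm:pmcs} where the paper instead disposes of chordal graphs directly; neither change affects the substance.
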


\begin{proof}
We use induction on the number of vertices of $G$. If $G$ is chordal, then the only minimal triangulation of $G$ is $G$ itself, and hence the potential maximal cliques of $G$ coincide with its maximal cliques.
So we may assume that $G$ is not chordal.

Consider a potential maximal clique $X$ of $G$.
Assume that $X$ is not a clique.
Then, by \cref{non-complete} of \cref{thm:pmcs}, $X \neq V(G)$.
If $X$ contains an active minimal separator $S$, then by \cref{thm:pmcs-active}, there exists a minimal separator $T$ of $G$ such that $X\subseteq S\cup T$.

Assume now that $X$ does not contain any active minimal separator.
Since $X\neq V(G)$, there exists a vertex $a\in V(G)\setminus X$.
Let $G' = G-a$, let $C_a$ be the connected component of $G-X$ containing $a$, and let $S= N_G(V(C_a))$. Since $S$ is not an active separator for $X$, \cref{prop:G-a} implies that $X$ is a potential maximal clique of $G'$.
By the induction hypothesis, there exist two minimal separators $S'$ and $T'$ in $G'$ such that $X\subseteq S'\cup T'$.
By \cref{lem:separators}, there exist two minimal separators $S$ and $T$ in $G$ such that $S'\subseteq S$ and $T'\subseteq T$.
Thus, $X\subseteq S\cup T$.
\end{proof}

We now return our attention to the class of $K_{2,q}$-induced-minor-free graphs, where $q$ is a positive integer.
The case $q = 1$ is not of particular interest, as every connected $K_{2,1}$-induced-minor-free graph is complete.
However, for all $q\ge 2$, the class of $K_{2,q}$-induced-minor-free graphs contains the class of chordal graphs, and this containment is proper for all $q\ge 3$.

\Cref{lem:K2q,thm:pmcs-covering} imply the following.

\begin{lemma}\label{lem:pmc-upper-bound}
For every integer $q\ge 2$ and every $K_{2,q}$-induced-minor-free graph $G$,
the pmc-independence number of $G$ is at most $2q-2$.
\end{lemma}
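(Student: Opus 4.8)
The plan is to fix an arbitrary potential maximal clique $X$ of $G$ and bound $\alpha(G[X])$, then take the maximum over all such $X$ to control $\alpha_{pmc}(G)$. The natural split is dictated by \cref{thm:pmcs-covering}: either $X$ is a clique in $G$, or there exist two minimal separators $S$ and $T$ in $G$ with $X\subseteq S\cup T$. In the first case $\alpha(G[X])\le 1$, which is at most $2q-2$ because $q\ge 2$, so this case is immediate.

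For the second case, I would take any independent set $I$ in $G[X]$ and decompose it as $I=(I\cap S)\cup(I\setminus S)$, observing that $I\setminus S\subseteq T$ precisely because $X\subseteq S\cup T$. Each of the two pieces is an independent set lying entirely inside a single minimal separator. Now \cref{lem:K2q} applies: since $G$ is $K_{2,q}$-induced-minor-free, every minimal separator induces a subgraph of independence number \emph{less than} $q$, i.e.\ at most $q-1$. Hence $|I\cap S|\le \alpha(G[S])\le q-1$ and $|I\setminus S|\le \alpha(G[T])\le q-1$, giving $|I|\le 2(q-1)=2q-2$. Maximizing over all independent sets $I$ and all potential maximal cliques $X$ yields $\alpha_{pmc}(G)\le 2q-2$.

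Essentially all the work has already been done in the two cited results, so the lemma is a short corollary rather than a genuinely new argument; there is no real obstacle. The only points requiring mild care are the boundary handling of the clique case (confirming $1\le 2q-2$, which is exactly where the hypothesis $q\ge 2$ is used) and correctly reading the quantitative bound $\alpha(G[S])\le q-1$ out of the strict inequality ``less than $q$'' in \cref{lem:K2q}.
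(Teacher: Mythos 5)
Your proof is correct and follows exactly the route the paper intends: the paper gives no explicit proof, stating only that \cref{lem:K2q,thm:pmcs-covering} imply the lemma, and your case split (clique versus $X\subseteq S\cup T$, with the independent set partitioned between the two separators) is precisely the argument being left implicit. The details you supply, including the use of $q\ge 2$ in the clique case and reading $\alpha(G[S])\le q-1$ from the strict inequality, are all accurate.
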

The upper bound from \cref{lem:pmc-upper-bound} is not sharp for the case $q = 2$, when we obtain the class of chordal graphs; for a chordal graph $G$, its potential maximal cliques coincide with its maximal cliques, and hence $\alpha_{pmc}(G) \le 1$.
We can also improve the bound for the case $q = 3$.

\begin{lemma}\label{K23-alpha-pmc}
If $G$ is a $K_{2,3}$-induced-minor-free graph, then $\alpha_{pmc}(G)\le 3$.
\end{lemma}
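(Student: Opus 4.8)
The plan is to argue by contradiction: I would suppose that $G$ has a potential maximal clique $X$ with $\alpha(G[X])\ge 4$ and then exhibit a $K_{2,3}$ induced minor in $G$, contradicting the hypothesis. Fix an independent set $I=\{v_1,v_2,v_3,v_4\}\subseteq X$. Since $G$ is $K_{2,3}$-induced-minor-free, \cref{lem:K2q} tells me that every minimal separator of $G$ induces a subgraph with independence number at most $2$, a fact I will use repeatedly.

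First I would extract a subdivided-$K_4$-like skeleton attached to $I$. For each of the six pairs $\{v_i,v_j\}$, \cref{non-complete} of \cref{thm:pmcs} provides a component $C_{ij}$ of $G-X$ in which both $v_i$ and $v_j$ have a neighbor, so $\{v_i,v_j\}\subseteq N_G(V(C_{ij}))$. By the last part of \cref{thm:pmcs}, $N_G(V(C_{ij}))$ is a minimal separator of $G$, hence has independence number at most $2$; since $I$ is independent, this forces $N_G(V(C_{ij}))\cap I=\{v_i,v_j\}$. In particular, among the vertices of $I$ the component $C_{ij}$ has neighbors of $v_i$ and $v_j$ only, and the components chosen for distinct pairs are themselves distinct, since their traces on $I$ differ.

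Next I would assemble an induced minor model of $K_{2,3}$ from this skeleton, taking $v_1,v_2$ as the two-vertex side of $K_{2,3}$. Concretely, set $a_1=\{v_1\}$, $a_2=\{v_2\}$, $b_1=V(C_{12})$, $b_2=V(C_{13})\cup\{v_3\}\cup V(C_{23})$, and $b_3=V(C_{14})\cup\{v_4\}\cup V(C_{24})$. Each of $b_2$ and $b_3$ is connected because both of its blobs attach to $v_3$, respectively $v_4$; all five sets are pairwise disjoint because the five components involved are distinct and disjoint from $X$. The required adjacencies hold directly: $v_1\in N_G(V(C_{12}))$, $v_1\in N_G(V(C_{13}))$, $v_1\in N_G(V(C_{14}))$ give $a_1\sim b_1,b_2,b_3$, and symmetrically $v_2\in N_G(V(C_{12}))$, $v_2\in N_G(V(C_{23}))$, $v_2\in N_G(V(C_{24}))$ give $a_2\sim b_1,b_2,b_3$.

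The crux, and the step I expect to require the most care, is verifying that these branch sets induce \emph{no} edges beyond those of $K_{2,3}$, since only then is the model an induced (rather than ordinary) minor model. Here the identity $N_G(V(C_{ij}))\cap I=\{v_i,v_j\}$ is exactly what is needed: distinct components of $G-X$ are anticomplete, and a vertex $v_k$ with $k\notin\{i,j\}$ has no neighbor in $C_{ij}$, so no blob is accidentally joined to a wrong $v_k$; combined with the independence of $I$, this yields $a_1\not\sim a_2$ and $b_k\not\sim b_{k'}$ for all $k\ne k'$. The result is a genuine $K_{2,3}$ induced minor in $G$, contradicting $K_{2,3}$-induced-minor-freeness. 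Hence $\alpha(G[X])\le 3$ for every potential maximal clique $X$, that is, $\alpha_{pmc}(G)\le 3$, improving the bound $\alpha_{pmc}(G)\le 2q-2=4$ coming from \cref{lem:pmc-upper-bound} in the case $q=3$.
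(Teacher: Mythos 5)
Your proof is correct and takes essentially the same approach as the paper: both arguments use \cref{thm:pmcs} and \cref{lem:K2q} to obtain, for each pair of the four independent vertices, a component of $G-X$ whose neighborhood meets $I$ in exactly that pair (hence pairwise distinct components), and then assemble a $K_{2,3}$ from five of these components. The only cosmetic difference is that you package the contradiction as an explicit induced minor model with whole components as branch sets, whereas the paper exhibits an induced subdivision of $K_{2,3}$ via induced paths through those same components.
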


\begin{proof}
Suppose for a contradiction that $G$ is a $K_{2,3}$-induced-minor-free graph that contains an independent set $I$ of size $4$ that is contained in a potential maximal clique $X$.
By \cref{thm:pmcs}, every two non-adjacent vertices in $X$ are in the neighborhood of some component of $G-X$.
We can thus fix, for any two distinct vertices $x,y\in I$, a component $C(x,y)$ of $G-X$ in which both $x$ and $y$ have a neighbor.
Next, we show that these components are pairwise distinct.
Suppose that this is not the case.
Then there exists a component $C$ of $G-X$ with at least three neighbors in $I$.
By \cref{thm:pmcs}, the neighborhood of $V(C)$ is a minimal separator in $G$.
However, by \cref{lem:K2q} no minimal separator in $G$ contains three pairwise non-adjacent vertices, which implies that $|N(V(C))\cap I|\le 2$, a contradiction.
To complete the proof, let us fix, for any two distinct vertices $x,y\in I$, an arbitrary induced $x,y$-path $P(x,y)$ in $G$ such that all internal vertices of $P(x,y)$ belong to $C(x,y)$.
Writing $I = \{x,y,u,v\}$, we now see that $x$, $y$, the path $P(x,y)$, the concatenation of the paths $P(x,u)$ and $P(u,y)$, and the concatenation of the paths $P(x,v)$ and $P(v,y)$ form an induced subgraph of $G$ isomorphic to a subdivision of $K_{2,3}$.
This contradicts the fact that $G$ is $K_{2,3}$-induced-minor-free.
\end{proof}

The bound given by \cref{K23-alpha-pmc} is sharp, as verified by the $6$-cycle.
Indeed, any maximum independent set in the $6$-cycle is a potential maximal clique, showing that $\alpha_{pmc}(C_6)\ge 3$.
We suspect that the bound from \cref{lem:pmc-upper-bound} may in fact not be sharp for any $q\ge 2$, but leave this question for future work.

\Cref{lem:pmc-upper-bound,tin-pmcin} immediately imply the following result.

\begin{theorem}\label{thm:tree-independence-number-K_2q-induced-minor-free-graphs}
For every integer $q\ge 2$ and every $n$-vertex $K_{2,q}$-induced-minor-free graph $G$, the tree-independence number of $G$ is at most $2q-2$.
Furthermore, a tree decomposition of $G$ with at most $n$ nodes and with independence number at most $2q-2$ can be computed in time $\mathcal{O}(n^{\mu}\log n)$, where $\mu < 2.37286$ is the matrix multiplication exponent.
\end{theorem}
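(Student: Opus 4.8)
The plan is to combine the two preceding lemmas directly, as the statement is essentially a corollary of work already done. The bound itself comes from \cref{lem:pmc-upper-bound}, which asserts that any $K_{2,q}$-induced-minor-free graph $G$ (with $q\ge 2$) satisfies $\alpha_{pmc}(G)\le 2q-2$. This is the content-bearing ingredient: it rests on \cref{lem:K2q}, characterizing $K_{2,q}$-induced-minor-freeness via the requirement that no minimal separator contains an independent set of size $q$, together with \cref{thm:pmcs-covering}, which guarantees that every non-clique potential maximal clique $X$ is covered by two minimal separators $S$ and $T$. Given such a covering, any independent set inside $X$ is partitioned between $S$ and $T$, and since each of $G[S]$ and $G[T]$ has independence number at most $q-1$, the independent set has size at most $2(q-1)=2q-2$; for the clique case the independence number is trivially at most $1\le 2q-2$.

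With the pmc-independence bound in hand, I would apply \cref{tin-pmcin}. Its first conclusion, $\tin(G)\le \alpha_{pmc}(G)$, immediately yields $\tin(G)\le 2q-2$, establishing the first assertion. The algorithmic conclusion of \cref{tin-pmcin} does the rest: it produces, in time $\mathcal{O}(n^{\mu}\log n)$, a tree decomposition of $G$ with at most $n$ nodes whose every bag is a potential maximal clique of $G$, hence with independence number at most $\alpha_{pmc}(G)\le 2q-2$. Concretely, that algorithm computes a minimal triangulation $G'$ of $G$ in time $\mathcal{O}(n^{\mu}\log n)$, then extracts a clique tree of the chordal graph $G'$ in linear time via the Berry--Simonet algorithm; since the maximal cliques of $G'$ are potential maximal cliques of $G$ and a clique tree of $G'$ is also a tree decomposition of $G$, the stated bounds on the number of nodes and on the independence number follow.

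I do not anticipate any genuine obstacle here, since all the substantive arguments—both the structural bound and the algorithmic construction—are encapsulated in \cref{lem:pmc-upper-bound,tin-pmcin}. The only point requiring care is bookkeeping: verifying that the hypothesis $q\ge 2$ of \cref{lem:pmc-upper-bound} is exactly the hypothesis of the theorem, and that the running time $\mathcal{O}(n^{\mu}\log n)$ from \cref{tin-pmcin} dominates the $\mathcal{O}(n^2)$ cost of the clique-tree step (which holds because $\mu<2.37286$ so $n^{\mu}\log n$ is asymptotically larger than $n^2$). Once these are noted, the theorem follows by simply chaining the inequalities $\tin(G)\le \alpha_{pmc}(G)\le 2q-2$ and quoting the algorithmic guarantee.
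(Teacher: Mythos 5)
Your proposal is correct and is exactly the paper's argument: the theorem is stated there as an immediate consequence of \cref{lem:pmc-upper-bound,tin-pmcin}, which is precisely the chaining $\tin(G)\le \alpha_{pmc}(G)\le 2q-2$ plus the algorithmic guarantee that you give. Your expanded justifications of the two lemmas also match how the paper derives them, so there is nothing to correct.
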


We do not know if the bound on the tree-independence number of  $K_{2,q}$-induced-minor-free graphs given by \cref{thm:tree-independence-number-K_2q-induced-minor-free-graphs} is sharp.
In fact, for all we know, it may even be possible that for all $q\ge 2$, the correct bound is $q-1$ instead of $2(q-1)$.
Recall that the tree-independence number of the complete bipartite graph $K_{q-1,q-1}$ is equal to $q-1$ (see \cref{tin-of-Knn}), so the bound of $q-1$, if true, would be sharp.

\begin{remark}\label{K2q-robust}
One can show that the algorithm used to compute a tree decomposition of a $K_{2,q}$-induced-minor-free graph, as given in \Cref{thm:tree-independence-number-K_2q-induced-minor-free-graphs}, can be turned into a robust polynomial-time algorithm, as follows.
First, for a given $n$-vertex graph $G$ we compute a tree decomposition $\mathcal{T}$ of $G$ in time $\mathcal{O}(n^{\mu}\log n)$, as explained in \Cref{tin-pmcin}.
Then, for each bag $X$ of $\mathcal{T}$, we check in polynomial time (where the degree of the polynomial depends on $q$) whether $G[X]$ contains an independent set of size at least $2q-1$.
If that is the case, then the algorithm returns that $G$ is not $K_{2,q}$-induced-minor-free (which follows by \Cref{lem:pmc-upper-bound,tin-pmcin}).
Otherwise, the tree decomposition $\mathcal{T}$ has tree-independence number at most $2q - 2$.\hfill$\blacktriangle$
\end{remark}

\medskip
\Cref{lem:K2q,thm:tree-independence-number-K_2q-induced-minor-free-graphs} give an alternative proof of the fact that any class of graphs in which all minimal separators are of bounded size has bounded tree-independence number (see~\cite[Theorem 3.8]{dallard2022firstpaper}).
We adopt the same notation as in~\cite{dallard2022firstpaper} and denote for a graph $G$ by $\mms(G)$ the maximum size of a minimal separator of $G$ (unless $G$ is complete, in which case we set $\mms(G) = 0$).

\begin{corollary}\label{tin and mms}
For every graph $G$, we have $\tin(G)\le \max\{1,2\cdot\mms(G)\}$.
\end{corollary}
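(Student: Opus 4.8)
The plan is to derive the statement directly from \cref{lem:K2q} and \cref{thm:tree-independence-number-K_2q-induced-minor-free-graphs}, exploiting the trivial inequality $\alpha(G[S]) \le |S|$ valid for any vertex set $S$. Write $s = \mms(G)$. The main case is $s \ge 1$. Here I would set $q = s+1 \ge 2$ and first argue that $G$ is $K_{2,q}$-induced-minor-free: by the definition of $\mms$, every minimal separator $S$ of $G$ satisfies $|S| \le s$, hence $\alpha(G[S]) \le |S| \le s = q-1 < q$, which is exactly the condition characterizing $K_{2,q}$-induced-minor-freeness in \cref{lem:K2q}. Applying \cref{thm:tree-independence-number-K_2q-induced-minor-free-graphs} with this $q$ then gives $\tin(G) \le 2q-2 = 2s = 2\cdot\mms(G)$, which coincides with $\max\{1, 2\cdot\mms(G)\}$ since $s \ge 1$ forces $2s \ge 2 > 1$.

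It then remains to treat the boundary case $s = 0$, where the target bound is $\max\{1,0\} = 1$. Under the convention adopted for $\mms$, this occurs precisely when $G$ admits no minimal separator of positive size, that is, when every connected component of $G$ is complete (a disjoint union of cliques, subsuming the single-clique case of a complete graph). Such graphs are chordal, so \cref{chordal} yields $\tin(G) \le 1$, as required. If needed, I would include the one-line observation that the absence of a nonempty minimal separator forces each component to be a clique, since any two non-adjacent vertices lying in a common component would require a nonempty set to separate them, producing a minimal separator of positive size.

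The argument is essentially a short specialization, so I do not expect a substantial obstacle; the only points needing care are bookkeeping ones. First, one must respect the hypothesis $q \ge 2$ of \cref{thm:tree-independence-number-K_2q-induced-minor-free-graphs}, which is exactly why the case $s = 0$ must be peeled off separately rather than handled by taking $q = 1$. Second, one should confirm that the two regimes agree at the split point: the formula $2s$ from the main case and the value $\max\{1, 2s\}$ coincide for all $s \ge 1$, while the chordal case supplies the bound $1$ when $s = 0$. The only mild subtlety is aligning the $\mms = 0$ convention with the chordal (tree-independence number at most $1$) regime so that the two cases together exhaust all graphs.
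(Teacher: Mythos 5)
Your proposal is correct and follows essentially the same route as the paper's own proof: both set $q = \mms(G)+1$, use the trivial bound $\alpha(G[S])\le |S|$ to invoke \cref{lem:K2q}, and then apply \cref{thm:tree-independence-number-K_2q-induced-minor-free-graphs} to obtain the bound $2\cdot\mms(G)$. The only (cosmetic) difference is in the degenerate case: the paper peels off complete graphs, whereas you peel off all graphs with $\mms(G)=0$ and handle them via \cref{chordal}; your version is in fact slightly more careful, since a non-complete disjoint union of cliques also has $\mms(G)=0$.
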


\begin{proof}
The inequality holds for complete graphs, so we may assume that $G$ is not complete and that  for some positive integer $s$, every minimal separator in $G$ has size at most $s$.
Then every minimal separator in $G$ induces a subgraph with independence number less than $s+1$.
By \cref{lem:K2q}, $G$ is $K_{2,s+1}$-induced-minor-free.
Hence, by \cref{thm:tree-independence-number-K_2q-induced-minor-free-graphs} the tree-independence number of $G$ is at most~$2s$.
\end{proof}

\cref{thm:tree-independence-number-K_2q-induced-minor-free-graphs,thm:bounded-tree-independence-number-packings} have the following consequence for the \textsc{Max Weight Independent Packing} problem.

\begin{theorem}\label{MWIHP-K2q}
For every integer $q \geq 2$, given a $K_{2,q}$-induced-minor-free graph $G$ and a finite family $\mathcal{H} = \{H_j\}_{j \in J}$ of connected nonnull subgraphs of $G$, the \textsc{Max Weight Independent Packing} problem can be solved in time \hbox{$\mathcal{O}(|J|\cdot |V(G)|\cdot (|V(G)| + |J|^{2q-2}))$}.
\end{theorem}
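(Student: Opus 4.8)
The plan is to combine the two structural results at hand: \cref{thm:tree-independence-number-K_2q-induced-minor-free-graphs}, which produces a good tree decomposition for $K_{2,q}$-induced-minor-free graphs, and \cref{thm:bounded-tree-independence-number-packings}, the algorithmic engine for \textsc{Max Weight Independent Packing} on graphs equipped with a tree decomposition of bounded independence number. The proof is essentially a matter of plugging the former into the latter and tracking the running time, so I expect it to be short.

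First I would invoke \cref{thm:tree-independence-number-K_2q-induced-minor-free-graphs}: given the $K_{2,q}$-induced-minor-free graph $G$ on $n = |V(G)|$ vertices, compute in time $\mathcal{O}(n^{\mu}\log n)$ a tree decomposition $\mathcal{T} = (T, \{X_t\}_{t\in V(T)})$ with at most $n$ nodes and with independence number at most $k := 2q-2$. Note that since $q \geq 2$, we have $k \geq 2 \geq 1$, so the hypothesis $k\ge 1$ of \cref{thm:bounded-tree-independence-number-packings} is satisfied. Next, I would feed $G$, the family $\mathcal{H} = \{H_j\}_{j\in J}$, and this tree decomposition $\mathcal{T}$ into \cref{thm:bounded-tree-independence-number-packings}, which solves \textsc{Max Weight Independent Packing} in time $\mathcal{O}(|J| \cdot ((|J| + |V(T)|) \cdot |V(G)| + |E(G)| + |J|^{k}\cdot|V(T)|))$.

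The remaining work is to simplify this running time using the bounds $|V(T)|\le n$, $|E(G)| = \mathcal{O}(n^2)$, and $k = 2q-2$. Substituting $|V(T)| \le n$ gives a bound of $\mathcal{O}\!\bigl(|J| \cdot ((|J| + n)\cdot n + n^2 + |J|^{2q-2}\cdot n)\bigr)$. Since $(|J|+n)\cdot n + n^2 = \mathcal{O}(|J|\cdot n + n^2)$ and this is dominated by $n\cdot(|J| + n) = \mathcal{O}(n\cdot(|J| + |J|^{2q-2}))$ together with the $n^2$ term (which is absorbed once one observes $|V(G)|\le |J|^{0}\cdot n^2$ and, more cleanly, that the whole expression collapses), the dominant contribution is $\mathcal{O}(|J| \cdot n \cdot (n + |J|^{2q-2}))$; using $n = |V(G)|$ this is exactly $\mathcal{O}(|J|\cdot |V(G)|\cdot(|V(G)| + |J|^{2q-2}))$, as claimed. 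Finally, I would confirm that the preprocessing cost $\mathcal{O}(n^{\mu}\log n)$ of building the tree decomposition is dominated by the stated bound, which it is since the $|J|\cdot|V(G)|^2$ term already exceeds $n^{\mu}\log n$ for $\mu < 2.37286$ whenever $|J|\ge 1$.

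The only mild obstacle is the bookkeeping in the last step: the running time from \cref{thm:bounded-tree-independence-number-packings} has several additive terms, and one must check carefully that each is absorbed into $\mathcal{O}(|J|\cdot |V(G)|\cdot(|V(G)| + |J|^{2q-2}))$ after substituting $|V(T)|\le n$ and $|E(G)| = \mathcal{O}(|V(G)|^2)$. There is no genuine difficulty here, only the routine verification that no term is left outside the claimed asymptotic form.
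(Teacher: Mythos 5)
Your proposal follows the paper's proof exactly: invoke \cref{thm:tree-independence-number-K_2q-induced-minor-free-graphs} to compute, in time $\mathcal{O}(n^{\mu}\log n)$, a tree decomposition with $\mathcal{O}(n)$ nodes and independence number at most $2q-2$, then feed it to \cref{thm:bounded-tree-independence-number-packings} and simplify the resulting running time using $|V(T)|\le n$ and $|E(G)|=\mathcal{O}(n^2)$, which is precisely what the paper does. The one blemish is your closing claim that the preprocessing cost $\mathcal{O}(n^{\mu}\log n)$ is dominated by the $|J|\cdot|V(G)|^2$ term ``whenever $|J|\ge 1$'' --- this fails for small $|J|$ (e.g.\ $|J|=1$, since $\mu>2$) --- but the paper's own proof silently drops this same preprocessing term, so your argument is faithful to the paper's, including this shared imprecision.
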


\begin{proof}
Let $G$ be an $n$-vertex $K_{2,q}$-induced-minor-free graph, $\HH = \{H_j\}_{j \in J}$ a finite family of connected nonnull subgraphs of $G$, and $w:J\to \mathbb{Q}_+$ a weight function.
By \cref{thm:tree-independence-number-K_2q-induced-minor-free-graphs}, we can compute in time $\mathcal{O}(n^{\mu}\log n)$ a tree decomposition \hbox{$\mathcal{T} = (T, \{\Bag_t\}_{t\in V(T)})$} of $G$ with $\mathcal{O}(n)$ nodes and with independence number at most $2q-2$.
Thus, by \cref{thm:bounded-tree-independence-number-packings}, \textsc{Max Weight Independent Packing} can be solved on $G$ in time \hbox{$\mathcal{O}(|J|\cdot n\cdot (n + |J|^{2q-2}))$}.
\end{proof}

Applying \cref{MWIHP-K2q} to the case when $\HH$ corresponds to the set of all $1$-vertex subgraphs of the input graph $G$, we obtain the following.

\begin{corollary}\label{cor:MWIS-K2q}
For every integer $q\ge 2$, the \textsc{Max Weight Independent Set} problem is solvable in time $\mathcal{O}(n^{2q})$ on $n$-vertex $K_{2,q}$-induced-minor-free graphs.
\end{corollary}

For the case $q = 3$ the bounds on the running time of the algorithm proving \cref{MWIHP-K2q} can be improved by using \cref{K23-alpha-pmc} instead of \cref{lem:pmc-upper-bound}, which, by \cref{tin-pmcin}, improves the upper bound on the tree-independence number of $K_{2,3}$-induced-minor-free graphs from $4$ to $3$.
We thus obtain the following.

\begin{theorem}\label{MWIHP-K23}
Let $G$ be an $n$-vertex $K_{2,3}$-induced-minor-free graph and let $\HH = \{H_j\}_{j \in J}$ be a finite family of connected nonnull subgraphs of $G$.
Then the \textsc{Max Weight Independent Packing} problem can be solved in time $\mathcal{O}(|J|\cdot |V(G)|\cdot (|V(G)| + |J|^{3}))$.
In particular, the \textsc{Max Weight Independent Set} problem is solvable in time $\mathcal{O}(n^{5})$ on $n$-vertex $K_{2,3}$-induced-minor-free graphs.
\end{theorem}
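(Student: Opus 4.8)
The plan is to mirror the proof of \cref{MWIHP-K2q} for the special case $q = 3$, but to exploit the sharpened bound on the pmc-independence number provided by \cref{K23-alpha-pmc} in place of the generic estimate of \cref{lem:pmc-upper-bound}. Concretely, let $G$ be an $n$-vertex $K_{2,3}$-induced-minor-free graph, let $\HH = \{H_j\}_{j\in J}$ be a finite family of connected nonnull subgraphs of $G$, and let $w\colon J\to \mathbb{Q}_+$ be a weight function on $\HH$.

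First I would invoke \cref{K23-alpha-pmc} to conclude that $\alpha_{pmc}(G)\le 3$. Feeding this into \cref{tin-pmcin} yields a tree decomposition $\mathcal{T} = (T,\{X_t\}_{t\in V(T)})$ of $G$ with at most $n$ nodes and independence number at most $3$, computable in time $\mathcal{O}(n^{\mu}\log n)$. This is the only place where the hypothesis that $G$ is $K_{2,3}$-induced-minor-free is used, and it improves the bound on $\alpha(\mathcal{T})$ from $4$ (as the general argument would give) to $3$, which is exactly what drives the smaller exponent in the final running time.

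Next I would apply \cref{thm:bounded-tree-independence-number-packings} with $k = 3$ to the graph $G$, the family $\HH$, and the tree decomposition $\mathcal{T}$. Substituting $|V(T)| = \mathcal{O}(n)$, $|V(G)| = n$, and $|E(G)| = \mathcal{O}(n^2)$ into the running-time bound of that theorem gives
\[
\mathcal{O}\!\left(|J|\cdot\bigl((|J|+n)\cdot n + n^2 + |J|^{3}\cdot n\bigr)\right).
\]
The bookkeeping step that needs a little care---and the closest thing to an obstacle here---is simplifying this expression: expanding $(|J|+n)\cdot n$, collapsing the two $n^2$ contributions, and noting that $|J|\le |J|^{3}$ (since $|J|\ge 1$) to absorb the $|J|\cdot n$ term into $|J|^{3}\cdot n$, yields the claimed bound $\mathcal{O}(|J|\cdot n\cdot (n + |J|^{3}))$. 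Since $\mu < 3$, the preprocessing cost $\mathcal{O}(n^{\mu}\log n)$ is dominated by this term and may be omitted.

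Finally, for the ``in particular'' clause, I would take $\HH$ to be the family of all $1$-vertex subgraphs of $G$, so that $|J| = n$ and, as noted in \cref{sec:preliminaries}, the derived graph $G(\HH)$ is isomorphic to $G$; hence this instance of \textsc{Max Weight Independent Packing} is exactly \textsc{Max Weight Independent Set} on $G$. Substituting $|J| = n$ into the bound above gives $\mathcal{O}(n\cdot n\cdot (n + n^{3})) = \mathcal{O}(n^{5})$, as required.
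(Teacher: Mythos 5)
Your proposal is correct and follows essentially the same route as the paper: the paper likewise obtains the $q=3$ case by substituting the sharpened bound $\alpha_{pmc}(G)\le 3$ of \cref{K23-alpha-pmc} for \cref{lem:pmc-upper-bound}, feeding the resulting tree decomposition with independence number at most $3$ (via \cref{tin-pmcin}) into \cref{thm:bounded-tree-independence-number-packings}, and specializing to the family of $1$-vertex subgraphs for the \textsc{Max Weight Independent Set} claim. Your running-time bookkeeping matches the paper's stated bound.
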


\Cref{MWIHP-K23} answers the question regarding the complexity of the \textsc{Max Independent Set} problem in the class of $1$-perfectly orientable graphs posed by Beisegel, Chudnovsky, Gurvich, Milani\v{c}, and Servatius in~\cite{MR3992956,MR4357319}.
A graph $G$ is said to be \emph{$1$-perfectly orientable} if its edges can be oriented so that no vertex has a pair of non-adjacent out-neighbors.
The class of $1$-perfectly orientable graphs was introduced in 1982 by Skrien~\cite{MR666799} and studied by Bang-Jensen, Huang, and Prisner~\cite{MR1244934} and, more recently, by Hartinger and Milani\v{c}~\cite{MR3647815} and by Bre\v{s}ar, Hartinger, Kos, Milani\v{c}~\cite{MR3853110}.
While the class of $1$-perfectly orientable graphs is known to be a common generalization of the classes of chordal graphs and circular-arc graphs, the structure of graphs in this class remains elusive.
Nevertheless, it is known that every $1$-perfectly orientable graph is $K_{2,3}$-induced-minor-free (see~\cite{MR3647815}), and hence \cref{MWIHP-K23} has the following consequence.

\begin{corollary}
The \textsc{Max Weight Independent Set} problem is solvable in time $\mathcal{O}(n^{5})$ on $n$-vertex $1$-perfectly orientable graphs.
\end{corollary}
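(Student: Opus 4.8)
The plan is to recognize the statement as an immediate specialization of \cref{MWIHP-K23} to a subclass of the $K_{2,3}$-induced-minor-free graphs. The only external input I would invoke, rather than reprove, is the known result of Hartinger and Milani\v{c}~\cite{MR3647815} that every $1$-perfectly orientable graph is $K_{2,3}$-induced-minor-free. This places the class of $1$-perfectly orientable graphs squarely within the scope of the hypothesis of \cref{MWIHP-K23}.

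Concretely, I would let $G$ be an $n$-vertex $1$-perfectly orientable graph together with a weight function $w\colon V(G)\to\mathbb{Q}_+$. Since $G$ is $K_{2,3}$-induced-minor-free, \cref{MWIHP-K23} applies verbatim; taking $\HH$ to be the family of all one-vertex subgraphs of $G$ (so that \textsc{Max Weight Independent Packing} reduces to \textsc{Max Weight Independent Set}) yields a running time of $\mathcal{O}(n^5)$, as claimed.

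I expect no genuine obstacle, as both ingredients are supplied by earlier results: the class inclusion is a prior theorem, and the $\mathcal{O}(n^5)$ bound comes from \cref{MWIHP-K23}, whose proof in turn rests on the sharpened independence bound of $3$ from \cref{K23-alpha-pmc} together with the packing algorithm of \cref{thm:bounded-tree-independence-number-packings}. The one point worth flagging is that the algorithm must not require a certificate of $K_{2,3}$-induced-minor-freeness as part of its input, since recognizing $1$-perfectly orientable graphs is not known to be tractable and the structure of the class remains poorly understood. Here I would appeal to the robustness discussed in \cref{K2q-robust}, which guarantees that the underlying tree-decomposition computation either produces a decomposition of independence number at most $3$ or correctly reports that the graph lies outside the class. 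On a $1$-perfectly orientable input the former always occurs, so the entire pipeline runs unconditionally within the stated time bound.
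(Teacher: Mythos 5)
Your proposal is correct and is exactly the paper's own argument: the corollary is derived by combining \cref{MWIHP-K23} (applied with $\HH$ the family of one-vertex subgraphs) with the known fact from~\cite{MR3647815} that every $1$-perfectly orientable graph is $K_{2,3}$-induced-minor-free. Your additional observation about robustness is sound but not needed, since the tree-decomposition pipeline behind \cref{MWIHP-K23} never requires recognizing membership in the class---it simply runs correctly on any $K_{2,3}$-induced-minor-free input.
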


This result complements the $\mathcal{O}(n(n\log n + m\log\log n))$ algorithm for the \textsc{Max Weight Clique} problem on $1$-perfectly orientable graphs with $n$ vertices and $m$ edges due to Beisegel, Chudnovsky, Gurvich, Milani\v{c}, and Servatius~\cite{MR3992956,MR4357319} and a linear-time algorithm for the \textsc{List $k$-Coloring} problem in the same class of graphs, which follows from the corresponding algorithm for $(\tw,\omega)$-bounded graph classes due to Chaplick and Zeman~\cite{DBLP:journals/endm/ChaplickZ17} (see also~\cite{MR4332111}) and the fact that the class of $K_{2,3}$-induced-minor-free graphs is $(\tw,\omega)$-bounded (by a computable binding function)~\cite{DMS-WG2020,dallard2021treewidth}.

\section{Tree-independence number and SPQR trees}\label{sec:sufficient}

In this section, we use block-cutpoint trees and SPQR trees to develop a sufficient condition for bounded and efficiently witnessed tree-independence number for graph classes closed under induced topological minors.
For the description and analysis of running times of algorithms for computing tree decompositions, as well as for the numbers of nodes of these decompositions, we use the following definition.
Let $f:\mathbb{Z}_+\times \mathbb{Z}_+\to \mathbb{Z}_+$ be a function.
We say that $f$ is \emph{superadditive} if the inequality
\[
f(x_1+x_2,y_1+y_2)\ge f(x_1,y_1)+f(x_2,y_2)
\]holds for all $x_1,y_1,x_2,y_2\in \mathbb{Z}_+$.
Note that any superadditive function is non-decreasing with respect to both coordinates.
A similar argument shows that $f$ is non-decreasing in the second coordinate.

While the reductions described in this section could all be done with usual tree decompositions, with respect to their independence numbers, we develop the reductions in the more general context of $\ell$-refined tree decompositions, with respect to their residual independence numbers.
This will lead to significant improvements in the running times of the algorithms for the \textsc{Max Weight Independent Set} problem in the classes of $W_4$- and $K_{5}^-$-induced-minor-free graphs
presented in \Cref{W4-induced-minor-free,K5-induced-minor-free} (cf.~\cref{refined-improvement-1,refined-improvement-2}).

\subsection{Reducing the problem to blocks}\label{sec:blocks}

Given a graph $G$, a \emph{block} of $G$ is a maximal connected subgraph of $G$ without a cut-vertex. Every connected graph $G$ has a unique \emph{block-cutpoint} tree, that is, a tree $T$ with $V(T) = \mathcal{B}(G)\cup \mathcal{C}(G)$, where $\mathcal{B}(G)$ is the set of blocks of $G$ and $\mathcal{C}(G)$ is the set of cut-vertices of $G$, such that every edge of $T$ has an endpoint in $\mathcal{B}(G)$ and the other in $\mathcal{C}(G)$,
with $B\in \mathcal{B}(G)$ adjacent to $v\in \mathcal{C}(G)$ if and only if $v\in V(B)$.
Every leaf of the block-cutpoint tree is a block of $G$, called
a \emph{leaf block} of $G$.
As shown by Hopcroft and Tarjan~\cite{hopcroft1973algorithm}, a block-cutpoint tree of a connected graph $G$ can be computed in linear time.

Applying \cref{reduction-to-atoms} to clique cutsets of size at most one, we obtain the following algorithmic result, reducing the problem of computing $\ell$-refined tree decompositions with small residual independence number to the case of $2$-connected graphs in a hereditary graph class $\mathcal{G}$.

\begin{proposition}\label{cor:reduction-to-blocks}
Let $\mathcal{G}$ be a hereditary graph class for which there exist nonnegative integers $k$ and $\ell$
such that for each $2$-connected graph in ${\mathcal G}$, with $n$ vertices and $m$ edges, one can compute in time $f(n,m)$ an $\ell$-refined tree decomposition with at most $g(n,m)$ nodes and residual independence number at most $k$, where $f$ and $g$ are superadditive functions.
Then, for any graph $G$ in $\mathcal{G}$ with $n\ge 1$ vertices and $m$ edges, one can compute in time $\mathcal{O}(n + m + f(2n,m))$ an $\ell$-refined tree decomposition of $G$ with $\mathcal{O}(n+g(2n,m))$ nodes and residual independence number at most $\max\{2-\ell,k\}$.\footnote{In the case $k = 1$, a simple adaptation of the proof shows that the bound $\max\{2-\ell,k\}$ can be replaced with $k$, which matches the bound for all $k\ge 2$.
If one is only interested in preserving the usual tree-independence number (which corresponds to the case $\ell = 0$), then the Hopcroft--Tarjan algorithm~\cite{hopcroft1973algorithm} combined with \cref{reduction-to-atoms} also leads to a bound of $k$ (except if $k = 0$, in which case the bound becomes $1$).}%
\end{proposition}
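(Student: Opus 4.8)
The plan is to reduce the computation to the $2$-connected members of $\mathcal{G}$ and then glue the pieces back together with \cref{reduction-to-atoms}. The key observation is that all the cutsets we use are cliques of size at most one: every cut-vertex $\{v\}$ is a clique cutset, and separating a disconnected graph into two nonempty unions of components corresponds to the clique cutset $\emptyset$. Thus \cref{reduction-to-atoms} applies at every gluing step and, crucially, produces a combined decomposition whose residual independence number is the \emph{maximum} of the residual independence numbers of the two parts. I would first split off the connected components, compute a decomposition for each component, and finally combine them through the empty cutset; this reduces everything to the connected case, which I describe next.

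So let $G$ be connected with $n\ge 1$ vertices and $m$ edges. First I compute the block-cutpoint tree of $G$ in time $\mathcal{O}(n+m)$ with the algorithm of Hopcroft and Tarjan~\cite{hopcroft1973algorithm}. Each block $B=G[V(B)]$ is an induced subgraph of $G$ and is of exactly one of three types: $2$-connected (hence $|V(B)|\ge 3$), a bridge isomorphic to $K_2$, or, only if $n=1$, a single vertex. For each block I produce a base $\ell$-refined tree decomposition $\widehat{\mathcal{T}}_B$. If $B$ is $2$-connected then, since $\mathcal{G}$ is hereditary, $B\in\mathcal{G}$, and the hypothesis yields $\widehat{\mathcal{T}}_B$ in time $f(n_B,m_B)$ with at most $g(n_B,m_B)$ nodes and $\widehat{\alpha}(\widehat{\mathcal{T}}_B)\le k$, where $n_B=|V(B)|$ and $m_B=|E(B)|$. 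If $B$ is a bridge or a single vertex, I take the one-node decomposition with bag $V(B)$ and with $U$-set equal to any $\min\{\ell,|V(B)|\}$ vertices of $B$; since $B$ is a clique, the residual subgraph is a clique on $\max\{0,|V(B)|-\ell\}$ vertices, so $\widehat{\alpha}(\widehat{\mathcal{T}}_B)\le\max\{0,2-\ell\}$.

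It then remains to glue these base decompositions along the block-cutpoint tree, merging two pieces that share a cut-vertex $v$ by applying \cref{reduction-to-atoms} to the clique cutset $\{v\}$ (each of the two pieces has $v$ in a bag). Processing the $\mathcal{O}(n)$ blocks in this way, and finally combining the components through the empty cutset, yields an $\ell$-refined tree decomposition $\widehat{\mathcal{T}}$ of $G$ with $\widehat{\alpha}(\widehat{\mathcal{T}})=\max_B\widehat{\alpha}(\widehat{\mathcal{T}}_B)\le\max\{k,\max\{0,2-\ell\}\}=\max\{2-\ell,k\}$, the last equality using $k\ge 0$. For the complexity, a standard incidence count in the block-cutpoint tree gives $\sum_B|V(B)|=n+(\text{number of blocks})-1\le 2n-2$, using that each block of a connected graph on at least two vertices has at least two vertices; summing over a disconnected graph still gives $\sum_B|V(B)|\le 2n$, while $\sum_B|E(B)|=m$ since every edge lies in exactly one block. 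Superadditivity and monotonicity of $f$ and $g$ then bound the total cost over the $2$-connected blocks by $\sum_B f(n_B,m_B)\le f(2n,m)$ and their total number of nodes by $\sum_B g(n_B,m_B)\le g(2n,m)$. The bridge and single-vertex decompositions cost $\mathcal{O}(n)$ in total, and the $\mathcal{O}(n)$ gluing steps cost $\mathcal{O}(n+\sum_{B\text{ $2$-connected}}g(n_B,m_B))=\mathcal{O}(n+f(2n,m))$, since for a $2$-connected block $g(n_B,m_B)\le f(n_B,m_B)$ (its decomposition is output within its own running time). Altogether this gives time $\mathcal{O}(n+m+f(2n,m))$ and a decomposition with $\mathcal{O}(n+g(2n,m))$ nodes, with residual independence number at most $\max\{2-\ell,k\}$.

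I expect the main point requiring care to be the bookkeeping that produces the factor $2$ in $f(2n,m)$ and $g(2n,m)$: because a cut-vertex lies in several blocks, $\sum_B|V(B)|$ exceeds $n$, and it is precisely the incidence count in the block-cutpoint tree (together with the lower bound $|V(B)|\ge 2$ per block) that caps this sum at $2n$. A secondary subtlety is the handling of the $U$-sets on the bridges, which is what yields the summand $2-\ell$ in the residual bound, and which—as noted in the footnote—can be removed when $k\ge 1$. The remaining steps are routine, given that \cref{reduction-to-atoms} takes the maximum of residual independence numbers and runs in time linear in the sizes of the decompositions being merged.
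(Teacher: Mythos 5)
Your proposal is correct and follows essentially the same route as the paper's proof: compute the block-cutpoint trees via Hopcroft--Tarjan, invoke the hypothesis on the $2$-connected blocks (using heredity), handle bridges and isolated vertices with single-bag decompositions whose $U$-sets absorb up to $\min\{\ell,2\}$ vertices, glue everything with \cref{reduction-to-atoms} along cut-vertices and the empty cutset, and bound the totals via $\sum_B|V(B)|\le 2n$, $\sum_B|E(B)|\le m$, and superadditivity of $f$ and $g$. The only difference is cosmetic (you glue all block decompositions along the block-cutpoint tree, whereas the paper peels off leaf blocks recursively), including the same key incidence count $\sum_B|V(B)| = |V(G_i)|+|\mathcal{B}(G_i)|-1$.
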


\begin{proof}
Let $G \in \mathcal{G}$ be a graph with $n\ge 1$ vertices and $m$ edges.
Using Breadth-First Search and the Hopcroft-Tarjan algorithm, we compute in time $\mathcal{O}(n+m)$ the connected components $G_1,\ldots, G_p$ of $G$ and the corresponding block-cutpoint trees $T_1,\dots,T_p$. Let $\mathcal{B}_2(G_i)$ and $\mathcal{B}_3(G_i)$ denote the sets of blocks of $G_i$ with exactly two, resp.\ at least three, vertices.
For each component $G_i$ we compute an $\ell$-refined tree decomposition $\widehat{\mathcal{T}}_{G_i}$ of $G_i$ recursively as follows.

At each step of the recursion, let $G_i'$ be the current graph and $B$ be a leaf block of $G_i'$.
Initially, we take $G_i' = G_i$.
\begin{enumerate}
\item First, if $G_i' = B$, then we consider the following two cases.

If $B \in \mathcal{B}_2(G_i)$, then $B$ is a complete graph of order at most two.
In this case we compute the $\ell$-refined tree decomposition $\widehat{\mathcal{T}}_{G_i'}$ of $G_i'$ consisting of a single node $t$ whose bag $X_t$ is $V(G_i')$ and the set $U_t$ to be any subset of $V(G_i')$ with $\min\{\ell,2\}$ vertices.
We can compute $\widehat{\mathcal{T}}_{G_i'}$ in constant time; note that this $\ell$-refined tree decomposition of $G_i'$ satisfies
\[\widehat{\alpha}(\widehat{\mathcal{T}}_{G_i'}) = \alpha(G_i'[X_t\setminus U_t]) \le |X_t|-|U_t| = 2-\min\{\ell,2\} = \max\{2-\ell,0\}\le \max\{2-\ell,k\}\,.\]
Otherwise, $B \in \mathcal{B}_3(G_i)$ is $2$-connected, and we compute in time
$f(|V(G_i')|, |E(G_i')|)$ an $\ell$-refined tree decomposition $\widehat{\mathcal{T}}_{G_i'}$ of $G_i'$ with at most
$f(|V(G_i')|, |E(G_i')|)$ nodes and residual independence number at most $k\le \max\{2-\ell,k\}$.

\item Second,  if $G_i'\neq B$, let $(X,Y,Z)$ be a cut-partition where $X = V(B)\setminus\{v\}$, $Y = V(G_i')\setminus V(B)$, and $Z = \{v\}$ for the unique cut-vertex $v$ of $G_i'$ contained in $B$.
We then compute an $\ell$-refined tree decomposition $\widehat{\mathcal{T}}_{G_i'}$ of $G_i'$ by recursively computing $\ell$-refined tree decompositions of the block $B$ and the graph $G_i'[Y\cup Z]$, and applying \cref{reduction-to-atoms} to the cut-partition $(X,Y,Z)$.
\end{enumerate}
This recursive procedure takes time
\[
    \mathcal{O}\left(|\mathcal{B}_2(G_i)|\right)+\sum_{B\in \mathcal{B}_3(G_i)}f(|V(B)|, |E(B)|) \, = \, \mathcal{O}\left(|V(T_i)|+\sum_{B\in \mathcal{B}(G_i)}f(|V(B)|, |E(B)|)\right)\,.
\]
The total number of nodes of $\widehat{\mathcal{T}}_{G_i}$ is upper bounded by
\[|\mathcal{B}_2(G_i)|+\sum_{B\in \mathcal{B}_3(G_i)}g(|V(B)|, |E(B)|) \quad \leq \quad  |V(T_i)|+\sum_{B\in \mathcal{B}(G_i)}g(|V(B)|, |E(B)|)\,.\]

Finally, we create an $\ell$-refined tree decomposition $\widehat{\mathcal{T}}$ of $G$ with residual independence number at most \hbox{$\max\{2-\ell,k\}$} by combining the $\ell$-refined tree decompositions $\widehat{\mathcal{T}}_{G_1},\ldots, \widehat{\mathcal{T}}_{G_p}$ of its connected components in the obvious way (for example, by iteratively applying \cref{reduction-to-atoms} with respect to a sequence of cut-partitions involving the empty clique cutset).
The number of nodes of $\widehat{\mathcal{T}}$ is the sum of the numbers of nodes of $\widehat{\mathcal{T}}_{G_i}$ over all $i\in \{1,\ldots, p\}$.
The overall running time of the algorithm is
\[\mathcal{O}\left(\sum_{i = 1}^p|V(T_i)|+\sum_{i = 1}^p\sum_{B\in \mathcal{B}(G_i)}f(|V(B)|, |E(B)|)\right)\,.\]

An inductive argument based on the number of blocks shows that for every connected graph $H$ with at least one cut-vertex (and thus with at least two blocks), we have $|\mathcal{C}(H)| < |\mathcal{B}(H)|\le |V(H)|-1$.
Thus, for all $i\in \{1,\ldots, p\}$, we have
$|V(T_i)| = |\mathcal{B}(G_i)|+|\mathcal{C}(G_i)|\le 2|V(G_i)|-3$, which implies
$\sum_{i = 1}^p|V(T_i)| = \mathcal{O}(n)$.
Furthermore, for all $i\in \{1,\ldots, p\}$, we have
\begin{align*}
  \sum_{B\in \mathcal{B}(G_i)}|V(B)|
  &= |V(G_i)|+\sum_{v\in \mathcal{C}(G_i)}(d_{T_i}(v)-1)\\
&= |V(G_i)|+|E(T_i)|-|\mathcal{C}(G_i)|\\
&= |V(G_i)|+|V(T_i)|-1-|\mathcal{C}(G_i)|\\
&= |V(G_i)|+|\mathcal{B}(G_i)|-1\\
&\le 2|V(G_i)|-2
\end{align*}
and
\[\sum_{B\in \mathcal{B}(G_i)}|E(B)|\le \sum_{B\in \mathcal{B}(G_i)}|E(B)| = |E(G_i)|\,.\]
Since $f$ is superadditive and non-decreasing in the first coordinate, we have
\begin{align*}
\sum_{B\in \mathcal{B}(G_i)}f(|V(B)|, |E(B)|)
&\le f\left(\sum_{B\in \mathcal{B}(G_i)}|V(B)|, \sum_{B\in \mathcal{B}(G_i)}|E(B)|\right)\\
&\le f\left(2|V(G_i)|-2, |E(G_i)|\right)\,.
\end{align*}
Applying the superadditivity and monotonicity properties of $f$ once again, we obtain
\begin{align*}
\sum_{i = 1}^p f\left(2|V(G_i)|-2, |E(G_i)|\right)
&\le f\left(2\sum_{i = 1}^p|V(G_i)|-2p, \sum_{i = 1}^p|E(G_i)|\right)\\
&\le f\left(2n, m\right),
\end{align*}
and the running time of the algorithm is $\mathcal{O}(n+m+f(2n,m))$, as claimed.

The number of nodes of $\widehat{\mathcal{T}}$ is at most
\[\sum_{i = 1}^p\left(|V(T_i)|+\sum_{B\in \mathcal{B}(G_i)}g(|V(B)|, |E(B)|)\right).\]
Applying the same arguments as we did above for the function $f$ to the function $g$ leads to an upper bound for the number of nodes of $\widehat{\mathcal{T}}$ given by $\mathcal{O}(n+g(2n,m))$.
\end{proof}

\Cref{cor:reduction-to-blocks} leads to a sharp upper bound on the tree-independence number of block-cactus graphs, which is needed in the proof of \cref{dichotomy tin}.
A \emph{block-cactus graph} is a graph every block of which is a cycle or a complete graph.
To derive the upper bound, we first show the following lemma.

\begin{lemma}\label{tin-cycles}
For every integer $n\ge 4$, the tree-independence number of the cycle $C_n$ is exactly $2$.
\end{lemma}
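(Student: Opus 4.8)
The plan is to prove the two inequalities $\tin(C_n)\le 2$ and $\tin(C_n)\ge 2$ separately. For the upper bound I would exhibit an explicit tree decomposition each of whose bags has independence number at most $2$; for the lower bound I would appeal to the characterization of graphs of tree-independence number at most one given by \cref{chordal}, together with the non-chordality of $C_n$.

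For the upper bound, I would label the vertices of $C_n$ as $v_1,\dots,v_n$ in cyclic order and construct a ``fan'' decomposition with $v_1$ as apex: take $T$ to be a path with nodes $t_2,\dots,t_{n-1}$ and set $X_{t_i}=\{v_1,v_i,v_{i+1}\}$ for each $i$. I would then verify the three tree-decomposition axioms. Every vertex occurs in some bag (in particular $v_1$ lies in every bag). Every edge is covered: each edge $v_iv_{i+1}$ with $2\le i\le n-1$ lies in $X_{t_i}$, while the two edges incident to the apex, namely $v_1v_2$ and $v_nv_1$, lie in $X_{t_2}$ and $X_{t_{n-1}}$ respectively. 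Finally, the nodes whose bags contain a fixed vertex $v_i$ form a subpath of $T$: the apex occupies all of $T$, and each other $v_i$ occupies at most the two consecutive nodes $t_{i-1},t_i$. Since every bag contains the adjacent pair $v_i,v_{i+1}$, no bag is an independent set of size $3$, whence $\alpha(C_n[X_{t_i}])\le 2$ for all $i$; this gives $\tin(C_n)\le 2$.

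For the lower bound, I would note that for $n\ge 4$ the cycle $C_n$ is itself an induced cycle of length at least four and is therefore not chordal. By \cref{chordal}, the inequality $\tin(C_n)\le 1$ would force $C_n$ to be chordal, a contradiction; since the tree-independence number is a nonnegative integer, this yields $\tin(C_n)\ge 2$. Combining the two bounds gives $\tin(C_n)=2$, as desired. The argument is essentially routine, and I expect no serious obstacle; the only point requiring care is the verification that the fan decomposition is a genuine tree decomposition, and in particular that the two edges incident to the apex $v_1$ are covered by the extreme bags $X_{t_2}$ and $X_{t_{n-1}}$ of the path.
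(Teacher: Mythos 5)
Your proposal is correct and follows essentially the same route as the paper: the lower bound via \cref{chordal} and non-chordality of $C_n$, and the upper bound via a fan-shaped path decomposition whose bags consist of an apex vertex together with a consecutive pair along the rest of the cycle (the paper uses $v_n$ as the apex with bags $\{v_i,v_{i+1},v_n\}$, which is your construction up to relabeling). The verification details you supply, including the edge-coverage and connectivity checks and the observation that each bag contains an edge and hence has independence number at most $2$, match the paper's argument.
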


\begin{proof}
Since $C_n$ is not a chordal graph, \cref{chordal} implies that $\tin(C_n)\ge 2$.
Let $v_1,\ldots, v_n$ be an order of the vertices of $C_n$ along the cycle.
We construct a tree decomposition $\mathcal T = (T, \{X_t : t \in V(T)\})$ of $C_n$.
The tree $T$ is an $(n-2)$-vertex path $(t_1,\ldots, t_{n-2})$; for each $i\in \{1,\ldots, n-2\}$, bag $X_{t_i}$ consists of vertices $\{v_i,v_{i+1},v_n\}$.
Note that in every bag $X$ there exist two consecutive vertices of $C_n$ in the order, and hence $G[X]$ contains at least one edge.
Furthermore, each bag $X$ contains exactly $3$ vertices.
This readily implies that $\alpha(G[X]) \leq |X| - 1 = 2$.
Hence, $\alpha(\mathcal T) \leq 2$ and consequently $\tin(C_n)\le 2$.
\end{proof}

Following the fact that every complete graph has tree-independence number at most one,
\cref{cor:reduction-to-blocks,tin-cycles} imply the following.

\begin{corollary}\label{cor:block-cactus-graphs}
The tree-independence number of any block-cactus graph is at most $2$.
\end{corollary}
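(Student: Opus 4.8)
The plan is to instantiate \cref{cor:reduction-to-blocks} with $\mathcal{G}$ taken to be the class of block-cactus graphs, using $\ell = 0$ and $k = 2$. With $\ell = 0$ the residual independence number coincides with the ordinary independence number (the underlying tree decomposition is just a tree decomposition), and the corollary's output bound $\max\{2-\ell,k\}$ becomes $\max\{2,2\}=2$. To invoke the corollary I must verify its two hypotheses: that $\mathcal{G}$ is hereditary, and that every $2$-connected graph in $\mathcal{G}$ admits an efficiently computable tree decomposition of independence number at most $2$.

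For heredity, I would show that deleting a vertex $v$ from a block-cactus graph $G$ preserves the defining property. The key observation is that every block $B'$ of $G-v$ on at least three vertices is $2$-connected, hence is a $2$-connected subgraph of $G$ and so is contained in a single block $B$ of $G$; since blocks are induced subgraphs, $B'$ is an induced subgraph of $B$. If $B$ is complete, then the induced subgraph $B'$ is complete, and if $B$ is a cycle $C_n$ with $n \ge 4$, then its only $2$-connected induced subgraph is $C_n$ itself, forcing $B' = B$. Blocks of $G-v$ on at most two vertices are complete. Thus every block of $G-v$ is a cycle or a complete graph, so $\mathcal{G}$ is hereditary.

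For the $2$-connected case, I would note that a $2$-connected block-cactus graph is a single block, hence is exactly a cycle $C_n$ with $n \ge 3$ or a complete graph $K_m$ with $m \ge 3$. Complete graphs are chordal, so by \cref{chordal} they admit a tree decomposition consisting of a single bag, of independence number at most $1 \le 2$; cycles have a tree decomposition of independence number at most $2$ by the construction in the proof of \cref{tin-cycles}. Both decompositions are computable in linear time and have linearly many nodes, so suitable superadditive functions $f$ and $g$ (for instance $f(n,m) = \mathcal{O}(n+m)$ and $g(n,m) = n$) exist, verifying the remaining hypothesis with $k = 2$ and $\ell = 0$.

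With both hypotheses in hand, \cref{cor:reduction-to-blocks} yields, for every block-cactus graph $G$, a tree decomposition of independence number at most $\max\{2-0,2\} = 2$, whence $\tin(G) \le 2$. I expect the only genuinely delicate step to be the heredity verification—specifically the observation that each block of $G-v$ refines into a single block of $G$ and that induced subgraphs of cycles and complete graphs remain within the class—while the rest is a direct substitution into \cref{cor:reduction-to-blocks,tin-cycles,chordal}.
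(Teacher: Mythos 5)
Your proof is correct and takes essentially the same route as the paper, which derives the corollary by combining \cref{cor:reduction-to-blocks} with \cref{tin-cycles} and the fact that complete graphs have tree-independence number at most one. The extra details you supply---heredity of the class of block-cactus graphs and the observation that its $2$-connected members are exactly cycles and complete graphs---are precisely the verifications the paper leaves implicit.
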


\subsection{Preliminaries on SPQR trees}

We use a classical tool for decomposing $2$-connected graphs into triconnected components, the SPQR trees.
The SPQR tree data structure was introduced by Di Battista and Tamassia in 1990~\cite{10.1007/BFb0032061} (see also~\cite{MR1377386,DBLP:conf/gd/GutwengerM00,MR327391,DBLP:conf/latin/ReedL08}); however, the underlying decomposition of a $2$-connected graph into its triconnected components has been used already by Mac Lane in 1937~\cite{MR1546002} and developed in detail by Tutte~\cite{MR0210617,MR746795} (see also~\cite{MR586989,MR3577069,MR3385727,MR3213840} for generalizations).
Most applications of SPQR trees in the literature are related to questions involving planarity~\cite{MR1408894,MR1546002,MR973566}, the Steiner tree problem~\cite{MR973566}, and graph drawing~\cite{MR2080688}.
More recent applications also involve improved recognition algorithms of certain well-structured graph classes~\cite{MR3804773} and layered pathwidth in minor-closed graph classes~\cite{MR4129002}.

We use the notation of Dujmovi\'{c}, Eppstein, Joret, Morin, and Wood~\cite{MR4129002}.
Given a $2$-connected graph $G$, an \emph{SPQR tree} of $G$ is a tree $S$ with three types of nodes: \textnormal{P}-nodes, \textnormal{R}-nodes, and \textnormal{S}-nodes.
Each node $a$ is associated with a set $X_a\subseteq V(G)$.
If $a$ is a \textnormal{P}-node, then $X_a$ is a $2$-cutset in $G$.
If $a$ is an \textnormal{R}-node or an \textnormal{S}-node, then $X_a$ is the vertex set of a graph $G_a$ associated to node $a$.
For every \textnormal{S}-node $a$, the graph $G_a$ is a cycle, while for every \textnormal{R}-node $a$, the corresponding graph $G_a$ is $3$-connected.
Every edge in $S$ has exactly one endpoint that is a \textnormal{P}-node.
For every two adjacent nodes $a$ and $b$ of $S$ such that $b$ is a \textnormal{P}-node, the two vertices in $X_b$ are adjacent in $G_a$.
An edge $uv$ in $G_a$ such that $uv \notin E(G)$ is called a \emph{virtual edge}; otherwise, it is called a \emph{real edge}.
For every virtual edge $uv$ in $G_a$ there exists a \textnormal{P}-node $b$ adjacent to $a$ in $S$ such that $X_b = \{u,v\}$.
No two \textnormal{P}-nodes are associated with the same $2$-cutset of $G$.
We denote the sets of all \textnormal{P}-nodes, \textnormal{R}-nodes, and \textnormal{S}-nodes of $S$ by $N_{\text{P}}(S)$, $N_{\text{R}}(S)$, and $N_{\text{S}}(S)$, or simply by $N_{\text{P}}$, $N_{\text{R}}$, and $N_{\text{S}}$, respectively, if the SPQR tree $S$ is clear from the context.
An SPQR tree need not be unique.
For example, if $G$ is the cycle $C_n$, then an SPQR tree of $G$ can consist of a single node $a$, which is an \textnormal{S}-node such that $G_a = G$.
At the other extreme, an SPQR tree of $G$ could be a tree with as many as $n-2$ \textnormal{S}-nodes, each with the corresponding graph $G_a$ isomorphic to the cycle $C_3$, and these \textnormal{S}-nodes are connected to each other via $n-3$ \textnormal{P}-nodes each corresponding to a virtual edge.
There are also many other intermediate possibilities.
However, if we consider an SPQR tree of $G$ with the minimum total number of nodes, then such a tree $S$ can be uniquely obtained, by a simple transformation, from a rooted SPQR tree as originally defined by Di~Battista and Tamassia~\cite{10.1007/BFb0032061}.
In this case, the graphs $G_a$ where $a$ is an \textnormal{R}- or \textnormal{S}-node are called \emph{triconnected components} of $G$ and are uniquely defined (see~\cite{MR1377386,MR1546002,MR327391}).
From now on, we assume this uniqueness property and refer to \emph{the} SPQR tree of $G$.

Thus, given a $2$-connected graph $G$, the SPQR tree $S$ of $G$ represents the collection of triconnected components of $G$ that are joined at $2$-cutsets (\textnormal{P}-nodes).
This structure can be described recursively as follows.
The SPQR tree $S$ is a single-vertex tree with a node $a$ if and only if $G$ is either $3$-connected, in which case $a$ is an \textnormal{R}-node, or $G$ is a cycle, in which case $a$ is an \textnormal{S}-node; in both cases, $G_a = G$.
Otherwise, $S$ has a leaf node $a$ that is either an \textnormal{R}-node or an \textnormal{S}-node such that its unique neighbor $b$ in $S$ is a \textnormal{P}-node.
The \textnormal{P}-node $b$ corresponds to a $2$-cutset $X_b$ of $G$ separating $X_a\setminus X_b$ from $V(G)\setminus X_a$.
Furthermore, the tree $S'$ obtained from $S$ by deleting $a$, as well as its neighbor $b$ if it becomes a leaf, is the SPQR tree of the $2$-connected graph $G'$ obtained from $G$ by deleting the vertices in $X_a\setminus X_b$ and adding the edge between the two vertices in $X_b$ if it does not yet exist.
In particular, since there exists a path in $G$ connecting the two vertices of $X_b$ and having all internal vertices in $V(G_a)\setminus X_b$, there exists a subdivision of $G'$ that is an induced subgraph of $G$ (hence, $G'$ is an induced topological minor of $G$).
For each \textnormal{R}-node or \textnormal{S}-node $a'$ of $S'$, the graph $G_{a'}$ associated with node $a'$ is the same as the corresponding graph associated with $a'$ in $S$.
An inductive argument on the number of nodes of the SPQR tree shows the following.

\begin{lemma}\label{G_a-induced-topological-minors}
Let $G$ be a $2$-connected graph and $S$ the SPQR tree of $G$.
For each node $a$ of $S$ that is either an \textnormal{R}-node or an \textnormal{S}-node, the graph $G_a$ is an induced topological minor of $G$.
\end{lemma}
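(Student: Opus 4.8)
The plan is to argue by induction on the number of nodes of the SPQR tree $S$, making direct use of the recursive description of $S$ recalled just above the lemma. In the base case $|V(S)| = 1$, the graph $G$ is either $3$-connected or a cycle, the unique node $a$ is correspondingly an \textnormal{R}-node or an \textnormal{S}-node, and $G_a = G$; since every graph is an induced topological minor of itself, the claim holds trivially.

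For the inductive step I would assume $|V(S)| \geq 2$ and that the statement holds for every $2$-connected graph whose SPQR tree has fewer nodes. By the recursive description, $S$ has a leaf node $c$ that is an \textnormal{R}- or \textnormal{S}-node whose unique neighbor $b$ is a \textnormal{P}-node with $X_b = \{u,v\}$, and deleting $c$ (together with $b$ if it becomes a leaf) yields the SPQR tree $S'$ of the $2$-connected graph $G'$ obtained from $G$ by deleting $X_c \setminus X_b$ and adding the edge $uv$ if absent. Two facts are already available: $G'$ is an induced topological minor of $G$, and for every \textnormal{R}- or \textnormal{S}-node $a'$ of $S'$ the associated graph $G_{a'}$ is unchanged. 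Now let $a$ be an arbitrary \textnormal{R}- or \textnormal{S}-node of $S$. If $a \neq c$, then $a$ is an \textnormal{R}- or \textnormal{S}-node of $S'$, so by the induction hypothesis applied to $G'$ the graph $G_a$ is an induced topological minor of $G'$; since this relation is transitive and $G'$ is an induced topological minor of $G$, it follows that $G_a$ is an induced topological minor of $G$.

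The remaining and more delicate case is $a = c$. Since $c$ is a leaf adjacent only to the \textnormal{P}-node $b$, and distinct virtual edges of $G_c$ would correspond to distinct \textnormal{P}-neighbors of $c$, the graph $G_c$ has exactly one virtual edge, namely $uv$, all its other edges being real edges of $G$; as $uv$ is virtual, $u$ and $v$ are non-adjacent in $G$. I would realize $G_c$ as an induced subdivision inside $G$ by keeping the vertex set $X_c$ together with all real edges, and replacing the virtual edge $uv$ by an induced $u,v$-path. Because $|V(S)| \geq 2$ we have $V(G) \setminus X_c \neq \emptyset$, and since $X_b = \{u,v\}$ separates $X_c \setminus X_b$ from $V(G)\setminus X_c$, there is a component $D$ of $G - \{u,v\}$ contained in $V(G) \setminus X_c$. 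As $G$ is $2$-connected, each of $u$ and $v$ has a neighbor in $D$ (otherwise a single vertex would separate $D$ from the rest), so a shortest $u,v$-path $Q$ with interior in $D$ is induced. The union of the real edges of $G_c$ with $Q$ is then an induced subdivision of $G_c$ in $G$, completing the induction.

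The main obstacle I anticipate is precisely this leaf case: one must check that $Q$, routed through the far side of the $2$-cutset and glued to the real edges of $G_c$, induces a genuine subdivision of $G_c$ rather than a graph carrying extra chords. This is exactly where both $2$-connectivity (guaranteeing neighbors of $u$ and $v$ inside $D$, so that $Q$ exists) and the separation property of $X_b$ (forbidding the interior of $Q$ from seeing $X_c \setminus X_b$) are needed; the case $a \neq c$, by contrast, is a routine transitivity argument built on the reduction already described.
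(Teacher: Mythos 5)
Your overall strategy---induction on the number of nodes of $S$ via the recursive leaf-peeling description, with transitivity of the induced topological minor relation disposing of every node that survives into $S'$---is exactly the argument the paper intends (the paper gives only the one-line pointer ``an inductive argument on the number of nodes of the SPQR tree''). The gap is in your leaf case $a=c$, specifically in the claim that ``$G_c$ has \emph{exactly} one virtual edge, namely $uv$,'' so that $u$ and $v$ are non-adjacent in $G$. The SPQR tree properties only give \emph{at most} one virtual edge: the edge of $G_c$ joining the two vertices of $X_b$ is present because $b$ is a P-neighbor of $c$, but it is virtual only when $uv\notin E(G)$; it can perfectly well be real. For instance, take $G=K_4^-$, i.e., two triangles glued along the edge $uv$: its SPQR tree is a path $a_1$--$b$--$a_2$ with two leaf S-nodes whose triangles both contain $uv$ as a real edge. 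In that situation your construction breaks down: the component $D$ on the far side and the path $Q$ through it exist just as you say, but $G[X_c\cup V(Q)]$ contains the chord $uv$ and is therefore not a subdivision of $G_c$ (it is $G_c$ plus an extra $u,v$-path).

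The repair is easy, in either of two ways. One option is to split the leaf case: if $uv\in E(G)$ then $G_c$ has no virtual edges at all, and since $G[X_c]$ is a spanning subgraph of $G_c$ whose edges are precisely the real edges of $G_c$ (a property of the decomposition recorded at the start of \cref{triconnected}, and one you already use implicitly when asserting that the real edges of $G_c$ together with $Q$ induce a subdivision), we get $G_c=G[X_c]$, an induced subgraph of $G$; if $uv\notin E(G)$, your construction goes through verbatim. The other option, closer to the paper's intent, is to avoid the case $a=c$ altogether: every leaf of $S$ is an R- or S-node (a P-node cannot be a leaf, since its $2$-cutset must have parts of $G$ on both sides represented in the tree), and a tree with more than one node has at least two leaves, so in the inductive step you may always peel a leaf $c\neq a$; then only your routine transitivity case is ever needed.
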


The SPQR tree of any graph $G$ has $\mathcal{O}(|V(G)|)$ nodes (see~\cite{MR1377386}).
As shown by Gutwenger and Mutzel~\cite{DBLP:conf/gd/GutwengerM00}, the SPQR tree of $G$ can be computed in linear time.
The algorithm by Gutwenger and Mutzel computes a slightly different and rooted variant of SPQR tree, but that tree can easily be turned into the SPQR tree with $\mathcal{O}(|V(G)|)$ nodes and satisfying the properties from our definition in linear time.
Note that the SPQR tree $S$ of a graph $G$ naturally defines a tree decomposition $\mathcal{T}$ of $G$ simply by assigning to each node $a\in V(S)$ the set $X_a$ as the corresponding bag.

For a graph $G$ we denote by $\eta(G)$ the \emph{Hadwiger number} of $G$, that is, the maximum integer $p$ such that $G$ contains $K_p$ as a minor.
Since the Hadwiger number is monotone under taking minors, we readily obtain the following inequality (which we will use in the following sections).

\begin{corollary}\label{corollary-eta}
    Let $G$ be a graph, $S$ the SPQR tree of $G$, and $a$ an \textnormal{R}-node or \textnormal{S}-node of $S$.
    Then
    $\eta(G) \geq \eta(G_a)$.
\end{corollary}

\begin{lemma}\label{SPQR-sum-of-bags}
Let $G$ be a $2$-connected $n$-vertex graph and $S$ the SPQR tree of $G$.
Then
\[
\sum_{a\in N_{\text{R}}\cup N_{\text{S}}}|X_a| \le 3n-6
 \hspace{1cm}\text{~and~}\hspace{1cm}\sum_{a\in N_{\text{R}}}|X_a| \le 2n-4\,.
\]
\end{lemma}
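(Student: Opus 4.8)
The plan is to prove both inequalities simultaneously by induction on the number of nodes of $S$, using the recursive description of the SPQR tree recalled just before \cref{G_a-induced-topological-minors}. Write $n=|V(G)|$; since $G$ is $2$-connected we have $n\ge 3$. For the base case, suppose $S$ consists of a single node $a$. If $a$ is an \textnormal{S}-node, then $G_a=G$ is a cycle, so $|X_a|=n\ge 3$; the two left-hand sides equal $n$ and $0$, and both $n\le 3n-6$ and $0\le 2n-4$ hold for $n\ge 3$. If $a$ is an \textnormal{R}-node, then $G$ is $3$-connected, so $n\ge 4$ and $|X_a|=n$; now both $n\le 3n-6$ and $n\le 2n-4$ hold for $n\ge 4$.

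For the inductive step I would take a leaf node $a$ of $S$ that is an \textnormal{R}- or \textnormal{S}-node, with unique \textnormal{P}-node neighbor $b$, and set $k=|X_a|$. As described in the text, deleting $a$ (and $b$ if it becomes a leaf) yields the SPQR tree $S'$ of the $2$-connected graph $G'$ obtained from $G$ by removing the $k-2$ vertices of $X_a\setminus X_b$ and adding the edge inside $X_b$; moreover $G_{a'}$, and hence $|X_{a'}|$, is unchanged for every \textnormal{R}- or \textnormal{S}-node $a'$ of $S'$. Since $a$ is a proper leaf we have $V(G)\setminus X_a\neq\emptyset$, so $k<n$ and thus $n':=|V(G')|=n-(k-2)\ge 3$, making the induction hypothesis applicable to $G'$. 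The \textnormal{R}/\textnormal{S}-nodes of $S$ are exactly those of $S'$ together with $a$, with bag sizes preserved, so
\[
\sum_{a'\in N_{\text{R}}(S)\cup N_{\text{S}}(S)}|X_{a'}|
= |X_a| + \sum_{a'\in N_{\text{R}}(S')\cup N_{\text{S}}(S')}|X_{a'}|
\le k + (3n'-6) = 3n-2k,
\]
and the bound $3n-6$ follows because $k\ge 3$ (a cycle has at least three vertices and a $3$-connected graph at least four).

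For the second inequality I would distinguish the type of $a$. If $a$ is an \textnormal{S}-node, the \textnormal{R}-sum is unchanged, and since $n'\le n$ the induction hypothesis gives $\sum_{a'\in N_{\text{R}}(S)}|X_{a'}|=\sum_{a'\in N_{\text{R}}(S')}|X_{a'}|\le 2n'-4\le 2n-4$. If $a$ is an \textnormal{R}-node, then $k\ge 4$ and $\sum_{a'\in N_{\text{R}}(S)}|X_{a'}|=k+\sum_{a'\in N_{\text{R}}(S')}|X_{a'}|\le k+(2n'-4)=2n-k\le 2n-4$.

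The only point requiring care is the bookkeeping in the inductive step: one must be certain that peeling off the leaf \textnormal{R}/\textnormal{S}-node deletes exactly $k-2$ vertices of $G$ while leaving every other triconnected component (and therefore its bag size) untouched. This is precisely the content of the recursive description of the SPQR tree, so no genuinely new difficulty arises, and the slack in both inequalities is governed entirely by the uniform lower bounds $k\ge 3$ (in general) and $k\ge 4$ (for \textnormal{R}-nodes). I do not anticipate any substantive obstacle beyond keeping this accounting straight.
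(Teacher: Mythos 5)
Your proof is correct and follows essentially the same route as the paper's: induction on the number of nodes of the SPQR tree, peeling off a leaf \textnormal{R}- or \textnormal{S}-node together with its \textnormal{P}-node neighbor, and using the bounds $|X_a|\ge 3$ in general and $|X_a|\ge 4$ for \textnormal{R}-nodes to absorb the slack. The only differences are cosmetic (you additionally verify $n'\ge 3$ and $k<n$, which the paper leaves implicit), so there is nothing to add.
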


\begin{proof}
We use induction on the number of nodes of $S$.
Assume first that $S$ is a single-vertex tree.
Then $G = G_\ell$ for the unique \textnormal{R}-node or \textnormal{S}-node $\ell$ of $S$, and $|X_\ell| = n$.
If $\ell$ is an \textnormal{R}-node, then $n\ge 4$ and thus $n\le 2n-4$.
If $\ell$ is an \textnormal{S}-node, then $n\ge 3$ and thus $n\le 3n-6$.

Consider now the general case.
Then $S$ has a leaf node $\ell$ that is either an \textnormal{R}-node or an \textnormal{S}-node such that its unique neighbor $p$ in $S$ is a \textnormal{P}-node.
Let $G'$ be the graph obtained from $G$ by deleting the vertices in $X_\ell\setminus X_p$ and adding the edge between the two vertices in $X_p$ if it does not yet exist, and let $S'$ be the tree obtained from $S$ by deleting $\ell$ as well as its neighbor $p$ if it becomes a leaf.
Then $S'$ is the SPQR tree of $G'$.
Let $n'= |V(G')|$.
By the induction hypothesis, we have
\[\sum_{a\in N_{\text{R}}(S')\cup N_{\text{S}}(S')}|X_a| \le 3n'-6\hspace{1cm}\text{~and~}\hspace{1cm}\sum_{a\in N_{\text{R}}(S')}|X_a| \le 2n'-4\,.\]
Note that
\[\sum_{a\in N_{\text{R}}(S)\cup N_{\text{S}}(S)}|X_a| = \sum_{a\in N_{\text{R}}(S')\cup N_{\text{S}}(S')}|X_a| + |X_\ell|\]
and
\[\sum_{a\in N_{\text{R}}(S)}|X_a| \le \sum_{a\in N_{\text{R}}(S')}|X_a| + |X_\ell|\,.\]
Furthermore, we have
\[n' = n - |X_\ell\setminus X_p|
= n - |X_\ell|+2\,.\]
Since $|X_\ell|\ge 3$, we thus obtain
\begin{align*}
\sum_{a\in N_{\text{R}}(S)\cup N_{\text{S}}(S)}|X_a| &=  \sum_{a\in N_{\text{R}}(S')\cup N_{\text{S}}(S')}|X_a| + |X_\ell|\\
&\le 3n' - 6+|X_\ell|\\
& = 3(n - |X_\ell|+2)-6 + |X_\ell|\\
& = 3n - 2|X_\ell|\\
&\le 3n - 6\,.
\end{align*}
Furthermore, if $\ell$ is an \textnormal{S}-node, then
\begin{align*}
\sum_{a\in N_{\text{R}}(S)}|X_a| = \sum_{a\in N_{\text{R}}(S')}|X_a|
\le 2n' - 4
\le 2n - 4\,,
\end{align*}
and if $\ell$ is an \textnormal{R}-node, then $|X_\ell|\ge 4$ and thus
\begin{align*}
\sum_{a\in N_{\text{R}}(S)}|X_a| &\le  \sum_{a\in N_{\text{R}}(S')}|X_a| + |X_\ell|\\
&\le 2n' - 4+|X_\ell|\\
& = 2(n - |X_\ell| + 2) - 4 + |X_\ell|\\
& = 2n - |X_\ell|\\
&\le 2n - 4\,.\qedhere
\end{align*}
\end{proof}

Additionally, we use the following bound on the total number of edges in the graphs $G_a$.

\begin{lemma}[Hopcroft and Tarjan~\cite{MR327391}]\label{SPQR-sum-of-edges}
Let $G$ be a $2$-connected graph and $S$ the SPQR tree of $G$.
Then \[\sum_{a\in N_{\text{R}}\cup N_{\text{S}}}|E(G_a)| \le 3|E(G)|-6\,.\]
\end{lemma}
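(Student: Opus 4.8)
The plan is to mirror the inductive argument used for \cref{SPQR-sum-of-bags}, running the induction on the number of nodes of the \textnormal{SPQR} tree $S$ and peeling off one leaf at a time. For the base case, when $S$ is a single-vertex tree, $G$ equals $G_\ell$ for the unique \textnormal{R}- or \textnormal{S}-node $\ell$, there are no \textnormal{P}-nodes and hence no virtual edges, so $\sum_{a}|E(G_a)| = |E(G)|$; since $G$ is $2$-connected it has at least three edges, whence $|E(G)|\le 3|E(G)|-6$, with equality for a triangle.

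For the inductive step, I would take a leaf node $\ell$ that is an \textnormal{R}- or \textnormal{S}-node, whose unique neighbor $p$ is necessarily a \textnormal{P}-node, and form $G'$ and $S'$ exactly as in the recursive description preceding \cref{SPQR-sum-of-bags}: delete the vertices in $X_\ell\setminus X_p$ from $G$ and add the edge between the two poles of $X_p$ if it is missing, so that $S'$ is the \textnormal{SPQR} tree of the $2$-connected graph $G'$ and the graphs $G_{a'}$ attached to the surviving \textnormal{R}- and \textnormal{S}-nodes are unchanged. Consequently $\sum_{a\in N_{\text{R}}(S)\cup N_{\text{S}}(S)}|E(G_a)| = \sum_{a\in N_{\text{R}}(S')\cup N_{\text{S}}(S')}|E(G_a)| + |E(G_\ell)|$, and by the induction hypothesis the first sum on the right is at most $3|E(G')|-6$.

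The crux is an edge-counting comparison between $G$, $G'$, and $G_\ell$. Since $\ell$ is a leaf it has a single \textnormal{P}-neighbor, so $G_\ell$ contains exactly one virtual edge; writing $R_\ell$ for its number of real edges (equivalently, the number of edges of $G$ with both endpoints in $V(G_\ell)$), this gives $|E(G_\ell)| = R_\ell + 1$. Passing from $G$ to $G'$ removes exactly the edges of $G$ incident to $X_\ell\setminus X_p$---all of which lie inside $V(G_\ell)$, because $X_p$ is a $2$-cutset separating $X_\ell\setminus X_p$ from the rest of $G$---and then reinserts the pole edge only if it was absent. A short case analysis on whether the two poles of $X_p$ are already adjacent in $G$ shows that $|E(G)|-|E(G')|\ge R_\ell-1$; I would carry out this analysis carefully, since it is the one place where the possible presence of a real edge between the two poles must be accounted for.

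Combining the pieces, the desired bound $3|E(G')|-6+|E(G_\ell)|\le 3|E(G)|-6$ follows once $|E(G_\ell)|\le 3\bigl(|E(G)|-|E(G')|\bigr)$, and since $|E(G)|-|E(G')|\ge R_\ell-1$ it suffices to verify $R_\ell+1\le 3(R_\ell-1)$, that is, $R_\ell\ge 2$. This holds in both cases: an \textnormal{S}-node leaf has $G_\ell\cong C_k$ with $k\ge 3$, so $R_\ell=k-1\ge 2$, while an \textnormal{R}-node leaf has $G_\ell$ that is $3$-connected, hence with at least four vertices and six edges, giving $R_\ell\ge 5$. I expect the induction itself and the two base estimates to be entirely routine; the main obstacle is the edge-bookkeeping inequality $|E(G)|-|E(G')|\ge R_\ell-1$, which is exactly where the structure of the \textnormal{SPQR} decomposition---one virtual edge per leaf and the $2$-cutset separation property---is actually used.
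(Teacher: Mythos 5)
Your proof is correct in substance, but it takes a genuinely different route from the paper, for the simple reason that the paper contains no proof of \cref{SPQR-sum-of-edges} at all: the bound is imported by citation from Hopcroft and Tarjan~\cite{MR327391}. You instead give a self-contained induction modelled on the paper's own proof of \cref{SPQR-sum-of-bags}, peeling a leaf \textnormal{R}- or \textnormal{S}-node off the SPQR tree. This is a reasonable trade: the citation outsources the work, but Hopcroft--Tarjan's triconnected components are defined slightly differently from the SPQR formulation used in this paper (in their decomposition every edge of $G$ lies in exactly one component, with multi-edge bonds split off as separate components, so a real edge between the poles of a $2$-cutset never sits inside a cycle or $3$-connected component), so a direct argument carried out entirely within the paper's conventions, as yours is, actually closes a small definitional gap that the bare citation leaves open.

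One imprecision should be repaired, though it is harmless. A leaf node $\ell$ need not have \emph{exactly} one virtual edge, only \emph{at most} one: the edge of $G_\ell$ joining the two vertices of $X_p$ is virtual only when those poles are non-adjacent in $G$. If they are adjacent (for example, $G$ a theta graph together with the chord between its two branch vertices), that edge is real, and then $|E(G_\ell)| = R_\ell$ rather than $R_\ell+1$; likewise, for an \textnormal{S}-node leaf one only has $R_\ell \ge k-1$, not $R_\ell = k-1$. So your displayed identities should be the inequalities $|E(G_\ell)| \le R_\ell + 1$ and $R_\ell \ge k-1$. Since your final step uses these quantities only as upper bounds---you need $|E(G_\ell)| \le 3\bigl(|E(G)|-|E(G')|\bigr)$, and $R_\ell+1 \le 3(R_\ell-1)$ whenever $R_\ell \ge 2$---the argument survives unchanged. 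In fact, the case analysis you defer yields the exact identity $|E(G)|-|E(G')| = R_\ell - 1$ in both cases (when the poles are adjacent, $R_\ell-1$ edges are deleted and none added; when they are not, $R_\ell$ are deleted and one is added), and $R_\ell \ge 2$ holds for \textnormal{S}-node leaves ($R_\ell \ge k-1 \ge 2$) as well as for \textnormal{R}-node leaves ($R_\ell \ge 6-1 = 5$), which is all the arithmetic requires.
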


\subsection{Reducing the problem to triconnected components}\label{triconnected}

Let us first observe that even if every triconnected component of a graph has bounded independence number, the graph does not necessarily have bounded tree-independence number.

\begin{example}\label{example subdivided Kn}
    Let $n \geq 4$ and let $G$ be the graph obtained from a complete graph $K_n$ by subdividing every edge once.
    Observe that the SPQR tree $S$ of $G$ contains a unique \textnormal{R}-node $a$ with $G_a \cong K_n$ to which every \textnormal{P}-node is adjacent, where the \textnormal{P}-nodes correspond to the $2$-cutsets that separate the vertices with degree $2$ from the rest of the graph (see \cref{fig:my_label}).
    In particular, every \textnormal{P}-node $a$ is adjacent to exactly one \textnormal{S}-node $a'$, and $a'$ is a leaf of the SPQR tree $S$ such that the corresponding graph $G_{a'}$ is a cycle on $3$ vertices.
    Hence, every triconnected component of $G$ has independence number~$1$.
    However, $\tin(G) \ge \floor{n/2}$.
    Indeed, since $G$ contains every graph with at most $n$ vertices as an induced topological minor, this in particular implies that the complete bipartite graph $K_{\floor{n/2},\floor{n/2}}$ is an induced topological minor of $G$.
    By \cref{tin-of-Knn,lem:tree-indepencence number induced-minor}, we obtain that $\tin(G) \ge \tin(K_{\floor{n/2},\floor{n/2}}) = \floor{n/2}$.
\end{example}

\begin{figure}[ht]
    \centering
\begin{tikzpicture}[]
\tikzset{every node/.style={draw,circle,fill=black,inner sep=0pt,minimum size=4.5pt}}

\begin{scope}[xscale=0.75]
    \node[label={below:$\mathstrut a$}] (a) at (-1.5,0) {};
    \node[label={below:$\mathstrut b$}] (b) at (-0.5,0) {};
    \node[label={below:$\mathstrut c$}] (c) at (0.5,0) {};
    \node[label={below:$\mathstrut d$}] (d) at (1.5,0) {};

    \node (ab) at (-2.5,2) {};
    \node (ac) at (-1.5,2) {};
    \node (ad) at (-0.5,2) {};

    \draw (a) to (ab) to (b)
          (a) to (ac) to (c)
          (a) to (ad) to (d);

    \node (bc) at (0.5,2) {};
    \node (bd) at (1.5,2) {};

    \draw (b) to (bc) to (c)
          (b) to (bd) to (d);

    \node (cd) at (2.5,2) {};

    \draw (c) to (cd) to (d);
    \end{scope}

\tikzset{
  pics/Pnode/.style n args={3}{
    code = {
    \node[fill=none,ellipse,inner sep=2.5pt] (P#2#3) at (#1,1.75) {
    \begin{tikzpicture}[anchor=center]
    \node[draw=none,fill=none,rectangle] at (-0.2,0) {$\mathstrut #2$};
    \node[draw=none,fill=none,rectangle] at (0.2,0) {$\mathstrut #3$};
    \end{tikzpicture}
    };
    \draw (P#2#3) to ($(R)+(#1/3,0)$);
  }
}
}

\tikzset{
  pics/Snode/.style n args={3}{
    code = {
    \node[fill=none,ellipse] (S#2#3) at (#1,3.35) {
    \begin{tikzpicture}[anchor=center]
    \node[label={below:$\mathstrut #2$}] (P#2#3#2) at (-0.25,0) {};
    \node[label={below:$\mathstrut #3$}] (P#2#3#3) at (0.25,0) {};
    \node (P#2#3x) at (0,0.25) {};
    \draw (P#2#3#2) to (P#2#3#3) to (P#2#3x) to (P#2#3#2);
    \end{tikzpicture}
    };
    \draw (P#2#3) to (S#2#3);
  }
}
}

\begin{scope}[xshift=7.5cm]
    \coordinate (R) at (0,0);

    \draw pic {Pnode={-3.75}{a}{b}};
    \draw pic {Pnode={-2.25}{a}{c}};
    \draw pic {Pnode={-0.75}{a}{d}};
    \draw pic {Pnode={0.75}{b}{c}};
    \draw pic {Pnode={2.25}{b}{d}};
    \draw pic {Pnode={3.75}{c}{d}};

    \draw pic {Snode={-3.75}{a}{b}};
    \draw pic {Snode={-2.25}{a}{c}};
    \draw pic {Snode={-0.75}{a}{d}};
    \draw pic {Snode={0.75}{b}{c}};
    \draw pic {Snode={2.25}{b}{d}};
    \draw pic {Snode={3.75}{c}{d}};

    \draw [decorate,decoration={brace,amplitude=5pt},xshift=5pt,yshift=0pt] (4.55,3.90) -- (4.55,2.80) node [draw=none,fill=none,rectangle,midway,xshift=7pt,label={[align=left]right:{S-nodes}}] {};

    \draw [decorate,decoration={brace,amplitude=5pt},xshift=5pt,yshift=0pt] (4.55,2.1) -- (4.55,1.4) node [draw=none,fill=none,rectangle,midway,xshift=7pt,label={[align=left]right:{P-nodes}}] {};

    \node[rectangle,fill=none,draw=none] at (3.25,0) {R-node};

    \node[fill=white,ellipse] (R) at (0,0) {
    \begin{tikzpicture}[anchor=center]
    \node[label={below:$\mathstrut a$}] (a) at (-1.5,0) {};
    \node[label={below:$\mathstrut b$}] (b) at (-0.5,0) {};
    \node[label={below:$\mathstrut c$}] (c) at (0.5,0) {};
    \node[label={below:$\mathstrut d$}] (d) at (1.5,0) {};
    \draw (a) to (b)
          (a) to[bend left=30] (c)
          (a) to[bend left=45] (d)
          (b) to (c)
          (b) to[bend left=30] (d)
          (c) to (d);
    \end{tikzpicture}};

\end{scope}
\end{tikzpicture}
    \caption{On the left, the graph $G$ obtained from subdividing every edge of $K_4$ once, as in \cref{example subdivided Kn}. On the right, a schematic representation of the SPQR tree of $G$.}
    \label{fig:my_label}
\end{figure}

As indicated by \cref{example subdivided Kn}, the reason for the increase in the tree-independence number of a graph when compared to the tree-independence numbers of its triconnected components is due to the fact that subdividing edges can increase the tree-independence number.
Informally speaking, this is due to the possibly complicated structure of the virtual edges present in the triconnected components of the graph.

Let us explain this in more detail.
Given a $2$-connected graph $G$ and the SPQR tree $S$ of $G$, as observed in \cref{G_a-induced-topological-minors}, for every \textnormal{R}-node $a$ of $S$, the triconnected component $G_a$ of $G$ is an induced topological minor of $G$.
Furthermore, the subgraph of $G$ induced by $V(G_a)$ is a spanning subgraph of $G_a$ and every edge in $G_a$ that is not an edge of $G$ is a virtual edge.
Assuming that $G_a$ has bounded tree-independence number, we would like to infer a similar conclusion for the subgraph $G'$ of $G$ induced by $V(G_a)$.
We can do this whenever the set $F$ of virtual edges within $G_a$ can be covered with a bounded number $\ell$ of vertices (see \cref{def:F-cover}).
Indeed, deleting any such set of vertices from $G_a$ results in an induced subgraph of $G$, which means that this part of $G$ will have bounded tree-independence number.
Since only a bounded number of vertices is deleted, the tree-independence number of the subgraph of $G$ induced by $V(G_a)$ will remain bounded.

\begin{definition}\label{def:F-cover}
Let $G$ be a graph, $F^*\subseteq E(G)$ and $G'$ a subgraph of $G$.
We say that a set $U\subseteq V(G')$ is an \emph{$F^*\!$-cover} of $G'$ if $U$ contains at least one endpoint of every edge in $E(G') \cap F^*$.
\end{definition}

The meaning and usefulness of the following definitions will become clearer later, in the proof of \cref{lem:sufficient}.

\begin{sloppypar}
\begin{definition}\label{def:F-compatible ell-refined tree decomposition}
Let $G$ be graph and $F^* \subseteq E(G)$.
An $\ell$-refined tree decomposition $\widehat{\mathcal{T}} = (T,\{(X_t,U_t)_{t \in V(T)}\})$ of $G$ is said to be \emph{$F^*\!$-covering} if
for every node $t \in V(T)$ the set $U_t$ is an $F^*\!$-cover of $G[X_t]$.
\end{definition}
\end{sloppypar}

\begin{definition}\label{def:F-safe}
Let $\mathcal G$ be a graph class, let $G$ be a graph in $\mathcal G$, and let $F^* \subseteq E(G)$.
We say that $F^*$ is a \emph{$\mathcal G$-safe} set of edges of $G$ if deleting from $G$ any subset of $F^*$ results in a graph that belongs to $\mathcal G$.
\end{definition}

\begin{definition}
Let $G$ be a graph, $F \subseteq E(G)$, and $\mathcal T$ a tree decomposition (either usual or $\ell$-refined) of $G$.
An \emph{$F$-mapping} of $\mathcal T$ is a mapping with domain $F$ assigning to every edge $e\in F$ a node $t_e\in V(T)$ such that $e\subseteq X_{t_e}$.
\end{definition}

Note that for any graph $G$, any set $F \subseteq E(G)$, and any (usual or $\ell$-refined) tree decomposition $\mathcal T$ of $G$, there exists an $F$-mapping of $\mathcal T$, since the endpoints of every edge in $G$ are contained in some bag of~$\mathcal T$.
Furthermore, it is clear that an $F$-mapping of $\mathcal T$ can be computed in polynomial time.
However, for a more efficient computation of an $F$-mapping of $\mathcal T$, it may be best to compute it together with $\mathcal T$.
To this end, we consider the following property of a graph class and an associated problem.

\begin{definition}
Given two nonnegative integers $k$ and $\ell$, a graph class $\mathcal{G}$ is said to be \emph{$(k,\ell)$-tree decomposable} if for every $3$-connected graph $G \in \mathcal{G}$ and any $\mathcal{G}$-safe set $F^*$ of edges of $G$, there exists an \hbox{$F^*\!$-covering} $\ell$-refined tree decomposition of $G$ with residual independence number at most $k$.
\end{definition}

\problemdef{$(k,\ell)$-Tree Decomposition($ \mathcal{G})$}{A $3$-connected graph $G \in \mathcal{G}$ and two sets of edges $F^*\subseteq F\subseteq E(G)$ such that $F^*$ is $\mathcal{G}$-safe.}{An $F^*\!$-covering $\ell$-refined tree decomposition $\widehat{\mathcal{T}}$ of $G$ with residual independence number at most $k$ and an $F$-mapping of $\widehat{\mathcal{T}}$.}

For a $(k,\ell)$-tree decomposable graph class $\mathcal{G}$ closed under induced topological minors, the following key lemma reduces the problem of computing $\widehat{\alpha}$-bounded $\ell$-refined tree decompositions of $2$-connected graphs in $\mathcal{G}$ to the {\sc $(k,\ell)$-tree Decomposition($\mathcal{G})$} problem.

\begin{sloppypar}
\begin{lemma}\label{lem:sufficient}
Let $\mathcal{G}$ be a graph class closed under induced topological minors for which there exist nonnegative integers $k$ and $\ell$ such that $\mathcal{G}$ is $(k,\ell)$-tree decomposable and {\sc $(k,\ell)$-tree Decomposition($\mathcal{G})$} can be solved in time $f(n,m)$ on graphs with $n$ vertices and $m$ edges so that the resulting $\ell$-refined tree decomposition has $g(n,m)$ nodes, where $f$ and $g$ are superadditive functions.
Then, for any $2$-connected graph $G$ in $\mathcal{G}$, with $n$ vertices and $m$ edges, one can compute in time $\mathcal{O}(m + f(2n,3m))$ an $\ell$-refined tree decomposition of $G$ with $\mathcal{O}(n + g(2n,3m))$ nodes and residual independence number at most $\max\{3 - \ell,k\}$.
\end{lemma}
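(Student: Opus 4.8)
The plan is to use the SPQR tree of $G$ to split it into its triconnected components, compute a local $\ell$-refined tree decomposition for each R-node and each S-node, and then glue these together along the $2$-cutsets recorded by the P-nodes. Concretely, I would first compute the SPQR tree $S$ of $G$ in linear time using the algorithm of Gutwenger and Mutzel, and then process the R-nodes and S-nodes separately. Throughout, for an R- or S-node $a$ I write $F_a^*$ for the set of virtual edges of $G_a$, and I repeatedly use \cref{G_a-induced-topological-minors} together with the hypothesis that $\mathcal{G}$ is closed under induced topological minors.

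The first key point is that $F_a^*$ is a $\mathcal{G}$-safe set of edges of $G_a$. Indeed, by \cref{G_a-induced-topological-minors} there is a subdivision $H$ of $G_a$ isomorphic to an induced subgraph of $G$, in which real edges of $G_a$ appear unsubdivided while every virtual edge is replaced by an internally disjoint path with at least one internal vertex. For any $F'\subseteq F_a^*$, deleting from $H$ the internal vertices of the paths corresponding to the edges of $F'$ yields an induced subgraph of $G$ that is a subdivision of $G_a - F'$; hence $G_a-F'$ is an induced topological minor of $G$ and thus lies in $\mathcal{G}$ (in particular $G_a\in\mathcal{G}$). Now for each R-node $a$ the graph $G_a$ is $3$-connected and in $\mathcal{G}$, so I invoke the assumed algorithm for {\sc $(k,\ell)$-tree Decomposition($\mathcal{G}$)} with input $G_a$ and $F=F^*=F_a^*$; in time $f(|V(G_a)|,|E(G_a)|)$ this returns an $F_a^*$-covering $\ell$-refined tree decomposition $\widehat{\mathcal T}_a$ of $G_a$ with at most $g(|V(G_a)|,|E(G_a)|)$ nodes, residual independence number at most $k$, and an $F_a^*$-mapping assigning to each virtual edge $uv$ a bag containing both $u$ and $v$. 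For each S-node $a$ the graph $G_a$ is a cycle $C_r$, which I handle directly following \cref{tin-cycles}: its bags are the $r-2$ triples of size three, and I let each $U_t$ be an arbitrary subset of $X_t$ of size $\min\{\ell,|X_t|\}$, so that the trivial estimate gives $\alpha(G[X_t\setminus U_t])\le |X_t\setminus U_t|\le\max\{3-\ell,0\}$; no control of virtual edges is needed here. Since every cycle edge lies in one of these bags, the required $F_a^*$-mapping is obtained in linear time.

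Next I combine the pieces along $S$. For a P-node $p$ with $X_p=\{u,v\}$ adjacent to R- or S-nodes $a_1,\dots,a_d$, each $G_{a_i}$ contains the virtual edge $uv$, so the $F_{a_i}^*$-mapping supplies a bag $b_i$ of $\widehat{\mathcal T}_{a_i}$ with $\{u,v\}\subseteq X_{b_i}$, and I connect $b_1,\dots,b_d$ by a path of $d-1$ new tree edges. Doing this for all P-nodes adds $|N_{\text{R}}|+|N_{\text{S}}|-1$ edges in total and, since $S$ is a tree, joins the $|N_{\text{R}}|+|N_{\text{S}}|$ pieces into a single tree $T$. Because each newly adjacent pair of bags shares both $u$ and $v$, the subtree condition is preserved, and since the triconnected components cover all vertices of $G$ and every real edge of $G$ appears inside some $G_a$, the result $\widehat{\mathcal T}$ is a genuine $\ell$-refined tree decomposition of $G$. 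Its bags and sets $U_t$ are exactly those of the pieces, so its residual independence number is the maximum over the pieces, measured in $G$. For a bag $X_t$ from an R-node, the $F_a^*$-covering property ensures $G_a[X_t\setminus U_t]$ has no virtual edge; since $G[V(G_a)]$ is a spanning subgraph of $G_a$ whose only missing edges are virtual, this gives $G[X_t\setminus U_t]=G_a[X_t\setminus U_t]$ and hence $\alpha(G[X_t\setminus U_t])\le k$. For a bag from an S-node the trivial bound above already refers to $G$. Therefore $\widehat{\alpha}(\widehat{\mathcal T})\le\max\{3-\ell,k\}$.

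Finally, the SPQR tree is computed in $\mathcal{O}(m)$ time, and the S-nodes are processed in $\mathcal{O}\big(\sum_{a\in N_{\text{S}}}|V(G_a)|\big)=\mathcal{O}(n)$ time by \cref{SPQR-sum-of-bags}. The R-nodes contribute $\sum_{a\in N_{\text{R}}}f(|V(G_a)|,|E(G_a)|)$, which by superadditivity of $f$ together with $\sum_{a\in N_{\text{R}}}|V(G_a)|\le 2n-4$ (\cref{SPQR-sum-of-bags}) and $\sum_{a\in N_{\text{R}}}|E(G_a)|\le 3m-6$ (\cref{SPQR-sum-of-edges}), and the resulting monotonicity of $f$, is at most $f(2n,3m)$; this gives total time $\mathcal{O}(m+f(2n,3m))$. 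The same estimate with $g$ in place of $f$, plus the $\mathcal{O}(n)$ nodes from the S-node decompositions, bounds the number of nodes of $\widehat{\mathcal T}$ by $\mathcal{O}(n+g(2n,3m))$. I expect the main obstacle to be the independence-number transfer in the combination step: the $2$-cutsets along which we glue need not be cliques of $G$ (their edges may be virtual), so one cannot merely invoke \cref{reduction-to-atoms}, and, as \cref{example subdivided Kn} shows, ignoring the virtual edges can make the tree-independence number blow up — it is exactly the $F^*$-covering requirement that rules this out.
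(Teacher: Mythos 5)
Your overall strategy is the same as the paper's: compute the SPQR tree, show via \cref{G_a-induced-topological-minors} that the virtual edges of each $G_a$ form a $\mathcal{G}$-safe set, obtain local $\ell$-refined decompositions (the oracle for \textnormal{R}-nodes, the cycle construction of \cref{tin-cycles} for \textnormal{S}-nodes), glue the pieces along the \textnormal{P}-nodes, and run the same superadditivity accounting using \cref{SPQR-sum-of-bags,SPQR-sum-of-edges}. The safety argument, the transfer of the residual independence bound from $G_a$ to $G$ via the $F^*\!$-covering property, and the complexity analysis all match the paper and are correct.

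However, the gluing step as written fails. You invoke the oracle with $F=F^*=F_a^*$ and then claim that for a \textnormal{P}-node $p$ with $X_p=\{u,v\}$, ``each $G_{a_i}$ contains the virtual edge $uv$''. That is false in general: in this formalism an edge of $G_{a_i}$ is virtual precisely when its endpoints are non-adjacent in $G$, and a $2$-cutset may consist of two adjacent vertices. (Take $G=K_4^-$, the diamond: the pair $\{u,v\}$ of degree-$3$ vertices is a \textnormal{P}-node, and $uv$ is a \emph{real} edge of both triangles adjacent to it; the same situation occurs inside arbitrarily large $2$-connected graphs.) For such a \textnormal{P}-node your $F_a^*$-mapping supplies no bag, so you have no efficiently located node of $\widehat{\mathcal T}_{a_i}$ to attach to; a bag containing $\{u,v\}$ does exist, but searching for it is not accounted for in your running time. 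The fix is exactly the distinction the paper makes: define $F_a$ to be the set of \emph{all} edges of $G_a$ arising from neighboring \textnormal{P}-nodes (real or virtual) and $F_a^*\subseteq F_a$ its virtual ones, and call the oracle with the pair $(F^*,F)=(F_a^*,F_a)$ --- the problem \textsc{$(k,\ell)$-Tree Decomposition($\mathcal{G}$)} is stated for $F^*\subseteq F$ precisely for this reason --- so that the returned $F_a$-mapping hands you a bag for every \textnormal{P}-node edge, virtual or not; likewise your \textnormal{S}-node construction should provide its mapping for all of $F_a$ rather than $F_a^*$ (it does, since every cycle edge lies in an explicitly named bag). With this correction your proof goes through. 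A minor structural difference, which is harmless: the paper keeps the \textnormal{P}-nodes themselves as small bags (with $U_b$ of size $\min\{\ell,2\}$) in the final decomposition, whereas you suppress them and join the pieces directly by paths of new tree edges; both variants yield a valid $\ell$-refined tree decomposition with residual independence number at most $\max\{3-\ell,k\}$.
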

\end{sloppypar}

\begin{proof}
Let $G$ be a $2$-connected graph in $\mathcal{G}$ with $n$ vertices and $m$ edges.
We compute in time $\O(n + m)$ the SPQR tree $S$ of $G$.
Let $\widehat{\mathcal{T}} = (T, \{(X_t,U_t) : t \in V(T)\})$ be the $0$-refined tree decomposition of $G$ corresponding to the SPQR tree $S$.
Note that $\widehat{\mathcal{T}}$ is $\ell$-refined.
Our goal is to obtain an $\widehat{\alpha}$-bounded $\ell$-refined tree decomposition of $G$ by updating $\widehat{\mathcal{T}}$ iteratively, as follows.

First, we iterate over all \textnormal{P}-nodes $b$ in $S$ and set $U_b$ to be any subset of $X_b$ with cardinality $\min\{\ell,2\}$.
Since the number of \textnormal{P}-nodes is $\mathcal{O}(n)$, this modification takes time $\mathcal{O}(n)$.
Furthermore, for each \textnormal{P}-node $b$ we then have
\[\alpha(G[X_b\setminus U_b]) \le |X_b|-|U_b| = 2-\min\{\ell,2\} = \max\{2-\ell,0\}\le \max\{3-\ell,k\}\,.\]

Then, for each \textnormal{R}-node or \textnormal{S}-node $a$ in $S$ we compute an $\widehat{\alpha}$-bounded $\ell$-refined tree decomposition $\widehat{\mathcal{T}}_a$ of $G_a$ (using the assumption of the lemma in the case of \textnormal{R}-nodes) and replace the corresponding node in $\widehat{\mathcal{T}}$ with $\widehat{\mathcal{T}}_a$.
Let us describe the update procedure in detail.
For each node $a$ of $S$ that is an \textnormal{R}-node or an \textnormal{S}-node, we perform the following three steps.

\medskip
\noindent{\bf Step 1.} We compute two sets of edges $F_a^*$ and $F_a$ such that $F_a^*\subseteq F_a\subseteq E(G_a)$.
Recall that node $a$ is adjacent in $S$ to \textnormal{P}-nodes only, and for every neighbor $b$ of $a$ in $S$, the set $X_b$ corresponding to the node $b$ is a $2$-cutset in $G$ such that the two vertices in $X_b$ are adjacent in $G_a$.
We define $F_a = \{X_b: b \text{~is adjacent to~} a \text{~in~}S\}$ and $F_a^* = \{e \in F_a : e \text{~is a virtual edge in~} G_a\}$.
Clearly, $F_a$ and $F^*_a$ can be obtained in time $\O(d_S(a))$.

\medskip
The second step relies on the following property of the sets $F^*_a$.

\begin{claim}\label{claim:virtually-safe}
For each \textnormal{R}-node or \textnormal{S}-node $a$ of $S$, the set $F^*_a$ is a $\mathcal{G}$-safe set of edges of $G_a$.
\end{claim}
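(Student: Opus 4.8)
The plan is to unfold the definition of $\mathcal{G}$-safety and reduce it to the closure of $\mathcal{G}$ under induced topological minors. Fixing an \textnormal{R}- or \textnormal{S}-node $a$, I would take an arbitrary subset $F'\subseteq F_a^*$ and aim to show that $G_a - F'\in\mathcal{G}$. Since $G\in\mathcal{G}$ and $\mathcal{G}$ is closed under induced topological minors, it suffices to exhibit $G_a - F'$ as an induced topological minor of $G$, i.e.\ to produce a subdivision of $G_a - F'$ that occurs as an induced subgraph of $G$.

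The construction I would use starts from \cref{G_a-induced-topological-minors}, which supplies a subdivision $H$ of $G_a$ that is (isomorphic to) an induced subgraph $G[W]$ of $G$. I would identify $V(G_a)$ with the branch vertices of $H$ and, for each edge $e\in E(G_a)$, let $P_e$ denote the path of $H$ realizing $e$; by the definition of a subdivision these paths are pairwise internally disjoint and meet only at branch vertices. The key point is that every edge of $F'$ is virtual in $G_a$, hence a non-edge of $G$, so the two endpoints of such an edge are non-adjacent in the induced subgraph $H$ and the corresponding path $P_e$ must have length at least two, thereby possessing at least one internal (subdivision) vertex. I would then form $H'$ by deleting from $H$ all internal vertices of $P_e$ for every $e\in F'$, keeping every branch vertex and every remaining path intact.

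It then remains to verify the two required properties of $H'$. First, $H'$ is an induced subgraph of $G$, since it is obtained from the induced subgraph $G[W]$ by deleting vertices, and an induced subgraph of an induced subgraph is again induced; in particular no spurious adjacencies can appear. Second, $H'$ is a subdivision of $G_a - F'$, because its branch vertices are exactly $V(G_a)=V(G_a-F')$, the path $P_e$ survives for each surviving edge $e\in E(G_a)\setminus F'$, and for each deleted edge $e=uv\in F'$ the endpoints $u,v$ remain non-adjacent in $H'$ (they are non-adjacent in $G$), matching the absence of $uv$ in $G_a-F'$. The main obstacle I anticipate is precisely this last structural point: making sure that each deleted edge is realized by a path with a genuine interior, so that removing that interior truly deletes the edge at the level of $G_a-F'$ rather than leaving a direct adjacency between its endpoints. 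This is exactly where the hypothesis $F'\subseteq F_a^*$ (virtuality, and hence non-adjacency in $G$) is used; once it is in place, the remaining checks are routine and the claim follows, as $G_a-F'$ is then an induced topological minor of $G\in\mathcal{G}$.
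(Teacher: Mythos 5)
Your proof is correct and follows essentially the same route as the paper: both invoke \cref{G_a-induced-topological-minors} to obtain a subdivision of $G_a$ that is an induced subgraph of $G$, use virtuality of the edges in $F'\subseteq F_a^*$ (hence non-adjacency of their endpoints in $G$) to guarantee that each such edge is realized by a path with a nonempty interior, delete those interiors, and conclude via the closure of $\mathcal{G}$ under induced topological minors. The only cosmetic difference is that the paper phrases the final step by contracting the surviving subdivided paths to recover $G_a-F'$ exactly (noting along the way that real edges of $G_a$ are edges of the subdivision), whereas you directly observe that the remaining induced subgraph is itself a subdivision of $G_a-F'$; the two formulations are interchangeable.
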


\begin{subproof}
By \cref{G_a-induced-topological-minors}, there exists a subdivision $G_a'$ of $G_a$ that is an induced subgraph of $G$.
We want to show that deleting from $G_a$ any subset of edges in $F_a^*$ results in a graph in $\mathcal G$.
Since every edge $e\in F_a^*$ is a virtual edge of $G_a$, the two endpoints of $e$ are non-adjacent in $G$.
Thus, the edges in $F_a^*$ correspond to a collection of internally vertex-disjoint paths $\{P_e:e\in F_a^*\}$ such that each $P_e$ is an induced path of length at least two in $H'_a$ (and thus in $G$) connecting the endpoints of $e$.
Consider an arbitrary set $F\subseteq F_a^*$.
Since $G_a'$ is an induced subgraph of $G$, each real edge in $G_a$ is also an edge in $G_a'$.
It follows that the graph $G_a-F$ can be obtained from $G_a'$ by deleting the internal vertices of $P_e$ for all $e\in F$ and by contracting to a single edge each path $P_e$ for all $e\in F_a^*\setminus F$.
Hence, $G_a-F$ is an induced topological minor of $G_a'$ and thus of $G$.
Since $G\in \mathcal{G}$ and $\mathcal{G}$ is closed under induced topological minors, we have $G_a-F\in \mathcal{G}$, as claimed.
\end{subproof}

\medskip
\noindent{\bf Step 2.} We compute an $\ell$-refined tree decomposition $\widehat{\mathcal{T}}_a = (T_a, \{(X^a_t,U^a_t) : t \in V(T_a)\})$ of $G[X_a]$ with residual independence number at most $\max\{3-\ell,k\}$ and an $F_a$-mapping of $\widehat{\mathcal{T}}_a$.
We do so by computing an $\ell$-refined tree decomposition of $G_a$.
Since $G[X_a]$ is a subgraph of $G_a$, the decomposition $\widehat{\mathcal{T}}_a$ is also an $\ell$-refined tree decomposition of $G[X_a]$.
We consider two cases depending on whether $a$ is an \textnormal{S}-node or an \textnormal{R}-node of $S$.

\begin{itemize}
\item If $a$ is an \textnormal{S}-node of $S$, then $G_a$ is a cycle and we can apply a similar approach as in \cref{tin-cycles}.
In time $\mathcal{O}(|V(G_a)|)$ we compute a cyclic order $v_1,\ldots, v_h$ of the vertices of the cycle.
We construct the desired $\ell$-refined tree decomposition $\widehat{\mathcal{T}}_a$ of $G_a$ as follows.
The tree $T_a$ is an $(h-2)$-vertex path $(t_1,\ldots, t_{h-2})$; for each $i\in \{1,\ldots, h-2\}$, the bag $X^a_{t_i}$ consists of vertices $\{v_i,v_{i+1},v_h\}$, and $U^a_{t_i}$ is any subset of $X^a_{t_i}$ with exactly $\min \{\ell,|X^a_{t_i}| \}$ vertices.
Note that every bag $X^a_{t}$ of $\widehat{\mathcal{T}}_a$ has size $3$ and hence
\[\alpha(G[X^a_{t} \setminus U^a_{t}]) \le |X^a_{t}|-|U^a_{t}|\le \max \{3 - \ell,0\}\le \max\{3-\ell,k\}\,.\]
The corresponding $F_a$-mapping of $\widehat{\mathcal{T}}_a$ can be obtained as follows.
For each edge $e = v_iv_{i+1} \in F_a$ with $1\le i\le h$ (indices modulo $h$), we do the following:
if $1\le i\le h - 2$, we map $e$ to $t_i\in V(T_a)$;
if $i = h - 1$, we map $e$ to $t_{h-2}\in V(T_a)$;
and if $i = h$, we map $e$ to $t_{1}\in V(T_a)$.
Clearly, both $\widehat{\mathcal{T}}_a$ and the $F_a$-mapping can be obtained in $\O(|V(G_a)|)$ time.

\item Otherwise, $a$ is an \textnormal{R}-node of $S$.
Recall that $\mathcal G$ is closed under induced topological minors.
By \cref{G_a-induced-topological-minors}, $G_a$ is an induced topological minor of $G\in \mathcal G$, and thus $G_a$ belongs to $\mathcal G$.
By \cref{claim:virtually-safe}, $F_a^*$ is a $\mathcal G$-safe set of edges of $G_a$.
Since $G_a$ is $3$-connected, by the assumption of the lemma, we can compute an $F^*_a$-covering $\ell$-refined tree decomposition $\widehat{\mathcal{T}}_a$ of $G_a$ with residual independence number at most $k$ and an $F_a$-mapping of $\widehat{\mathcal{T}}_a$ in time $f(|V(G_a)|,|E(G_a)|)$.
Let $t$ be a node of $\widehat{\mathcal{T}}_a$.
Since $U^a_t$ is an $F_a^*\!$-cover of $G_a[X^a_t]$, every virtual edge in $G_a[X^a_t]$ has an endpoint in $U^a_t$.
We infer that the subgraphs of $G$ and $G_a$ induced by the set $X^a_t\setminus U^a_t$ are the same and thus, \[\alpha(G[X^a_t\setminus U^a_t]) = \alpha(G_a[X^a_t\setminus U^a_t])\le \widehat{\alpha}(\widehat{\mathcal{T}}_a)\le k\le \max\{3-\ell,k\}\,.\]
\end{itemize}

\medskip
\noindent{\bf Step 3.} This step consists in, informally speaking, replacing the node $a$ and the bag $X_a$ in $\widehat{\mathcal{T}}$ by the newly computed $\ell$-refined tree decomposition $\widehat{\mathcal{T}}_a$ of $G_a$.
We first compute the forest $T' =(T-a) + T_a$ and then make $T'$ connected by iterating over all neighbors $b$ of $a$ in $S$.
Since every edge of $S$ has exactly one endpoint which is a \textnormal{P}-node and $a$ is not a \textnormal{P}-node of $S$, we infer that $b$ must be a \textnormal{P}-node of $S$, and thus $X_b$ contains exactly two vertices in $G$, which are adjacent in $G_{a}$.
Let $e \in E(G_a)$ be the edge with endpoints in $X_b$.
By the definition of $F_a$, the edge $e$ belongs to $F_a$.
Recall that an $F_a$-mapping of $\widehat{\mathcal{T}}_a$ has been computed in Step~2.
Hence, given the edge $e$, the $F_a$-mapping of $\widehat{\mathcal{T}}_a$ returns, in constant time, a node $c$ of the tree $T_a$ whose bag $X_c$ contains the endpoints of $e$.
We connect $b$ to $c$ in $T'$.
Notice that once all the neighbors of $a$ have been considered, $T'$ becomes a tree.
We now set $\widehat{\mathcal{T}} = (T', \{(X_t,U_t) : t \in V(T-a)\} \cup \{(X^a_t,U^a_t) : u \in V(T_a)\})$.
It is not difficult to verify that after this modification, $\widehat{\mathcal{T}}$ remains an $\ell$-refined tree decomposition of $G$.
The time complexity of this step is proportional to $\mathcal{O}(d_S(a) + |\widehat{\mathcal{T}}_a|)$.

\medskip
We now reason about the overall time complexity of Steps 1--3 for a fixed \textnormal{R}-node or \textnormal{S}-node $a$ of $S$.
Recall that $N_{\text{R}}$ and $N_\text{S}$ denote the sets of \textnormal{R}-nodes and \textnormal{S}-nodes of $S$, respectively.
Assume first that $a\in N_{\text{S}}$.
Then the complexity is $\mathcal{O}(d_S(a) + |V(G_a)|+|\widehat{\mathcal{T}}_a|)$.
Since $G_a$ is a cycle and our construction of $\widehat{\mathcal{T}}_a$ implies that $|\widehat{\mathcal{T}}_a| = \O(|V(G_a)|)$,
this simplifies to $\mathcal{O}(d_S(a) + |V(G_a)|)$.
Assume now that $a\in N_{\text{R}}$.
Then the complexity is $\mathcal{O}(d_S(a) + f(|V(G_a)|,|E(G_a)|)+ |\widehat{\mathcal{T}}_a|)$,
which simplifies to $\mathcal{O}(d_S(a) + f(|V(G_a)|,|E(G_a)|))$ since $f(|V(G_a)|,|E(G_a)|)\ge |\widehat{\mathcal{T}}_a|$.

\medskip
By construction, the final $\ell$-refined tree decomposition $\widehat{\mathcal{T}}$ has residual independence number at most $\max\{3-\ell,k\}$.
The overall time complexity of the algorithm is
\[\O\left(n+m +\sum_{a\in N_{\text{S}}}(d_S(a) + |V(G_a)|) + \sum_{a\in N_{\text{R}}}(d_S(a) + f(|V(G_a)|,|E(G_a)|))\right)\,,\]
or, equivalently,
\[
    \mathcal{O}\left(n+m +
\sum_{a\in N_{\text{R}}\cup N_{\text{S}}}d_S(a)
+ \sum_{a\in N_{\text{S}}}|V(G_a)| + \sum_{a\in N_{\text{R}}}f(|V(G_a)|,|E(G_a)|)\right)\,.
\]
\Cref{SPQR-sum-of-bags} implies that $\sum_{a\in N_{\text{S}}}|V(G_a)|\le \sum_{a\in N_{\text{R}}\cup N_{\text{S}}}|X_a|  \le 3n-6$.
Furthermore, we know that $\sum_{a\in N_{\text{R}}\cup N_{\text{S}}}d_S(a) = |E(S)| = \O(m)$~\cite{DBLP:conf/gd/GutwengerM00}.
Since the function $f$ is superadditive, we have
\[\sum_{a\in N_{\text{R}}}f(|V(G_a)|,|E(G_a)|) \leq f\left( \sum_{a\in N_{\text{R}}} |V(G_a)|, \sum_{a\in N_{\text{R}}} |E(G_a)|\right)\,.\]
By \cref{SPQR-sum-of-bags}, we have that $\sum_{a\in N_{\text{R}}} |V(G_a)| \leq 2n$.
Additionally, by \cref{SPQR-sum-of-edges} we get that $\sum_{a\in N_{\text{R}}} |E(G_a)| \leq 3m$.
Since $f$ is non-decreasing in each coordinate and $m\ge n$ as $G$ is $2$-connected, the running time simplifies to $\mathcal{O}\left(m + f(2n,3m)\right)$.

The number of nodes of $\widehat{\mathcal{T}}$ is at most
\[|N_{\text{P}}|+\sum_{a\in N_{\text{S}}} (|V(G_a)|-2) + \sum_{a\in N_{\text{R}}} g(|V(G_a)|,|E(G_a)|)\,.\]
Following the fact that $|N_{\text{P}}| \le |V(S)| = \mathcal{O}(n)$ and by \cref{SPQR-sum-of-bags}, we get that $|N_{\text{P}}|+\sum_{a\in N_{\text{S}}} (|V(G_a)|-2) = \mathcal{O}(n)$.
Applying the same arguments as we did for the function $f$ to the function $g$ shows that $\sum_{a\in N_{\text{R}}} g(|V(G_a)|,|E(G_a)|)\le g(2n,3m)$.
Thus, the number of nodes of $\widehat{\mathcal{T}}$ is of the order $\mathcal{O}(n+g(2n,3m))$.
This completes the proof.
\end{proof}

We now combine the reduction using block-cutpoint trees (given by  \cref{cor:reduction-to-blocks}) with the reduction using SPQR trees (given by \cref{lem:sufficient}) to derive the main result of this section.

\begin{sloppypar}
\begin{theorem}\label{thm:sufficient}
Let $\mathcal{G}$ be a graph class closed under induced topological minors for which there exist nonnegative integers $k$ and $\ell$ such that $\mathcal{G}$ is $(k,\ell)$-tree decomposable and {\sc $(k,\ell)$-tree Decomposition($ \mathcal{G})$} can be solved in time $f(n,m)$ on graphs with $n$ vertices and $m$ edges so that the resulting $\ell$-refined tree decomposition has $g(n,m)$ nodes, where $f$ and $g$ are superadditive functions.
Then, for any graph $G$ in $\mathcal{G}$ with $n\ge 1$ vertices and $m$ edges, one can compute in time $\mathcal{O}(n + m + f(4n,3m))$ an $\ell$-refined tree decomposition of $G$ with $\mathcal{O}(n + g(4n,3m))$ nodes and with residual independence number at most $\max\{3 - \ell,k\}$.
\end{theorem}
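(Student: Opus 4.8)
The plan is to combine the two reductions developed in this section: \cref{lem:sufficient} handles $2$-connected graphs, while \cref{cor:reduction-to-blocks} assembles a decomposition of an arbitrary graph from decompositions of its blocks (hence of its $2$-connected pieces). First I would observe that since $\mathcal{G}$ is closed under induced topological minors, it is in particular hereditary, so the hypothesis of \cref{cor:reduction-to-blocks} that $\mathcal{G}$ be hereditary is met. The strategy is then to feed the $2$-connected solver supplied by \cref{lem:sufficient} into \cref{cor:reduction-to-blocks}.

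Under the hypotheses of the present theorem, \cref{lem:sufficient} applies and yields, for every $2$-connected graph in $\mathcal{G}$ with $n'$ vertices and $m'$ edges, an $\ell$-refined tree decomposition computable in time $\mathcal{O}(m' + f(2n', 3m'))$, having $\mathcal{O}(n' + g(2n', 3m'))$ nodes and residual independence number at most $k' := \max\{3 - \ell, k\}$. Accordingly I would set
\[
  f'(n', m') = m' + f(2n', 3m')
  \qquad\text{and}\qquad
  g'(n', m') = n' + g(2n', 3m'),
\]
so that $f'$ and $g'$ bound (up to the constant hidden in the $\mathcal{O}$-notation) the running time and node count of this $2$-connected solver, with the parameter $k$ of \cref{cor:reduction-to-blocks} replaced by $k'$.

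The one point requiring care is that $f'$ and $g'$ are again superadditive, as \cref{cor:reduction-to-blocks} demands. Here the linear reparametrizations $n' \mapsto 2n'$ and $m' \mapsto 3m'$ are exactly what makes things work, since they are additive and hence interact cleanly with superadditivity. Concretely, using the superadditivity of $f$,
\begin{align*}
  f'(x_1 + x_2, y_1 + y_2)
  &= (y_1 + y_2) + f(2x_1 + 2x_2, 3y_1 + 3y_2) \\
  &\ge (y_1 + y_2) + f(2x_1, 3y_1) + f(2x_2, 3y_2) \\
  &= f'(x_1, y_1) + f'(x_2, y_2),
\end{align*}
and the identical computation for $g'$ (with the additive term $y_1 + y_2$ replaced by $x_1 + x_2$) shows $g'$ is superadditive as well.

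It then remains to invoke \cref{cor:reduction-to-blocks} with $f'$, $g'$ and parameter $k'$, and simplify. The running time it guarantees is $\mathcal{O}(n + m + f'(2n, m))$, and since $f'(2n, m) = m + f(4n, 3m)$ this becomes $\mathcal{O}(n + m + f(4n, 3m))$, as claimed; likewise the number of nodes is $\mathcal{O}(n + g'(2n, m)) = \mathcal{O}(n + g(4n, 3m))$. Finally, the residual independence number is at most $\max\{2 - \ell, k'\} = \max\{2 - \ell, 3 - \ell, k\} = \max\{3 - \ell, k\}$, using $3 - \ell \ge 2 - \ell$. I expect no genuine obstacle here: the result is a clean composition, and the only delicate point will be the bookkeeping of the constant factors $2$ and $3$ through the two nested applications, together with the superadditivity verification above and the observation that the residual independence number $\max\{3-\ell,k\}$ coming from the $2$-connected stage dominates the contribution $\max\{2-\ell,\cdot\}$ produced by the block-reduction stage.
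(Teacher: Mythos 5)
Your proposal is correct and takes essentially the same route as the paper's own proof: it composes \cref{lem:sufficient} with \cref{cor:reduction-to-blocks} by defining the auxiliary functions $f'(x,y)=y+f(2x,3y)$ and $g'(x,y)=x+g(2x,3y)$ (the paper's $\hat f,\hat g$, up to explicit constants $c,d$ used there to absorb the $\mathcal{O}$-notation), checking their superadditivity, and simplifying $f'(2n,m)$ and $g'(2n,m)$ exactly as you do. Your explicit superadditivity computation and the observation that $\max\{2-\ell,\max\{3-\ell,k\}\}=\max\{3-\ell,k\}$ are precisely the steps the paper asserts with less detail.
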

\end{sloppypar}

\begin{proof}
By \cref{lem:sufficient}, there exist positive integers $c$ and $d$ such that for any $2$-connected graph $G \in \mathcal{G}$ with $n$ vertices and $m$ edges, one can compute in time $c \cdot (m + f(2n,3m))$ an $\ell$-refined tree decomposition of $G$ with residual independence number at most $\max\{3 - \ell,k\}$ and with $d \cdot (n + g(2n,3m))$ nodes.
Let $\hat k = \max\{3 - \ell,k\}$ and let us define two functions, $\hat{f}: \mathbb{Z}_+\times \mathbb{Z}_+\to \mathbb{Z}_+$ and $\hat{g}: \mathbb{Z}_+\times \mathbb{Z}_+\to \mathbb{Z}_+$, as follows:
for each $(x,y)\in \mathbb{Z}_+\times \mathbb{Z}_+$, we set $\hat{f}(x,y) = c \cdot (y + f(2x,3y))$ and $\hat{g}(x,y) = d \cdot (x + g(2x,3y))$.
Thus, $\mathcal{G}$ is a hereditary graph class such that for each $2$-connected graph $G$ in ${\mathcal G}$ with $n$ vertices and $m$ edges, one can compute in time $\hat{f}(n,m)$ an $\ell$-refined tree decomposition $\widehat{\mathcal{T}}$ with $\hat{g}(n,m)$ nodes and with residual independence number at most $\hat k$.
Furthermore, the fact that $f$ and $g$ are superadditive implies that $\hat{f}$ and $\hat{g}$ are superadditive.
By \cref{cor:reduction-to-blocks}, for any graph $G$ in $\mathcal{G}$ with $n\ge 1$ vertices and $m$ edges, one can compute in time $\mathcal{O}(n + m + \hat{f}(2n,m))$ an $\ell$-refined tree decomposition of $G$ with $\mathcal{O}(n + \hat{g}(2n,m))$ nodes and with residual independence number at most $\max\{2-\ell, \hat k\} = \max\{3-\ell,k\}$.
Since $\hat{f}(2n,m) = c \cdot (m + f(4n,3m))$ and $\hat{g}(2n,m) = c \cdot (n + g(4n,3m))$, the theorem follows.
\end{proof}

\section{$W_4$-induced-minor-free graphs}\label{W4-induced-minor-free}

\begin{sloppypar}
In this section and the next one, we apply \cref{thm:sufficient} to prove that if $\mathcal{G}$ is the class of \hbox{$H$-induced-minor-free graphs} for $H\in \{W_4,K_5^-\}$, then $\mathcal{G}$ has bounded and efficiently witnessed tree-independence number.
In fact, we show that $\mathcal{G}$ is $(1,3)$-tree decomposable and develop a polynomial-time algorithm for the {\sc $(1,3)$-tree Decomposition($\mathcal{G})$} problem.
Combining this result with \cref{thm:bounded-ell-refined-tree-independence-number} leads to an $\mathcal{O}(|V(G)|^3)$ algorithm for the \textsc{Max Weight Independent Set} problem for vertex-weighted graphs $G\in \mathcal{G}$.
Similarly, applying \cref{thm:bounded-tree-independence-number-packings} leads to polynomial-time algorithms for the \textsc{Max Weight Independent Packing} problem.

Since the {\sc $(k,\ell)$-tree Decomposition($\mathcal{G})$} problem deals with $3$-connected graphs, we first need to characterize the $3$-connected graphs in $\mathcal{G}$.
We start with the $W_4$-induced-minor-free graphs.
\end{sloppypar}

\medskip
\begin{figure}[ht]
  \centering
\begin{tikzpicture}[scale=1]
\tikzset{every node/.style={draw,circle,fill=black,inner sep=0pt,minimum size=4.5pt}}

    \clip(-3,-3) rectangle (5.5,3.5);

    \draw[ultra thick,red] ([shift=(100:2.2)]0,0) arc (100:260:2.2cm);
    \coordinate[label={[text=red]left:$\strut P$}] (P) at (200:2.2);
    \draw[ultra thick,black!20!green] ([shift=(-80:2.2)]0,0) arc (-80:80:2.2cm);
    \coordinate[label={[text=black!20!green]right:$\strut Q$}] (Q) at (-20:2.2);

    \draw (0,0) circle(2);
    \node[label=above:$\strut u$] (u) at (0,2) {};
    \node[label=below:$\strut v$] (v) at (0,-2) {};
    \node (x) at (180:2) {};
    \coordinate (x') at (185:2) {};
    \node (y) at (20:2) {};
    \coordinate (y') at (15:2) {};
    \node[label=below left:$\strut w$] (w) at (0:2) {};

    \draw[ultra thick, blue, shorten <= 0.2cm, shorten >= 0.2cm] (x') to[bend right=60] node[midway,label={[text=blue]below:$\strut R$},draw=none,fill=none,minimum size=0pt] {} (y');
    \draw (x) to[bend right=60] (y);

\makeatletter
\pgfdeclareshape{bean}{
\anchor{center}{\pgfpoint{0}{0cm}}%

\backgroundpath{ %
    \pgfsetlinewidth{\pgfkeysvalueof{/pgf/minimum width}*0.035}
    \path[draw, clip] plot[smooth, tension=1] coordinates {(-3.5,0.5) (-3,2.5) (-1,3.5) (1.5,3) (4,3.5) (5,2.5) (5,0.5) (2.5,-2) (0,-0.5) (-3,-2) (-3.5,0.5)};
    \fill[black!5!white] (-4,-4) rectangle (6, 4,);
}%
}
\makeatother

    \coordinate (H) at (4,1);
    \draw (H) to[out=120, in=50, looseness=1.5] (u)
          (H) to[bend left=10] (w)
          (H) to[in=-60,out=-80,looseness=1] (v);
    \draw (u) to[out=60, in=110,looseness=1.5] ($(H)+(0,.3)$);
    \draw (w) to[bend right=30] ($(H)+(0,-.2)$);

    \node[rotate=-90,scale=.3,bean,minimum size=50pt, label=below right:$\strut H$] at (H) {};
\end{tikzpicture}
  \caption{A schematic representation of the situation in the proof of \Cref{lem:3-con-W_4-IM-free}.}\label{fig:W4}
\end{figure}
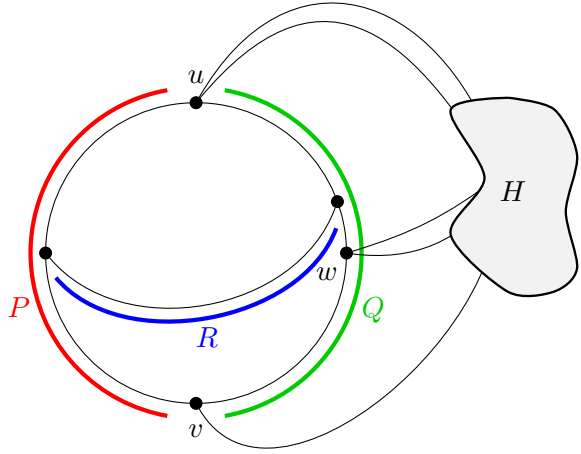

\begin{lemma}\label{lem:3-con-W_4-IM-free}
Let $G$ be a $3$-connected graph.
Then $G$ is $W_4$-induced-minor-free if and only if $G$ is chordal.
\end{lemma}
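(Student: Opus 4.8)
My plan is to prove the two implications separately; only the forward implication (from $W_4$-induced-minor-freeness to chordality) will use $3$-connectivity, so I would first dispose of the easy direction, which holds for all graphs.

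For the direction that a chordal graph $G$ is $W_4$-induced-minor-free, I would record the general fact that the class of chordal graphs is closed under induced minors. Closure under vertex deletion is immediate, so it suffices to treat edge contraction: contracting an edge $uv$ of a chordal graph to a vertex $w$ cannot create an induced cycle of length at least four, since any such cycle through $w$ would, after replacing $w$ by $u$, by $v$, or by the edge $uv$ (according to which of $u,v$ the two cycle-neighbours of $w$ attach to), yield an induced cycle of length at least four in $G$. Since $W_4$ contains an induced $C_4$ and is therefore not chordal, no chordal graph can have $W_4$ as an induced minor. No connectivity hypothesis is needed here.

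For the converse I would argue the contrapositive: assuming $G$ is $3$-connected and \emph{not} chordal, I would construct an induced minor model of $W_4$. Since $G$ is not chordal it contains an induced cycle $C$ of length at least four, and I take $C$ to be a shortest such cycle. As $C$ is $2$-regular it cannot equal the $3$-connected graph $G$, so $G - V(C)$ is nonempty. For a connected component $B$ of $G - V(C)$, call $N_G(V(B)) \cap V(C)$ its set of \emph{attachments}; if some $B$ had at most two attachments, these would form a cutset of size at most two separating $V(B)$ from the (nonempty, since $|V(C)| \ge 4$) rest of $C$, contradicting $3$-connectivity. Hence every such component has at least three attachments on $C$.

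It then remains to turn attachments into a hub. Contracting a connected component $B$ into a single branch set $X_h$ gives a hub adjacent to every attachment of $B$; if $B$ has at least four attachments I pick four in cyclic order along $C$, which split $V(C)$ into four consecutive arcs taken as the rim branch sets. Because $C$ is induced, consecutive arcs are adjacent while opposite arcs are not, so the rim is an induced $C_4$, and $X_h$ meets all four arcs, yielding an induced minor model of $W_4$. The main difficulty, and the step I expect to be hardest, is the case in which every available component has exactly three attachments: a single contracted component then supplies only three spokes, so a fourth spoke must be manufactured. Here I would again invoke $3$-connectivity—choosing an arc between two attachments that contains an internal vertex (which exists as the hole has length at least four) and using that the two endpoints of this arc do not form a $2$-cut, one obtains a further connection off $C$, namely an induced path $R$ joining two distinct arcs, as in \cref{fig:W4}. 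The delicate part is to assemble $B$, the path $R$, and the arcs of $C$ into four rim branch sets and one hub so that the rim remains an \emph{induced} $C_4$ (no diagonal edge appears) while the hub is adjacent to all four rim sets; minimality of $C$ is what rules out the chords that would otherwise spoil the rim. Exhibiting such a model completes the contrapositive and hence the lemma.
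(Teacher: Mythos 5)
Your backward direction is correct (chordal graphs are closed under induced minors, and $W_4$ contains an induced $C_4$), and your forward-direction setup agrees with the paper's: every component of $G-V(C)$ has at least three attachments on the hole $C$, and a component with at least four attachments immediately yields a $W_4$-model with that component contracted to the hub. The genuine gap is precisely the case you label ``delicate'' and leave unfinished: when a component $B$ has exactly three attachments you never exhibit the five branch sets, and the two hints you give point away from a workable construction. First, ``manufacturing a fourth spoke'' with $B$ kept as the hub is impossible, not merely hard: the neighborhood of $V(B)$ consists of exactly the three attachments, so four pairwise disjoint rim sets, each of which must contain a neighbor of the hub, cannot exist, by pigeonhole. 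Second, minimality of $C$ plays no role; any hole suffices, and shortest-hole minimality would not control adjacencies involving an off-cycle connector path anyway.

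The construction that works (the paper's, and essentially forced) is as follows. Among the three attachments pick a non-adjacent pair $u,v$ (it exists because $C$ is a hole), let $P$ and $Q$ be the two paths of $C-\{u,v\}$, named so that the third attachment $w$ lies on $Q$, and let $R$ be the interior of a shortest path from $V(P)$ to $V(Q)$ in $G-\{u,v\}$, which exists by $3$-connectedness. One must then argue that $R$ avoids $B$ --- a step absent from your sketch; it holds because the only neighbor of $V(B)$ in $G-\{u,v\}$ outside $B$ is $w\in V(Q)$, so a shortest $P$--$Q$ path cannot pass through $B$. The hub is then $V(Q)\cup V(R)$, \emph{not} $B$, and the rim is $\{u\}$, $V(P)$, $\{v\}$, $V(B)$; the required non-adjacencies are $u\not\sim v$ (by choice) and the absence of edges between $P$ and $B$ (because $P$ carries no attachment of $B$), with no appeal to minimality of $C$. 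Note also that $R$ cannot sit on the rim at all: it is adjacent to $P$ by construction, and any cyclic order of $\{u\},\{v\},V(P),V(R)$ in which $P$ and $R$ are consecutive forces $u$ and $v$ to be consecutive, hence adjacent. Since your proposal neither identifies this hub/rim assignment nor verifies the resulting adjacencies, the core step of the lemma's proof is missing.
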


\begin{proof}
First, let us assume that $G$ is a $3$-connected $W_4$-induced-minor-free graph.
Suppose that there exists an induced cycle $C$ of length at least $4$ in $G$.
Since $G$ is $3$-connected, it cannot equal to $C$.
Thus, there exists a component $H$ of $G-V(C)$.
Let $S$ be the set of vertices of $C$ that have a neighbor in $H$.
The $3$-connectedness of $G$ implies that $|S|\ge 3$.
On the other hand, $|S|\le 3$, since otherwise we could contract $H$ into a single vertex, delete all the other components, and contract some edges of $C$ to obtain a $W_4$ as an induced minor of $G$.
Thus, $|S| = 3$.
Since $C$ has length at least $4$, there exists a pair $\{u,v\}$ of nonadjacent vertices in $S$.
Let $P$ and $Q$ be the two paths of the graph $C-\{u,v\}$.
Note that $P$ and $Q$ are both nonempty.
Furthermore, one of the two paths $P$ and $Q$ does not contain any vertex from $S$.
By symmetry, we may assume that $V(P)\cap S = \emptyset$, that is, no vertex from $H$ has a neighbor in $P$.
It follows that the third neighbor of $H$ on $C$, call it $w$, belongs to $V(Q)$.
Since $G$ is $3$-connected, the graph $G-\{u,v\}$ is connected, and hence contains a path from $V(P)$ to $V(Q)$.
Let $R'$ be a shortest path in $G-\{u,v\}$ from $V(P)$ to $V(Q)$ and let $R$ be the subpath obtained by deleting the endpoints of $R'$.
Note that since $C$ is an induced cycle, the path $R$ is nonempty.
Furthermore, since $w$ is the only vertex in $Q$ with a neighbor in $H$, the path $R$ cannot contain any vertex of $H$ (see \Cref{fig:W4}).
We now derive a contradiction by showing that there exists an induced minor model of $W_4$ in $G$.
Let $X_0 = V(Q)\cup V(R)$, $X_1 = \{u\}$, $X_2 = V(P)$, $X_3 = \{v\}$, and $X_4 = V(H)$.
Then $X_0,X_1,\ldots, X_4$ are pairwise disjoint subsets of $V(G)$, each inducing a connected subgraph, such that for each $i\in \{1,2,3,4\}$ there is an edge in $G$ with one endpoint in $X_0$ and the other in $X_i$ as well as an edge in $G$ with one endpoint in $X_i$ and the other in $X_{i+1}$ (with $X_5 = X_1$), no edge in $G$ has one endpoint in $X_1$ and another in $X_3$, and no edge in $G$ has one endpoint in $X_2$ and another in $X_4$.
Therefore, the sets $X_0,X_1,\ldots, X_4$ form an induced minor model of $W_4$ in $G$, a contradiction.

For the other direction, assume that $G$ is a chordal graph.
Suppose that $G$ contains $W_4$ as an induced minor.
It follows that $G$ also contains $C_4$ as an induced minor.
But then, $G$ contains some cycle of length at least four as an induced subgraph which is a contradiction with the assumption that $G$ is chordal.
\end{proof}

By \cref{lem:3-con-W_4-IM-free}, every $3$-connected graph in the class  $\mathcal{G}$ of $W_4$-induced-minor-free graphs is chordal.
To show that $\mathcal{G}$ is $(1,3)$-tree decomposable and develop a linear-time algorithm for the {\sc $(1,3)$-tree Decomposition($\mathcal{G})$} problem (\Cref{w4-(124)}), we need the following concepts related to chordal graphs.

Let $G$ be a graph.
A \emph{vertex ordering} of $G$ is a total order $(v_1,\ldots, v_n)$ of its vertices.
A \emph{module} in $G$ is a set $M\subseteq V(G)$ such that every vertex not in $M$ that has a neighbor in $M$ is adjacent to all vertices in $M$.
A \emph{moplex} in $G$ is an inclusion-maximal module $M\subseteq V(G)$ that is a clique and its neighborhood $N(M)$ is either empty or a minimal separator in $G$.
A \emph{perfect moplex partition} of $G$ is an ordered partition $(M_1,\ldots, M_k)$ of $V(G)$ such that for all $i\in \{1,\ldots, k\}$, $M_i$ is a moplex in the subgraph of $G$ induced by $\cup_{j = i}^k M_j$.
Given an ordered partition $\pi = (Z_1,\ldots, Z_k)$ of $V(G)$ and a vertex ordering $\sigma = (v_1,\ldots, v_n)$ of $G$, we say that $\sigma$ is \emph{compatible with} $\pi$ if $v_i\in Z_p$, $v_j\in Z_q$, and $p<q$ imply $i<j$.
Berry and Bordat showed in~\cite{MR1626534} that every graph has a moplex.
Consequently, every graph has a perfect moplex partition.
A \emph{perfect moplex ordering} of $G$ is a vertex ordering compatible with a perfect moplex partition.
Using graph search algorithms such as Lexicographic Breadth-First Search (LexBFS)~\cite{MR408312} or Maximum Cardinality Search (MCS)~\cite{MR749707}, one can compute in linear time a perfect moplex ordering of a given graph by reversing the ordering returned by LexBFS or MCS (see~\cite{MR1626534,MR1320296,MR2659379}).
The reader may know the concept of \emph{perfect elimination ordering}; it is easily observed that every perfect moplex ordering is a perfect elimination ordering.

\begin{sloppypar}
\begin{lemma}\label{chordal-F-mapping}
Let $G$ be a chordal graph with $n$ vertices and $m$ edges and $F\subseteq E(G)$.
Then one can compute in time $\mathcal{O}(n+m)$ a clique tree $\mathcal T$ of $G$ with at most $n$ nodes and an $F$-mapping of $\mathcal T$.
\end{lemma}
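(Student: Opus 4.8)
The plan is to combine a linear-time perfect elimination ordering with a linear-time clique tree construction, and to read off the $F$-mapping from the ordering. First I would compute, in time $\mathcal{O}(n+m)$, a perfect elimination ordering $\sigma=(v_1,\dots,v_n)$ of $G$ --- for instance by reversing the output of LexBFS~\cite{MR408312} or MCS~\cite{MR749707}, exactly as used for perfect moplex orderings earlier in this section --- together with, for every $i$, the set $N^{+}(v_i)=N(v_i)\cap\{v_{i+1},\dots,v_n\}$ of later neighbors; since $\sum_i|N^{+}(v_i)|=m$, storing all of these takes linear time and space. Writing $X_i=\{v_i\}\cup N^{+}(v_i)$, the defining property of a perfect elimination ordering is that each $X_i$ is a clique, and the maximal cliques of $G$ are exactly the inclusionwise maximal members of $\{X_1,\dots,X_n\}$; in particular $G$ has at most $n$ maximal cliques. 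The clique tree $\mathcal T$ itself, with at most $n$ nodes, can then be built in time $\mathcal{O}(n+m)$ by the algorithm of Berry and Simonet~\cite{MR3632036} already invoked in \Cref{tin-pmcin} (see also~\cite{MR1320296,MR2659379}).

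It remains to produce the $F$-mapping within the same time budget. The key observation is that it suffices to assign to each vertex $v_i$ a single maximal clique $M(v_i)$ with $X_i\subseteq M(v_i)$: then for every edge $e=v_iv_j\in F$ with $i<j$ we have $v_j\in N^{+}(v_i)\subseteq X_i$, so both endpoints of $e$ lie in $M(v_i)$, and we may map $e$ to the node of $\mathcal T$ whose bag is $M(v_i)$. Once the labels $M(\cdot)$ are available, each edge of $F$ is then handled in constant time by a single lookup, with no need to inspect the contents of any bag, so this phase costs $\mathcal{O}(|F|)=\mathcal{O}(m)$.

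Such a clique $M(v_i)$ always exists because $X_i$ is a clique. To argue that the assignment is computable in linear time, I would record that if $X_i$ is not maximal then some $v_k$ with $k<i$ is adjacent to all of $X_i$: indeed, as $v_i$ is simplicial in the graph induced by $\{v_i,\dots,v_n\}$, every later neighbor of $v_i$ already lies in $X_i$, so any vertex extending $X_i$ has index below $i$. Such a $v_k$ satisfies $N^{+}(v_i)\subseteq N^{+}(v_k)$ and $v_i\in N^{+}(v_k)$, whence $X_i\subseteq X_k$ with $|X_k|>|X_i|$. Iterating, the indices strictly decrease and the process terminates at an index $j$ with $X_j$ maximal and $X_i\subseteq X_j$, and I set $M(v_i)=X_j$. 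These ``home cliques'' are precisely the per-vertex representative-clique labels produced as a by-product of the standard perfect-elimination-ordering clique tree construction, obtainable in a single pass over $\sigma$ in total time $\mathcal{O}(n+m)$.

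The step I expect to require the most care is this last one: making the assignment $v_i\mapsto M(v_i)$ genuinely linear rather than spending, for each vertex, time proportional to the size of its clique or naively following the decreasing-index chains above (which could be quadratic). The containment $X_i\subseteq M(v_i)$ is exactly what lets us charge each edge to the home clique of its lower endpoint without ever testing membership in a bag; threading the representative labels through the clique tree construction in one linear-time pass, via the elimination-tree bookkeeping underlying~\cite{MR3632036}, is the only genuinely delicate point, while everything else is routine.
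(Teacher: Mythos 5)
Your proposal is correct and takes essentially the same approach as the paper: both compute an ordering by reversing LexBFS/MCS, build the clique tree with the Berry--Simonet algorithm, and then map each edge $v_iv_j\in F$ with $i<j$ to a designated maximal clique containing $\{v_i\}\cup N^{+}(v_i)$, charging the lookup to the lower endpoint. The only difference is how the per-vertex ``home clique'' is identified: the paper works with a perfect \emph{moplex} ordering, so the home clique of $v_i$ is simply the bag $N_{G_p}[M_p]$ of the moplex $M_p$ containing $v_i$ (making the linear-time label assignment immediate from the moplex partition), whereas you obtain the same labels as representative-clique bookkeeping in the PEO-based construction --- a presentational, not substantive, difference.
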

\end{sloppypar}

\begin{proof}
Using LexBFS or MCS, we compute in linear time a perfect moplex ordering  $(v_1,\ldots, v_n)$ of $G$.
Berry and Simonet gave in~\cite{MR3632036} a linear-time algorithm that takes as input a connected chordal graph $G$ and a perfect moplex ordering of $G$, and computes a clique tree ${\mathcal T}$ of $G$.
We explain the idea of their algorithm in terms of the perfect moplex partition $(M_1,\ldots, M_k)$ of $G$ corresponding to the given perfect moplex ordering.
The bags of the computed clique tree $\mathcal{T}$ are the maximal cliques of $G$, which are exactly the sets $X_1,\ldots, X_k$ of the form $X_i = N_{G_i}[M_i]$ for all $i\in \{1,\ldots, k\}$ where $G_i$ is the subgraph of $G$ induced by $\cup_{j = i}^k M_j$.
The algorithm processes the moplexes in order from $M_k$ to $M_1$.
It starts with a clique tree $\mathcal{T}_k$ of $G_k$ with a tree containing a unique node $k$ and the corresponding bag $X_k = M_k$.
Then, for each $i= k-1,\ldots, 1$, the algorithm computes a clique tree $\mathcal{T}_i$ of $G_i$ from the clique tree $\mathcal{T}_{i+1}$ of $G_{i+1}$ by adding to the tree of ${\mathcal T}_{i+1}$ a new node $i$ associated with bag $X_i$, and an edge $(i,j)$ where $j$ is the smallest number in $\{i+1,\ldots, k\}$ such that there is an edge in $G$ from $M_i$ to $M_j$.
The final clique tree of $G = G_1$ is given by ${\mathcal T} = {\mathcal T}_1$.

We compute an $F$-mapping of ${\mathcal T}$ as follows.
For each edge $v_iv_j\in F$ such that $i<j$, we assign the edge $v_iv_j$ to the unique node $p\in \{1,\ldots, k\}$ of $\mathcal{T}$ such that $v_i\in M_p$.
This can be done in time $\mathcal{O}(|F|) = \mathcal{O}(|E(G)|)$.

Let us justify that the so-defined mapping is indeed an $F$-mapping of ${\mathcal T}$.
Consider an edge $v_iv_j\in F$ with $i<j$, and let $p$ and $q$ be the unique nodes of $\mathcal{T}$ such that $v_i\in M_p$ and $v_j\in M_q$, respectively.
Since the vertex ordering $(v_1,\ldots, v_n)$ is compatible with the perfect moplex partition $(M_1,\ldots, M_k)$, we have $p \le q$.
We need to show that $v_i$ and $v_j$ both belong to $X_p = N_{G_p}[M_p]$.
First, we have that $v_i\in M_p\subseteq X_p$.
Second, since $p\le q$, we have $v_j\in V(G_p)$ and thus $v_j$ is adjacent to $v_i$ in $G_p$. In particular, we have $v_j\in N_{G_p}[M_p]$. Thus, $v_iv_j\subseteq X_p$, which is what we wanted to show.

Let now $G$ be an arbitrary chordal graph with $n$ vertices and $m$ edges.
The above approach can be extended in a straightforward way to the case when $G$ is not connected, by computing the connected components and applying the above algorithm to each component.
The resulting forest of clique trees of the components can be turned into a clique tree of $G$ by adding the appropriate number of edges between the clique trees of the components.
By construction, the obtained clique tree ${\mathcal T}$ of $G$ has at most $n$ nodes.
\end{proof}

\begin{lemma}\label{w4-(124)}
Let $\mathcal{G}$ be the class of $W_4$-induced-minor-free graphs.
Then $\mathcal{G}$ is $(1,3)$-tree decomposable and {\sc $(1,3)$-Tree Decomposition($\mathcal{G}$)} can be solved on graphs with $n$ vertices and $m$ edges in time $\mathcal{O}(n \cdot m)$
so that the resulting $3$-refined tree decomposition with residual independence number at most $1$ has at most $n$ nodes.
\end{lemma}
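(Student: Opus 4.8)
The plan is to reduce the whole construction to the maximal cliques of $G$. Since the $(1,3)$-\textsc{Tree Decomposition} problem only concerns $3$-connected inputs, \cref{lem:3-con-W_4-IM-free} guarantees that any $3$-connected $G \in \mathcal G$ is chordal, and hence admits a clique tree whose bags are exactly its maximal cliques. In such a decomposition every bag induces a complete graph, so $\alpha(G[X_t \setminus U_t]) \le 1$ for \emph{any} choice of $U_t \subseteq X_t$; the residual independence number is therefore at most $1$ for free. Thus the only real task is to produce, for each maximal clique $X$, an $F^*\!$-cover of $G[X]$ of size at most $3$. I would compute a clique tree $\mathcal{T}$ together with an $F$-mapping in time $\mathcal{O}(n+m)$ via \cref{chordal-F-mapping} (the $F$-mapping of $\mathcal T$ is also one for the resulting $3$-refined decomposition, since $\mathcal T$ is its underlying decomposition), and then set each $U_t$ to be a minimum vertex cover of the graph $H_t := (X_t, F^* \cap \binom{X_t}{2})$ of $F^*$-edges inside the bag.

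The crux is the structural claim that, for every maximal clique $X$, the graph $H := (X, F^* \cap \binom{X}{2})$ has a vertex cover of size at most $3$. I would establish this by excluding three configurations inside $H$, each of which would let me delete a suitable subset $F \subseteq F^*$ so that $G - F$ contains $W_4$ as an induced minor, contradicting $\mathcal G$-safety of $F^*$. Since $G[X]$ is complete: (i) a matching of size $3$ in $H$, upon deleting its three edges, turns the six endpoints into the octahedron $K_{2,2,2}$, which contains $W_4$ as an induced subgraph (any vertex together with the $4$-cycle induced by its neighbourhood); (ii) two vertex-disjoint triangles in $H$, upon deleting their six edges, turn the six vertices into $K_{3,3}$, which is $3$-connected and non-chordal and hence contains $W_4$ as an induced minor by \cref{lem:3-con-W_4-IM-free}; and (iii) a $K_5$ in $H$, after deleting two disjoint edges among its five vertices, becomes exactly $W_4$. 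In each case the chosen $F$ lies in $F^* \cap \binom{X}{2} \subseteq F^*$ and the forbidden graph appears as an induced minor of $G-F$, the desired contradiction.

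What remains is the purely combinatorial statement that any graph $H$ with no matching of size $3$, no two vertex-disjoint triangles, and no $K_5$ has vertex-cover number at most $3$. The absence of a $3$-matching gives matching number at most $2$; if it is at most $1$ then the vertex-cover number is at most $2$, so I may fix a maximum matching $\{a_1b_1, a_2b_2\}$, whose four vertices form a cover. If this cover could not be shrunk to size $3$, then each of $a_1, b_1, a_2, b_2$ would have a neighbour among the unmatched vertices $W$ (else it could be dropped). Two distinct unmatched neighbours of $a_1$ and $b_1$ would, together with $a_2b_2$, form a $3$-matching; hence $N(a_1) \cap W = N(b_1) \cap W = \{w\}$ for a single vertex $w$, so $\{w, a_1, b_1\}$ is a triangle, and symmetrically $\{w', a_2, b_2\}$ is a triangle. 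If $w \ne w'$ these triangles are disjoint, contradicting the exclusion in (ii); if $w = w'$ then $w$ is adjacent to all of $a_1, b_1, a_2, b_2$, and the impossibility of a size-$3$ cover forces $G[\{a_1,b_1,a_2,b_2\}]$ to be $K_4$ (the only $4$-vertex graph with vertex-cover number $3$), so $\{a_1, b_1, a_2, b_2, w\}$ induces $K_5$, contradicting (iii). Hence the vertex-cover number is at most $3$.

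For the running time, the clique tree and $F$-mapping cost $\mathcal{O}(n+m)$ and yield at most $n$ bags. For each bag I would mark its vertices, scan $F^*$ once to extract $H_t$ in $\mathcal{O}(m)$ time, and compute a vertex cover of size at most $3$ by the standard bounded-search-tree routine (repeatedly branch on the two endpoints of an uncovered edge, to depth $3$), which runs in $\mathcal{O}(m)$ time because the search tree has constant size and the claim guarantees success. Summing over the at most $n$ bags gives the overall bound $\mathcal{O}(n \cdot m)$; the output is a $3$-refined tree decomposition with at most $n$ nodes and residual independence number at most $1$, in which each $U_t$ is an $F^*\!$-cover with $|U_t| \le 3$, which simultaneously shows that $\mathcal G$ is $(1,3)$-tree decomposable. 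I expect the main obstacle to be the structural claim, and within it the combinatorial lemma: verifying that the three configurations really are the only obstructions to vertex-cover number $3$ is exactly the matching analysis sketched above.
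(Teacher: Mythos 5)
Your proof is correct, and it follows the paper's skeleton: \cref{lem:3-con-W_4-IM-free} to reduce to chordal graphs, \cref{chordal-F-mapping} for the clique tree with at most $n$ nodes together with the $F$-mapping, and then a per-bag $F^*\!$-cover of size at most $3$ (residual independence number $1$ being automatic since every bag is a clique). Where you genuinely diverge is in the key step that bounds the cover. The paper splits on bag size: if $|X_t|\le 4$ it takes any three vertices of the bag, and if $|X_t|\ge 5$ it observes that two \emph{disjoint} $F^*\!$-edges inside the bag, together with a spare fifth bag vertex, span a $K_5$ in $G$ from which deleting those two edges yields $W_4$, contradicting $\mathcal{G}$-safety; hence in a large bag the $F^*\!$-edges pairwise intersect, and the two endpoints of any single such edge already form a cover of size $2$. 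You instead prove, with no case split, that the graph $H_t$ of $F^*\!$-edges inside any bag has vertex-cover number at most $3$, by excluding three configurations --- a $3$-matching (deletion gives the octahedron $K_{2,2,2}$, which contains $W_4$ as an induced subgraph), two disjoint triangles (deletion gives $K_{3,3}$, which contains $W_4$ as an induced minor, e.g.\ by \cref{lem:3-con-W_4-IM-free}), and a $K_5$ (deletion of two disjoint edges gives $W_4$ directly) --- followed by a correct maximum-matching analysis; note that this analysis implicitly uses that the unmatched vertices form an independent set, which indeed follows from the absence of a $3$-matching. Your route avoids needing a spare vertex in the bag, at the price of two extra $W_4$-containment verifications and a nontrivial combinatorial obstruction lemma; the paper's route gets a size-$2$ cover in large bags from a one-line intersecting-family argument. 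Both constructions achieve the same bounds: an $F^*\!$-covering $3$-refined tree decomposition with residual independence number at most $1$, at most $n$ nodes, and $\mathcal{O}(n\cdot m)$ running time.
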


\begin{proof}
Let $G$ be a $3$-connected $W_4$-induced-minor-free graph with $n$ vertices and $m$ edges and let $F^*\subseteq F\subseteq E(G)$ such that $F^*$ is $\mathcal{G}$-safe.
We want to compute an $F^*\!$-covering $3$-refined tree decomposition $\widehat{\mathcal{T}}$ of $G$ with $\widehat{\alpha}(\widehat{\mathcal{T}})\le 1$, together with an $F$-mapping of $\widehat{\mathcal{T}}$.

By \cref{lem:3-con-W_4-IM-free}, $G$ is chordal.
By \cref{chordal-F-mapping}, one can compute in linear time a $0$-refined tree decomposition $\widehat{\mathcal{T}}= (T,\{(X_t,U_t)\}_{t\in V(T)})$ of $G$ with residual independence number $1$ having at most $n$ nodes and an $F$-mapping of $\widehat{\mathcal{T}}$.
Note that $\widehat{\mathcal{T}}$ already satisfies all the desired properties, except that it may fail to be $F^*\!$-covering.
To fix this, we next redefine the sets $U_t$ for all $t\in V(T)$.

Let $t \in V(T)$.
If $|X_t| \leq 4$, then we set $U_t$ to be any subset of $\min\{|X_t|,3\}$ vertices of $X_t$.
This in particular implies that $|X_t \setminus U_t| \leq 1$, and hence $U_t$ is an $F^*\!$-cover of $G[X_t]$ of size at most $3$.

Now suppose that $|X_t| \geq 5$.
Consider the set $L$ of all edges in $F^*$ that have both endpoints in $X_t$.
We claim that any two edges in $L$ have a common endpoint.
Suppose that this is not the case, and let $e = uv$ and $f = xy$ be two disjoint edges in $L$.
As $|X_t|\ge 5$, there exists a vertex $z$ in $X_t$ distinct from any of $u,v,x,y$.
Since $X_t$ is a clique in $G$, the subgraph $H$ of $G$ induced by $\{u,v,x,y,z\}$ is isomorphic to $K_5$, and thus $H-\{e,f\}$ is isomorphic to $W_4$.
This implies that $G-\{e,f\}$ is not $W_4$-induced-minor-free.
However, since $\{e,f\}\subseteq L\subseteq F^*$, this is a contradiction with the fact that $F^*$ is $\mathcal{G}$-safe.

If $L = \emptyset$, we set $U_t = \emptyset$.
Otherwise, we choose an arbitrary edge $e =xy\in L$ and set $U_t = \{x,y\}$.
Such a set $U_t$ can be computed in time $\mathcal{O}(|X_t| \cdot |F^*|) = \mathcal{O}(n \cdot m)$, by iterating over the edges in $F^*$ and checking whether their endpoints both belong to $X_t$.
Note that since any two edges in $L$ have a common endpoint, all edges of the graph $G[X_t\setminus U_t]$ are in $E(G) \setminus F^*$, and thus $U_t$ is an $F^*\!$-cover of $G[X_t]$ of size at most $2$.
\end{proof}

\Cref{thm:sufficient,w4-(124)} imply the following.

\begin{theorem}\label{W_4-tree-independence}
For any $W_4$-induced-minor-free graph $G$ with $n$ vertices and $m$ edges, one can compute in time $\mathcal{O}(n \cdot m)$ a $3$-refined tree decomposition of $G$ with $\mathcal{O}(n)$ nodes and residual independence number at most $1$.
\end{theorem}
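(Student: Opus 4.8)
The plan is to obtain \cref{W_4-tree-independence} as a direct instantiation of \cref{thm:sufficient}, feeding in the algorithm for the {\sc $(1,3)$-Tree Decomposition($\mathcal{G}$)} problem guaranteed by \cref{w4-(124)}. First I would check that the class $\mathcal{G}$ of $W_4$-induced-minor-free graphs meets the hypotheses of \cref{thm:sufficient}. The required closure under induced topological minors holds because every induced topological minor of $G$ is in particular an induced minor of $G$ (it arises by taking an induced subgraph and then contracting subdivided paths), so by transitivity of the induced minor relation any induced topological minor of a $W_4$-induced-minor-free graph is again $W_4$-induced-minor-free. By \cref{w4-(124)}, the class is $(1,3)$-tree decomposable and the {\sc $(1,3)$-Tree Decomposition($\mathcal{G}$)} problem is solvable in time $f(n,m) = \O(n\cdot m)$, producing a $3$-refined tree decomposition of residual independence number at most $1$ with at most $g(n,m) = n$ nodes. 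Accordingly I would set $k = 1$ and $\ell = 3$.

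Second, I would verify the superadditivity conditions on the bounding functions, which \cref{thm:sufficient} demands. Writing $f(x,y) = c\,xy$ for a suitable constant $c$ and $g(x,y) = x$, both are superadditive over $\mathbb{Z}_+\times\mathbb{Z}_+$: for $f$ one has $(x_1+x_2)(y_1+y_2) \ge x_1y_1 + x_2y_2$ because the cross terms $x_1y_2 + x_2y_1$ are nonnegative, and for $g$ the defining inequality holds with equality, $g(x_1+x_2,y_1+y_2) = g(x_1,y_1)+g(x_2,y_2)$.

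Third, I would simply substitute these parameters into \cref{thm:sufficient}. Its conclusion yields, for any $G\in\mathcal{G}$ with $n\ge 1$ vertices and $m$ edges, a $3$-refined tree decomposition computable in time $\O(n + m + f(4n,3m)) = \O(n + m + n\cdot m) = \O(n\cdot m)$, with $\O(n + g(4n,3m)) = \O(n)$ nodes, and residual independence number at most $\max\{3-\ell,k\} = \max\{0,1\} = 1$, matching the claim verbatim.

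Since the statement is essentially a corollary of the two cited results, there is no genuine obstacle in this final step; all the substantive work has already been carried out, namely the SPQR-tree and block-cutpoint-tree reductions underlying \cref{thm:sufficient}, and the argument behind \cref{w4-(124)}, which rests on the characterization of $3$-connected $W_4$-induced-minor-free graphs as chordal (\cref{lem:3-con-W_4-IM-free}) together with the linear-time clique-tree and $F$-mapping computation of \cref{chordal-F-mapping}. The only points that require attention are the routine bookkeeping of the superadditivity hypotheses and confirming that the bound $\max\{3-\ell,k\}$ collapses to $1$ precisely because $\ell = 3$ and $k = 1$.
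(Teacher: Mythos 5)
Your proposal is correct and follows exactly the paper's own proof: instantiate \cref{thm:sufficient} with the algorithm from \cref{w4-(124)}, taking $k=1$, $\ell=3$, $f(n,m)=\mathcal{O}(n\cdot m)$, and $g(n,m)=n$, so that the residual independence bound $\max\{3-\ell,k\}$ collapses to $1$ and the running time $\mathcal{O}(n+m+f(4n,3m))$ becomes $\mathcal{O}(n\cdot m)$. The only difference is that you spell out the superadditivity check and the closure under induced topological minors, which the paper asserts without detail.
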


\begin{proof}
The class of $W_4$-induced-minor-free graphs is closed under induced topological minors, and \cref{w4-(124)} shows that \cref{thm:sufficient} is satisfied with $k = 1$ and $\ell = 3$, $f(n,m) = \mathcal{O}(n \cdot m)$, and $g(n,m) = n$.
Therefore, given a $W_4$-induced-minor-free graph $G$ with $n$ vertices and $m$ edges, \cref{thm:sufficient} implies that one can compute an $\ell$-refined tree decomposition of $G$ with $\mathcal{O}(n)$ nodes and with residual independence number at most $\max \{3-\ell,k\} = 1$ in time $\mathcal{O}(n+m+f(4n,3m))$, which is $\mathcal{O}(n \cdot m)$.
\end{proof}

\begin{remark}\label{W4-robust}
The algorithm for computing a $3$-refined tree decomposition of a $W_4$-induced-minor-free graph can be turned into a polynomial-time robust algorithm, as follows.
First, after we compute the SPQR tree $S$ of a given graph $G$, we check for every $R$-node $a$ if the corresponding graph $G_a$ in $S$ is a chordal graph.
If this is not the case, the algorithm returns that the graph does not belong to the class of $W_4$-induced-minor-free graphs.
This check can be done in linear time.
Assume now that each graph $G_a$ corresponding to an $R$ node $a$ of $S$ is chordal.
Then, once we compute a clique tree $\mathcal{T}$ of $G_a$, we check for each bag $X$ of $\mathcal{T}$ if the graph $G[X]$ contains an independent set of size $5$, which can be checked in polynomial time.
If some graph $G[X]$ contains an independent set of size $5$, then the algorithm returns that $G$ is not in the class of $W_4$-induced-minor-free graphs; otherwise, the algorithm correctly computes a $3$-refined tree decomposition of $G$ with residual independence number at most $1$.\hfill$\blacktriangle$
\end{remark}

In the first paper of the series~\cite{dallard2021treewidth} we established $(\tw,\omega)$-boundedness of the class of $W_4$-induced-minor-free graphs, although without an explicit binding function.
\Cref{bounded tin implies bounded tw-omega-refined,W_4-tree-independence} imply that the class of $W_4$-induced-minor-free graphs admits a linear $(\tw,\omega)$-binding function $f(p) = R(p+1,2)+3-2 = p+2$.
Furthermore, our approach leading to the proof of \cref{W_4-tree-independence} can be used to show, additionally, that if $G$ is a $W_4$-induced-minor-free graph, then the inequality $\tw(G) \ge  \eta(G)-1$, which holds for all graphs, is satisfied with equality.
Indeed, assuming that $G$ is $2$-connected, the proof of \cref{W_4-tree-independence} provides a tree decomposition of $G$ obtained from tree decompositions of graphs $G_a$ over all \textnormal{R}- and \textnormal{S}-nodes $a$ of a fixed SPQR tree of $G$.
Using the structure of the corresponding graphs $G_a$, it can be verified that the constructed tree decompositions are only composed of bags with at most $\eta(G_a)$ vertices.
Applying \cref{corollary-eta} then shows that the overall tree decomposition only contains bags with at most $\eta(G)$ vertices.

\begin{proposition}
If $G$ is a $W_4$-induced-minor-free graph, then $\tw(G) = \eta(G)-1$.
\end{proposition}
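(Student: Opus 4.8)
The plan is to establish the nontrivial inequality $\tw(G) \le \eta(G) - 1$, since the reverse inequality $\tw(G) \ge \eta(G) - 1$ holds for every graph: treewidth is minor-monotone and $\tw(K_p) = p - 1$, so a $K_{\eta(G)}$ minor forces $\tw(G) \ge \eta(G) - 1$. To prove the upper bound I would exhibit a tree decomposition of $G$ in which every bag has at most $\eta(G)$ vertices, so that its width is at most $\eta(G)-1$.

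First I would reduce to the $2$-connected case. The blocks of $G$ are induced subgraphs, treewidth satisfies $\tw(G) = \max_B \tw(B)$ over the blocks $B$ of $G$, and every block is again $W_4$-induced-minor-free. Moreover $\eta(B) \le \eta(G)$ because $B$ is a subgraph of $G$. Hence if $\tw(B) \le \eta(B) - 1$ for every block $B$, then $\tw(G) = \max_B \tw(B) \le \eta(G) - 1$. The blocks isomorphic to $K_1$ or $K_2$ satisfy the bound trivially, so it remains to treat the $2$-connected blocks.

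For a $2$-connected $G$, I would analyze the tree decomposition produced in the proof of \cref{W_4-tree-independence} (via \cref{lem:sufficient,w4-(124)}), which is assembled from tree decompositions of the graphs $G_a$ ranging over the \textnormal{R}- and \textnormal{S}-nodes $a$ of the SPQR tree of $G$, together with the size-$2$ bags coming from the \textnormal{P}-nodes. I would bound the three types of bags separately. The \textnormal{P}-node bags have size $2 \le \eta(G)$, since a $2$-connected graph on at least three vertices contains a cycle and hence satisfies $\eta(G) \ge 3$. For an \textnormal{S}-node, $G_a$ is a cycle and the cyclic construction used for \textnormal{S}-nodes in \cref{lem:sufficient} (cf.\ \cref{tin-cycles}) gives bags of size exactly $3$; since a cycle satisfies $\eta(G_a) = 3$, each such bag has at most $\eta(G_a)$ vertices. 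For an \textnormal{R}-node, $G_a$ is $3$-connected and $W_4$-induced-minor-free, hence chordal by \cref{lem:3-con-W_4-IM-free}, so the clique tree supplied by \cref{w4-(124)} has bags that are maximal cliques of $G_a$, each of size at most $\omega(G_a) = \eta(G_a)$. Here the identity $\omega(G_a) = \eta(G_a)$ holds because $K_{\omega(G_a)}$ is a subgraph (giving $\eta \ge \omega$) while for a chordal graph $\tw = \omega - 1 \ge \eta - 1$ (giving $\omega \ge \eta$). In all three cases the bag size is at most $\eta(G_a)$, and applying \cref{corollary-eta}, which gives $\eta(G) \ge \eta(G_a)$, every bag of the combined decomposition has at most $\eta(G)$ vertices. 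Thus the width is at most $\eta(G) - 1$, and combined with the universal lower bound we obtain $\tw(G) = \eta(G) - 1$.

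The main point requiring care is verifying that the specific construction underlying \cref{W_4-tree-independence} never introduces a bag larger than the relevant $\eta(G_a)$; note that this is not automatic from the residual-independence-number bound, so one genuinely has to inspect bag sizes. The only nontrivial structural ingredient is the identity $\omega(G_a) = \eta(G_a)$ for the chordal \textnormal{R}-node graphs, while the \textnormal{P}- and \textnormal{S}-node bags are bounded directly. The reduction to blocks and the use of \cref{corollary-eta} then propagate the per-component bound to $G$ itself without loss.
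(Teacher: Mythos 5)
Your proposal is correct and follows essentially the same route as the paper: the paper's own (sketched) argument also takes the universal lower bound $\tw(G)\ge \eta(G)-1$, reduces to the $2$-connected case, inspects the SPQR-tree-based decomposition from \cref{W_4-tree-independence} to check that every bag of the decompositions of the graphs $G_a$ has at most $\eta(G_a)$ vertices, and concludes via \cref{corollary-eta}. Your write-up merely fills in the details the paper leaves implicit, notably the block reduction and the identity $\omega(G_a)=\eta(G_a)$ for the chordal \textnormal{R}-node graphs.
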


Let us emphasize that for general classes of $H$-induced-minor-free graphs where $H$ is planar, only the existence of a function bounding the treewidth in terms of the Hadwiger number is known (see~\cite{MR3741534,MR3853110,MR2901091}).

By \cref{W_4-tree-independence}, every $W_4$-induced-minor-free graph $G$ satisfies the inequality $3$-$\tin(G)\le 1$.
Using \cref{observation-ell-tree-alpha} we thus obtain the following.

\begin{corollary}\label{W_4-tree-independence-number}
The tree-independence number of any $W_4$-induced-minor-free graph is at most $4$.
\end{corollary}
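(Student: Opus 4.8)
The plan is to read off the bound directly from \Cref{W_4-tree-independence} together with \Cref{observation-ell-tree-alpha}, so that essentially all of the substantive work has already been carried out. First I would invoke \Cref{W_4-tree-independence}: for any $W_4$-induced-minor-free graph $G$ it produces a $3$-refined tree decomposition of $G$ whose residual independence number is at most $1$. By \Cref{definition-ell-refined-tree-alpha}, the quantity $3\textnormal{-}\tin(G)$ is the minimum residual independence number over all $3$-refined tree decompositions of $G$; since we have just exhibited one with residual independence number at most $1$, this yields $3\textnormal{-}\tin(G)\le 1$.

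Next I would apply \Cref{observation-ell-tree-alpha} with $\ell = 3$, which asserts that $\tin(G)\le \ell\textnormal{-}\tin(G)+\ell$ for every graph $G$ and every integer $\ell\ge 0$. Substituting $\ell = 3$ gives
\[
\tin(G)\le 3\textnormal{-}\tin(G)+3\le 1+3 = 4,
\]
which is exactly the claimed bound, completing the argument.

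Since the corollary is an immediate consequence of the preceding results, there is no genuine obstacle in this step itself; the difficulty lies entirely in establishing \Cref{W_4-tree-independence}, which in turn rests on the SPQR-tree reduction of \Cref{thm:sufficient} and on \Cref{lem:3-con-W_4-IM-free} (that every $3$-connected $W_4$-induced-minor-free graph is chordal). I would simply emphasize that it is the additive slack of $\ell$ in \Cref{observation-ell-tree-alpha} that converts the very strong residual bound of $1$ into the bound of $4$ on the ordinary tree-independence number: deleting the $F^*\!$-cover of size at most $3$ from each bag can hide up to $3$ further pairwise non-adjacent vertices, so the passage from the $3$-refined invariant back to $\tin$ can cost up to $3$, which is precisely what the inequality accounts for.
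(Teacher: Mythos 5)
Your proposal is correct and follows exactly the paper's own argument: the text preceding the corollary derives $3\textnormal{-}\tin(G)\le 1$ from \cref{W_4-tree-independence} and then applies \cref{observation-ell-tree-alpha} with $\ell=3$ to conclude $\tin(G)\le 1+3=4$. Your additional remark about why the additive slack of $\ell$ is exactly what absorbs the cost of the $F^*\!$-cover is a fair gloss, but the substance of the argument is identical.
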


\begin{remark}\label{remark:sharp}
The bound on the tree-independence number given by \cref{W_4-tree-independence-number} is sharp: there exist arbitrarily large $2$-connected $W_4$-induced minor-free graphs with tree-independence number~$4$.
Take an integer $q\ge 4$ and let $F_q$ be the graph obtained from a complete graph with vertex set $S = \{1,2,3,4\}$ by replacing each of its edges $ij$ with $q$ paths of length two connecting $i$ and $j$.
Note that $F_q$ has exactly $4$ vertices of degree more than two.
Neither deleting vertices nor contracting edges having an endpoint of degree at most two can increase the number of vertices of degree more than two.
It follows that every induced minor of $F_q$ has at most $4$ vertices of degree more than two.
Since the graph $W_4$ has $5$ vertices and minimum degree $3$, we infer that $F_q$ is $W_4$-induced-minor-free, and hence $\tin(F_q) \le 4$ by \cref{W_4-tree-independence}.
To see that the inequality is satisfied with equality, it suffices to show that $\tin(F_4)\ge 4$.
But this was already observed in the proof of~\cite[Theorem 3.8]{dallard2022firstpaper}.
\hfill$\blacktriangle$
\end{remark}

\cref{W_4-tree-independence,thm:bounded-ell-refined-tree-independence-number} imply the existence of a polynomial-time algorithm for the \textsc{Max Weight Independent Set} problem in the class of $W_4$-induced-minor-free graphs.

\begin{corollary}\label{MWIS for W4-im-free graphs}
The \textsc{Max Weight Independent Set} problem can be solved in time $\mathcal{O}(n^{3})$ for $n$-vertex $W_4$-induced-minor-free graphs.
\end{corollary}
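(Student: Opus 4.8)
The plan is to combine the structural/algorithmic output of \cref{W_4-tree-independence} with the weighted dynamic program of \cref{thm:bounded-ell-refined-tree-independence-number}, so that the entire argument reduces to a short running-time bookkeeping; all of the genuine difficulty has already been placed in those two results, which we may assume.

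First I would invoke \cref{W_4-tree-independence}: given an $n$-vertex $W_4$-induced-minor-free graph $G$ with $m$ edges, it produces in time $\mathcal{O}(n\cdot m)$ a $3$-refined tree decomposition $\widehat{\mathcal{T}} = (T,\{(X_t,U_t)\}_{t\in V(T)})$ of $G$ with $|V(T)| = \mathcal{O}(n)$ nodes and residual independence number at most $1$. This is exactly the input required by \cref{thm:bounded-ell-refined-tree-independence-number}, instantiated with $\ell = 3$ and $k = 1$.

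Next I would feed $G$ (together with its vertex weights) and $\widehat{\mathcal{T}}$ into the algorithm of \cref{thm:bounded-ell-refined-tree-independence-number}. With $k = 1$ and $\ell = 3$, that algorithm solves \textsc{Max Weight Independent Set} in time
\[
\mathcal{O}\bigl(2^{\ell}\cdot |V(G)|^{k+1}\cdot |V(T)|\bigr) \;=\; \mathcal{O}\bigl(2^{3}\cdot n^{2}\cdot \mathcal{O}(n)\bigr) \;=\; \mathcal{O}(n^{3}).
\]

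Finally I would absorb the preprocessing cost. The only place where $m$ enters is the $\mathcal{O}(n\cdot m)$ bound for constructing $\widehat{\mathcal{T}}$; since $m = \mathcal{O}(n^{2})$ for every $n$-vertex graph, this term is also $\mathcal{O}(n^{3})$, so the overall running time is $\mathcal{O}(n^{3})$, as claimed. I do not expect any real obstacle beyond this bookkeeping: the heavy lifting lives in \cref{W_4-tree-independence} (the SPQR-tree construction of a bounded-$\widehat{\alpha}$ $3$-refined tree decomposition via \cref{thm:sufficient}) and in \cref{thm:bounded-ell-refined-tree-independence-number} (the dynamic program over $\ell$-refined tree decompositions). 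The single point worth double-checking is that the exponent $k+1 = 2$ together with the linear node count $|V(T)| = \mathcal{O}(n)$ yields exactly cubic and not higher running time, and that the factor $2^{\ell}$ is harmless since $\ell = 3$ is a fixed constant.
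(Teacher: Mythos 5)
Your proposal is correct and matches the paper's own argument exactly: the paper also derives \cref{MWIS for W4-im-free graphs} by feeding the $3$-refined tree decomposition from \cref{W_4-tree-independence} (with $k=1$, $\ell=3$, $\mathcal{O}(n)$ nodes) into the algorithm of \cref{thm:bounded-ell-refined-tree-independence-number}, with the same $\mathcal{O}(2^{3}\cdot n^{2}\cdot n)=\mathcal{O}(n^{3})$ accounting and the same absorption of the $\mathcal{O}(n\cdot m)$ preprocessing. Your bookkeeping, including the observation that using plain tree decompositions instead would only give $\mathcal{O}(n^{6})$, is consistent with the paper's \cref{refined-improvement-1}.
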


\begin{remark}\label{refined-improvement-1}
By applying \cref{thm:bounded-tree-independence-number} instead of \cref{thm:bounded-ell-refined-tree-independence-number}, one could also use usual tree decompositions instead of $\ell$-refined ones.
However, this approach would only lead to a running time of $\mathcal{O}(n^{6})$ instead of $\mathcal{O}(n^{3})$.
\hfill$\blacktriangle$
\end{remark}

Furthermore, since a $3$-refined tree decomposition with residual independence number at most $1$ has independence number at most $4$, \cref{W_4-tree-independence,thm:bounded-tree-independence-number-packings} imply the existence of a polynomial-time algorithm for the \textsc{Max Weight Independent Packing} problems in the class of $W_4$-induced-minor-free graphs.

\begin{theorem}\label{MWIHP for W4-im-free graphs}
Given a $W_4$-induced-minor-free graph $G$ and a finite family $\mathcal{H} = \{H_j\}_{j \in J}$ of connected nonnull subgraphs of $G$, the \textsc{Max Weight Independent Packing} problem can be solved in time \hbox{$\mathcal{O}(|J|\cdot |V(G)|\cdot (|V(G)| + |J|^4))$}.
\end{theorem}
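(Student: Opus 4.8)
The plan is to obtain the result by composing the structural/algorithmic result of \cref{W_4-tree-independence} with the generic packing algorithm of \cref{thm:bounded-tree-independence-number-packings}, exactly mirroring the proof of \cref{MWIHP-K2q}. First I would apply \cref{W_4-tree-independence} to the input graph $G$, computing in time $\mathcal{O}(|V(G)|\cdot|E(G)|)$ a $3$-refined tree decomposition $\widehat{\mathcal{T}} = (T, \{(X_t,U_t)\}_{t\in V(T)})$ of $G$ with $|V(T)| = \mathcal{O}(|V(G)|)$ nodes and residual independence number at most~$1$.

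The key observation is that the \emph{underlying} tree decomposition $\mathcal{T} = (T, \{X_t\}_{t\in V(T)})$ is an ordinary tree decomposition of $G$ with independence number at most~$4$. Indeed, for each node $t$ we have $\alpha(G[X_t\setminus U_t])\le 1$ while $|U_t|\le 3$, so any independent set of $G[X_t]$ contains at most one vertex of $X_t\setminus U_t$ and at most $|U_t|\le 3$ vertices of $U_t$, whence $\alpha(G[X_t])\le 4$. This is precisely the inequality $\tin(G)\le \ell\textnormal{-}\tin(G)+\ell$ from \cref{observation-ell-tree-alpha} instantiated with $\ell = 3$, applied to the specific decomposition at hand.

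With $\mathcal{T}$ in hand I would invoke \cref{thm:bounded-tree-independence-number-packings} with $k = 4$. Substituting $|V(T)| = \mathcal{O}(|V(G)|)$ into its running time $\mathcal{O}(|J|\cdot((|J|+|V(T)|)\cdot|V(G)|+|E(G)|+|J|^{k}\cdot|V(T)|))$ gives $\mathcal{O}(|J|\cdot(|J|\,|V(G)|+|V(G)|^2+|E(G)|+|J|^4\,|V(G)|))$. The rest is purely arithmetic simplification: using $|E(G)| = \mathcal{O}(|V(G)|^2)$ to absorb the edge term, factoring out $|V(G)|$, and using $|J|\le |J|^4$ to discard the summand linear in $|J|$, the bound collapses to $\mathcal{O}(|J|\cdot|V(G)|\cdot(|V(G)|+|J|^4))$, as claimed. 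The preprocessing cost $\mathcal{O}(|V(G)|\cdot|E(G)|)$ is accounted for in the stated order in the same fashion as the decomposition cost in \cref{MWIHP-K2q}.

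I do not expect a genuine obstacle: the statement is a straightforward composition of two results already established in the excerpt, and the only point requiring care is the running-time bookkeeping, where one must verify that the $|J|^4\,|V(G)|$ term arising from the choice $k=4$ indeed dominates the remaining contributions to the packing algorithm's running time.
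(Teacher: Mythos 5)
Your proposal is correct and takes essentially the same route as the paper's own proof: both compute the $3$-refined tree decomposition of \cref{W_4-tree-independence}, pass to its underlying tree decomposition of independence number at most $4$ via \cref{observation-ell-tree-alpha}, and then invoke \cref{thm:bounded-tree-independence-number-packings} with $k=4$ and $|V(T)|=\mathcal{O}(|V(G)|)$. The only difference is that you make explicit the running-time arithmetic that the paper leaves implicit.
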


\begin{proof}
Let $G$ be an $n$-vertex $W_4$-induced-minor-free graph, given along with a finite family $\HH = \{H_j\}_{j \in J}$ of connected nonnull subgraphs and a weight function \hbox{$w:J\to \mathbb{Q}_+$}.
By \cref{W_4-tree-independence,observation-ell-tree-alpha}, we can compute in time $\mathcal{O}(n^3)$ a tree decomposition \hbox{$\mathcal{T} = (T, \{\Bag_t\}_{t\in V(T)})$} of $G$ with $\mathcal{O}(n)$ nodes and with independence number at most $4$.
The conclusion now follows from \cref{thm:bounded-tree-independence-number-packings}.
\end{proof}

\begin{remark}\label{remark:cant-apply-MWIS-directly}
Cameron and Hell showed in~\cite{MR2190818} that if $G$ is a chordal graph, then for every nonempty family $\FF$ of connected graphs, the derived graph $G(\HH)$ where $\HH = \HH(G,\FF)$ is chordal.
One may wonder whether a similar statement holds for the more general class of $W_4$-induced-minor-free graphs.
This would be algorithmically interesting as it would imply the existence of an $\mathcal{O}(n^{3r})$ algorithm for the \textsc{Max Weight Independent $\FF$-Packing} problem for $n$-vertex $W_4$-induced-minor-free graphs, where $r$ is the maximum order of a graph in $\FF$, a significant improvement of the $\mathcal{O}(n^{5r+1})$ time complexity following from \cref{MWIHP for W4-im-free graphs}.\footnote{We could compute in time $\mathcal{O}(n^{2r})$ the graph $G(\HH)$ (as in the proof of~\cite[Theorem 7.3]{dallard2022firstpaper}), which has $\mathcal{O}(n^r)$ vertices, and if this graph is $W_4$-induced-minor-free, \cref{MWIS for W4-im-free graphs} applies and leads to an $\mathcal{O}(n^{3r})$ algorithm for the \textsc{Max Weight Independent $\FF$-Packing} problem on $G$.}
Unfortunately, this statement is in general not true, as the following example shows.
Let $\FF = \{K_2\}$ and let $G$ be the $6$-cycle.
Then $G$ is $W_4$-induced-minor-free.
However, the graph $G(\HH)$ obtained with $\HH = \HH(G,\FF)$, that is, the square of the line graph of $G$, has the property that deleting any one vertex from it results in a graph isomorphic to $W_4$.\hfill$\blacktriangle$
\end{remark}

\section{$K_5^-$-induced-minor-free graphs}\label{K5-induced-minor-free}

We now apply the approach outlined at the beginning of \cref{W4-induced-minor-free} to the case of $K_5^-$-induced-minor-free graphs.
As before, we start with characterizing the $3$-connected graphs in the class.
Recall that a \emph{wheel} is any graph $W_n$ obtained from a cycle $C_n$, $n\ge 4$, by adding to it a universal vertex.
Our proof makes use of the following classical result on $3$-connected graphs.

\begin{theorem}[Tutte~\cite{MR0140094}]\label{3conn-contraction}
Every $3$-connected graph with at least $5$ vertices has an edge whose contraction results in a $3$-connected graph.
\end{theorem}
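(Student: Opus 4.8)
The plan is to prove this by contradiction using an extremal argument on the size of a smallest component. Suppose $G$ is $3$-connected with $|V(G)|\ge 5$ but that for \emph{every} edge $e$ the contraction $G/e$ fails to be $3$-connected. First I would note that since $G$ is $3$-connected, $G/e$ has at least $4$ vertices, so ``not $3$-connected'' means $G/e$ has a cutset of size at most $2$. Because $G$ itself has no cutset of size at most $2$, any such cutset of $G/e$ must contain the vertex $v_e$ obtained by contracting $e=xy$; it therefore has the form $\{v_e,z\}$, and pulling this back to $G$ shows that $\{x,y,z\}$ is a $3$-cutset of $G$ with $x,y,z$ pairwise distinct. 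Hence the contradiction hypothesis yields the following: for every edge $xy$ there is a vertex $z$ such that $\{x,y,z\}$ separates $G$.

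Next I would set up the extremal step. Among all triples $(x,y,z)$ with $xy\in E(G)$ and $\{x,y,z\}$ a cutset of $G$, choose one for which a smallest component $C$ of $G-\{x,y,z\}$ has as few vertices as possible. Since $G$ is $3$-connected, $\{x,y\}$ is not a cutset, which forces $z$ to have a neighbour $v$ in $C$ (otherwise $C$ would be separated from the remaining components of $G-\{x,y,z\}$ by the two vertices $\{x,y\}$ alone). Now I would apply the hypothesis to the edge $zv$: there is a vertex $w$ such that $\{z,v,w\}$ is a cutset of $G$.

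The heart of the argument is to produce a component of $G-\{z,v,w\}$ that is strictly smaller than $C$, contradicting the choice of $(x,y,z)$. Since $\{z,v,w\}$ is a cutset, $G-\{z,v,w\}$ has at least two components, and because $x,y$ are adjacent, at most one component meets $\{x,y\}$; so I can pick a component $D$ disjoint from $\{x,y\}$. This $D$ also avoids $z,v,w$, so $D$ avoids $\{x,y,z\}$, and being connected it lies inside a single component of $G-\{x,y,z\}$. By $3$-connectedness, the cutset vertex $v$ has a neighbour in $D$; that neighbour is adjacent to $v\in C$ and lies outside $\{x,y,z\}$, so it belongs to $C$. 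Hence $D$ meets $C$, forcing $D\subseteq C$; and since $v\in C$ but $v\notin D$, we get $|D|\le |C|-1<|C|$. As $zv$ is an edge and $\{z,v,w\}$ separates $G$ with $D$ a component, this is exactly the configuration we minimized over, yielding the desired contradiction.

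The main obstacle I expect is the bookkeeping in the final paragraph: one must carefully justify that the new component $D$ is both nonempty (disjoint from $\{x,y\}$) and strictly contained in $C$. This relies on two applications of $3$-connectivity---every cutset vertex has a neighbour in every component---combined with the fact that the edge $xy$ confines $x$ and $y$ to a single side of the cut $\{z,v,w\}$. Some attention is also needed to confirm that the pulled-back separator $\{x,y,z\}$ consists of three distinct vertices, which follows because $v_e$ and $z$ are distinct vertices of $G/e$.
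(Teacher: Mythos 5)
Your proof is correct. Note, however, that the paper does not prove this statement at all: it is quoted as a classical theorem with a citation to Tutte's 1961 paper, so there is no proof in the paper to compare against. Your argument is the standard short proof of Tutte's contraction lemma (essentially the one in Diestel's \emph{Graph Theory}): assume no edge works, pull the small separator of $G/xy$ back to a $3$-cutset $\{x,y,z\}$ of $G$, minimize the size of a component $C$ of $G-\{x,y,z\}$ over all such configurations, then use a neighbour $v$ of $z$ in $C$ and the resulting cutset $\{z,v,w\}$ to exhibit a component $D\subseteq C\setminus\{v\}$, contradicting minimality. All the delicate verifications are in place: the pulled-back separator consists of three distinct vertices; $z$ has a neighbour in $C$ (else $\{x,y\}$ would be a $2$-cutset); a component $D$ of $G-\{z,v,w\}$ avoiding $\{x,y\}$ exists because the edge $xy$ confines $x$ and $y$ to at most one component; and every vertex of a $3$-cutset of a $3$-connected graph has a neighbour in every component of its complement. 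One cosmetic slip: $G/e$ has at least four vertices because $|V(G)|\ge 5$, not because $G$ is $3$-connected (which only gives $|V(G)|\ge 4$); this is harmless since the hypothesis $|V(G)|\ge 5$ is explicitly available. Tutte's original proof was quite different---longer, and embedded in his structural theory of $3$-connected graphs leading to the wheel theorem---so your self-contained extremal argument is a legitimate and more elementary substitute for the citation.
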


\begin{theorem}\label{3-connected K_5^-induced-minor-free}
For every graph $G$, the following statements are equivalent.
\begin{enumerate}
\item $G$ is $3$-connected and $K_{5}^-$-induced-minor-free.
\item $G$ is either a complete graph with at least four vertices, a wheel, $K_{3,3}$ or $\overline{C_6}$.
\end{enumerate}
\end{theorem}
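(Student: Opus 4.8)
The plan is to prove the two implications separately: the reverse direction $(2)\Rightarrow(1)$ by direct verification, and the forward direction $(1)\Rightarrow(2)$ by induction on $|V(G)|$ driven by Tutte's contraction theorem (\cref{3conn-contraction}).

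For $(2)\Rightarrow(1)$, the $3$-connectivity of each listed graph is standard, so the content is $K_5^-$-induced-minor-freeness. For a complete graph this is immediate, since every induced minor of a complete graph is complete while $K_5^-$ is not. For $K_{3,3}$ and $\overline{C_6}$, which have only six vertices, any induced minor isomorphic to the five-vertex graph $K_5^-$ must arise either by deleting a single vertex or by contracting a single edge; a short case check (using vertex-transitivity to cut down the cases) shows that none of these produces $K_5^-$: contracting an edge of $K_{3,3}$ or a matching edge of $\overline{C_6}$ yields $W_4$, and every remaining option yields a graph with fewer than the nine edges of $K_5^-$. For a wheel $W_n$ I would argue as follows. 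In any induced minor model the hub lies in at most one branch set, so at least four of the five branch sets are pairwise disjoint connected subsets of the rim cycle, i.e.\ \emph{arcs}; the adjacency graph of pairwise disjoint arcs on a cycle is itself a subgraph of a cycle, hence has maximum degree at most two and at most one edge per arc. On the other hand, every four vertices of $K_5^-$ induce either $K_4$ or $K_4^{-}$, and all five induce $K_5^-$ itself; each of these has a vertex of degree at least three and more edges than a cycle on the same number of vertices, contradicting the arc bound. Hence $W_n$ has no $K_5^-$ induced minor.

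For $(1)\Rightarrow(2)$, the base case $n=4$ is settled since the only $3$-connected graph on four vertices is $K_4$. For $n\ge 5$, \cref{3conn-contraction} supplies an edge $e$ with $G':=G/e$ still $3$-connected; as induced minors compose, $G'$ is again $K_5^-$-induced-minor-free, so by the induction hypothesis $G'$ lies in the list. It then remains to analyse how $G$ can arise from $G'$ by the inverse operation: \emph{splitting} the contracted vertex $w$ into two adjacent vertices $u,v$ whose neighbourhoods cover (with possible overlap) the set $S=N_{G'}(w)$, say into $S_u=N_G(u)\cap S$ and $S_v=N_G(v)\cap S$. The $3$-connectivity of $G$ forces $|S_u|,|S_v|\ge 2$, and $K_5^-$-freeness imposes strong further constraints, which I would extract by exhibiting a $K_5^-$ (as an induced subgraph, or after a single further contraction) whenever the split is too rich.

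This split analysis, carried out separately for each type of $G'$, is the decisive and most laborious step. When $G'=K_m$ the set $S$ is a clique, and one checks that a $K_5^-$ appears unless either $u,v$ are both complete to $S$ (giving $G=K_{m+1}$) or $m=4$ and the split is the unique balanced one (giving $W_4$); in particular no new type appears for $m\ge 5$. When $G'$ is a wheel, $K_{3,3}$, or the prism $\overline{C_6}$, the neighbourhood $S$ is no longer a clique, so the clique-based obstruction must be replaced by obstructions tailored to the local structure (for instance, splitting the hub versus a rim vertex of $W_k$, or a vertex of $\overline{C_6}$). The target in each case is to show that the only $3$-connected $K_5^-$-induced-minor-free splits land back in the list: that the admissible splits of $W_4$ are exactly $W_5$, $\overline{C_6}$, and $K_{3,3}$, that for $k\ge 5$ the splits of $W_k$ give only $W_{k+1}$, and that $K_{3,3}$ and $\overline{C_6}$ admit no splits outside the list. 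The main obstacle is precisely the bookkeeping of these expansions while certifying, through explicit induced-minor models, that every forbidden split creates a $K_5^-$; organising the cases by the clique number $\omega(G)$ (with $\omega\ge 4$ pushing towards completeness, $\omega=3$ towards a wheel or $\overline{C_6}$, and $\omega=2$ towards $K_{3,3}$) is a natural way to keep the argument manageable.
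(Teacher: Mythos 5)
Your proposal follows essentially the same route as the paper: the reverse direction by direct verification (the paper dismisses this as routine, while you supply correct arguments, e.g., the disjoint-arcs-on-a-cycle bound for wheels), and the forward direction by induction on $|V(G)|$ using Tutte's theorem (\cref{3conn-contraction}) followed by an analysis of how the contracted vertex can be split back into two adjacent vertices for each of the four graph types. All the outcomes you assert for the split analysis---splits of $K_m$ yield only $K_{m+1}$ or $W_4$, splits of $W_4$ yield only $W_5$, $\overline{C_6}$, or $K_{3,3}$, splits of $W_k$ for $k\ge 5$ yield only $W_{k+1}$, and $K_{3,3}$ and $\overline{C_6}$ yield nothing new---are exactly what the paper's Cases 1--4 establish, though you leave that casework (the bulk of the paper's proof) as a stated plan with explicit targets rather than carrying it out.
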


\begin{proof}
It is straightforward to verify that if $G$ is either a complete graph $K_n$ with $n\ge 4$, a wheel, $G\cong K_{3,3}$, or $G\cong \overline{C_6}$, then
$G$ is $3$-connected and $K_{5}^-$-induced-minor-free.

We prove the converse direction using induction on $n = |V(G)|$.
So let $G$ be a $3$-connected $K_{5}^-$-induced-minor-free graph.
Since $G$ is $3$-connected, $n\ge 4$, and if $n = 4$, then $G$ is complete since otherwise it would contain a vertex of degree at most two.
Suppose that $n \ge 5$. By \cref{3conn-contraction}, $G$ has an edge $e = uv$ whose contraction results in a $3$-connected graph $G'$. Since
$G'$ is also $K_{5}^-$-induced-minor-free, the induction hypothesis implies that $G'$ is either a complete graph, a wheel, $G'\cong K_{3,3}$, or $G'\cong \overline{C_6}$. We analyze each of the four cases separately.

\medskip
\noindent \emph{Case 1: $G'$ is a complete graph.}\\
In this case, $N_G(u)\cup N_G(v) = V(G)\setminus\{u,v\}$.
Let $a = |N_G(u)\setminus N_G(v)|$,
$b = |N_G(u)\cap N_G(v)|$, and
$c = |N_G(v)\setminus N_G(u)|$.
If $a = c = 0$, then $G$ is complete.
We may thus assume by symmetry that $a>0$.
If $b\ge 2$, then any two vertices from $N_G(u)\cap N_G(v)$, along with $u$, $v$, and a vertex from $N_G(u)\setminus N_G(v)$, would induce a subgraph of $G$ isomorphic to $K_5^-$, a contradiction. Thus $b\le 1$.
Since $G$ is $3$-connected, we have $d_G(v)\ge 3$ and hence $c = d_G(v)-(b+1)\ge 2-b>0$.
If $a = c = 1$, then the $3$-connectedness of $G$ implies that $b = 1$, and $G$ is isomorphic to $W_4$. We may thus assume by symmetry that $a\ge 2$.
If $b = 1$, then any two vertices from $N_G(u)\setminus N_G(v)$, along with $u$, a vertex from $N_G(u)\cap N_G(v)$, and a vertex from $N_G(v)\setminus N_G(u)$, form an induced subgraph of $G$ isomorphic to $K_5^-$, a contradiction.
Thus $b = 0$ and consequently $c = d_G(v)-1\ge 2$.
But now, the graph obtained from the subgraph $H$ of $G$ induced by $u$, $v$, any two vertices from $N_G(u)\setminus N_G(v)$, and any two vertices from $N_G(v)\setminus N_G(u)$ by contracting an edge from $u$ to one of the vertices in $V(H)\cap (N_G(u)\setminus N_G(v))$ is isomorphic to $K_5^-$, a contradiction.

\medskip
\noindent \emph{Case 2: $G'$ is a wheel $W_{n-2}$ with $n\ge 6$.}\\
Let $x$ be the universal vertex in $G'$ and let $C$ be the cycle $G'-x$, with a cyclic order of the vertices $v_1,\ldots, v_{n-2}$.
We analyze two subcases depending on whether the vertex $w$ of $G'$ to which the edge $uv$ was contracted corresponds to $x$,
the central vertex of the wheel, or not.

\medskip
\noindent \emph{Case 2.1: $w = x$.}\\
In this case, $V(C)\subseteq N_G(u)\cup N_G(v)$.
Since $G$ is $3$-connected, each of $u$ and $v$ has at least two neighbors on $C$.
Suppose first that each of $u$ and $v$ has only two neighbors on $C$.
Since $C$ has at least four vertices, the neighborhoods of $u$ and $v$ on $C$ are disjoint.
Thus $|V(C)| = 4$ and $G$ is either $\overline{C_6}$ or $K_{3,3}$, depending on whether the two neighbors of $u$ on $C$ are adjacent or not.
We may thus assume that one of $u$ and $v$, say $v$, has at least three neighbors on $C$.
If $u$ and $v$ are both adjacent to three consecutive vertices on $C$, say
$v_j,v_{j+1},v_{j+2}$ (indices modulo $n-2$), then the subgraph of $G$ induced by $\{u,v,v_j,v_{j+1},v_{j+2}\}$ is isomorphic to $K_5^-$, a contradiction.
Thus, we may assume in particular that $u$ is not adjacent to $v_1$, and, consequently, $v$ is adjacent to $v_1$. Let $v_{i}$ and $v_j$ be the two neighbors of $u$ on $C$ such that $i$ is as small as possible and $j$ is as large as possible. Then $1<i<j\le n-2$.
Now, if $v$ has a neighbor in $\{v_2,\ldots, v_{j-1}\}$ and a neighbor in $\{v_{j},\ldots, v_{n-2}\}$, then contracting in $G$ all the edges of the paths $(v_2,v_3,\ldots, v_{j-1})$ and $(v_{j},v_{j+1},\ldots, v_{n-2})$ results in a graph isomorphic to $K_5^-$, a contradiction.
Similarly, if $v$ has a neighbor in $\{v_2,\ldots, v_{i}\}$ and a neighbor in $\{v_{i+1},\ldots, v_{n-2}\}$, then contracting in $G$ all the edges of the paths $(v_2,v_3,\ldots, v_{i})$ and $(v_{i+1},v_{i+2},\ldots, v_{n-2})$ results in a graph isomorphic to $K_5^-$, a contradiction.
Next, if $v$ has at least two neighbors in $\{v_i,\ldots, v_j\}$, say $v_{k_1}$ and $v_{k_2}$, with $k_1<k_2$, then contracting in $G$ all the edges of the paths $(v_2,v_3,\ldots, v_{k_1})$ and $(v_{k_1+1},v_{k_2+2},\ldots, v_{n-2})$ results in a graph isomorphic to $K_5^-$, a contradiction.
Thus, all the neighbors of $v$ in $\{v_2,\ldots, v_{n-2}\}$ are in $\{v_2,\ldots, v_{i-1}\}$ or in $\{v_{j+1},\ldots, v_{n-2}\}$.
We may assume without loss of generality that all the neighbors of $v$ are in $\{v_2,\ldots, v_{i-1}\}$.
Consequently, $i>3$ and the vertices $v_2$ and $v_3$ are adjacent to $v$ and non-adjacent to $u$.
It follows that contracting in $G$ all the edges of the paths $(v_3,v_4,\ldots, v_{j-1})$ and $(v_{j},v_{j+1},\ldots, v_{n-2},v)$ results in a graph isomorphic to $K_5^-$, a contradiction.

\medskip
\noindent \emph{Case 2.2: $w \neq x$.}\\
We may assume without loss of generality that $w = v_1$. Thus, $N_{G}(u)\cup N_G(v) = \{u,v,v_{n-2},x,v_2\}$.
Since $G$ is $3$-connected, each of the vertices $v_2$ and $v_{n-2}$ must have a neighbor in the set $\{u,v\}$.
If the edges in $G$ having one endpoint in $\{v_2,v_{n-2}\}$  and the other one in
$\{u,v\}$ form a matching of size two, then
$u$ and $v$ must both be adjacent to $x$ since $G$ is $3$-connected, and hence $G$ is a wheel, $W_{n-1}$, in this case.
We may thus assume without loss of generality that $u$ is adjacent to both $v_2$ and $v_{n-2}$.
Furthermore, since $d_G(v)\ge 3$, we may assume that $v$ is adjacent to $v_{n-2}$.
Suppose that $v$ is not adjacent to $x$. Then $v$ is adjacent to $v_2$ and $u$ is adjacent to $x$, and
contracting in $G$ the edge $vv_2$ and all the edges of the path $(v_3,\ldots, v_{n-3})$ results in a graph isomorphic to $K_5^-$, a contradiction.
Hence, $v$ is adjacent to $x$. But now, contracting in $G$ the edge $uv_2$ and all the edges of the path $(v_3,\ldots, v_{n-3})$ results in a graph isomorphic to $K_5^-$, a contradiction.

\medskip
\noindent \emph{Case 3: $G'$ is isomorphic to $K_{3,3}$.}\\
Let $w$ be the vertex of $G'$ to which the edge $uv$ was contracted and let $A = \{u_1,u_2,w\}$ and $B = \{v_1,v_2,v_3\}$ be the two independent sets partitioning $V(G')$.
Since $A$ forms an independent set in $G'$, it follows that both $u$ and $v$ are non-adjacent with the vertices $u_1$ and $u_2$ in $G$.
On the other hand, $B\subseteq N_G(u)\cup N_G(v)$, and,
since $G$ is $3$-connected, each of $u$ and $v$ has at least two neighbors in $B$.
We may assume without loss of generality that $u$ and $v$ are both adjacent to $v_2$.
Since each vertex in the set $\{u,v\}$ has a neighbor in the set $\{v_1,v_3\}$ and vice versa, we may assume without loss of generality that $u$ is adjacent to $v_1$ and $v$ is adjacent to $v_3$.
It follows that contracting in $G$ the edges $uv_1$ and $vv_3$ results in a graph isomorphic to $K_5^-$, a contradiction.

\medskip
\noindent \emph{Case 4: $G'$ is isomorphic to $\overline{C_6}$.}\\
Let $v_1,\ldots ,v_6$ be a cyclic order of the vertices of the $C_6$.
It follows that in $G'$, the three odd-indexed vertices form a clique, and the same is true for the even-indexed vertices.
Additionally, in $G'$ we also have the edges $v_1v_4$, $v_3v_6$, and $v_2v_5$.
We may assume that $v_1$ is the vertex obtained by contracting the edge $uv$.
Therefore, $N_G(u)\cup N_G(v) = \{u,v,v_3,v_4,v_5\}$ and, since $G$ is $3$-connected, each of the vertices $u$ and $v$ has at least two neighbors among the vertices $\{v_3,v_4,v_5\}$.
Since each vertex in the set $\{u,v\}$ has a neighbor in the set $\{v_3,v_5\}$ and vice versa, we may assume without loss of generality that $u$ is adjacent to $v_3$ and $v$ is adjacent to $v_5$.
Furthermore, we may assume without loss of generality that $u$ is adjacent to $v_4$.
Now, if $v$ is adjacent to $v_3$, then contracting the edges $uv_4$ and $v_2v_5$ results in a graph isomorphic to $K_5^-$, a contradiction.
On the other hand, if $v$ is adjacent to $v_4$, then contracting the edges $vv_5$ and $v_3v_6$ results in a graph isomorphic to $K_5^-$, again a contradiction.
\end{proof}

Using \cref{3-connected K_5^-induced-minor-free} we now derive the following algorithmic result that will allow us to apply \cref{thm:sufficient} to the case of $K_5^-$-induced-minor-free graphs.

\begin{lemma}\label{k5-(104)}
Let $\mathcal{G}$ be the class of $K_5^-$-induced-minor-free graphs.
Then $\mathcal{G}$ is $(1,3)$-tree decomposable and {\sc $(1,3)$-Tree Decomposition($\mathcal{G}$)} can be solved on graphs with $n$ vertices and $m$ edges in time $\mathcal{O}(n+m)$ so that the resulting tree decomposition has at most $n-3$ nodes.
\end{lemma}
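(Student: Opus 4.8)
The plan is to lean on the structural characterization in \cref{3-connected K_5^-induced-minor-free}: every $3$-connected $K_5^-$-induced-minor-free graph $G$ is a complete graph $K_n$ with $n\ge 4$, a wheel $W_n$ with $n\ge 4$, $K_{3,3}$, or $\overline{C_6}$. For each type I would exhibit an explicit tree decomposition and then equip every bag with a suitable set $U_t$ of at most three vertices. The decisive simplification I would isolate first is the following \emph{size-four observation}: if $\widehat{\mathcal T}$ is any tree decomposition in which every bag $X_t$ has size at most $4$, then for an \emph{arbitrary} $F^*\subseteq E(G)$ one may put $U_t = X_t\setminus\{v_t\}$ for any chosen $v_t\in X_t$. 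Such a $U_t$ has at most three vertices, it is automatically an $F^*$-cover of $G[X_t]$ (an edge inside the bag has at most one endpoint equal to $v_t$), and the residual graph $G[X_t\setminus U_t]$ is a single vertex, hence has independence number at most $1$. Thus for decompositions with bags of size at most $4$ all requirements of the problem (residual independence at most $k=1$, $F^*$-covering, $\ell=3$) are met regardless of $F^*$ and of safety, and the $F$-mapping is obtained by sending each edge of $F$ to any bag containing both of its endpoints.

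With this observation in hand I would dispose of the three ``small-bag'' families. For $K_4$ a single bag $V(G)$ of size $4$ suffices. For a wheel $W_n$ with centre $x$ and rim $v_1\cdots v_n$ I would use the fan/cycle decomposition $X_i=\{x,v_i,v_{i+1},v_n\}$ for $i=1,\dots,n-2$ arranged on a path, mirroring \cref{tin-cycles}; this is a valid tree decomposition, all bags have size $4$, and it has exactly $n-2=|V(G)|-3$ nodes. For $K_{3,3}$ and for $\overline{C_6}$ (isomorphic to the triangular prism) I would give fixed three-node path decompositions with all bags of size $4$ — for instance $\{a_1,a_2,a_3,b_j\}$, $j=1,2,3$, for $K_{3,3}$ — each having $3=|V(G)|-3$ nodes. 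In all these cases the size-four observation immediately yields the required $3$-refined covers.

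The one family where large bags are unavoidable is $K_n$ with $n\ge 5$: since $V(G)$ is a clique, every tree decomposition of $K_n$ must contain a bag equal to $V(G)$, so no decomposition with bags of size at most $4$ exists. Here I would invoke the $\mathcal G$-safety hypothesis. If $e=uv$ is any edge of $K_n$ with $n\ge 5$, then $u,v$ together with three further vertices induce $K_5-e\cong K_5^-$ in $K_n-e$; hence $K_n-e$ is not $K_5^-$-induced-minor-free, so no nonempty set can be $\mathcal G$-safe. Consequently $F^*=\emptyset$, and the single bag $V(G)$ with $U_t=\emptyset$ is $F^*$-covering with residual independence $\alpha(K_n)=1$ and node count $1\le n-3$. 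This is the only point in the whole argument where safety is used, and recognising it as the crux — that completeness forces an arbitrarily large bag and that only the $\mathcal G$-safety promise rescues the covering condition — is the main conceptual obstacle; everything else becomes routine once the size-four reduction is noticed.

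Finally I would check the linear running time and the node bound. One first recognises the type of $G$ in time $\mathcal O(n+m)$: test completeness via the degree sequence; otherwise locate the unique universal vertex and test whether deleting it leaves a cycle, which detects a wheel; the only remaining possibility forces $|V(G)|=6$ and is settled by inspection, distinguishing $K_{3,3}$ from $\overline{C_6}$. Building the chosen decomposition and the sets $U_t$ costs $\mathcal O(n)$ (constant work per bag), and the $F$-mapping is computed in $\mathcal O(|F|)=\mathcal O(m)$. Each construction above has at most $|V(G)|-3$ nodes, as tallied case by case, which simultaneously shows that $\mathcal G$ is $(1,3)$-tree decomposable (the construction produces, for every $\mathcal G$-safe $F^*$, an $F^*$-covering $3$-refined tree decomposition with residual independence at most $1$) and that \textsc{$(1,3)$-Tree Decomposition}$(\mathcal G)$ is solvable within the stated bounds. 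I expect the most delicate bookkeeping to be verifying the exact $n-3$ node bound for wheels (where it holds with equality) and confirming the tree-decomposition axioms for the cyclic fan construction.
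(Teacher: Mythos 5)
Your proposal is correct and takes essentially the same approach as the paper: both arguments rest on \cref{3-connected K_5^-induced-minor-free} and construct the same decompositions case by case (a single bag for complete graphs, the fan of size-$4$ bags $\{x,v_i,v_{i+1},v_n\}$ for wheels, and three-node paths of size-$4$ bags for $K_{3,3}$ and $\overline{C_6}$), with $U_t$ taken to be all but one vertex of each small bag so that covering and residual independence number at most $1$ are automatic. The only cosmetic difference is the clique case with $n\ge 5$: the paper uses $\mathcal{G}$-safety to show that any fixed $3$-subset $U_t$ is an $F^*\!$-cover, whereas you use the very same induced-$K_5^-$ argument to conclude $F^*=\emptyset$ — functionally the identical (and only) use of the safety hypothesis.
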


\begin{proof}
Let $G$ be a $3$-connected $K_5^-$-induced-minor-free graph, let $n = |V(G)|$, and let $F^*\subseteq F\subseteq E(G)$ such that $F^*$ is $\mathcal{G}$-safe.
We want to compute an $F^*\!$-covering $3$-refined tree decomposition $\widehat{\mathcal{T}}$ of $G$ with residual independence number at most $1$, together with an $F$-mapping of $\widehat{\mathcal{T}}$.

By \cref{3-connected K_5^-induced-minor-free}, $G$ is either a complete graph with at least four vertices, a wheel, $K_{3,3}$, or $\overline{C_6}$.
In constant time we check if $G\cong K_{3,3}$ or $G\cong \overline{C_6}$.
If this is not the case, then $G$ is either a complete graph or a wheel.
One can distinguish among these two cases in constant time using vertex degrees.
We consider each of the four cases independently.

\medskip
\noindent \emph{Case 1: $G$ is a complete graph.}\\
In this case, $G$ admits a trivial $0$-refined tree decomposition $\widehat{\mathcal{T}}$ with a tree $T$ consisting of only one node $t$ and a unique bag $X_t = V(G)$.
The corresponding $F$-mapping of $\widehat{\mathcal{T}}$ maps each edge $e\in F$ to $t$.
Note that $|X_t|\ge 4$ since $G$ is $3$-connected.
We set $U_t$ to be any subset of $X_t$ of cardinality $3$ and claim that $U_t$ is an $F^*$-cover of $G[X_t] = G$.
Suppose this is not the case and let $e= xy$ be an edge in $F^*$ such that $\{x,y\}\cap U_t=\emptyset$.
Then the subgraph of $G-e$ induced by $U_t\cup \{x,y\}$ is isomorphic to $K_5^-$ and thus $G-e\not\in \mathcal{G}$.
However, this contradicts the assumption that $F^*$ is $\mathcal{G}$-safe.
Hence, $\widehat{\mathcal{T}}$ is an $F^*\!$-covering $3$-refined tree decomposition of $G$ with residual independence number at most $1$.

\medskip
\noindent \emph{Case 2: $G$ is a wheel $W_{n-1}$ with $n\ge 5$.}\\
In linear time we identify the universal vertex $v_0$ in $G$ and compute a cyclic order $v_1,\ldots, v_{n-1}$ of the vertices of the cycle $G-v_0$.
We construct the desired $3$-refined tree decomposition $\widehat{\mathcal{T}} = (T,\{(X_t,U_t)\}_{t\in V(T)})$ of $G$ as follows.
The tree $T$ is an $(n-3)$-vertex path $(t_1,\ldots, t_{n-3})$; for each $i\in \{1,\ldots, n-3\}$, the bag $X_{t_i}$ consists of the vertices $\{v_0,v_i,v_{i+1},v_{n-1}\}$, and we set $U_{t_i}$ to an arbitrary subset of $X_{t_i}$ with cardinality $3$.
Following the fact that $|X_t \setminus U_t| \leq 1$, we get that $\widehat{\mathcal{T}}$ is an $F^*\!$-covering $3$-refined tree decomposition of $G$ with residual independence number at most $1$.
The corresponding $F$-mapping of $\widehat{\mathcal{T}}$ can be obtained as follows.
For each edge $e = v_iv_j\in F$ with $0\le i<j\le n-1$, we map $e$ to $t_e\in V(T)$ where
\begin{align*}
t_e = \left\{\begin{array}{ll}
t_{i}               & \text{if~} 1\le i <j \le n - 2 \text{~or~} (i,j) = (1,n-1)\,,\\
t_j   & \text{if~} i = 0 \text{~and~} j\le n-3\,,\\
t_{n-3}   & \text{otherwise}\,.\\
\end{array}\right.
\end{align*}
Note that both $\widehat{\mathcal{T}}$ and the $F$-mapping can be obtained in linear time.

\medskip
\noindent \emph{Case 3: $G$ is $K_{3,3}$.}\\
Let $A$ and $B$ be the two independent sets partitioning $V(G)$.
Then $G$ has a tree decomposition such that every bag $X_t$ contains all the vertices of $A$ and exactly one vertex of $B$, and thus $|X_t|\le 4$.
This tree decomposition can be turned into an $F^*\!$-covering $3$-refined tree decomposition $\widehat{\mathcal{T}}$ with residual independence number at most one by defining, for instance, $U_t = A$ for all nodes $t$.
In this case, both the $3$-refined tree decomposition $\widehat{\mathcal{T}}$ and an arbitrary $F$-mapping of $\widehat{\mathcal{T}}$ can be obtained in constant time.

\medskip
\noindent \emph{Case 4: $G$ is $\overline{C_6}$.}\\
Let $u$ and $v$ be two non-adjacent vertices of $G$ and let
$(u_1,u_2,u_3,u_4)$ be a $4$-vertex path formed by the vertices of $G$ other than $u$ and $v$.
Labeling the vertices of a $3$-vertex path with the sets $\{u_1,u_2,u,v\}$, $\{u_2,u_3,u,v\}$, $\{u_3,u_4,u,v\}$ in order yields a tree decomposition of $G$ with bags of size $4$.
Again, the tree decomposition can be turned into an $F^*\!$-covering $3$-refined tree decomposition $\widehat{\mathcal{T}}$ with residual independence number at most one by taking, for each bag $X_t$, the set $U_t$ to be an arbitrary subset of $X_t$ with cardinality $3$.
This $3$-refined tree decomposition $\widehat{\mathcal{T}}$ and an arbitrary $F$-mapping of $\widehat{\mathcal{T}}$ can be obtained in constant time.

In each case, the computed $3$-refined tree decomposition has at most $n-3$ nodes.
\end{proof}

\Cref{thm:sufficient,k5-(104)} imply the following.

\begin{theorem}\label{K_5-tree-independence}
For any $K_5^-$-induced-minor-free $n$-vertex graph $G$, one can compute in linear time a $3$-refined tree decomposition of $G$ with $\mathcal{O}(n)$ nodes and residual independence number at most $1$.
\end{theorem}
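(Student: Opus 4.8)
The plan is to obtain this as an immediate consequence of \cref{thm:sufficient}, feeding in the structural and algorithmic ingredients established in \cref{k5-(104)}, in exactly the same manner as was done for the $W_4$ case in \cref{W_4-tree-independence}.

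First I would verify that the class $\mathcal{G}$ of $K_5^-$-induced-minor-free graphs meets the hypotheses of \cref{thm:sufficient}. The class is closed under induced minors by definition, and therefore also under induced topological minors, since every induced topological minor is in particular an induced minor. Next, \cref{k5-(104)} tells us that $\mathcal{G}$ is $(1,3)$-tree decomposable and that {\sc $(1,3)$-Tree Decomposition($\mathcal{G}$)} can be solved in time $f(n,m) = \mathcal{O}(n+m)$, returning a $3$-refined tree decomposition with $g(n,m) = n-3$ nodes. I would record that both $f$ and $g$ may be taken to be linear, hence superadditive, functions, so that all the requirements of \cref{thm:sufficient} are in place with $k = 1$ and $\ell = 3$.

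Applying \cref{thm:sufficient} with these parameters then yields, for any $K_5^-$-induced-minor-free graph $G$ with $n$ vertices and $m$ edges, a $3$-refined tree decomposition that is computable in time $\mathcal{O}(n + m + f(4n,3m)) = \mathcal{O}(n+m)$, that has $\mathcal{O}(n + g(4n,3m)) = \mathcal{O}(n)$ nodes, and whose residual independence number is at most $\max\{3-\ell,k\} = \max\{0,1\} = 1$. Since $\mathcal{O}(n+m)$ is linear in the input size, this is precisely the asserted statement.

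I do not expect any genuine obstacle in this final step: the mathematical substance resides in \cref{thm:sufficient} and \cref{k5-(104)}, and the deduction amounts to checking closure under induced topological minors, confirming superadditivity of $f$ and $g$, and carrying out the arithmetic $\max\{3-\ell,k\} = 1$. The only mild point of care is to keep the roles of $f$ and $g$ (running time versus number of nodes) straight when substituting the arguments $4n$ and $3m$, and to note that ``linear time'' here means linear in $n+m$, which is appropriate since $\mathcal{G}$ contains dense graphs such as complete graphs.
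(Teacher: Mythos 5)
Your proposal is correct and follows essentially the same route as the paper's own proof: the paper likewise deduces \cref{K_5-tree-independence} by applying \cref{thm:sufficient} with $k=1$, $\ell=3$, $f(n,m)=\mathcal{O}(n+m)$, and $g(n,m)=n$ as supplied by \cref{k5-(104)}, obtaining residual independence number $\max\{3-\ell,k\}=1$ and running time $\mathcal{O}(n+m+f(4n,3m))=\mathcal{O}(n+m)$. Your additional observation that closure under induced minors implies closure under induced topological minors is a valid justification of the hypothesis the paper states without comment.
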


\begin{proof}
The class of $K_5^-$-induced-minor-free graphs is closed under induced topological minors, and \cref{k5-(104)} shows that \cref{thm:sufficient} is satisfied with $k = 1$, $\ell = 3$, $f(n,m) = \mathcal{O}(n+m)$, and $g(n,m) = n$.
Therefore, given a $K_5^-$-induced-minor-free graph $G$ with $n$ vertices and $m$ edges,
\cref{thm:sufficient} implies that one can compute a $3$-refined tree decomposition of $G$ with $\mathcal{O}(n)$ nodes and residual independence number at most $\max \{3-\ell,k\}= 1$ in time $\mathcal{O}(n+m+f(4n,3m))$, which is $\mathcal{O}(n+m)$.
\end{proof}

\begin{remark}\label{K5-robust}
Similarly as in \Cref{W4-robust}, one can turn the algorithm for computing a $3$-refined tree decomposition of a $K_5^-$-induced-minor-free graph into a polynomial-time robust algorithm, as follows.
First, after we compute the SPQR tree $S$ of a given graph $G$, we check for every $R$-node $a$ in $S$ if the corresponding graph $G_a$ is a complete graph, a wheel, a $K_{3,3}$, or a $\overline{C_6}$.
If $G_a$ has at most six vertices, we check in constant time if $G_a$ corresponds to any of the four cases, and if not, the algorithm returns that the graph is not $K_5^-$-induced-minor-free.
If $G_a$ has more than six vertices, then we check in linear time whether $G_a$ is a complete graph or a wheel.
If this is also not the case, then the algorithm returns that the graph is not $K_5^-$-induced-minor-free.
Assume now that each graph $G_a$ corresponding to an $R$ node $a$ of $S$ is a complete graph, a wheel, a $K_{3,3}$, or a $\overline{C_6}$.
Then, once we compute a $3$-refined tree decomposition $\widehat{\mathcal{T}}$ of $G_a$ as in the proof of \cref{k5-(104)}, we check for each bag $X$ of $\widehat{\mathcal{T}}$ if the graph $G[X]$ contains an independent set of size $5$.
(Note that this can only happen in the case when $G_a$ is a clique.)
If some graph $G[X]$ contains an independent set of size $5$, then the algorithm returns that $G$ is not $K_5^-$-induced-minor-free;
otherwise, the algorithm correctly computes a $3$-refined tree decomposition of $G$ with residual independence number at most $1$.
\hfill$\blacktriangle$
\end{remark}

Regarding bounding the treewidth in terms of the clique or the Hadwiger numbers, the situation for the class of $K_5^-$-induced-minor-free graphs is similar as for the class of $W_4$-induced-minor-free graphs.
\Cref{bounded tin implies bounded tw-omega-refined,K_5-tree-independence} imply the existence of a linear $(\tw,\omega)$-binding function $f(p) = p+2$ for the class of $K_5^-$-induced-minor-free graphs, for which no polynomial binding function was known prior to this work.
Furthermore, our approach leading to the proof of \cref{K_5-tree-independence} can be used to show the following.

\begin{proposition}\label{Hadwiger-number-K_5-induced-minor-free-graphs}
If $G$ is a $K_5^-$-induced-minor-free graph, then $\tw(G) = \eta(G)-1$.
\end{proposition}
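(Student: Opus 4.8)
The plan is to establish the two inequalities separately. The lower bound $\tw(G)\ge \eta(G)-1$ holds for every graph: since treewidth is minor-monotone and $G$ has $K_{\eta(G)}$ as a minor, $\tw(G)\ge \tw(K_{\eta(G)})=\eta(G)-1$. Thus all the content is in the reverse inequality $\tw(G)\le \eta(G)-1$, i.e.\ in exhibiting a tree decomposition of $G$ all of whose bags have at most $\eta(G)$ vertices.

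First I would reduce to the $2$-connected case. Both invariants are governed by the blocks of $G$: one has $\tw(G)=\max_B\tw(B)$, and, since $K_p$ is $2$-connected for $p\ge 3$, every clique minor lives in a single block, so $\eta(G)=\max_B\eta(B)$ as well (blocks that are single vertices or edges contribute $\eta\le 2$ and $\tw\le 1$ and are handled trivially). As blocks are induced subgraphs, each block is again $K_5^-$-induced-minor-free, so it suffices to prove $\tw(B)\le \eta(B)-1$ for every $2$-connected block $B$. Concretely, this is exactly the block reduction underlying \cref{cor:reduction-to-blocks}.

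Now assume $G$ is $2$-connected. I would reuse the very tree decomposition produced in the proof of \cref{K_5-tree-independence} (via \cref{lem:sufficient} and \cref{k5-(104)}): fixing an SPQR tree $S$ of $G$, its bags are the $2$-element sets $X_b$ of the \textnormal{P}-nodes, the $3$-element bags coming from the cyclic decomposition of each \textnormal{S}-node's cycle $G_a$, and the bags of the explicit decompositions of the $3$-connected graphs $G_a$ at the \textnormal{R}-nodes. The key point is to verify that every such bag has size at most $\eta(G_a)$ for the relevant node $a$, after which \cref{corollary-eta} gives $\eta(G)\ge \eta(G_a)$ and hence each bag has at most $\eta(G)$ vertices. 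Since $G$ is $2$-connected it contains a cycle, so $\eta(G)\ge 3$, which already absorbs the \textnormal{P}-node bags (size $2$) and the \textnormal{S}-node bags (size $3=\eta(C_h)$).

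The main work---and the only place requiring a genuine computation---is the \textnormal{R}-nodes, where by \cref{3-connected K_5^-induced-minor-free} the graph $G_a$ is a complete graph $K_p$ with $p\ge 4$, a wheel $W_m$, $K_{3,3}$, or $\overline{C_6}$, and I must check that the bag sizes prescribed in \cref{k5-(104)} never exceed $\eta(G_a)$. For $K_p$ the single bag has size $p=\eta(K_p)$. In the three remaining cases the bags have size $4$, and I would verify $\eta(G_a)=4$ in each: wheels are planar, so $\eta(W_m)\le 4$, while contracting a single spoke edge exhibits a $K_4$ minor, giving $\eta(W_m)=4$; for $K_{3,3}$ a $K_4$ minor is immediate while its $9$ edges forbid a $K_5$ minor (which would require $10$ distinct edges), so $\eta(K_{3,3})=4$; and $\overline{C_6}$ is the triangular prism, which is planar (hence $\eta\le 4$) and from which contracting the two matching edges $\{1,4\}$ and $\{3,6\}$ yields $K_4$, so $\eta(\overline{C_6})=4$. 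I expect this case check to be the principal obstacle, though it is one of bookkeeping---being careful that the explicit bags from \cref{k5-(104)} really match these Hadwiger numbers---rather than of any deep difficulty. With all bags of size at most $\eta(G)$, the decomposition has width at most $\eta(G)-1$, so $\tw(G)\le \eta(G)-1$; combined with the universal lower bound this yields $\tw(G)=\eta(G)-1$.
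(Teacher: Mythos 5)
Your proposal follows the same route the paper intends (the paper only sketches this proof, describing the argument in detail for the $W_4$ case and asserting it transfers): the universal lower bound $\tw(G)\ge \eta(G)-1$, reduction to the $2$-connected case, reuse of the SPQR-tree-based decomposition behind \cref{K_5-tree-independence} (via \cref{k5-(104)}), verification that every bag has at most $\eta(G_a)$ vertices using the catalogue of \cref{3-connected K_5^-induced-minor-free}, and \cref{corollary-eta} to lift the bound from $\eta(G_a)$ to $\eta(G)$. Your bookkeeping for $K_p$, $K_{3,3}$, and $\overline{C_6}$ is correct: the single bag of size $p=\eta(K_p)$, the edge-count argument ruling out a $K_5$ minor in $K_{3,3}$ (ten pairwise-adjacent branch sets would need ten edges between them, but $K_{3,3}$ has only nine edges), and the prism $\overline{C_6}$ being planar with a $K_4$ minor obtained by contracting $\{1,4\}$ and $\{3,6\}$.

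One step, as written, fails: for wheels you claim that contracting a single spoke edge exhibits a $K_4$ minor. Contracting a spoke of $W_m$ merges the hub with a rim vertex and yields a fan (a path plus a dominating vertex), which has treewidth $2$ and hence no $K_4$ minor---three disjoint connected subsets of a path can never be pairwise adjacent, so no three branch sets avoiding the dominating vertex can work. The fact you need, $\eta(W_m)=4$, is nevertheless true and matches the bag size $4$ in Case 2 of \cref{k5-(104)}: contract $m-3$ \emph{rim} edges to shrink the rim cycle to a triangle, producing $W_3=K_4$ as a minor, while planarity gives $\eta(W_m)\le 4$ as you say. With this one-line repair your proof is complete and coincides with the paper's intended argument.
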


\cref{K_5-tree-independence,observation-ell-tree-alpha} imply the following.

\begin{corollary}\label{K_5-tree-independence-number}
The tree-independence number of any $K_5^-$-induced-minor-free graph is at most $4$.
\end{corollary}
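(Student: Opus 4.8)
The plan is to obtain this bound as an immediate consequence of the structural theorem \cref{K_5-tree-independence} together with the general comparison between the refined and ordinary tree-independence numbers recorded in \cref{observation-ell-tree-alpha}. This mirrors exactly the short argument already carried out for the $W_4$-induced-minor-free case in the text preceding \cref{W_4-tree-independence-number}, so I would structure the proof in the same two-step fashion.

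First I would apply \cref{K_5-tree-independence} to the given $K_5^-$-induced-minor-free graph $G$. That theorem guarantees a $3$-refined tree decomposition of $G$ whose residual independence number is at most $1$. By the definition of the $\ell$-refined tree-independence number (see \cref{definition-ell-refined-tree-alpha}), the mere existence of such a decomposition witnesses that $3\textnormal{-}\tin(G) \le 1$. Second, I would invoke \cref{observation-ell-tree-alpha} with $\ell = 3$, which states that $\tin(G) \le \ell\textnormal{-}\tin(G) + \ell$ for every graph $G$. Substituting the bound just obtained then yields
\[
\tin(G) \le 3\textnormal{-}\tin(G) + 3 \le 1 + 3 = 4\,,
\]
which is the claimed inequality.

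I do not anticipate any genuine obstacle here, since both ingredients are already established: the proof is purely a matter of chaining the two results. The only point worth flagging is conceptual rather than technical, namely that the additive loss of $\ell = 3$ in \cref{observation-ell-tree-alpha} is exactly what accounts for the gap between the residual bound $1$ and the final bound $4$. Thus the value $4$ emerges naturally from this two-step reduction, and no additional case analysis or independent construction of a tree decomposition is needed.
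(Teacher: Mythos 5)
Your proposal is correct and coincides exactly with the paper's own argument: the corollary is stated as an immediate consequence of \cref{K_5-tree-independence} and \cref{observation-ell-tree-alpha}, i.e.\ the $3$-refined tree decomposition with residual independence number at most $1$ gives $3\textnormal{-}\tin(G)\le 1$, and then $\tin(G)\le 3\textnormal{-}\tin(G)+3\le 4$. No further commentary is needed.
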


\begin{remark}
Consider again the family $\{F_q\}_{q\ge 4}$ of graphs from \cref{remark:sharp}.
The same arguments as used therein to show that graphs $F_q$ are $W_4$-induced-minor-free also show that these graphs are $K_5^-$-induced-minor-free.
Thus, the bound on the tree-independence number given by \cref{K_5-tree-independence-number} is sharp; there exist arbitrarily large $2$-connected $K_5^-$-induced minor-free graphs with tree-independence number~$4$.\hfill$\blacktriangle$
\end{remark}

\Cref{K_5-tree-independence,thm:bounded-ell-refined-tree-independence-number} have the following algorithmic consequence for the \textsc{Max Weight Independent Set} problem in the class of $K_5^-$-induced-minor-free graphs.

\begin{corollary}\label{MWIS for K5minus-im-free graphs}
The \textsc{Max Weight Independent Set} problem can be solved in time $\mathcal{O}(n^{3})$ for $n$-vertex $K_5^-$-induced-minor-free graphs.
\end{corollary}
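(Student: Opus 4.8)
The plan is to combine the structural/algorithmic result on tree decompositions of $K_5^-$-induced-minor-free graphs (\cref{K_5-tree-independence}) with the dynamic-programming algorithm for \textsc{Max Weight Independent Set} over $\ell$-refined tree decompositions (\cref{thm:bounded-ell-refined-tree-independence-number}). This mirrors exactly the proof of the analogous statement for $W_4$-induced-minor-free graphs (\cref{MWIS for W4-im-free graphs}), so I expect a short two-line argument with the only genuine content being the bookkeeping of the exponent.

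First I would take an $n$-vertex $K_5^-$-induced-minor-free graph $G$ (with a weight function) and invoke \cref{K_5-tree-independence} to compute, in linear time, a $3$-refined tree decomposition $\widehat{\mathcal{T}} = (T, \{(X_t,U_t)\}_{t\in V(T)})$ of $G$ with $|V(T)| = \mathcal{O}(n)$ nodes and residual independence number at most $1$; that is, a decomposition with parameters $\ell = 3$ and $k = 1$. Then I would feed $\widehat{\mathcal{T}}$ directly into \cref{thm:bounded-ell-refined-tree-independence-number}. With $k = 1$ and $\ell = 3$, that theorem solves \textsc{Max Weight Independent Set} in time $\mathcal{O}(2^{\ell}\cdot |V(G)|^{k+1}\cdot |V(T)|) = \mathcal{O}(2^{3}\cdot n^{2}\cdot n) = \mathcal{O}(n^{3})$, using $|V(G)| = n$ and $|V(T)| = \mathcal{O}(n)$. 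The linear-time preprocessing that produces $\widehat{\mathcal{T}}$ is dominated by this bound, so the total running time is $\mathcal{O}(n^{3})$, as claimed.

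There is essentially no obstacle here, since all the real work has been discharged into the two cited theorems; the only thing to verify is that substituting $k=1$, $\ell=3$, and $|V(T)|=\mathcal{O}(n)$ indeed yields the cubic bound. The one point worth emphasizing is \emph{why} the exponent is $3$ rather than $5$: routing the computation through the $\ell$-refined machinery (exploiting that the \emph{residual} independence number is $1$) keeps the exponent governed by $k+1=2$ plus the $|V(T)|$ factor, whereas using the plain tree-independence number—which \cref{K_5-tree-independence-number} bounds only by $4$—together with \cref{thm:bounded-tree-independence-number} would give $\mathcal{O}(n^{5}\cdot|V(T)|)=\mathcal{O}(n^{6})$. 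This is precisely the improvement recorded for the $W_4$ case in \cref{refined-improvement-1}, and the identical reasoning applies verbatim to $K_5^-$-induced-minor-free graphs.
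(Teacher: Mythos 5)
Your proof is correct and follows exactly the paper's argument: the corollary is stated there as an immediate consequence of \cref{K_5-tree-independence} and \cref{thm:bounded-ell-refined-tree-independence-number}, with precisely the substitution $k=1$, $\ell=3$, $|V(T)|=\mathcal{O}(n)$ giving $\mathcal{O}(2^3\cdot n^2\cdot n)=\mathcal{O}(n^3)$. Your closing remark on why the exponent is $3$ rather than $6$ is also exactly the content of \cref{refined-improvement-2} in the paper.
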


\begin{remark}\label{refined-improvement-2}
Similarly as for the class of $W_4$-induced-minor-free graphs (see \cref{refined-improvement-1}), we could apply \cref{thm:bounded-tree-independence-number} instead of \cref{thm:bounded-ell-refined-tree-independence-number}, using usual tree decompositions instead of $\ell$-refined ones.
However, such an approach would again only lead to a running time of $\mathcal{O}(n^{6})$ instead of $\mathcal{O}(n^{3})$.
\hfill$\blacktriangle$
\end{remark}

\medskip
\cref{K_5-tree-independence,thm:bounded-tree-independence-number-packings} imply the existence of a polynomial-time algorithm for the \textsc{Max Weight Independent Packing} problems in the class of $K_5^-$-induced-minor-free graphs.

\begin{theorem}\label{MWIHP for K5minus-im-free graphs}
Given a $K_5^-$-induced-minor-free graph $G$ and a finite family $\mathcal{H} = \{H_j\}_{j \in J}$ of connected nonnull subgraphs of $G$, the \textsc{Max Weight Independent Packing} problem can be solved in time \hbox{$\mathcal{O}(|J|\cdot |V(G)|\cdot (|V(G)| + |J|^4))$}.
\end{theorem}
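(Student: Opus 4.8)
The plan is to follow verbatim the strategy used for the analogous statement about $W_4$-induced-minor-free graphs (\cref{MWIHP for W4-im-free graphs}), substituting the structural/algorithmic result specific to the class of $K_5^-$-induced-minor-free graphs. First I would set $n = |V(G)|$ and invoke \cref{K_5-tree-independence} to compute, in linear time, a $3$-refined tree decomposition of $G$ with $\mathcal{O}(n)$ nodes and residual independence number at most $1$. Next I would convert this bound on the residual independence number into a bound on the ordinary independence number of the underlying tree decomposition: by \cref{observation-ell-tree-alpha} applied with $\ell = 3$, the independence number of the underlying tree decomposition is at most $1 + 3 = 4$. Thus I obtain a tree decomposition $\mathcal{T} = (T, \{\Bag_t\}_{t\in V(T)})$ of $G$ with $|V(T)| = \mathcal{O}(n)$ nodes and independence number at most $4$.

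I would then feed $\mathcal{T}$ into the dynamic-programming engine provided by \cref{thm:bounded-tree-independence-number-packings} with $k = 4$, which solves the \textsc{Max Weight Independent Packing} problem in time
\[
\mathcal{O}\bigl(|J|\cdot((|J| + |V(T)|)\cdot|V(G)| + |E(G)| + |J|^{4}\cdot|V(T)|)\bigr).
\]
The final (and only nontrivial, though still routine) step is to simplify this expression to the claimed form. Substituting $|V(T)| = \mathcal{O}(n)$ and using the trivial bound $|E(G)| = \mathcal{O}(n^2)$, the running time becomes $\mathcal{O}(|J|\cdot(|J|\cdot n + n^{2} + |J|^{4}\cdot n))$; absorbing the lower-order term $|J|\cdot n$ into $|J|^{4}\cdot n$ yields $\mathcal{O}(|J|\cdot n\cdot(n + |J|^{4})) = \mathcal{O}(|J|\cdot|V(G)|\cdot(|V(G)| + |J|^{4}))$, as desired.

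I do not expect any genuine obstacle here, since all the substance is packaged into the two results being composed: \cref{K_5-tree-independence} (establishing that a tree decomposition of small independence number can be computed efficiently for this class) and \cref{thm:bounded-tree-independence-number-packings} (the general algorithm for \textsc{Max Weight Independent Packing} on graphs equipped with such a decomposition). The one point that repays a moment of care is the arithmetic of the running-time simplification — in particular, confirming that the $|E(G)|$ term does not dominate and that the $(|J| + |V(T)|)\cdot|V(G)|$ and $|J|^{4}\cdot|V(T)|$ contributions collapse exactly to $\mathcal{O}(|J|\,|V(G)|^2)$ and $\mathcal{O}(|J|^5\,|V(G)|)$ after multiplication by the outer factor $|J|$.
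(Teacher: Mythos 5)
Your proposal is correct and follows essentially the same route as the paper's proof: compute the $3$-refined tree decomposition via \cref{K_5-tree-independence}, convert the residual independence number bound of $1$ into an independence number bound of $4$ via \cref{observation-ell-tree-alpha}, and then invoke \cref{thm:bounded-tree-independence-number-packings}. The paper states this more tersely, but the chain of citations and the running-time arithmetic are identical to yours.
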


\begin{proof}
Let $G$ be an $n$-vertex $K_5^-$-induced-minor-free graph, given along with a finite family $\HH = \{H_j\}_{j \in J}$ of connected nonnull subgraphs and a weight function \hbox{$w:J\to \mathbb{Q}_+$}.
By \cref{K_5-tree-independence,observation-ell-tree-alpha}, we can compute in linear time a tree decomposition \hbox{$\mathcal{T} = (T, \{\Bag_t\}_{t\in V(T)})$} of $G$ with $\mathcal{O}(n)$ nodes and independence number at most $4$.
The conclusion now follows from \cref{thm:bounded-tree-independence-number-packings}.
\end{proof}

\begin{remark}
Similarly as in \cref{remark:cant-apply-MWIS-directly}, we can ask if, given a family $\FF$ of connected nonnull graphs, the graph $G(\HH)$ for $\HH = \HH(G,\FF)$ is $K_5^-$-induced-minor-free whenever $G$ is.
Again, this is in general not true, as the following example shows.
Let $\FF = \{K_2\}$ and let $G$ be the graph obtained from the $5$-vertex path $(v_1,\ldots, v_5)$ by adding the edge $v_3v_5$.
Then $G$ is $K_5^-$-induced-minor-free, while the graph $G(\HH)$ is isomorphic to $K_5^-$.
Note that the example from \cref{remark:cant-apply-MWIS-directly} would also work: if $G$ is the $6$-cycle and $\FF = \{K_2\}$, then the graph $G(\HH)$ has the property that contracting any of its edges results in a graph isomorphic to $K_5^-$.\hfill$\blacktriangle$
\end{remark}

\section{The dichotomies}\label{sec:dichotomies}

In order to state the next theorem we require two more definitions.
A graph is \emph{subcubic} if every vertex of it has degree at most three.
We denote by $\mathcal{S}$ the class of forests in which every connected component has at most three leaves (vertices of degree one).

\begin{theorem}[Dallard, Milani{\v{c}}, and {\v{S}}torgel~\cite{DMS-WG2020,dallard2021treewidth}]\label{dichotomy tw-omega}
    For every graph $H$, the following statements hold.
    \begin{enumerate}
        \item The class of $H$-subgraph-free graphs is $(\tw,\omega)$-bounded if and only if $H \in \mathcal{S}$.

        \item The class of $H$-topological-minor-free graphs is $(\tw,\omega)$-bounded if and only if $H$ is subcubic and planar.

        \item The class of $H$-minor-free graphs is $(\tw,\omega)$-bounded if and only if $H$ is planar.
        \item The class of $H$-free graphs is $(\tw,\omega)$-bounded if and only if $H$ is either an induced subgraph of $P_3$ or an edgeless graph.

        \item The class of $H$-induced-topological-minor-free graphs is $(\tw,\omega)$-bounded if and only if $H$ is either an induced topological minor of $C_4$ or $K_4^-$, or $H$ is edgeless.

        \item The class of $H$-induced-minor-free graphs is $(\tw,\omega)$-bounded if and only if $H$ is an induced minor of $W_4$, $K_5^-$, or $K_{2,q}$ for some $q \in \mathbb{Z}_+$, or $H$ is edgeless.
    \end{enumerate}
\end{theorem}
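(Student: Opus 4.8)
The plan is to prove each of the six equivalences as a dichotomy, in every case arguing the sufficiency (``if'') direction by bounding the treewidth of a graph in the class in terms of its clique number, and the necessity (``only if'') direction by exhibiting, whenever $H$ violates the stated condition, a family of graphs lying in the class with clique number at most a constant but unbounded treewidth, which refutes the existence of any binding function. I would begin with the two purely structural relations, statements~(3) and~(2). For~(3), if $H$ is planar then $H$ is a minor of a sufficiently large wall, so by the Grid-Minor Theorem an $H$-minor-free graph has treewidth bounded by a constant; conversely, if $H$ is non-planar then the class contains all planar graphs, in particular all grids, which have clique number $2$ and unbounded treewidth. For~(2) I would use that the minor and topological-minor relations coincide on subcubic graphs: if $H$ is subcubic and planar this reduces sufficiency to~(3), while for necessity a non-planar $H$ is handled by planar witnesses as above and a non-subcubic $H$ (having a vertex of degree at least $4$) is excluded as a topological minor by \emph{every} subcubic graph, so the family of walls (subcubic, triangle-free, of unbounded treewidth) witnesses non-boundedness.

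For the subgraph relation~(1), the condition $H\in\mathcal{S}$ says that $H$ is a forest each of whose components is a path or a subdivided claw (a tree with exactly three leaves). For sufficiency I would show the stronger fact that $H$-subgraph-free graphs have bounded treewidth: a graph of large treewidth contains a large wall as a minor, hence (walls being subcubic) as a topological minor, and a large subdivided wall contains pairwise vertex-disjoint exact copies of any fixed collection of paths and subdivided claws, so it contains $H$ as a subgraph; contraposing yields the treewidth bound. For necessity, if $H$ has a cycle of length $g$ then all graphs of girth exceeding $g$ are $H$-subgraph-free, and there exist triangle-free graphs of arbitrarily large girth and treewidth; if instead $H$ is a forest with a component $T$ having at least four leaves, then $T$ either has a vertex of degree at least $4$---in which case every subcubic graph avoids $T$ and walls witness non-boundedness---or is subcubic with two branch vertices, in which case I would take a large grid and subdivide each edge many times: the only vertices of degree at least $3$ are then the original grid vertices, pairwise far apart, so no exact copy of $T$ (whose branch vertices lie at bounded distance) can embed, while the clique number is $2$ and the treewidth stays large because the grid is recovered as a minor by contracting the subdivision paths.

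The three induced relations~(4),~(5),~(6) share a common template. For sufficiency, when $H$ is edgeless the class has bounded independence number and Ramsey's theorem bounds the number of vertices in terms of $\omega$, hence bounds treewidth; otherwise $H$ is an induced subgraph, induced topological minor, or induced minor of one of the listed extremal graphs, and since ``$G$ contains the extremal graph'' implies ``$G$ contains $H$'' by transitivity of the relation, the class of graphs excluding $H$ is a subclass of the class excluding the extremal graph and inherits $(\tw,\omega)$-boundedness from that base case. The base cases are: disjoint unions of cliques (for $H=P_3$) and chordal graphs (for the induced-topological-minor exclusion of $C_4$), both satisfying $\tw\le\omega-1$; the $K_4^-$-induced-topological-minor-free graphs; and, crucially, the classes of $W_4$-, $K_5^-$-, and $K_{2,q}$-induced-minor-free graphs, whose $(\tw,\omega)$-boundedness is exactly what the present paper establishes via bounded tree-independence number (see \cref{W_4-tree-independence-number,K_5-tree-independence-number,thm:tree-independence-number-K_2q-induced-minor-free-graphs} together with \cref{bounded tin implies bounded tw-omega-refined}). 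For necessity I would again split off the non-planar case (planar graphs, closed under all three induced relations, witness it), and treat the remaining planar obstructions with suitably subdivided high-treewidth bounded-clique families, complete bipartite graphs $K_{n,n}$, high-girth graphs, and line graphs of walls.

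The hard part will be the necessity direction of~(6): one must first prove the combinatorial classification that a graph $H$ fails to be an induced minor of $W_4$, of $K_5^-$, and of every $K_{2,q}$ precisely when it is non-planar or contains one of a short list of explicit planar obstructions, and then, for each such obstruction, supply a concrete family that is closed under induced minors, has bounded clique number, and has unbounded treewidth. The delicacy is that edge contractions can create large cliques, so the witnesses cannot be arbitrary sparse graphs; controlling the clique number along \emph{all} induced-minor images while keeping the treewidth large is the main technical obstacle, and is exactly where the extremal roles of $W_4$, $K_5^-$, and the family $\{K_{2,q}\}$ become essential.
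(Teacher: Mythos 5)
You should note at the outset that the paper you are being compared against contains no proof of \cref{dichotomy tw-omega}: the theorem is imported, with attribution, from the first paper of the series~\cite{DMS-WG2020,dallard2021treewidth}, and is used in \cref{sec:dichotomies} purely as a black box (the forward implications in the proof of \cref{dichotomy tin} are discharged simply by citing it). So the comparison has to be with the cited proof, not with anything in this document. Against that benchmark, your treatment of statements (1)--(3) and of every ``if'' direction is essentially the standard one and is sound: the Grid-Minor Theorem for (3), the coincidence of the minor and topological-minor relations on subcubic pattern graphs for (2), walls and subdivided walls for (1), Ramsey's theorem for edgeless $H$, and transitivity of containment plus the base classes (disjoint unions of cliques, chordal graphs, block-cactus graphs) for (4) and (5). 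The one genuinely nontrivial ingredient in the ``if'' direction of (6) -- $(\tw,\omega)$-boundedness of the $W_4$-, $K_5^-$- and $K_{2,q}$-induced-minor-free classes -- you correctly source from \cref{W_4-tree-independence-number,K_5-tree-independence-number,thm:tree-independence-number-K_2q-induced-minor-free-graphs} together with \cref{bounded tin implies bounded tw-omega-refined}; this is legitimate and non-circular, since \cref{sec:K2q-induced-minor-free,W4-induced-minor-free,K5-induced-minor-free} nowhere invoke \cref{dichotomy tw-omega}, and it is in fact the simplification over the graph-minors arguments of~\cite{dallard2021treewidth} that the present paper advertises.

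The genuine gap is the one you flag yourself and then leave standing: the ``only if'' halves of (4), (5) and, above all, (6). Two separate pieces are missing. First, a classification lemma: you must prove that every $H$ which is not an induced minor of $W_4$, of $K_5^-$, or of any $K_{2,q}$ (and is not edgeless) is avoided, with respect to the induced minor relation, by some family of graphs with bounded clique number and unbounded treewidth -- and likewise for the other two induced relations; your text never even states the list of obstructions such an $H$ must contain. Second, for each obstruction you must exhibit a concrete witness family and actually verify the exclusion. Naming candidate families (walls, line graphs of walls, $K_{n,n}$, high-girth graphs, subdivisions) is not such a verification, and for the induced minor relation the verification is precisely where the difficulty sits, as you yourself observe: contractions create new adjacencies, so a sparse high-treewidth graph is not automatically $H$-induced-minor-free even for sparse planar $H$. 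This case analysis constitutes the bulk of the proof in~\cite{dallard2021treewidth}. As written, your proposal establishes statements (1)--(3) and the six sufficiency directions, but not the theorem.
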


\begin{remark}\label{tw-omega bounded treewidth non-induced}
A graph class closed under edge deletion has bounded tree-independence number if and only if it has bounded treewidth.
This follows from Robertson and Seymour's Grid-Minor Theorem (see~\cite{MR0854606}) stating that graphs with sufficiently large treewidth contain a large (elementary) wall as a minor.
In particular, such graphs contain a subdivision of a large wall as a subgraph, which implies $(\tw,\omega)$-unboundedness and, thus, unbounded tree-independence number.
Hence, for graph classes closed under subgraph, topological minor, or minor relation, bounded tree-independence number is equivalent to bounded treewidth.\hfill$\blacktriangle$
\end{remark}

We can now refine the result of \cref{dichotomy tw-omega} and prove the main result of this section.

\begin{theorem}\label{dichotomy tin}
   For every graph $H$, the following statements hold.
    \begin{enumerate}
        \item The class of $H$-subgraph-free graphs has bounded tree-independence number if and only if $H \in \mathcal{S}$.

        \item The class of $H$-topological-minor-free graphs has bounded tree-independence number if and only if $H$ is subcubic and planar.

        \item The class of $H$-minor-free graphs has bounded tree-independence number if and only if $H$ is planar.
        \item The class of $H$-free graphs has bounded tree-independence number if and only if $H$ is either an induced subgraph of $P_3$ or an edgeless graph.

        \item The class of $H$-induced-topological-minor-free graphs has bounded tree-independence number if and only if $H$ is either an induced topological minor of $C_4$ or $K_4^-$, or $H$ is edgeless.

        \item The class of $H$-induced-minor-free graphs has bounded tree-independence number if and only if $H$ is an induced minor of $W_4$, $K_5^-$, or $K_{2,q}$ for some $q \in \mathbb{Z}_+$, or $H$ is edgeless.
    \end{enumerate}
\end{theorem}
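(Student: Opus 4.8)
The plan is to deduce \cref{dichotomy tin} from the corresponding characterization of $(\tw,\omega)$-boundedness in \cref{dichotomy tw-omega}, using that bounded tree-independence number lies between bounded treewidth and $(\tw,\omega)$-boundedness. All six ``only if'' implications can be disposed of uniformly: if the class of graphs excluding $H$ under one of the six relations has bounded tree-independence number, then by \cref{bounded tin implies bounded tw-omega-refined} it is $(\tw,\omega)$-bounded, whence the matching item of \cref{dichotomy tw-omega} forces $H$ to satisfy the stated condition. Since the conditions imposed on $H$ are literally the same in the two theorems, this settles every forward direction at once, and only the ``if'' directions remain.

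For items (1)--(3) the ``if'' direction is immediate from \cref{tw-omega bounded treewidth non-induced}. The classes of $H$-subgraph-free, $H$-topological-minor-free, and $H$-minor-free graphs are each closed under edge deletion, and for such classes bounded tree-independence number, bounded treewidth, and $(\tw,\omega)$-boundedness all coincide; hence the first three items transfer verbatim from \cref{dichotomy tw-omega}.

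For the induced relations in items (4)--(6), I would first reduce to the extremal excluded graphs by transitivity: whenever $H$ is an induced subgraph (resp.\ induced topological minor, induced minor) of a graph $M$, every $H$-free graph is $M$-free, so it suffices to bound the tree-independence number on the classes defined by the maximal admissible $M$, together with the edgeless case. Concretely, for (4) one uses that $P_3$-free graphs are disjoint unions of cliques, hence chordal with $\tin\le 1$ by \cref{chordal}; for (6) the required bounds are already in hand, namely $\tin\le 4$ for $W_4$- and $K_5^-$-induced-minor-free graphs (\cref{W_4-tree-independence-number,K_5-tree-independence-number}) and $\tin\le 2q-2$ for $K_{2,q}$-induced-minor-free graphs (\cref{thm:tree-independence-number-K_2q-induced-minor-free-graphs}). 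In each item the residual edgeless case $H=\overline{K_t}$ is handled directly: such graphs have $\alpha(G)<t$, so the single-bag tree decomposition gives $\tin(G)\le t-1$; in (6) it is moreover subsumed, as $\overline{K_t}$ is an induced minor of $K_{2,t}$.

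The only genuinely new structural input concerns the extremal classes for item (5), where I would establish the two identities $C_4\text{-induced-topological-minor-free}=\text{chordal}$ and $K_4^-\text{-induced-topological-minor-free}=\text{block-cactus}$. The first is routine, since the subdivisions of $C_4$ are exactly the cycles of length at least four, so excluding their induced copies is the same as forbidding long induced cycles, giving $\tin\le 1$ via \cref{chordal}. The second identity is the crux, and I expect it to be the main obstacle. The easy inclusion notes that in a block-cactus graph every block is a cycle or a clique, neither of which contains an induced subdivision of the diamond (such a subdivision is $2$-connected and would therefore have to live inside a single block). The reverse inclusion requires showing that any $2$-connected graph that is neither a cycle nor complete contains an induced subdivision of $K_4^-$, i.e.\ an induced theta; carrying out this $2$-connected case analysis---choosing two non-adjacent vertices and extracting three appropriately induced internally disjoint paths between a pair of branch vertices---is where the real effort goes. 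Once it is in place, \cref{cor:block-cactus-graphs} yields $\tin\le 2$ and completes item (5), and assembling all the pieces proves the theorem.
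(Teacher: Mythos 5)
Your overall architecture coincides with the paper's proof almost step for step: all six forward implications are dispatched at once via \cref{bounded tin implies bounded tw-omega-refined} combined with \cref{dichotomy tw-omega}; items (1)--(3) follow from \cref{tw-omega bounded treewidth non-induced}; item (4) from the fact that $P_3$-free graphs are disjoint unions of complete graphs; the $C_4$ case of item (5) from the identification with chordal graphs and \cref{chordal}; and item (6) from \cref{W_4-tree-independence-number,K_5-tree-independence-number,thm:tree-independence-number-K_2q-induced-minor-free-graphs}, with the reduction to the extremal excluded graphs by transitivity of the relations and the edgeless case handled by the trivial one-bag decomposition, exactly as in the paper.

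The one divergence, and the one genuine gap, is the $K_4^-$ case of item (5). The paper does not prove that $K_4^-$-induced-topological-minor-free graphs are block-cactus graphs; it simply cites this fact from the earlier papers of the series~\cite{DMS-WG2020,dallard2021treewidth} and then applies \cref{cor:block-cactus-graphs}. You propose instead to establish the identification from scratch, and you explicitly label the inclusion you actually need---that every $2$-connected graph which is neither a cycle nor a complete graph contains an induced subdivision of $K_4^-$---as ``the crux'' and ``where the real effort goes,'' offering only a one-sentence hint (choose two non-adjacent vertices and extract three suitably induced internally disjoint paths). That statement is true, but it is a nontrivial structural lemma: a correct proof must, for instance, treat separately the chordal case (where a clique minimal separator with two full components yields the subdivided diamond) and the non-chordal case (where one must control all attachment vertices of a component over a long induced cycle, not just two of them), and your sketch does not carry this out. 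So, as a self-contained argument, your proposal is incomplete at precisely this point, whereas the paper's proof has nothing to fill there because it leans on the cited prior work. Two minor remarks: the ``easy inclusion'' (block-cactus graphs are $K_4^-$-induced-topological-minor-free) that you also plan to prove is not needed for the theorem, only the converse inclusion is; and your observation that the edgeless case of item (6) is subsumed by the $K_{2,q}$ case (since $\overline{K_t}$ is an induced minor of $K_{2,t}$) is correct, though the paper handles it directly.
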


\begin{proof}
  Fix a graph $H$ and one of the six considered graph containment relations, and let $\mathcal{G}$ be the class of graphs excluding $H$ with respect to this relation.
  Assume first that $\mathcal{G}$ has bounded tree-independence number.
Then, by \cref{bounded tin implies bounded tw-omega-refined}, $\mathcal{G}$ is $(\tw,\omega)$-bounded.
Thus, for each graph containment relation, the forward implication holds by \cref{dichotomy tw-omega}.

    Consider first the subgraph, topological minor, and minor relations.
    Assume that $\mathcal G$ is $(\tw,\omega)$-bounded.
    Then $\mathcal G$ has bounded treewidth by \cref{tw-omega bounded treewidth non-induced}, and therefore also bounded tree-independence number.

    We may thus assume that the considered relation is the induced subgraph, induced topological minor, or induced minor relation.
    Clearly, if $H$ is edgeless, then every graph $G \in \mathcal G$ has independence number at most $|V(H)|$, which implies that $G$ has tree-independence number at most $|V(H)|$.

    Consider now the induced subgraph relation and assume that $H$ is an induced subgraph of $P_3$.
    Then every graph $G$ in $\mathcal G$ is $P_3$-free, and thus every connected component of $G$ is a complete graph.
    Hence, there clearly exists a tree decomposition of $G$ with independence number $1$.
    So the theorem holds for the induced subgraph relation.

    Consider next the induced topological minor relation.
    Assume first that $H$ is an induced topological minor of $C_4$.
    Then $\mathcal{G}$ is a subclass of the class of $C_4$-induced-topological-minor-free graphs, which is precisely the class of chordal graphs.
    By \cref{chordal}, every chordal graph has tree-independence number at most $1$.
    Assume now that $H$ is an induced topological minor of $K_4^-$.
    Then $\mathcal{G}$ is a subclass of the class of block-cactus graphs~\cite{DMS-WG2020,dallard2021treewidth}.
    By \cref{cor:block-cactus-graphs}, every graph in $\mathcal{G}$ has tree-independence number at most $2$.
    Hence, the theorem holds for the induced topological minor relation.

    Now, consider the induced minor relation.
    If $H$ is an induced minor of $W_4$ or $K_5^-$, then every graph in $\mathcal G$ is $W_4$-induced-minor-free or $K_5^-$-induced-minor-free, and so \cref{W_4-tree-independence-number,K_5-tree-independence-number} imply an upper bound of $4$ on the tree-independence number of graphs in $\mathcal G$.
    Finally, if $H$ is an induced minor of $K_{2,q}$ for some $q\ge 2$, then every graph in $\mathcal G$ is $K_{2,q}$-induced-minor free.
    As shown in \cref{thm:tree-independence-number-K_2q-induced-minor-free-graphs}, such graphs have tree-independence number at most $2q-2$.
    Hence, the theorem holds for the induced minor relation.
\end{proof}

\Cref{bounded tin implies bounded tw-omega-refined,dichotomy tin,dichotomy tw-omega}, and the proof of \cref{dichotomy tin}, imply the following.

\begin{corollary}\label{equivalencies without H}
For every graph $H$ and each of the six graph containment relations (the subgraph, topological minor, and minor relations, and their induced variants), the following statements are equivalent for the class $\G$ of graphs excluding $H$ with respect to the relation.
\begin{enumerate}
\item $\G$ is $(\tw,\omega)$-bounded.
\item $\G$ is polynomially $(\tw,\omega)$-bounded.
\item $\G$ has bounded tree-independence number.
\end{enumerate}
Furthermore, whenever the above conditions are satisfied, there is a polynomial-time algorithm for computing a tree decomposition with bounded independence number of a graph in $\G$.
\end{corollary}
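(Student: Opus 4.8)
The plan is to prove the corollary by establishing the cycle of implications $(3) \Rightarrow (2) \Rightarrow (1) \Rightarrow (3)$ and then verifying the algorithmic postscript by inspecting the proof of \cref{dichotomy tin} case by case. The implication $(3) \Rightarrow (2)$ is immediate from \cref{bounded tin implies bounded tw-omega-refined} applied with $\ell = 0$: if $\mathcal{G}$ has tree-independence number at most $k$, then it admits the binding function $f(p) = R(p+1,k+1)-2$, which is upper-bounded by a polynomial in $p$ of degree $k$, so $\mathcal{G}$ is polynomially $(\tw,\omega)$-bounded. The implication $(2) \Rightarrow (1)$ is trivial, since a polynomial $(\tw,\omega)$-binding function is in particular a $(\tw,\omega)$-binding function.

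The crux is $(1) \Rightarrow (3)$, and here I would simply observe that the two dichotomies match condition for condition. For each of the six containment relations, the characterizing condition on $H$ in \cref{dichotomy tw-omega} (for $(\tw,\omega)$-boundedness) is verbatim the condition in \cref{dichotomy tin} (for bounded tree-independence number): for example, in the induced minor case both require $H$ to be an induced minor of $W_4$, $K_5^-$, or some $K_{2,q}$, or to be edgeless, and analogously for the other five relations. Hence $(1)$ forces $H$ to satisfy the relevant condition, which by \cref{dichotomy tin} yields $(3)$. This closes the cycle and proves the equivalence of the three statements.

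For the final algorithmic assertion, I would revisit the proof of \cref{dichotomy tin} and confirm that in every case where the tree-independence number is shown to be bounded, the witnessing tree decomposition is efficiently computable. In the edgeless case the single-bag decomposition with bag $V(G)$ works; in the induced subgraph case ($H$ an induced subgraph of $P_3$) the components are complete graphs, so a trivial clique tree suffices. For the two induced-topological-minor cases, $\mathcal{G}$ is a subclass of the chordal graphs or of the block-cactus graphs, where \cref{chordal-F-mapping} and \cref{cor:block-cactus-graphs,cor:reduction-to-blocks}, respectively, supply linear-time algorithms. For the three induced-minor cases, \cref{W_4-tree-independence}, \cref{K_5-tree-independence}, and \cref{thm:tree-independence-number-K_2q-induced-minor-free-graphs} each produce, in polynomial (indeed often linear) time, a tree decomposition of bounded independence number. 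Finally, for the subgraph, topological minor, and minor relations, the condition on $H$ forces bounded treewidth via \cref{tw-omega bounded treewidth non-induced}, so a constant-factor treewidth approximation yields in polynomial time a tree decomposition of bounded width, whose independence number is at most its width.

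The main obstacle is not conceptual but organizational: since the corollary is a synthesis of results proved earlier, the only point demanding real care is the algorithmic postscript, where one must make sure that \emph{every} branch of the proof of \cref{dichotomy tin} genuinely returns an efficiently computable decomposition. In particular, the non-induced cases must be handled through bounded treewidth together with an explicit (approximate) treewidth routine, rather than through any non-constructive appeal to the Grid-Minor Theorem used in \cref{tw-omega bounded treewidth non-induced}; once this is spelled out, the uniformity of the two dichotomies makes the rest routine.
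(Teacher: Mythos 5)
Your proposal is correct and follows essentially the same route as the paper, which derives the corollary by combining \cref{bounded tin implies bounded tw-omega-refined} (for $(3)\Rightarrow(2)$), the verbatim match between the characterizations in \cref{dichotomy tw-omega,dichotomy tin} (for $(1)\Rightarrow(3)$), and a case inspection of the proof of \cref{dichotomy tin} for the algorithmic claim. Your added observation that the non-induced cases require an explicit bounded-treewidth algorithm (rather than the purely structural appeal to the Grid-Minor Theorem) is a detail the paper leaves implicit, and you resolve it correctly.
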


\section{Concluding remarks}\label{sec:open-questions}

In conclusion, we summarize the main questions left open by this work and the two preceding works of the series~\cite{dallard2021treewidth,dallard2022firstpaper}, and discuss their interrelations.

As mentioned in \cref{sec:K2q-induced-minor-free}, we do not know if the bound on the tree-independence number of $K_{2,q}$-induced-minor-free graphs given by \cref{thm:tree-independence-number-K_2q-induced-minor-free-graphs} is sharp.

\begin{question}
For an integer $q\ge 3$, what is the maximum tree-independence number over all $K_{2,q}$-induced-minor-free graphs?
\end{question}

As explained in \cref{sec:K2q-induced-minor-free}, the answer is somewhere between $q-1$ and $2(q-1)$.

We say that a graph class $\mathcal{G}$ is \emph{$\alpha$-easy} if the \textsc{Max Weight Independent Set} problem is solvable in polynomial time for graphs in $\mathcal{G}$.
\Cref{theorem-main} gives an infinite family of planar graphs $H$ such that the class of $H$-induced-minor-free graphs is $\alpha$-easy.
In general, however, the problem of classifying planar graphs $H$ with respect to the complexity of the \textsc{Max Weight Independent Set} problem in the class of graphs excluding $H$ as an induced minor is a widely open question, even in the case when $H$ is a path or cycle.

\begin{question}
Is the class of $H$-induced-minor-free graphs $\alpha$-easy for every planar graph $H$?
\end{question}

Recently, Korhonen has shown that for any planar graph $H$, the \textsc{Max Weight Independent Set} problem can be solved in subexponential time in the class of $H$-induced-minor-free graphs~\cite{Korhonen2022}.

\medskip
The remaining questions can be phrased in terms of various properties of graph classes, as summarized in \cref{fig questions}.

\begin{figure}
    \centering
    \begin{tikzpicture}[yscale=1.25]
    \usetikzlibrary{arrows.meta}
    \usetikzlibrary{calc}
        \tikzset{every node/.style={rectangle, draw, very thick, inner sep = 5pt, outer sep = 2pt, align = center, rounded corners=2.pt}}

        \node[text width = 6cm, align = center] (alpha-easy) at (0,0) {\strut $\alpha$-easy};
        \node[text width = 6cm, align = center, below = of alpha-easy] (tw-w-B) {\strut $(\tw,\omega)$-bounded};
        \node[text width = 6cm, align = center, below = of tw-w-B] (poly-tw-w-B) {\strut polynomially $(\tw,\omega)$-bounded};
        \node[text width = 6cm, align = center, below = of poly-tw-w-B] (B-tree-alpha) {\strut bounded tree-independence number};
        \node[text width = 6cm, align = center, below = of B-tree-alpha] (EW-tree-alpha) {\strut bounded and efficiently witnessed tree-independence number};

        \draw [-{Latex[round,length=2.5mm,width=2.5mm]},very thick] ($(EW-tree-alpha.north)+(-0.3,0)$) -- ($(B-tree-alpha.south)+(-0.3,0)$);
        \draw [-{Latex[round,length=2.5mm,width=2.5mm]},very thick] ($(B-tree-alpha.north)+(-0.3,0)$) -- ($(poly-tw-w-B.south)+(-0.3,0)$);
        \draw [-{Latex[round,length=2.5mm,width=2.5mm]},very thick] ($(poly-tw-w-B.north)+(-0.3,0)$) -- ($(tw-w-B.south)+(-0.3,0)$);
        \draw [-{Latex[round,length=2.5mm,width=2.5mm]}, very thick, dashed] ($(tw-w-B.north)+(-0.3,0)$) -- node[auto,swap,fill=none,draw=none]{\strut \Cref{question3}} ($(alpha-easy.south)+(-0.3,0)$);

        \draw [-{Latex[round,length=2.5mm,width=2.5mm]}, very thick, dashed] ($(B-tree-alpha.south)+(0.3,0)$) -- node[auto,fill=none,draw=none]{\strut \Cref{computing-tree-decompositions}} ($(EW-tree-alpha.north)+(0.3,0)$);
        \draw [-{Latex[round,length=2.5mm,width=2.5mm]}, very thick, dashed] ($(poly-tw-w-B.south)+(0.3,0)$) -- ($(B-tree-alpha.north)+(0.3,0)$);
        \draw [-{Latex[round,length=2.5mm,width=2.5mm]}, very thick, dashed] ($(tw-w-B.south)+(0.3,0)$) -- node[auto,fill=none,draw=none]{\strut \Cref{q1}} ($(poly-tw-w-B.north)+(0.3,0)$);

        \draw [-{Latex[round,length=2.5mm,width=2.5mm]},very thick] (EW-tree-alpha.north west) to[bend left = 65] ($(alpha-easy.west)+(0,-0.1)$);

        \draw [-{Latex[round,length=2.5mm,width=2.5mm]}, dashed,very thick] (B-tree-alpha.north west) to[bend left = 55] node[auto,pos=0.15,right=0pt,fill=none,draw=none] {\strut \Cref{mwis-complexity-for-bounded-tin}} (alpha-easy.south west);

        \draw [-{Latex[round,length=2.5mm,width=2.5mm]}, dashed,very thick] (poly-tw-w-B.north west) to[bend left = 35] ($(alpha-easy.south west)+(0.3,0)$);

        \draw [-{Latex[round,length=2.5mm,width=2.5mm]}, very thick, dashed] (tw-w-B.east) to[bend left = 75] node[near start,right,fill=none,draw=none]{\strut \Cref{main-conjecture}} (B-tree-alpha.east);

        \draw [-{Latex[round,length=2.5mm,width=2.5mm]}, very thick, dashed] (tw-w-B.south east) to[bend left = 55] (EW-tree-alpha.east);

        \draw [-{Latex[round,length=2.5mm,width=2.5mm]}, very thick, dashed] (poly-tw-w-B.south east) to[bend left = 35] (EW-tree-alpha.north east);
    \end{tikzpicture}
    \caption{Various properties of graph classes and their interrelations. A full line arrow from property $A$ to property $B$ means that every graph class having property $A$ also has property $B$.
    Dashed lines correspond to open questions.}
    \label{fig questions}
\end{figure}
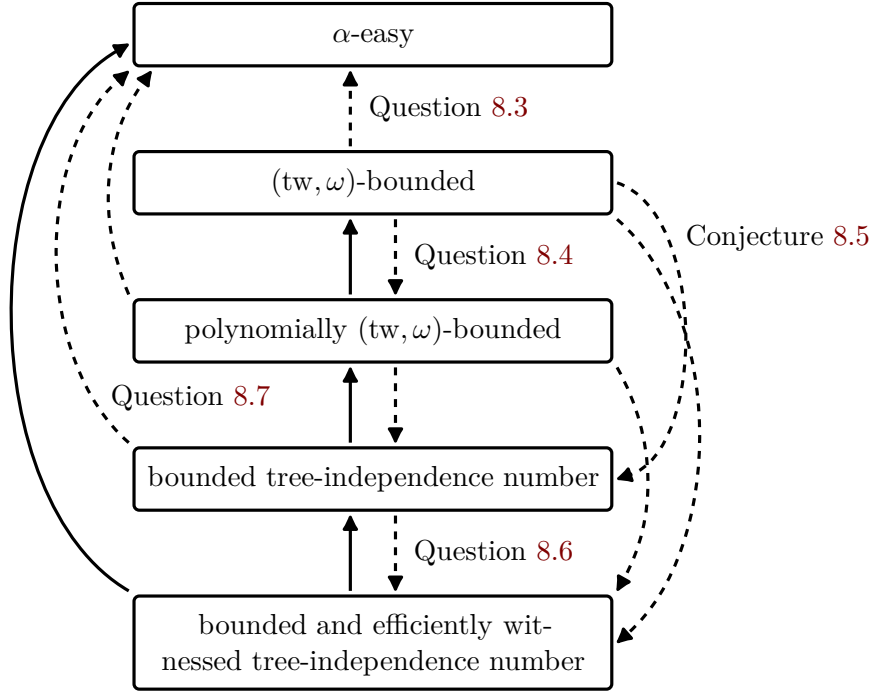

To explain the nontrivial implications in the diagram given in \cref{fig questions}, note that:
\begin{itemize}
\item Corollary 5.3 in \cite{dallard2022firstpaper} (cf.~\cref{thm:bounded-tree-independence-number}) implies that every graph class $\mathcal{G}$ with bounded and efficiently witnessed tree-independence number is $\alpha$-easy.
\item Lemma 3.2 in~\cite{dallard2022firstpaper} (cf.~\cref{bounded tin implies bounded tw-omega-refined}) implies that every graph class with bounded tree-independence number is polynomially $(\tw,\omega)$-bounded.
\end{itemize}

By \cref{tin-of-Knn}, for each $n\ge 1$, the tree-independence number of complete bipartite graph $K_{n,n}$ is $n$.
Because the \textsc{Max Weight Independent Set} problem is solvable in linear time in the class of complete bipartite graphs, it follows that not every $\alpha$-easy hereditary graph class is $(\tw,\omega)$-bounded, that is, the topmost possible implication in the diagram given in \cref{fig questions} cannot be an equivalence.
However, we do not know if the four properties at the bottom of the diagram are distinct, leaving open $9$ pairs $(A,B)$ of properties for which we do not know if every graph class with property $A$ also has property $B$ (corresponding to the dashed lines in \cref{fig questions}).
We now explicitly state and discuss some of these questions.

In~\cite{dallard2021treewidth}, we asked whether there is a $(\tw,\omega)$-bounded graph class in which the \textsc{Max Weight Independent Set} problem is \NP-hard.
Unless $\P = \NP$, the question can be equivalently formulated as follows.

\begin{question}\label{question3}
Is every $(\tw,\omega)$-bounded graph class $\alpha$-easy?
\end{question}

As shown in this paper, \Cref{question3} has a positive answer when restricted to the family of classes of graphs excluding a single graph with respect to one of the six considered relations.

In~\cite{DMS-WG2020,dallard2021treewidth}, we also posed the following question.

\begin{question}\label{q1}
Is every $(\tw,\omega)$-bounded graph class polynomially $(\tw,\omega)$-bounded?
\end{question}

We showed~\cite{DMS-WG2020,dallard2021treewidth} that \cref{q1} has an affirmative answer for the $(\tw,\omega)$-bounded graph classes defined by forbidding a single graph $H$ with respect to the minor, the topological minor, the subgraph relation, or their induced variants, except possibly when $H\in \{W_4,K_5^-\}$ is excluded as an induced minor.
\Cref{equivalencies without H} strengthens these results by showing that for every $H$ and each of the six relations, $(\tw,\omega)$-boundedness, polynomial $(\tw,\omega)$-boundedness, bounded tree-independence number, as well as bounded and efficiently witnessed tree-independence number are in fact equivalent concepts.
Motivated by this result, we boldly pose the following conjecture.

\begin{conjecture}\label{main-conjecture}
Let $\mathcal{G}$ be a hereditary graph class.
Then $\mathcal{G}$ is $(\tw,\omega)$-bounded if and only if $\mathcal{G}$ has bounded tree-independence number.
\end{conjecture}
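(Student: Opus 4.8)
The nontrivial content of the conjecture lies entirely in one direction. The implication ``bounded tree-independence number $\Rightarrow$ $(\tw,\omega)$-boundedness'' holds for every graph class, hereditary or not, and is exactly \cref{bounded tin implies bounded tw-omega-refined}. Thus the plan is to attack the reverse implication: every hereditary $(\tw,\omega)$-bounded class $\mathcal{G}$ has bounded tree-independence number. I would argue by contraposition, assuming that $\mathcal{G}$ has unbounded tree-independence number and trying to exhibit, for every $t$, an induced subgraph of a member of $\mathcal{G}$ whose treewidth exceeds $t$ while its clique number stays bounded by a constant independent of $t$; the existence of such a sequence contradicts $(\tw,\omega)$-boundedness.

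The first reduction is to observe that the two invariants already coincide on each slice of bounded clique number. Indeed, for any graph $G$ with $\omega(G)\le p$, Ramsey's theorem forces every bag of size at least $R(p+1,k+1)$ to contain an independent set of size $k+1$, so a tree decomposition of small width automatically has small independence number; this gives $\tin(G)\le \tw(G)+1$ and, conversely, $\tw(G)\le R(p+1,\tin(G)+1)-2$ exactly as in \cref{bounded tin implies bounded tw-omega-refined}. Consequently, on the subclass $\{G\in\mathcal{G}:\omega(G)\le p\}$ bounded treewidth and bounded tree-independence number are equivalent, and $(\tw,\omega)$-boundedness already yields a bound on $\tin$, but one that grows with $p$. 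The entire difficulty is therefore to upgrade this to a bound on $\tin$ that is \emph{uniform} over all clique numbers: one must explain why the large treewidth of members having a large clique can always be absorbed into bags of small independence number, without inflating the independence number of bags elsewhere in the decomposition.

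The route I would pursue is to seek a Grid-Minor-type obstruction characterization of large tree-independence number: a theorem asserting that any graph with sufficiently large tree-independence number contains, \emph{as an induced subgraph}, one of a bounded list of canonical dense-but-clique-poor patterns, such as a large balanced complete bipartite graph (recall $\tin(K_{n,n})=n$ by \cref{tin-of-Knn}), a large induced subdivided wall, or a layered near-clique structure. Each such obstruction has bounded clique number yet unbounded treewidth, so if the obstruction could be guaranteed to live inside $\mathcal{G}$ itself, using heredity together with the induced-subgraph monotonicity of $\tin$ inherited from \cref{lem:tree-indepencence number induced-minor}, the contradiction with $(\tw,\omega)$-boundedness would follow. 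Proving such an obstruction theorem is precisely the main obstacle, and it is what places the statement beyond the present techniques. The delicate point is that the natural obstructions (like $K_{n,n}$) are easy to produce only as induced \emph{minors}, whereas contractions may leave $\mathcal{G}$; one genuinely needs the pattern as an induced subgraph. The potential-maximal-clique and SPQR-tree machinery developed here controls $\tin$ only under concrete excluded-substructure hypotheses and does not, on its own, produce canonical induced obstructions for an abstract $(\tw,\omega)$-bounded class. A complete proof would most plausibly combine a flat-wall / induced-wall analysis, in the spirit of recent induced-subgraph structure theory, with a Ramsey-type argument separating the clique-driven part of the treewidth from a residual obstruction of bounded clique number.
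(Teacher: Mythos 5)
The statement you were asked to prove is not a theorem of this paper: it is \cref{main-conjecture}, posed in the concluding section as an open problem, and the paper contains no proof of it. The only supporting material the paper provides is the backward implication, which holds for all graph classes (\cref{bounded tin implies bounded tw-omega-refined}), and the verification in \cref{equivalencies without H} that the equivalence does hold in the special case where $\mathcal{G}$ is defined by excluding a single graph under one of the six containment relations. So there is no proof of the paper's to compare yours against; the only question is whether your argument settles the conjecture, and it does not.

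Your preliminary observations are correct and match what the paper knows: the implication from bounded tree-independence number to $(\tw,\omega)$-boundedness is exactly \cref{bounded tin implies bounded tw-omega-refined}; the inequality $\tin(G)\le \tw(G)+1$ holds for every graph; hence on each slice $\{G \in \mathcal{G} : \omega(G)\le p\}$ bounded treewidth and bounded tree-independence number coincide, with a bound on $\tin$ that grows with $p$. You also diagnose the obstacle accurately: the natural witnesses of large tree-independence number, such as $K_{n,n}$ (\cref{tin-of-Knn}), are typically produced as induced \emph{minors}, whereas heredity of $\mathcal{G}$ only keeps induced \emph{subgraphs} inside the class, so monotonicity via \cref{lem:tree-indepencence number induced-minor} is not by itself enough. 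But the step on which your whole argument rests --- an obstruction theorem asserting that every graph of sufficiently large tree-independence number contains, as an induced subgraph, one of a bounded list of patterns with bounded clique number and unbounded treewidth --- is stated as a hope, not proved, and you acknowledge as much. None of the machinery in the paper (potential maximal cliques, SPQR trees, the six dichotomies) yields such a theorem for an abstract $(\tw,\omega)$-bounded hereditary class. Consequently, what you have written is a correct and well-informed account of \emph{why} the statement is a conjecture, together with a plausible research program; it is not a proof, and the gap is the entire forward implication.
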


As a first step towards \cref{main-conjecture}, it would be interesting to determine which of the $(\tw,\omega)$-bounded graph classes defined by finitely many forbidden induced subgraphs (see~\cite{MR4385180} for a characterization) have bounded tree-independence number.

In the second paper of the series~\cite{dallard2022firstpaper}, we posed the following two interrelated questions.

\begin{question}\label{computing-tree-decompositions}
Is there a computable function $f:\mathbb{Z}_+\to\mathbb{Z}_+$ such that for every positive integer $k$ there exists a polynomial-time algorithm that takes as input a graph $G$ with tree-independence number at most $k$ and computes a tree decomposition of $G$ with independence number at most $f(k)$?
\end{question}

\begin{question}\label{mwis-complexity-for-bounded-tin}
Is every graph class with bounded tree-independence number $\alpha$-easy?
\end{question}

\medskip
\noindent\textbf{Note.}
After the paper was submitted for publication, we became aware of the work of Yolov~\cite{MR3775804}, who considered two parameters based on tree decompositions, including the tree-independence number (named \emph{$\alpha$-treewidth} and denoted by $\alpha$-$\tw$).
He gave an algorithm that, given an $n$-vertex graph $G$ and an integer $k$, in time $n^{\mathcal{O}(k^3)}$ either computes a tree decomposition of $G$ with independence number $\mathcal{O}(k^3)$ or correctly determines that the tree-independence number of $G$ is larger than $k$.
In particular, this provides an affirmative answer to~\Cref{computing-tree-decompositions} and consequently to \Cref{mwis-complexity-for-bounded-tin}.
Yolov's result was recently improved further by Dallard, Golovach, Fomin, Korhonen, and Milani{\v c}~\cite{dallard2022computing}, who improved the running time to $2^{\mathcal{O}(k^2)} n^{\mathcal{O}(k)}$ and the upper bound on the independence number of the computed tree decomposition to $8k$.

\subsection*{Acknowledgements}

We are grateful to Erik Jan van Leeuwen for telling us about the work of Yolov~\cite{MR3775804}.
This work is supported in part by the Slovenian Research Agency (I0-0035, research programs P1-0285 and P1-0383, research projects J1-3001, J1-3002, J1-3003, J1-4008, J1-4084, J1-9110, J7-3156, N1-0102, N1-0160, and a Young Researchers Grant).


\begin{thebibliography}{10}

\bibitem{MR4262549}
T.~Abrishami, M.~Chudnovsky, M.~Pilipczuk, P.~Rz\c{a}\.{z}ewski, and
  P.~Seymour.
\newblock Induced subgraphs of bounded treewidth and the container method.
\newblock In {\em Proceedings of the 2021 {ACM}-{SIAM} {S}ymposium on
  {D}iscrete {A}lgorithms ({SODA})}, pages 1948--1964. [Society for Industrial
  and Applied Mathematics (SIAM)], Philadelphia, PA, 2021.

\bibitem{MR3776983}
B.~Ahat, T.~Ekim, and Z.~C.~Ta\c{s}k{\i}n.
\newblock Integer programming formulations and benders decomposition for the
  maximum induced matching problem.
\newblock {\em INFORMS J. Comput.}, 30(1):43--56, 2018.

\bibitem{MR4262465}
J.~Alman and V.~Vassilevska~Williams.
\newblock A refined laser method and faster matrix multiplication.
\newblock In {\em Proceedings of the 2021 {ACM}-{SIAM} {S}ymposium on
  {D}iscrete {A}lgorithms ({SODA})}, pages 522--539. [Society for Industrial
  and Applied Mathematics (SIAM)], Philadelphia, PA, 2021.

\bibitem{MR1244934}
J.~Bang-Jensen, J.~Huang, and E.~Prisner.
\newblock In-tournament digraphs.
\newblock {\em J. Combin. Theory Ser. B}, 59(2):267--287, 1993.

\bibitem{MR3992956}
J.~Beisegel, M.~Chudnovsky, V.~Gurvich, M.~Milani\v{c}, and M.~Servatius.
\newblock Avoidable vertices and edges in graphs.
\newblock In {\em Algorithms and data structures}, volume 11646 of {\em Lecture
  Notes in Comput. Sci.}, pages 126--139. Springer, Cham, 2019.

\bibitem{MR4357319}
J.~Beisegel, M.~Chudnovsky, V.~Gurvich, M.~Milani\v{c}, and M.~Servatius.
\newblock Avoidable vertices and edges in graphs: existence, characterization,
  and applications.
\newblock {\em Discrete Appl. Math.}, 309:285--300, 2022.

\bibitem{MR3741534}
R.~Belmonte, Y.~Otachi, and P.~Schweitzer.
\newblock Induced minor free graphs: isomorphism and clique-width.
\newblock {\em Algorithmica}, 80(1):29--47, 2018.

\bibitem{MR3296270}
W.~Ben-Ameur, M.-A. Mohamed-Sidi, and J.~Neto.
\newblock The {$k$}-separator problem: polyhedra, complexity and approximation
  results.
\newblock {\em J. Comb. Optim.}, 29(1):276--307, 2015.

\bibitem{MR2659379}
A.~Berry, J.~R.~S. Blair, J.-P. Bordat, and G.~Simonet.
\newblock Graph extremities defined by search algorithms.
\newblock {\em Algorithms (Basel)}, 3(2):100--124, 2010.

\bibitem{MR1626534}
A.~Berry and J.-P. Bordat.
\newblock Separability generalizes {D}irac's theorem.
\newblock {\em Discrete Appl. Math.}, 84(1-3):43--53, 1998.

\bibitem{MR3632036}
A.~Berry and G.~Simonet.
\newblock Computing a clique tree with the algorithm maximal label search.
\newblock {\em Algorithms (Basel)}, 10(1):Paper No. 20, 23, 2017.

\bibitem{MR973566}
D.~Bienstock and C.~L. Monma.
\newblock Optimal enclosing regions in planar graphs.
\newblock {\em Networks}, 19(1):79--94, 1989.

\bibitem{MR1172354}
M.~B\'{\i}r\'{o}, M.~Hujter, and Z.~Tuza.
\newblock Precoloring extension. {I}. {I}nterval graphs.
\newblock {\em Discrete Math.}, 100(1-3):267--279, 1992.

\bibitem{MR1320296}
J.~R.~S. Blair and B.~Peyton.
\newblock An introduction to chordal graphs and clique trees.
\newblock In {\em Graph theory and sparse matrix computation}, volume~56 of
  {\em IMA Vol. Math. Appl.}, pages 1--29. Springer, New York, 1993.

\bibitem{MR1642971}
H.~Bodlaender, J.~Gustedt, and J.~A. Telle.
\newblock Linear-time register allocation for a fixed number of registers.
\newblock In {\em Proceedings of the {N}inth {A}nnual {ACM}-{SIAM} {S}ymposium
  on {D}iscrete {A}lgorithms ({S}an {F}rancisco, {CA}, 1998)}, pages 574--583.
  ACM, New York, 1998.

\bibitem{MR1734049}
V.~Bouchitt\'{e} and I.~Todinca.
\newblock Treewidth and minimum fill-in of weakly triangulated graphs.
\newblock In {\em S{TACS} 99 ({T}rier)}, volume 1563 of {\em Lecture Notes in
  Comput. Sci.}, pages 197--206. Springer, Berlin, 1999.

\bibitem{MR1896345}
V.~Bouchitt\'{e} and I.~Todinca.
\newblock Listing all potential maximal cliques of a graph.
\newblock {\em Theoret. Comput. Sci.}, 276(1-2):17--32, 2002.

\bibitem{MR3763297}
A.~Brandst\"{a}dt and R.~Mosca.
\newblock Maximum weight independent set for {$\ell$}claw-free graphs in
  polynomial time.
\newblock {\em Discrete Appl. Math.}, 237:57--64, 2018.

\bibitem{MR3853110}
B.~Bre\v{s}ar, T.~R. Hartinger, T.~Kos, and M.~Milani\v{c}.
\newblock 1-perfectly orientable {$K_4$}-minor-free and outerplanar graphs.
\newblock {\em Discrete Appl. Math.}, 248:33--45, 2018.

\bibitem{brianski2022separating}
M.~Briań\'{n}ski, J.~Davies, and B.~Walczak.
\newblock Separating polynomial $\chi$-boundedness from $\chi$-boundedness.
\newblock 2022.
\newblock arXiv:2201.08814.

\bibitem{MR1011265}
K.~Cameron.
\newblock Induced matchings.
\newblock {\em Discrete Appl. Math.}, 24(1-3):97--102, 1989.

\bibitem{MR2190818}
K.~Cameron and P.~Hell.
\newblock Independent packings in structured graphs.
\newblock {\em Math. Program.}, 105(2-3, Ser. B):201--213, 2006.

\bibitem{MR3213840}
J.~Carmesin, R.~Diestel, F.~Hundertmark, and M.~Stein.
\newblock Connectivity and tree structure in finite graphs.
\newblock {\em Combinatorica}, 34(1):11--45, 2014.

\bibitem{MR4262487}
P.~Chalermsook and B.~Walczak.
\newblock Coloring and maximum weight independent set of rectangles.
\newblock In {\em Proceedings of the 2021 {ACM}-{SIAM} {S}ymposium on
  {D}iscrete {A}lgorithms ({SODA})}, pages 860--868. [Society for Industrial
  and Applied Mathematics (SIAM)], Philadelphia, PA, 2021.

\bibitem{MR4249058}
S.~Chaplick, F.~V. Fomin, P.~A. Golovach, D.~Knop, and P.~Zeman.
\newblock Kernelization of graph {H}amiltonicity: proper {$H$}-graphs.
\newblock {\em SIAM J. Discrete Math.}, 35(2):840--892, 2021.

\bibitem{MR3746153}
S.~Chaplick, M.~T\"{o}pfer, J.~Voborn\'{\i}k, and P.~Zeman.
\newblock On {$H$}-topological intersection graphs.
\newblock In {\em Graph-theoretic concepts in computer science}, volume 10520
  of {\em Lecture Notes in Comput. Sci.}, pages 167--179. Springer, Cham, 2017.

\bibitem{MR4332111}
S.~Chaplick, M.~T\"{o}pfer, J.~Voborn\'{\i}k, and P.~Zeman.
\newblock On {$H$}-topological intersection graphs.
\newblock {\em Algorithmica}, 83(11):3281--3318, 2021.

\bibitem{DBLP:journals/endm/ChaplickZ17}
S.~Chaplick and P.~Zeman.
\newblock Combinatorial problems on {$H$}-graphs.
\newblock {\em Electron. Notes Discret. Math.}, 61:223--229, 2017.

\bibitem{MR4117301}
M.~Chudnovsky, M.~Pilipczuk, M.~Pilipczuk, and S.~Thomass\'{e}.
\newblock On the maximum weight independent set problem in graphs without
  induced cycles of length at least five.
\newblock {\em SIAM J. Discrete Math.}, 34(2):1472--1483, 2020.

\bibitem{MR4141321}
M.~Chudnovsky, M.~Pilipczuk, M.~Pilipczuk, and S.~Thomass\'{e}.
\newblock Quasi-polynomial time approximation schemes for the maximum weight
  independent set problem in {$H$}-free graphs.
\newblock In {\em Proceedings of the 2020 {ACM}-{SIAM} {S}ymposium on
  {D}iscrete {A}lgorithms}, pages 2260--2278. SIAM, Philadelphia, PA, 2020.

\bibitem{MR2901082}
F.~Cicalese and M.~Milani\v{c}.
\newblock Graphs of separability at most 2.
\newblock {\em Discrete Appl. Math.}, 160(6):685--696, 2012.

\bibitem{MR1466576}
M.~Conforti, G.~Cornu\'{e}jols, A.~Kapoor, and K.~Vu\v{s}kovi\'{c}.
\newblock Universally signable graphs.
\newblock {\em Combinatorica}, 17(1):67--77, 1997.

\bibitem{MR586989}
W.~H. Cunningham and J.~Edmonds.
\newblock A combinatorial decomposition theory.
\newblock {\em Canadian J. Math.}, 32(3):734--765, 1980.

\bibitem{dallard2022computing}
C.~Dallard, F.~V. Fomin, P.~A. Golovach, T.~Korhonen, and M.~Milani{\v c}.
\newblock Computing tree decompositions with small independence number, 2022.
\newblock arXiv:2207.09993.

\bibitem{DMS-WG2020}
C.~Dallard, M.~Milani{\v{c}}, and K.~{\v{S}}torgel.
\newblock Treewidth versus clique number in graph classes with a forbidden
  structure.
\newblock In I.~Adler and H.~M{\"{u}}ller, editors, {\em Graph-Theoretic
  Concepts in Computer Science - 46th International Workshop, {WG} 2020, Leeds,
  UK, June 24-26, 2020, Revised Selected Papers}, volume 12301 of {\em Lecture
  Notes in Computer Science}, pages 92--105. Springer, 2020.

\bibitem{dallard2022firstpaper}
C.~Dallard, M.~Milani{\v{c}}, and K.~{\v{S}}torgel.
\newblock Treewidth versus clique number. {II}. {T}ree-independence number,
  2021.
\newblock arXiv:2111.04543.

\bibitem{dallard2021treewidth}
C.~Dallard, M.~Milani\v{c}, and K.~\v{S}torgel.
\newblock Treewidth versus clique number. {I}. {G}raph classes with a forbidden
  structure.
\newblock {\em SIAM J. Discrete Math.}, 35(4):2618--2646, 2021.

\bibitem{MR3804773}
H.~B. de~Macedo~Filho, C.~M.~H. de~Figueiredo, Z.~Li, and R.~C.~S. Machado.
\newblock Using {SPQR}-trees to speed up recognition algorithms based on
  2-cutsets.
\newblock {\em Discrete Appl. Math.}, 245:101--108, 2018.

\bibitem{10.1007/BFb0032061}
G.~Di~Battista and R.~Tamassia.
\newblock On-line graph algorithms with {SPQR}-trees.
\newblock In M.~S. Paterson, editor, {\em Automata, Languages and Programming},
  pages 598--611, Berlin, Heidelberg, 1990. Springer Berlin Heidelberg.

\bibitem{MR1377386}
G.~Di~Battista and R.~Tamassia.
\newblock On-line maintenance of triconnected components with {SPQR}-trees.
\newblock {\em Algorithmica}, 15(4):302--318, 1996.

\bibitem{MR1408894}
G.~Di~Battista and R.~Tamassia.
\newblock On-line planarity testing.
\newblock {\em SIAM J. Comput.}, 25(5):956--997, 1996.

\bibitem{MR4129002}
V.~Dujmovi\'{c}, D.~Eppstein, G.~Joret, P.~Morin, and D.~R. Wood.
\newblock Minor-closed graph classes with bounded layered pathwidth.
\newblock {\em SIAM J. Discrete Math.}, 34(3):1693--1709, 2020.

\bibitem{MR3385727}
M.~J. Dunwoody and B.~Kr\"{o}n.
\newblock Vertex cuts.
\newblock {\em J. Graph Theory}, 80(2):136--171, 2015.

\bibitem{esperet2017graph}
L.~Esperet.
\newblock {\em Graph colorings, flows and perfect matchings}.
\newblock {H}abilitation {T}hesis, Universit{\'e} Grenoble Alpes, 2017.

\bibitem{MR4276552}
F.~V. Fomin and P.~A. Golovach.
\newblock Subexponential parameterized algorithms and kernelization on almost
  chordal graphs.
\newblock {\em Algorithmica}, 83(7):2170--2214, 2021.

\bibitem{MR4141534}
F.~V. Fomin, P.~A. Golovach, and J.-F. Raymond.
\newblock On the tractability of optimization problems on {$H$}-graphs.
\newblock {\em Algorithmica}, 82(9):2432--2473, 2020.

\bibitem{MR0392683}
A.~Frank.
\newblock Some polynomial algorithms for certain graphs and hypergraphs.
\newblock In {\em Proceedings of the {F}ifth {B}ritish {C}ombinatorial
  {C}onference ({U}niv. {A}berdeen, {A}berdeen, 1975)}, pages 211--226.
  Congressus Numerantium, No. XV, 1976.

\bibitem{MR411240}
M.~R. Garey, D.~S. Johnson, and L.~Stockmeyer.
\newblock Some simplified {NP}-complete graph problems.
\newblock {\em Theoret. Comput. Sci.}, 1(3):237--267, 1976.

\bibitem{MR4232071}
P.~Gartland and D.~Lokshtanov.
\newblock Independent set on {${P}_k$}-free graphs in quasi-polynomial time.
\newblock In {\em 2020 {IEEE} 61st {A}nnual {S}ymposium on {F}oundations of
  {C}omputer {S}cience---{FOCS} 2020}, pages 613--624. IEEE Computer Soc., Los
  Alamitos, CA, 2020.

\bibitem{10.1145/3406325.3451034}
P.~Gartland, D.~Lokshtanov, M.~Pilipczuk, M.~Pilipczuk, and P.~Rza\.{z}ewski.
\newblock Finding large induced sparse subgraphs in ${C}_{>t}$-free graphs in
  quasipolynomial time.
\newblock In {\em Proceedings of the 53rd Annual ACM SIGACT Symposium on Theory
  of Computing}, STOC 2021, page 330–341, New York, NY, USA, 2021.
  Association for Computing Machinery.

\bibitem{MR2063679}
M.~C. Golumbic.
\newblock {\em Algorithmic {G}raph {T}heory and {P}erfect {G}raphs}, volume~57
  of {\em Annals of Discrete Mathematics}.
\newblock Elsevier Science B.V., Amsterdam, second edition, 2004.

\bibitem{MR3577069}
M.~Grohe.
\newblock Quasi-4-connected components.
\newblock In {\em 43rd {I}nternational {C}olloquium on {A}utomata, {L}anguages,
  and {P}rogramming}, volume~55 of {\em LIPIcs. Leibniz Int. Proc. Inform.},
  pages Art. No. 8, 13. Schloss Dagstuhl. Leibniz-Zent. Inform., Wadern, 2016.

\bibitem{MR3909546}
A.~Grzesik, T.~Klimo\v{s}ov\'{a}, M.~Pilipczuk, and M.~Pilipczuk.
\newblock Polynomial-time algorithm for maximum weight independent set on
  {$P_6$}-free graphs.
\newblock {\em ACM Trans. Algorithms}, 18(1), jan 2022.

\bibitem{DBLP:conf/gd/GutwengerM00}
C.~Gutwenger and P.~Mutzel.
\newblock A linear time implementation of {SPQR}-trees.
\newblock In J.~Marks, editor, {\em Graph Drawing, 8th International Symposium,
  {GD} 2000, Colonial Williamsburg, VA, USA, September 20-23, 2000,
  Proceedings}, volume 1984 of {\em Lecture Notes in Computer Science}, pages
  77--90. Springer, 2000.

\bibitem{MR951359}
A.~Gy\'{a}rf\'{a}s.
\newblock Problems from the world surrounding perfect graphs.
\newblock {\em Zastos. Mat.}, 19(3-4):413--441, 1987.

\bibitem{MR3647815}
T.~R. Hartinger and M.~Milani\v{c}.
\newblock Partial characterizations of 1-perfectly orientable graphs.
\newblock {\em J. Graph Theory}, 85(2):378--394, 2017.

\bibitem{MR2204109}
P.~Heggernes.
\newblock Minimal triangulations of graphs: a survey.
\newblock {\em Discrete Math.}, 306(3):297--317, 2006.

\bibitem{MR2206129}
P.~Heggernes, J.~A. Telle, and Y.~Villanger.
\newblock Computing minimal triangulations in time {$O(n^\alpha\log
  n)=o(n^{2.376})$}.
\newblock {\em SIAM J. Discrete Math.}, 19(4):900--913, 2005.

\bibitem{hopcroft1973algorithm}
J.~E. Hopcroft and R.~E. Tarjan.
\newblock Algorithm 447: efficient algorithms for graph manipulation.
\newblock {\em Communications of the ACM}, 16(6):372--378, 1973.

\bibitem{MR327391}
J.~E. Hopcroft and R.~E. Tarjan.
\newblock Dividing a graph into triconnected components.
\newblock {\em SIAM J. Comput.}, 2:135--158, 1973.

\bibitem{MR4189425}
A.~Jacob, F.~Panolan, V.~Raman, and V.~Sahlot.
\newblock Structural parameterizations with modulator oblivion.
\newblock In {\em 15th {I}nternational {S}ymposium on {P}arameterized and
  {E}xact {C}omputation}, volume 180 of {\em LIPIcs. Leibniz Int. Proc.
  Inform.}, pages Art. No. 19, 18. Schloss Dagstuhl. Leibniz-Zent. Inform.,
  Wadern, 2020.

\bibitem{MR0378476}
R.~M. Karp.
\newblock Reducibility among combinatorial problems.
\newblock In {\em Complexity of computer computations ({P}roc. {S}ympos., {IBM}
  {T}homas {J}. {W}atson {R}es. {C}enter, {Y}orktown {H}eights, {N}.{Y}.,
  1972)}, pages 85--103, 1972.

\bibitem{Korhonen2022}
T.~Korhonen.
\newblock Grid induced minor theorem for graphs of small degree.
\newblock {\em J. Combin. Theory Ser. B}, 160:206--214, 2023.

\bibitem{MR3987192}
E.~Lee.
\newblock Partitioning a graph into small pieces with applications to path
  transversal.
\newblock {\em Math. Program.}, 177(1-2, Ser. A):1--19, 2019.

\bibitem{Lewchalermvongs}
C.~Lewchalermvongs.
\newblock {\em Well-Quasi-Ordering by the Induced-Minor Relation}.
\newblock PhD thesis, Louisiana State University, 2015.

\bibitem{MR4385180}
V.~Lozin and I.~Razgon.
\newblock Tree-width dichotomy.
\newblock {\em European J. Combin.}, 103:Paper No. 103517, 8, 2022.

\bibitem{MR1546002}
S.~Mac~Lane.
\newblock A structural characterization of planar combinatorial graphs.
\newblock {\em Duke Math. J.}, 3(3):460--472, 1937.

\bibitem{MR3757557}
M.~Milani\v{c}, I.~Penev, and N.~Trotignon.
\newblock Stable sets in {$\{{\rm ISK}4, \text{wheel}\}$}-free graphs.
\newblock {\em Algorithmica}, 80(2):415--447, 2018.

\bibitem{MR2080688}
P.~Mutzel.
\newblock The {SPQR}-tree data structure in graph drawing.
\newblock In {\em Automata, languages and programming}, volume 2719 of {\em
  Lecture Notes in Comput. Sci.}, pages 34--46. Springer, Berlin, 2003.

\bibitem{MR2812599}
Y.~Orlovich, A.~Dolgui, G.~Finke, V.~Gordon, and F.~Werner.
\newblock The complexity of dissociation set problems in graphs.
\newblock {\em Discrete Appl. Math.}, 159(13):1352--1366, 2011.

\bibitem{MR4151749}
B.~S. Panda, A.~Pandey, J.~Chaudhary, P.~Dane, and M.~Kashyap.
\newblock Maximum weight induced matching in some subclasses of bipartite
  graphs.
\newblock {\em J. Comb. Optim.}, 40(3):713--732, 2020.

\bibitem{DBLP:conf/sosa/PilipczukPR21}
M.~Pilipczuk, M.~Pilipczuk, and P.~Rz\k{a}\.{z}ewski.
\newblock Quasi-polynomial-time algorithm for independent set in {$P_t$}-free
  graphs via shrinking the space of induced paths.
\newblock In H.~V. Le and V.~King, editors, {\em 4th Symposium on Simplicity in
  Algorithms, {SOSA} 2021, Virtual Conference, January 11-12, 2021}, pages
  204--209. [Society for Industrial and Applied Mathematics (SIAM)],
  Philadelphia, PA, 2021.

\bibitem{MR2006100}
V.~Raghavan and J.~Spinrad.
\newblock Robust algorithms for restricted domains.
\newblock {\em J. Algorithms}, 48(1):160--172, 2003.

\bibitem{MR1576401}
F.~P. Ramsey.
\newblock On a {P}roblem of {F}ormal {L}ogic.
\newblock {\em Proc. London Math. Soc. (2)}, 30(4):264--286, 1929.

\bibitem{DBLP:conf/latin/ReedL08}
B.~A. Reed and Z.~Li.
\newblock Optimization and recognition for ${K}_5$-minor free graphs in linear
  time.
\newblock In E.~S. Laber, C.~F. Bornstein, L.~T. Nogueira, and L.~Faria,
  editors, {\em {LATIN} 2008: Theoretical Informatics, 8th Latin American
  Symposium, B{\'{u}}zios, Brazil, April 7-11, 2008, Proceedings}, volume 4957
  of {\em Lecture Notes in Computer Science}, pages 206--215. Springer, 2008.

\bibitem{MR0854606}
N.~Robertson and P.~D. Seymour.
\newblock Graph minors. {V}. {E}xcluding a planar graph.
\newblock {\em J. Combin. Theory Ser. B}, 41(1):92--114, 1986.

\bibitem{MR408312}
D.~J. Rose, R.~E. Tarjan, and G.~S. Lueker.
\newblock Algorithmic aspects of vertex elimination on graphs.
\newblock {\em SIAM J. Comput.}, 5(2):266--283, 1976.

\bibitem{MR1090614}
P.~Scheffler.
\newblock What graphs have bounded tree-width?
\newblock In {\em Proceedings of the 7th {F}ischland {C}olloquium, {III}
  ({W}ustrow, 1988)}, number~41, pages 31--38, 1990.

\bibitem{MR1852483}
K.~Skodinis.
\newblock Efficient analysis of graphs with small minimal separators.
\newblock In {\em Graph-theoretic concepts in computer science ({A}scona,
  1999)}, volume 1665 of {\em Lecture Notes in Comput. Sci.}, pages 155--166.
  Springer, Berlin, 1999.

\bibitem{MR666799}
D.~J. Skrien.
\newblock A relationship between triangulated graphs, comparability graphs,
  proper interval graphs, proper circular-arc graphs, and nested interval
  graphs.
\newblock {\em J. Graph Theory}, 6(3):309--316, 1982.

\bibitem{MR749707}
R.~E. Tarjan and M.~Yannakakis.
\newblock Simple linear-time algorithms to test chordality of graphs, test
  acyclicity of hypergraphs, and selectively reduce acyclic hypergraphs.
\newblock {\em SIAM J. Comput.}, 13(3):566--579, 1984.

\bibitem{MR0140094}
W.~T. Tutte.
\newblock A theory of {$3$}-connected graphs.
\newblock {\em Nederl. Akad. Wetensch. Proc. Ser. A 64 = Indag. Math.},
  23:441--455, 1961.

\bibitem{MR0210617}
W.~T. Tutte.
\newblock {\em Connectivity in graphs}.
\newblock Mathematical Expositions, No. 15. University of Toronto Press,
  Toronto, Ont.; Oxford University Press, London, 1966.

\bibitem{MR746795}
W.~T. Tutte.
\newblock {\em Graph theory}, volume~21 of {\em Encyclopedia of Mathematics and
  its Applications}.
\newblock Addison-Wesley Publishing Company, Advanced Book Program, Reading,
  MA, 1984.
\newblock With a foreword by C. St. J. A. Nash-Williams.

\bibitem{MR2901091}
P.~van~'t Hof, M.~Kami\'{n}ski, D.~Paulusma, S.~Szeider, and D.~M. Thilikos.
\newblock On graph contractions and induced minors.
\newblock {\em Discrete Appl. Math.}, 160(6):799--809, 2012.

\bibitem{MR1367739}
D.~B. West.
\newblock {\em Introduction to {G}raph {T}heory}.
\newblock Prentice Hall, Inc., Upper Saddle River, NJ, 1996.

\bibitem{MR615221}
M.~Yannakakis.
\newblock Node-deletion problems on bipartite graphs.
\newblock {\em SIAM J. Comput.}, 10(2):310--327, 1981.

\bibitem{MR3775804}
N.~Yolov.
\newblock Minor-matching hypertree width.
\newblock In {\em Proceedings of the {T}wenty-{N}inth {A}nnual {ACM}-{SIAM}
  {S}ymposium on {D}iscrete {A}lgorithms}, pages 219--233. SIAM, Philadelphia,
  PA, 2018.

\bibitem{MR3593941}
J.~You, J.~Wang, and Y.~Cao.
\newblock Approximate association via dissociation.
\newblock In {\em Graph-theoretic concepts in computer science}, volume 9941 of
  {\em Lecture Notes in Comput. Sci.}, pages 13--24. Springer, Berlin, 2016.

\bibitem{MR2403018}
D.~Zuckerman.
\newblock Linear degree extractors and the inapproximability of max clique and
  chromatic number.
\newblock {\em Theory Comput.}, 3:103--128, 2007.

\end{thebibliography}
\end{document}